\newtheorem{theorem}{Theorem}[section]
\newtheorem{lemma}[theorem]{Lemma}
\newtheorem{proposition}[theorem]{Proposition}
\newtheorem{corollary}[theorem]{Corollary}
\numberwithin{equation}{section}
\theoremstyle{remark} \newtheorem{remark}[theorem]{Remark} \numberwithin{equation}{section}
\numberwithin{figure}{section}
\newcommand\concel[2]{\ooalign{$\hfil#1\mkern0mu/\hfil$\crcr$#1#2$}}
\newcommand\nxlra{\mathrel{\mathpalette\concel{\longleftrightarrow}}}
\newcommand{\ep}{\varepsilon}
\newcommand{\set}{{\bf n}}
\newcommand{\m}{{\bf m}}
\newcommand{\sets}{\pmb\Omega}
\newcommand*\bigcdot{\mathpalette\bigcdot@{.5}}
\newcommand*\bigcdot@[2]{\mathbin{\vcenter{\hbox{\scalebox{#2}{$\m@th#1\bullet$}}}}}
\newcommand{\n}{{\mathbf n}}
\definecolor{slightblue}{rgb}{.8, .8, 1}
\definecolor{hair}{RGB}{100,225,190}
\definecolor{ruby}{RGB}{220,50,120}
\definecolor{grass}{RGB}{150,220,110}
\title{Conformal invariance of double random currents II: tightness and properties in the discrete}
\author{Hugo Duminil-Copin\thanks{Institut des Hautes \'Etudes Scientifiques} \thanks{Universit\'e de Gen\`eve} \and Marcin Lis\thanks{Universit\"{a}t Wien} \and Wei Qian\thanks{CNRS and Laboratoire de Math\'{e}matiques d'Orsay, Universit\'{e} Paris-Saclay}}
\date{\today}
\begin{document}
\maketitle

\begin{abstract}
This is the second of two papers devoted to the proof of conformal invariance of the critical double random current on the square lattice. More precisely, we show convergence of loop ensembles obtained by taking the cluster boundaries in the sum of two independent critical currents (both for free and wired boundary conditions). 
The strategy is first to prove convergence of the associated height function to the continuum Gaussian free field, and then to characterize the scaling limit of the loop ensembles as certain local sets of this Gaussian Free Field.
In this paper, we derive crossing properties of the discrete model required to prove this characterization. 
\end{abstract}

\section{Introduction}

\subsection{Motivation}\label{sec:motivation}
Studying the large scale properties of discrete lattice models at criticality is one of the cornerstones of modern statistical physics. In the present papers we show conformal invariance of the critical double random current on the square lattice, i.e.~the percolation model obtained by summing two independent currents from the current representation of the Ising model. 

As often, there are two sides to the story when proving conformal invariance:
\begin{itemize}
\item one studies the discrete model to guarantee that subsequential scaling limits exist, and sometimes completes this result with the derivation of a few quantitative properties of the limit;
\item one characterizes any possible subsequential scaling limit, based either on the convergence of certain discrete holomorphic observables, or as in our case, thanks to the joint convergence of loops and a height function in a well-chosen coupling.
\end{itemize}
In this paper, we perform the first step. 
For the motivation related to the whole project and to the second item, we refer to the first paper \cite{DumLisQia21}. It will not come as a surprise that this article is mostly concerned with the so-called crossing estimates for the double random current model. The importance of precise crossing estimates became first evident in the study of Bernoulli percolation at the end of the seventies \cite{Rus78,SeyWel78}. In this context, the analysis of crossing estimates is  known under the coined name of Russo-Seymour-Welsh (RSW) theory. The RSW theory was developed extensively in the last ten years (see \cite{DumTas16} and references therein) for dependent percolation models, and this paper is another addition to the literature on the subject.
Compared to existing RSW results, the present framework presents two new interesting features: the model does not satisfy
\begin{itemize}
\item the FKG inequality;
\item uniform lower bounds on probabilities of crossing from boundary to boundary in fractal domains. This comes from the fact that the scaling limit is related to the Conformal Loop Ensemble CLE(4) whose loops are known not to touch the boundary (see \cite{DumLisQia21}). Let us mention that this is similar to what is expected for the critical random cluster model with cluster-weight $q=4$. 
\end{itemize}
 Studying these crossing estimates therefore requires new tools related to discrete harmonic measures and properties of the double random current model.

\subsection{Definition of the model}
A finite graph will be denoted by $G=(V,E)$, with vertex-set $V$ and edge-set $E$. We will often consider $G$ to be a subset of the square lattice $\mathbb Z^2$ with the vertex-set consisting of points $x=(x_1,x_2)$ with $x_1,x_2\in \mathbb Z$, and the edge-set consisting of unordered pairs $\{x,y\}\subset\mathbb Z^2$ with $\|x-y\|_1=1$. For a subgraph $G=(V,E)$  of $\mathbb Z^2$, let $\partial G$ be the set of $x\in V$ such that there exists an edge $\{x,y\}$ of the square lattice that does not belong to $E$. A {\em domain} $\Omega$ is a graph $G$ whose boundary $\partial G$ is a self-avoiding polygon of $\mathbb Z^2$.

 For two integers $n\le N$, set $\Lambda_n:=[-n,n]^2$ and ${\rm Ann}(n,N):=\Lambda_N\setminus\Lambda_{n-1}$. 
 We also write $\Lambda_n(x)$ and ${\rm Ann}(x,n,N)$ for the translates by $x$ of $\Lambda_n$ and ${\rm Ann}(n,N)$.
 
 In some (rare) occasions, we will also refer to the dual graph of a graph $G\subset \mathbb Z^2$. The dual graph $(\mathbb Z^2)^*$ of $\mathbb Z^2$ is $(\tfrac12,\tfrac12)+\mathbb Z^2$. For each edge $e$ of $\mathbb Z^2$, we write $e^*$ for the unique edge of $(\mathbb Z^2)^*$ intersecting it in its middle. The dual graph $G^*=(V^*,E^*)$ of $G=(V,E)$ is defined as follows: $E^*:=\{e^*:e\in E\}$ and $V^*$ is the set of endpoints of the vertices in $E^*$.
 
 \paragraph{Definition of the Ising model}

 Consider the Ising model with free boundary conditions on $G$ defined as follows. For spin configurations $\sigma \in\{-1,+1\}^V$ (the variable $\sigma_x$ is called the (Ising) {\em spin} at $x$), introduce the nearest-neighbor ferromagnetic Ising Hamiltonian with {\em free boundary conditions}
\[
H_G(\sigma):=-\sum_{\{x,y\}\in E}\sigma_x\sigma_y,
\]
and the Gibbs measure $\langle\cdot\rangle_{G,\beta}$ on $G$ given by 
\[
\langle X\rangle_{G,\beta}:=\frac{1}{Z_{G,\beta}^{\rm Ising}}\sum_{\sigma\in\{\pm1\}^V}X(\sigma)\exp[-\beta H_G(\sigma)],\qquad\forall X:\{-1,+1\}^V\longrightarrow \mathbb C,
\]
where $Z_{G,\beta}^{\rm Ising}:=\sum_{\sigma\in\{\pm1\}^V}\exp[-\beta H_G(\sigma)]$ is the {\em partition function} of the model.

In this paper, $\beta$ is always fixed to be equal to the critical inverse temperature 
\[
\beta_c:=\tfrac12\log(\sqrt 2+1)
\] 
of the Ising model on the square lattice, and we drop it from the notation. 

%Below, we write $A\stackrel+\longleftrightarrow B$ if there exists $v_0,\dots,v_k$ with $v_0\in A $, $v_k\in B$, with $\{v_i,v_{i+1}\}\in E$ and $\tau_{v_i}=+$ for every $0\le i\le k$. 
%We call a {\em $+$-cluster} (resp.~$-$-cluster) of $\tau$ a connected component of the graph induced by vertices with $+$ (resp.~$-$) spin.

\paragraph{Definition of the random current and the double random current} A {\em current} $\n$ on  $G=(V,E)$ is an integer-valued function defined on the edges $E$.  
The current's set of {\em sources} is defined as the set
\begin{equation}
\partial\n \, := \, \{ x\in V :\, \sum_{y\in V:\{x,y\}\in E} \n_{\{x,y\}}\text{ is odd} \}.
\end{equation}
Let $\sets^B$ be the set of all currents with sources $B$. For a current $\n$ on $G$, we define the {\em critical weight} 
\begin{equation} 
w_{G}(\n) :=\prod_{\{x,y\}\in E}\frac{\beta_c ^{\n_{\{x,y\}}}}{\n_{\{ x,y\}}!}\,. 
\end{equation}
Currents are useful because of the following relation between their weighted sums and Ising spin correlations: if one defines, for a set $B\subset V$, the quantity ${Z}^B(G):=\sum_{\n\in \sets^B}w_{G}(\n)$, and one writes $\sigma_B= \prod_{x\in B}$, then
\begin{equation}
\langle \sigma_B\rangle_G=\frac{Z^B(G)}{Z^\emptyset(G)}.
\end{equation}
We introduce a probability measure on currents with sources $B\subset V$ (with $|B|$ even) by 
\begin{align}\label{eq:rc}
\mathbf P^B_G(\set) := \frac {w_{G}(\n)}{{Z}^B(G)} \qquad \text{for all }\set \in \sets^B.
\end{align}
The random variable $\n$ is called the {\em critical random current with free boundary conditions and sources $B$}.
When $B=\emptyset$, we speak of {\em sourceless} currents.
We will also write $\mathbf P^{A,B}_{G,H}$ for the law of $(\n_1,\n_2)$, where $\n_1$ and $\n_2$ are two independent currents drawn according to $\mathbf P^A_G$ and $\mathbf P^B_H$ respectively. 
Under this law, the sum $\n_1+\n_2$ is called the \emph{critical double random current}.
Finally we note that random currents can be defined in the infinite volume, as explained in \cite{AizDumSid15}. In this case we denote the measure by $\mathbf P^{A,B}_{\mathbb Z^2,\mathbb Z^2}$.

Let us mention that the double random current model has proved to be a very powerful tool in the study of the Ising model. Its applications range from  correlation inequalities \cite{GHS}, exponential decay in the off-critical regime \cite{AizBarFer87,DumTas15,DumGosRao18}, classification of Gibbs states \cite{Rao17}, etc. Even in two dimensions, where a number of other tools are available, new developments have been made possible via the use of this representation \cite{DumLis,LisT,ADTW,LupWer}. For a more exhaustive account of random currents, we refer the reader to \cite{Dum16}.

We write $A\stackrel{\n}\longleftrightarrow B$ if there exists $v_0,\dots,v_k$ with $v_0\in A $, $v_k\in B$, with $\{v_i,v_{i+1}\}\in E$ and $\n_{\{v_i,v_{i+1}\}}>0$ for every $0\le i<k$. We call a {\em cluster} of $\n$ a connected component of the graph with vertex-set $V$ and edge-set $E(\n):=\{e\in E:\n_e>0\}$. 
We will say that a subgraph of $H\subset G$ is a {\em $H$-cluster} if it is a cluster of $\n$ when {\em restricted to the edges in} $H$. Note that the $H$-clusters of $\n$ are not necessarily equal to the restrictions of the clusters of $\n$ to $H$ (as several $H$-clusters may be connected to each other outside of $H$ and therefore belong to the restriction to $H$ of the same cluster in $\n$).

\subsection{Main results}

In order to implement the scheme described in the first of our papers \cite{DumLisQia21}, several properties of the model need to be derived.
We start by mentioning the Aizenman-Burchard criterion, which is in fact fairly straightforward to obtain.
For an integer $k\ge1$, let $A_{2k}(r,R)$ be the event\footnote{The subscript $2k$ instead of $k$ is meant to illustrate that there are $k$ ${\rm Ann}(r,R)$-clusters from inside to outside separated by $k$ ``dual'' clusters.} that there are $k$ distinct  ${\rm Ann}(r,R)$-clusters in $\n_1+\n_2$ that are crossing ${\rm Ann}(r,R)$. 

\begin{theorem}[Aizenman-Burchard criterion for the double random current model]
\label{prop:tight number crossings}
There exist sequences $(C_k)_{k\geq1}$ and $(\lambda_k)_{k\geq 1}$, with the latter tending to infinity as $k\to \infty$, such that  for every domain $\Omega$, every $k\ge1$ and all $r,R$ with $1\le r\le R$,

\begin{align}
\label{eq:tight number XOR}{\bf P}_{\Omega,\Omega}^{\emptyset,\emptyset}[A_{2k}(r,R)]&\le (C_k\tfrac rR)^{\lambda_k}.\end{align}
\end{theorem}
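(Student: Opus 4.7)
I would follow the classical Aizenman--Burchard strategy, reducing $A_{2k}(r,R)$ to a polychromatic $2k$-arm event and then bounding the latter by comparison with a reference model with known arm-exponent decay.

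First, the event $A_{2k}(r,R)$ is polychromatic essentially by definition. On any Jordan curve encircling $\Lambda_{r-1}$ inside ${\rm Ann}(r,R)$, the $k$ distinct crossing ${\rm Ann}(r,R)$-clusters of $\n_1+\n_2$ realize $k$ disjoint primal arcs; by planarity these must be separated by $k$ arcs of vertices belonging to none of the crossing clusters. Hence $A_{2k}(r,R)$ is contained in the event that ${\rm Ann}(r,R)$ supports $2k$ alternating arms: $k$ primal arms (connections inside crossing clusters) and $k$ dual arms (arcs without $\n_1+\n_2$-connections to neighboring primal arms).

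Second, I would couple $(\n_1,\n_2)$ with the critical FK--Ising configuration $\omega$ on $\Omega$ through the random current / FK correspondence (see e.g.~\cite{Dum16}) and use it to transfer the polychromatic $2k$-arm event to $\omega$ in a comparable annulus (possibly shrunk or thickened by a constant factor absorbed into $C_k$). The critical FK--Ising model enjoys full RSW (Chelkak--Duminil-Copin--Hongler--Kemppainen--Smirnov and subsequent works); in particular its polychromatic $2k$-arm probability in any annulus decays like $(C_k r/R)^{\lambda_k}$ with $\lambda_k\to\infty$, uniformly in the ambient domain. Combined with the reduction above, this yields the claimed bound.

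\textbf{Main obstacle.} The delicate step is the coupling/transfer above. Because $\n_1+\n_2$ does not satisfy FKG and is neither stochastically dominated by nor dominates an FKG percolation, arm events cannot be moved between models by monotonicity alone. One must use the concrete structure of the random current / FK coupling (notably the switching lemma and the Poissonization of the weights $\beta_c^{n}/n!$) to argue that the primal and dual arms in $\n_1+\n_2$ both produce corresponding arms of $\omega$ at the same scale. An alternative, entirely internal to $\n_1+\n_2$, is to set up an iterative Aizenman--Burchard scheme at dyadic scales, gaining a uniform multiplicative factor at each scale per additional pair of arms; this requires a quasi-multiplicativity statement for crossings of $\n_1+\n_2$ which, in the absence of FKG, relies on the switching lemma to glue arms across nested annuli. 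Either route, the arm-separation input in Step 2 is where the model-specific work of the paper is concentrated.
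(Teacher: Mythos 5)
Your primary route---transferring a polychromatic $2k$-arm event from $\n_1+\n_2$ to critical FK--Ising via the random current / FK coupling---does not work, for a concrete reason: the coupling of Proposition~\ref{prop:coupling random cluster} is strictly one-sided. It sets $\omega_e=1$ whenever $\n_e>0$ \emph{or} an independent Bernoulli variable $\alpha_e$ fires, so $\omega$ has \emph{more} open edges than the support of $\n$. Primal arms of $\n_1+\n_2$ would indeed produce primal arms of $\omega$, but dual arms (regions where $\n_1+\n_2=0$) do not transfer, since $\omega$ may be open there. The polychromatic arm event is therefore lost in the coupling. This is not a fixable technicality: the two models have genuinely different scaling limits (CLE(4) for double random current loops versus CLE(16/3) for FK--Ising boundaries, as stressed in the introduction), so no monotone transfer of arm events between them can be expected, and in particular comparison with FK--Ising arm exponents is not the route taken here.

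Your ``alternative, entirely internal'' route---iterative Aizenman--Burchard with a quasi-multiplicativity input coming from the switching lemma---points in the right general direction, but it omits the actual key idea of the paper. The central obstruction is that exploring crossing clusters of $\n_1+\n_2$ leaves behind \emph{sources} on the boundary of the explored region, so one cannot directly iterate a sourceless crossing estimate. The paper circumvents this by first controlling the odd part $\eta(\n_i)$ of each current, which by Kramers--Wannier duality (Proposition~\ref{prop:coupling Kramers Wannier}) is the low-temperature expansion of a \emph{dual Ising model}; Lemma~\ref{lem:AB odd current} then bounds the number of $\eta$-clusters crossing an annulus using Ising RSW on the dual graph, with no switching lemma and no issue of sources. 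Only after conditioning on these odd-cluster components---chosen at two intermediate scales $s,S$ with $R/S\approx S/s\approx s/r$---does the paper obtain an unexplored middle annulus where the remaining currents are genuinely sourceless, at which point Corollary~\ref{cor:upper bound crossing} and the monotonicity in coupling constants (Lemma~\ref{lem:monotonicity}) apply directly. This Kramers--Wannier decomposition of the current into odd part plus sourceless remainder is what replaces the missing FKG/quasi-multiplicativity, and it does not appear in your proposal.
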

Here, we do not a priori assume that $\Omega$ contains $\Lambda_R$.

Contrarily to previously known results about other dependent percolation models, crossing probabilities for the critical double random current do not remain bounded away from zero uniformly in the domain $\Omega$. This is a feature which makes the following theorem very interesting (it may somehow look surprising to be able to derive it without referring to the scaling limit of the double random current).
For a set $\Omega$, let $\partial_r\Omega$ be the set of vertices in $\Omega$ that are within a distance $r$ from $\partial\Omega$. 
\begin{theorem}[Connection probabilities close to the boundary for double random current]\label{thm:crossing free}
There exists $c>0$ such that for all $r,R$ with $1\le r\le R$ and every $R$-centred domain $\Omega$, 
$$\frac{c}{\log(R/r)}\le{\bf P}_{\Omega,\Omega}^{\emptyset,\emptyset}[\Lambda_R\stackrel{\n_1+\n_2}\longleftrightarrow\partial_r\Omega]\le \epsilon(\tfrac rR),$$
where $x\mapsto \epsilon(x)$ is an explicit function tending to 0 as $x$ tends to 0. 
\end{theorem}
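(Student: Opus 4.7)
The plan is to treat the two bounds by rather different arguments. For the \textbf{upper bound}, the idea is to reduce the connection event to a sum of two-point connection probabilities: if $\Lambda_R\stackrel{\n_1+\n_2}\longleftrightarrow\partial_r\Omega$, then there exist $x\in\partial\Lambda_R$ and $y\in\partial_r\Omega$ connected in $\n_1+\n_2$, so the union bound gives
$$\mathbf P_{\Omega,\Omega}^{\emptyset,\emptyset}[\Lambda_R\stackrel{\n_1+\n_2}\longleftrightarrow\partial_r\Omega]\le\sum_{x\in\partial\Lambda_R}\sum_{y\in\partial_r\Omega}\mathbf P_{\Omega,\Omega}^{\emptyset,\emptyset}[x\stackrel{\n_1+\n_2}\longleftrightarrow y].$$
The switching lemma for sourceless double currents identifies each summand with $\langle\sigma_x\sigma_y\rangle_\Omega^2$. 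Because $\Omega$ carries free boundary conditions and $y$ sits within distance $r$ of $\partial\Omega$, one controls $\langle\sigma_x\sigma_y\rangle_\Omega$ via the critical Ising boundary magnetization exponent combined with the bulk spin exponent; this gives polynomial decay in $r/R$ of each summand, with plenty of room to absorb the $O(|\partial\Lambda_R|\cdot|\partial_r\Omega|)$ pairs and obtain an explicit $\epsilon(r/R)\to 0$.

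The \textbf{lower bound} $c/\log(R/r)$ is the main content of the theorem. As stressed in Section~\ref{sec:motivation}, the absence of FKG and of uniform domain-to-boundary crossing estimates rules out direct RSW schemes. The rate $1/\log(R/r)$ is the signature of two-dimensional discrete harmonic measure, suggesting the connection probability should emerge as a harmonic-measure-type quantity. The plan is a Paley--Zygmund argument applied to
$$N:=\sum_{y\in\partial_r\Omega}\mathbf 1\{y\stackrel{\n_1+\n_2}\longleftrightarrow\Lambda_R\},$$
so that $\mathbf P[N\ge 1]\ge\mathbf E[N]^2/\mathbf E[N^2]$. The first moment reduces by switching to a sum of squared Ising two-point functions, and combined with the boundary estimates from the previous step it is expected to be comparable to $\log(R/r)$ times the harmonic measure of $\partial_r\Omega$ viewed from the bulk. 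The second moment reduces, by two applications of switching, to a sum involving four-point Ising functions, to be bounded by products of two-point functions via GHS and Simon--Lieb type inequalities; if these bounds can be established, the ratio $\mathbf E[N]^2/\mathbf E[N^2]$ produces the required $c/\log(R/r)$.

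The main obstacle will be the variance estimate itself. Without FKG one cannot decouple distant events by positive association, so the needed independence must be extracted from the switching lemma together with the product structure of $\mathbf P_{\Omega,\Omega}^{\emptyset,\emptyset}$. A further delicacy is that $\Omega$ is an arbitrary domain with possibly wild boundary, so boundary-hugging contributions to the second moment must be controlled uniformly across the $\log(R/r)$ dyadic scales between $r$ and $R$; this is precisely where the new discrete harmonic-measure tools hinted at in the introduction should come into play. Should the four-point route prove too intricate, a complementary strategy is to use the height function from \cite{DumLisQia21} to express $\{\Lambda_R\stackrel{\n_1+\n_2}\longleftrightarrow\partial_r\Omega\}$ as a level-set event and to extract the logarithmic rate from its approximately Gaussian behaviour at dyadic scales, at the price of proving tight pre-scaling-limit inputs on the height function.
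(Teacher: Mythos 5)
Your upper bound approach is incorrect, and the gap is fundamental rather than technical. The union bound over vertex pairs $(x,y)$ is far too lossy: the paper's Lemma~\ref{lem:expected number} shows that the \emph{expected number} of $r$-boxes near $\partial\Omega$ that are connected to $\Lambda_R$ in $\n_1+\n_2$ is $\Theta(1)$ uniformly in $r/R$, so the expected number of connected vertex pairs is at least of order $1$, and in fact diverges as $R/r\to\infty$. There is also an a priori obstruction to any polynomial bound of the form you sketch: the theorem's own lower bound is $c/\log(R/r)$, which exceeds $(r/R)^\alpha$ for every $\alpha>0$ once $R/r$ is large, so no argument can give a polynomial $\epsilon$. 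The upper bound is actually the harder half of the theorem, and the paper proves it by a conditional-abundance argument: one shows that conditioned on a single near-boundary box being connected to $\Lambda_R$, there are with high probability many other such boxes, so the unconditional probability of seeing even one must be small. This is organized around dyadic scales $L_i=r\,8^{8^i}$ around a fixed box $B$, and in each of roughly $\log\log(R/r)$ annuli a unique crossing cluster intersects another box of $\partial_r^\square\Omega$ (Lemmas~\ref{lem:1a} and~\ref{lem:1b}), which is then glued together via the mixing property (Proposition~\ref{lem:mixing}) and the switching lemma on the merged graph $\Omega^{\bullet\bullet}$. None of this is captured by the two-point union bound, and the resulting rate is $\epsilon(x)\sim 1/\log\log(1/x)$, not polynomial.

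Your lower bound plan is in the right family: a second moment argument is indeed how the paper proceeds, and identifying $1/\log(R/r)$ as a harmonic-measure signature is the right intuition. But several of the key ingredients are missing or misattributed. First, the switching lemma requires point sources, so to express $\{\cdot\leftrightarrow\Lambda_R\}$ as a squared spin-spin correlation you must first use the merging-vertices trick together with the monotonicity in coupling constants (Lemma~\ref{lem:monotonicity}) to pass to the graph where $\Lambda_R$ and a small sub-box are collapsed to single vertices. Second, summing over all $y\in\partial_r\Omega$ overcounts; the paper sums over $r$-boxes $B\in\partial_r^\square\Omega$ and, crucially, weights them by the auxiliary events $\mathcal A(B)$ (a circuit in $\n_1+\n_2$ around $B$) and $\mathcal E_\ep(B)$ (the conditional connection probability given the outside exceeds $\ep$); these are precisely what allows one to pass from the graph with merged vertices back to a connection event and to make switching usable in the second-moment bound. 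Third, the second moment is not controlled by GHS or Simon--Lieb but by the tree-graph type inequality \cite[Prop.~A.3]{AizDum19}, which bounds $\mathbf P^{\{0\}\Delta\{x\},\emptyset}_{G,G}[u,v\leftrightarrow 0]$ by products of two-point functions. Finally, the two-point functions are estimated via the discrete harmonic partition functions $Z_\Omega[\cdot,\cdot]$ of \cite{CheDumHon16} (Corollary~\ref{cor:bound harmonic measure}), and a random walk computation gives $\mathbf E[N_\ep^\circ]\gtrsim 1$ and $\mathbf E[(N_\ep^\circ)^2]\lesssim\log(R/r)$ — note the first moment is $\Theta(1)$, not $\Theta(\log(R/r))$ as you suggest — so Cauchy--Schwarz yields $c/\log(R/r)$. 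So the plan is right in outline, but you should replace the vertex-counting, GHS/Simon--Lieb, and raw harmonic-measure heuristics with the merging-vertices trick, the auxiliary events $\mathcal A,\mathcal E_\ep$, the tree-graph inequality, and the $Z_\Omega$-estimates.
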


We note that the assumption that $\Omega$ does not contain $\Lambda_{3R}$ is only necessary for the lower bound. The proof also gives the same lower bound for the probability that $\Lambda_R$ is connected to $\partial_r\Omega\cap \Lambda_{4R}$. 

We predict that the upper bound should be true for $\epsilon(x):=C/\log(1/x)$ 
but we do not need such a precise estimate here. The result is coherent with the fact that the scaling limit of the outer boundary of large clusters in $\n_1+\n_2$ is given by CLE(4) (see \cite{DumLisQia21}), which is known to be made of simple loops that do not intersect the boundary of the domain. Interestingly, to derive the convergence to the continuum object it will be necessary to first prove this result at the discrete level.

The lower bound is to be compared with recent estimates \cite{DumSidTas17,DumManTas20} obtained for another dependent percolation model, namely the critical random cluster model with cluster-weight $q\in[1,4)$. There, it was proved that the crossing probability is bounded from below by a constant $c=c(q)>0$ uniformly in $r/R$. On the other hand, we expect that the behaviour of the critical random cluster model with cluster weight $q=4$ is comparable to the behaviour presented here: large clusters do not come close to the boundary of domains when the boundary conditions are ``free''.

We conclude this paper with a series of results that are both important as intermediary steps in the proof of our main results, and also play an essential role in \cite{DumLisQia21}.

The first one deals with the possibility of two large clusters of the double random current coming close to each other. More formally, let
\begin{align*}
A_4^\square(r,R)&:=\{\text{there exist two $\Lambda_R$-clusters crossing $\mathrm{Ann}(r,R)$}\}\end{align*}
and let $A_4^\square(x,r,R)$ be the translate of $A_4^\square(r,R)$ by $x$.

\begin{theorem}\label{thm:absence closed pivotal expectation white}
There exists $C>0$ such that for all $r,R$ such that $1\le r\le R$,
\begin{align}
\label{eq:closed pivotal0}{\bf P}_{\mathbb Z^2,\mathbb Z^2}^{\emptyset,\emptyset}[A_4^\square(r,R)]&\le C(r/R)^2.
\end{align}
Furthermore, for every $\ep>0$, there exists $\eta=\eta(\ep)>0$ such that for all $r,R$ such that $1\le r\le \eta R$ and every domain $\Omega\supset\Lambda_{2R}$,
\begin{align}
\label{eq:closed pivotal existence0}{\bf P}_{\Omega,\Omega}^{\emptyset,\emptyset}[\exists x\in \Lambda_R:A_4^\square(x,r,R)]&\le \ep.
\end{align}
\end{theorem}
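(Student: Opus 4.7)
My plan is to first establish the bound~(\ref{eq:closed pivotal0}) on the full plane $\mathbb{Z}^2$, and then deduce~(\ref{eq:closed pivotal existence0}) via a multi-scale argument that exploits the correlation between nearby occurrences of the four-arm event.

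For~(\ref{eq:closed pivotal0}), the idea is that if $A_4^\square(r,R)$ occurs, I can select two vertices $u_1,u_2\in\partial\Lambda_r$ belonging to the two distinct $\Lambda_R$-clusters that cross $\mathrm{Ann}(r,R)$, with $u_1\not\leftrightarrow u_2$ inside $\Lambda_R$ through $\n_1+\n_2$. A union bound over the $O(r^2)$ such pairs reduces matters to controlling, for each fixed $(u_1,u_2)$, the probability that $u_1$ and $u_2$ are both connected to $\partial\Lambda_R$ in \emph{disjoint} $\n_1+\n_2$-clusters of $\Lambda_R$. I plan to apply the switching lemma (in the form of \cite{AizDumSid15}) once to each of $\n_1$ and $\n_2$, after further summing over the putative endpoints $v_1,v_2\in\partial\Lambda_R$ of the two arms, which rewrites the probability as
\[
\sum_{v_1,v_2}\langle\sigma_{u_1}\sigma_{v_1}\rangle_{\Lambda_R}\langle\sigma_{u_2}\sigma_{v_2}\rangle_{\Lambda_R}\cdot\mathbf P^{\{u_1,v_1\},\{u_2,v_2\}}_{\Lambda_R,\Lambda_R}[u_1\not\leftrightarrow u_2\text{ in }\n_1+\n_2].
\]
The target exponent $2$ will emerge because it is precisely the scaling dimension of the Ising energy field (the product $\sigma\sigma$), and I expect to extract it using sharp Ising two- and four-point function bounds on $\mathbb{Z}^2$ combined with an RSW-type estimate for the critical FK-Ising model.

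For~(\ref{eq:closed pivotal existence0}), a union bound over $x\in\Lambda_R$ using~(\ref{eq:closed pivotal0}) and coarse-graining by $r$-boxes returns only a constant upper bound, so an additional ingredient is needed. The key point is that if $A_4^\square(x,r,R)$ holds for some $x\in\Lambda_R$, then two distinct macroscopic clusters in $\Lambda_{2R}$, each of diameter at least $R-r$, must come within distance~$r$ of each other. By Theorem~\ref{prop:tight number crossings} applied with $k$ large, the number of such macroscopic clusters in $\Lambda_{2R}$ has fast-decaying tails, so one may restrict to the event that there are at most $N_0=N_0(\epsilon)$ of them. It then suffices to bound the probability that two of these fixed $O(N_0^2)$ clusters approach each other to within distance~$r$; summing~(\ref{eq:closed pivotal0}) over pairs and over the possible locations of the close encounter gives a bound which can be made arbitrarily small by choosing $r/R$ small enough. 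The passage from $\Omega$ to $\mathbb{Z}^2$ at this step relies on the spatial Markov property of the double random current, the assumption $\Omega\supset\Lambda_{2R}$ ensuring that events localized in $\Lambda_R(x)\subset\Lambda_{2R}$ only feel a controlled influence of the boundary conditions outside.

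The hardest step will be the derivation of the sharp exponent~$2$ in~(\ref{eq:closed pivotal0}). Since the double random current does not satisfy the FKG inequality and its scaling limit is related to $\mathrm{CLE}(4)$ (whose loops do not touch the boundary), the standard RSW techniques for Bernoulli or FK percolation do not apply directly; the argument has to genuinely use the Ising-specific identities provided by the switching lemma, together with the integrability of the 2D critical Ising model. Once~(\ref{eq:closed pivotal0}) is in place, the multi-scale deduction of~(\ref{eq:closed pivotal existence0}) is comparatively routine, modulo the careful accounting of boundary effects coming from the generality of~$\Omega$.
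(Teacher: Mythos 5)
Your plan for \eqref{eq:closed pivotal0} differs from the paper's, but the more serious issue is a genuine gap in your argument for \eqref{eq:closed pivotal existence0}.

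On \eqref{eq:closed pivotal0}: the paper does not run a union bound over pairs of vertices on $\partial\Lambda_r$ with a switching-lemma rewriting. It instead bounds the \emph{expected number} of $x\in r\mathbb Z^2\cap\Lambda_R$ for which $A_4^\square(x,r,R)$ occurs, by conditioning on the first $k$ crossing $\Lambda_{2R}$-clusters $\mathbf C_{\le n_k}$ and, in each connected component $\mathbf D_\ell^{(k)}$ of the complement, invoking Corollary~\ref{cor:upper bound crossing} to bound the conditional probability of a further crossing by a random-walk partition function $Z_{\mathbf D^{(k)}_\ell}[\cdot,\cdot]$; summing over $\ell$ and $k$ gives an $O(1)$ bound on the expected count, whence $(r/R)^2$ for a single $x$. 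This is a harmonic-measure argument and does not rely on sharp Ising two- or four-point asymptotics, nor on a uniform RSW lower bound (which, as the introduction emphasizes, is precisely what fails for this model). Your union bound over $O(r^2)$ pairs plus the displayed sum over $(v_1,v_2)$ is not a direct consequence of the switching lemma as stated: the event that the cluster of $u_1$ reaches $\partial\Lambda_R$ is not a disjoint union over choices of $v_1\in\partial\Lambda_R$, so summing over $v_1$ overcounts, and the disjointness condition $u_1\not\leftrightarrow u_2$ is not of the form to which Lemma~\ref{lem:switching lemma} applies directly. Your exponent heuristic (energy dimension $=1$ giving $(r/R)^2$) is consistent with the answer, but the route you sketch would need substantial repair. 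The paper's merging-vertices/random-walk route is simpler and avoids integrability entirely.

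On \eqref{eq:closed pivotal existence0}, there is a real gap. You propose to restrict to at most $N_0(\ep)$ macroscopic clusters (via Theorem~\ref{prop:tight number crossings}) and then sum the bound~\eqref{eq:closed pivotal0} ``over pairs and over the possible locations of the close encounter.'' But the number of candidate locations is $\Theta((R/r)^2)$, and each contributes $\Theta((r/R)^2)$, so this sum is $O(1)$, not $o(1)$. The first-moment estimate already gives $\mathbf E[\mathbf N]=O(1)$ for the count $\mathbf N$ of pivotal boxes; as the paper explicitly remarks after the theorem, for the random cluster model with $q\in[1,4)$ the analogous expectation is in fact much larger \emph{and} a pivotal box exists with positive probability, so a bound $\mathbf E[\mathbf N]=O(1)$ is not enough to conclude that $\mathbf P[\mathbf N>0]\to0$. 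The missing ingredient is Theorem~\ref{thm:crossing free}: the paper introduces an intermediate scale $\rho=\lfloor\kappa R\rfloor$ and events $E(y,z)$, shows (after a Markov-inequality step disposing of the case of more than $1/(8\kappa)$ boxes where $A_4^\square(\cdot,2\rho,R/4)$ occurs) that $\exists x:A_4^\square(x,r,R)$ forces some $E(y,z)$ with $y,z\in\rho\mathbb Z^2\cap\Lambda_{2R}$, and then conditions on the cluster $\mathcal C(z,\rho)$ so that the residual event becomes a connection from $\Lambda_\rho(y)$ to a $2r$-neighborhood of the boundary of the complementary domain, whose probability tends to $0$ by Theorem~\ref{thm:crossing free}. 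Without this boundary-avoidance input (which is the genuinely new RSW-type content of the paper), your argument cannot push the probability below a constant. Finally, the passage from $\Omega$ to $\mathbb Z^2$ is not via a ``spatial Markov property'' of the double random current (the model does not have one in the naive sense); the paper uses the mixing property~\eqref{eq:independence in graph} of Proposition~\ref{lem:mixing}.
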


Equation \eqref{eq:closed pivotal0} implies that the expected number of $x\in r\mathbb Z^2\cap \Lambda_R$ such that $A_4^\square(x,r,R)$ occurs is $O(1)$. This is to be compared, for instance in the case of $A_4^\square$, with random cluster models with $1\le q<4$, for which the expected number of so-called pivotal boxes is polynomially large in $R/r$. In this case, it is also proved that with positive probability, there exists a pivotal box. Here, we see from \eqref{eq:closed pivotal existence0} that this is not true and that the probability of seeing a pivotal box is tending to 0.

The second theorem deals with another event of interest. For a current $\n$, let $\n^*$ be the set of dual edges $e^*$ with $\n_e=0$. For a dual path $\gamma=(e_1^*,e_2^*,\dots,e_k^*)$, call the $\n$-flux through $\gamma$ to be the sum of the $\n_{e_i}$. 
Call a {\em ${\rm Ann}(r,R)$-hole in $\n_1+\n_2$} a connected component of $(\n_1+\n_2)^*$ in ${\rm Ann}(r,R)^*\cap \Omega^*$ (note that it can be seen as a collection of faces). A ${\rm Ann}(r,R)$-hole is said to be {\em crossing} ${\rm Ann}(r,R)$ if it intersects $\partial\Lambda_r^*$ and $\partial\Lambda_R^*$.  Consider the event
\begin{align*}
A_4^\blacksquare(r,R)&:=\Big\{\begin{array}{c}\text{there exist two ${\rm Ann}(r,R)$-holes crossing ${\rm Ann}(r,R)$ and the}\\
\text{shortest dual path between them has even $(\n_1+\n_2)$-flux}\end{array}\Big\}\end{align*}
(see Fig.~\ref{fig:events_pivotal})
and its translate by $x$, denoted by $A_4^\blacksquare(x,r,R)$.

\begin{theorem}\label{thm:absence closed pivotal expectation black}
There exists $C>0$ such that for all $r,R$ such that $1\le r\le R$,
\begin{align}
\label{eq:open pivotal}{\bf P}_{\mathbb Z^2,\mathbb Z^2}^{\emptyset,\emptyset}[A_4^\blacksquare(r,R)]&\le C(r/R)^2.
\end{align}
Furthermore, for every $\ep>0$, there exists $\eta=\eta(\ep)>0$ such that for all $r,R$ such that $1\le r\le \eta R$ and every domain $\Omega\supset\Lambda_{2R}$,
\begin{align}
\label{eq:open pivotal existence}{\bf P}_{\Omega,\Omega}^{\emptyset,\emptyset}[\exists x\in \Lambda_R:A_4^\blacksquare(x,r,R)]&\le \ep.
\end{align}
\end{theorem}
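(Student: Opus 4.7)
The strategy is to mirror the proof of Theorem~\ref{thm:absence closed pivotal expectation white}, substituting primal connectivity in $\n_1+\n_2$ by dual holes equipped with the even-flux condition. The latter condition is the natural ``black'' analogue of primal connectivity under the self-duality of the critical Ising model: at $\beta_c$, even-flux paths between dual holes correspond, in the associated Ising contour picture, to the two dual regions sitting in the same spin phase, so the event $A_4^\blacksquare$ is really a dual four-arm event in disguise. Thus one expects a structurally identical proof once the translation from black to white is made.

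For the bound \eqref{eq:open pivotal} on $\Zb^2$, I would first use the switching lemma to rewrite $\Pf^{\emptyset,\emptyset}_{\Zb^2,\Zb^2}[A_4^\blacksquare(r,R)]$ as a sum over the endpoints $(u,v)$ of the shortest connecting dual path times an Ising two-point ratio $\langle \sigma_u\sigma_v\rangle^2$; the even-flux hypothesis is exactly what is needed for the switching to produce admissible (sourceless) currents. After the switching step, a BKR/Reimer-type decoupling analogous to the one used for $A_4^\square$ factorizes the remaining event into two disjoint dual arms (in zero-current edges) crossing $\mathrm{Ann}(r,R)$. Each such dual arm is controlled by a one-arm estimate of order $r/R$, obtained from a connection-probability argument in the spirit of Theorem~\ref{thm:crossing free} and its proof; multiplying the two arms, and summing the resulting Ising two-point bound over the $O(1)$ relevant scales of $(u,v)$, gives the announced $(r/R)^2$.

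For the existence estimate \eqref{eq:open pivotal existence} in a domain $\Omega \supset \Lambda_{2R}$, I would first transfer \eqref{eq:open pivotal} from $\Zb^2$ to $\Omega$ at the cost of a bounded multiplicative constant, using monotonicity of partition-function ratios (Griffiths-type inequalities) together with finite-energy comparison. A first-moment bound over an $r$-net in $\Lambda_R$ then yields
\[
\sum_{x\in r\Zb^2\cap\Lambda_R}\Pf^{\emptyset,\emptyset}_{\Omega,\Omega}[A_4^\blacksquare(x,r,R)] \le C,
\]
i.e.\ an $O(1)$ control that does not by itself give $\le\ep$. To upgrade to $\le\ep$ when $r/R\le\eta$, I would observe that simultaneous occurrence of $A_4^\blacksquare(x,r,R)$ for some $x$ forces many nested dual arms in annuli around $x$, and in particular an $A_{2k}$-type event in a suitable intermediate annulus; combined with the polynomial decay of Theorem~\ref{prop:tight number crossings} for large $k$, choosing $\eta$ small kills the $O(1)$ constant and delivers $\le \ep$.

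The main obstacle will be the first step. Random currents do not enjoy a clean Kramers--Wannier self-duality (unlike FK-Ising), so the passage from the black event $A_4^\blacksquare$ to two genuinely independent dual arms has to go through a careful source-switching manipulation, with the even-flux hypothesis playing the decisive role by providing the parity balance required by the switching lemma. Once this combinatorial identity is in place, the one-arm estimate and the iteration for the existence bound proceed in direct parallel with the ``white'' case of Theorem~\ref{thm:absence closed pivotal expectation white}.
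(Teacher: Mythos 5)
Your high-level intuition is right on two points: the even-flux condition does interact well with the switching principle, and the proof does ultimately reduce to the ``white'' Theorem~\ref{thm:absence closed pivotal expectation white}. But the concrete steps you propose for both bounds diverge from anything that can actually be made to work, and the key ideas of the paper's argument are absent.

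For \eqref{eq:open pivotal}, there is no BKR/Reimer-type decoupling available for random currents (the model does not even satisfy FKG), and the paper's proof of the $A_4^\square$ estimate does not use one either -- Lemma~\ref{prop:closed pivotal} is an exploration argument: one conditions on the $\Lambda_{2R}$-clusters $\mathbf C_{\le n_k}$ hitting $\partial\Lambda_{2R}$, bounds the conditional connection probability by a random-walk partition function via Corollary~\ref{cor:upper bound crossing}, and sums. For the black event, the actual route is structural: one writes $A_4^\blacksquare = A_4^\square\cup A_4^{\blacksquare\,\rm odd}\cup A_4^{\blacksquare\,\rm even}$, uses the switching principle to show $\mathbf P[A_4^{\blacksquare\,\rm even}]=\mathbf P[A_4^{\blacksquare\,\rm odd}]$ (not, as you suggest, to produce an Ising two-point ratio), and then, via a separation-of-arms decomposition with the events ${\rm Sep}_\delta(r_j)$ and Kramers--Wannier duality for the odd part $\eta(\n_i)$, reduces $A_4^{\blacksquare\,\rm even}$ at scale $r$ to $A_4^\square$ at the first well-separated scale $r_j$, paying an exponentially small factor for each scale skipped. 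There is also no ``one-arm estimate of order $r/R$'' in the sense you use; the crossing bounds available (Theorem~\ref{thm:crossing free}) are of order $1/\log(R/r)$ near the boundary, not polynomial.

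For \eqref{eq:open pivotal existence}, the upgrade you propose via Theorem~\ref{prop:tight number crossings} does not close the gap: having many $\n_1+\n_2$-crossing clusters in an annulus is a very different event from having many scales at which $A_4^\blacksquare$ occurs, and the polynomial tail of $A_{2k}$ cannot convert an $O(1)$ first moment into $\le\ep$. The paper's argument here is substantially more delicate. After reducing to $A_4^{\blacksquare\,\rm odd}$ by switching and passing to $\mathbb Z^2$ by mixing, it sets up a dichotomy: either the target probability is already small, or a uniform lower bound $(\mathcal H_\ep)$ on $\mathbf P[F_\delta(r_j)]$ holds at all scales (Lemmata~\ref{lem:claim 2}--\ref{lem:claim 3}). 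Under $(\mathcal H_\ep)$ one runs a multimap/energy-entropy comparison (Claim~1 in the proof): one explicitly reconstructs configurations in which the event $H_j$ occurs at any prescribed collection of scales $j_1<\dots<j_k$, using Corollary~\ref{cor:upper bound crossing close 1} and the crucial fact that the odd $\n_1$-flux forces the four reconstructed dual ``arms'' to be linked by primal connections, so that the construction cannot spuriously create extra four-arm scales. That parity-forcing mechanism -- precisely why $A_4^{\blacksquare\,\rm odd}$ rather than $A_4^\blacksquare$ is the right object to iterate -- is the heart of the proof and is not captured by your sketch.
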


At this stage, we want to highlight the fact that the condition on the $(\n_1+\n_2)$-flux is important as otherwise the bound is wrong for the probability of the existence of two holes coming close to each other.

\begin{figure}
\begin{center}
			\includegraphics[scale=1.00]{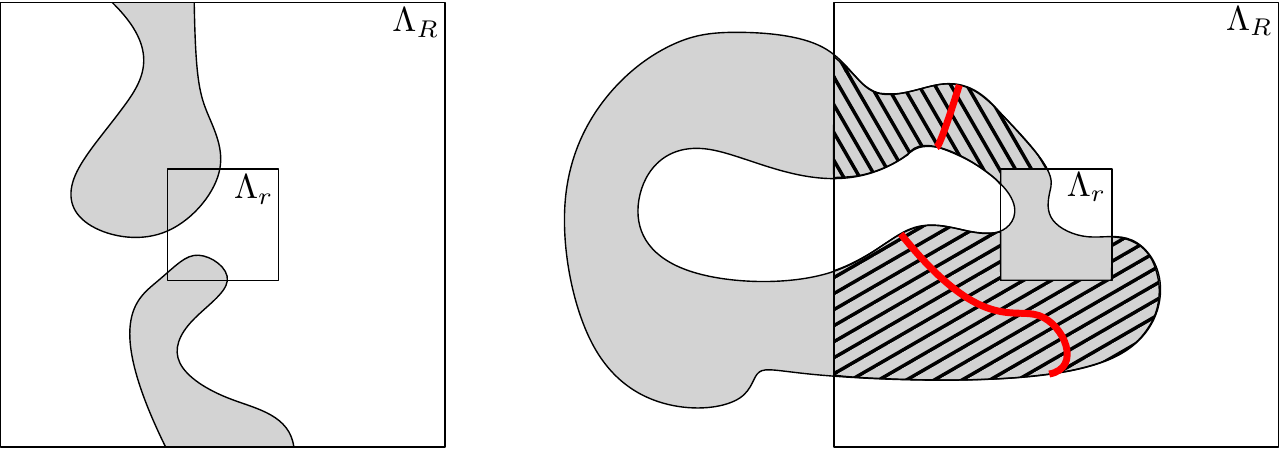}
		\end{center}
\caption{A depiction of the events $A_4^\square(r,R)$ and $A_4^\blacksquare(r,R)$. The $(\n_1+\n_2)-$flux across each one of the red paths must be even. }\label{fig:events_pivotal}
\end{figure}

\paragraph{Acknowledgements} The first author was supported by the NCCR SwissMap from the FNS. This project has received funding from the European Research Council (ERC) under the European Union's Horizon 2020 research and innovation programme (grant agreement No.~757296). The beginning of the project involved a number of people, including Gourab Ray, Benoit Laslier, and Matan Harel. We thank them for inspiring discussions. The project would not have been possible without the numerous contributions of Aran Raoufi. We are very thankful to him.

\paragraph{Organization} In Section~\ref{sec:background}, we review some background on the random cluster and random current models. The results  can be found in the literature and are briefly mentioned, without proofs.
In Section~\ref{sec:preliminaries hugo}, we present several new results that are of general interest. This includes a mixing property for random currents and some monotonicity properties of the double random current. While these results are not the most difficult results of this paper, we think that they may be of independent interest for the study of the planar Ising model. 
In Section~\ref{sec:crossing estimate}, we prove Theorem~\ref{thm:crossing free}. Section~\ref{sec:pivotal points} is devoted to the proofs of Theorems~\ref{thm:absence closed pivotal expectation white} and \ref{thm:absence closed pivotal expectation black}.
Finally, in Section~\ref{sec:tightness}, we prove Theorem~\ref{prop:tight number crossings}.

\begin{remark}
The previous version of this article also contained results on the critical XOR Ising model that are no longer relevant 
for the current scope of the article but that should still be useful for proving Wilson's conjecture on the XOR Ising model~\cite{Wilson}.
\end{remark}

\section{Background}\label{sec:background}

\subsection{The switching lemma for the double random current}

We will repeatedly use the following classical property of the double random current, see e.g.~\cite{Aiz82} or \cite{AizDumSid15} for the proof of the statement below. 
\begin{lemma}[Switching lemma]\label{lem:switching lemma}
Consider two graphs $H\subset G$ and two sets $A$ and $B$ in $G$ and $H$ respectively. For every functional $F$ from currents on $G$ with source-set $A\Delta B$ into $\mathbb C$, we have
\[
\mathbf E^{A,B}_{G,H}[F(\n_1+\n_2)]=\frac{\langle\sigma_A\sigma_B\rangle_G}
{\langle\sigma_B\rangle_G\langle\sigma_A\rangle_H}\mathbf E^{A\Delta B,\emptyset}_{G,H}[F(\n_1+\n_2)\mathbf 1_{(\n_1+\n_2)_{|H}\in \mathcal F_A}],
\]
where $\n\in \mathcal F_A$ is the event that every cluster of $\n$ intersects an even number of vertices in $A$ (it may be none).
\end{lemma}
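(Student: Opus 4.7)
The plan is to carry out the classical source-swap argument of Aizenman~\cite{Aiz82}, suitably adapted to the asymmetric two-graph setting $H\subset G$. First I would expand both sides using the definition~\eqref{eq:rc} of the random current measure. Setting $\m := \n_1+\n_2$ and using that $\n_2\equiv 0$ on $G\setminus H$ (so that $\n_1=\m$ there), the multiplicativity of the critical weight gives
\[
w_G(\n_1)\,w_H(\n_2) \;=\; w_G(\m)\prod_{e\in H}\binom{\m(e)}{\n_2(e)}.
\]
For each fixed $\m$, both expectations then reduce to $F(\m)\,w_G(\m)$ times a weighted count of subcurrents $\n_2\le \m|_H$ with prescribed boundary --- namely $B$ on the left, and $\emptyset$ on the right.

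The combinatorial heart of the argument is the classical cycle-space identity: for any multigraph $\mu$ and any source set $S$,
\[
\sum_{\n\le\mu,\;\partial\n=S}\prod_e\binom{\mu(e)}{\n(e)} \;=\; 2^{d(\mu)}\cdot \mathbf 1_{\{\text{every cluster of }\mu\text{ meets }S\text{ in even cardinality}\}},
\]
where $d(\mu)$ is the cycle-space dimension of the parallel-edge expansion of $\mu$ and depends only on $\mu$, not on $S$. Applying this with $\mu=\m|_H$ for $S=B$ and $S=\emptyset$, the two subcurrent counts both equal $2^{d(\m|_H)}$ whenever the non-trivial parity condition holds; a short parity computation that uses the global constraint $\partial\m=A\Delta B$ together with the contribution of edges in $G\setminus H$ to the parities of vertices of $V(H)$ rewrites this condition as $\mathbf 1_{\m|_H\in\mathcal F_A}$, matching the indicator in the statement.

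To conclude, I would sum the resulting pointwise equality against $F(\m)\,w_G(\m)$ and divide by the normalizations $Z^A(G)Z^B(H)$ and $Z^{A\Delta B}(G)Z^\emptyset(H)$. The prefactor $\langle\sigma_A\sigma_B\rangle_G/(\langle\sigma_B\rangle_G\langle\sigma_A\rangle_H)$ then emerges via the Ising identities $\langle\sigma_S\rangle_K=Z^S(K)/Z^\emptyset(K)$ and $\langle\sigma_A\sigma_B\rangle_G=\langle\sigma_{A\Delta B}\rangle_G$. The main obstacle I anticipate is bookkeeping --- in particular, carefully tracking the parity contributions at vertices of $A\setminus V(H)$, whose source requirement in $\n_1$ must be entirely met by edges of $G\setminus H$, and verifying the parity-swap between the ``natural'' indicator (even $B$-intersection on each cluster of $\m|_H$) and $\mathcal F_A$. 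All such details are classical and can be extracted from~\cite{Aiz82,AizDumSid15}.
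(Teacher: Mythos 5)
Your plan is the standard computation that the paper itself cites for this lemma (it gives no proof here; it refers to \cite{Aiz82,AizDumSid15}), and the two pillars you identify are correct: the factorization $w_G(\n_1)w_H(\n_2)=w_G(\m)\prod_{e\in E(H)}\binom{\m_e}{(\n_2)_e}$ with $\m=\n_1+\n_2$, and the cycle-space count of sub-multigraphs with prescribed boundary. So the overall route matches the one behind the cited references.

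The gap is the step you call ``a short parity computation.'' The cycle-space identity with $S=B$ produces, for fixed $\m$ with $\partial\m=A\Delta B$, the indicator $\mathbf 1_{\m|_H\in\mathcal F_B}$; you then assert this can be rewritten as $\mathbf 1_{\m|_H\in\mathcal F_A}$. That fails when $H\subsetneq G$: for a cluster $C$ of $\m|_H$,
\[
|C\cap(A\Delta B)|=|C\cap\partial\m|\equiv\sum_{v\in C}\deg_{\m}(v)\equiv\sum_{v\in C}\deg_{\m|_{E(G)\setminus E(H)}}(v)\pmod 2,
\]
because the $\m|_H$-edges incident to $C$ lie entirely inside $C$ and contribute evenly; the remaining term, the number of $\m$-edges of $E(G)\setminus E(H)$ leaving $C$, need not be even. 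Concretely, take $G$ the $4$-cycle $abcda$, $H$ the path $abc$, $A=\{a,c\}$, $B=\{a,b\}$, and $\m$ with $\m_{ab}=\m_{cd}=\m_{da}=1$, $\m_{bc}=0$: then $\partial\m=\{b,c\}=A\Delta B$ and a valid decomposition exists (take $\n_2$ equal to $1$ on $ab$ and $0$ elsewhere), yet the cluster $\{a,b\}$ of $\m|_H$ meets $B$ evenly and $A$ oddly, so $\mathcal F_B$ holds while $\mathcal F_A$ fails. Likewise the normalization in your last step yields the prefactor $Z^{A\Delta B}(G)Z^\emptyset(H)/(Z^A(G)Z^B(H))=\langle\sigma_A\sigma_B\rangle_G/(\langle\sigma_A\rangle_G\langle\sigma_B\rangle_H)$, not the printed $\langle\sigma_A\sigma_B\rangle_G/(\langle\sigma_B\rangle_G\langle\sigma_A\rangle_H)$ that you claim ``emerges.'' In other words, your calculation carried out honestly gives the lemma with $\mathcal F_B$ in the indicator and $\langle\sigma_A\rangle_G\langle\sigma_B\rangle_H$ in the denominator, which is the form in the references; the statement as printed has $A$ and $B$ interchanged in those two places. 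The discrepancy is harmless in this paper, since every application takes $A=B$ and $V(G)=V(H)$, where both forms coincide (there $|C\cap(A\Delta B)|$ is automatically even), but you should flag it rather than invoke a parity identity that does not hold.
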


We will sometimes refer to a generalization of the switching lemma, referred to as the switching principle, given in \cite[Lemma 2.1]{ADTW}. In order to state it, we introduce a representation in which  
 a current configuration $\n$ is presented  as a (multi-)graph $\mathcal N$ obtained by replacing each edge  $e$ by $\n_e$ edges, all linking the endpoints of $e$.   By default, we shall denote the multigraph corresponding to $\n$ or $\m$ by the appropriate calligraphic script symbol  
$\mathcal N$ or $\mathcal M$.  
We extend the above correspondence to the  weight and source notation, so that 
$\partial \mathcal N := \partial \mathcal \n$ and $w(  \mathcal N)  := w( \mathcal \n)$, and similarly for $\mathcal M$ in relation to $\m$.  

 The switching principle is stated as follows.

\begin{lemma}[Switching principle] \label{lem:switch}
 For any set $A$ of vertices on $G$, any multigraph $\mathcal M$ such that there exists  $\mathcal K \subset \mathcal M$ with  $ \partial \mathcal K = A$, and any function $ f $ of a current:
\begin{equation} \label{eq:switch} 
  \sum_{\substack {\mathcal N \subset \mathcal M \\  \partial \mathcal N = A } }  f(\mathcal{N})  \ = \  
       \sum_{\substack {\mathcal N \subset \mathcal M \\  \partial \mathcal N = \emptyset } }    f(\mathcal{N} \Delta \mathcal{K}). 
\end{equation} 
\end{lemma}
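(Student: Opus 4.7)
The plan is to interpret both sides of \eqref{eq:switch} as sums over the same finite set after a well-chosen bijection. The key object is the involution
\[
\Phi_{\mathcal K}\colon \mathcal N \longmapsto \mathcal N \,\Delta\, \mathcal K
\]
defined on the collection of sub-multigraphs of $\mathcal M$. Here the symmetric difference is taken with respect to the edges of the (labelled) multigraph $\mathcal M$, so $\mathcal N\subset \mathcal M$ implies $\mathcal N\,\Delta\,\mathcal K\subset \mathcal M$. The map $\Phi_{\mathcal K}$ is clearly an involution since $\mathcal K\,\Delta\,\mathcal K=\emptyset$ and symmetric difference is associative.

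The second step is to verify that $\partial(\mathcal N\,\Delta\,\mathcal K)=\partial\mathcal N\,\Delta\,\partial\mathcal K$. This follows from the fact that the parity of the degree of a vertex in a multigraph depends only on the set of edges modulo $2$: indeed, for each vertex $x$, the degree of $x$ in $\mathcal N\,\Delta\,\mathcal K$ equals the degree in $\mathcal N$ plus the degree in $\mathcal K$ minus twice the degree in $\mathcal N\cap\mathcal K$. Consequently, if $\partial\mathcal K=A$, then $\Phi_{\mathcal K}$ sends $\{\mathcal N\subset\mathcal M:\partial\mathcal N=A\}$ bijectively onto $\{\mathcal N\subset\mathcal M:\partial\mathcal N=\emptyset\}$, with inverse again given by $\Phi_{\mathcal K}$.

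With these two observations in hand, the identity \eqref{eq:switch} follows by the change of variables $\mathcal N' = \mathcal N\,\Delta\,\mathcal K$: the left-hand side is
\[
\sum_{\substack{\mathcal N\subset\mathcal M \\ \partial\mathcal N=A}} f(\mathcal N) \;=\; \sum_{\substack{\mathcal N'\subset\mathcal M \\ \partial\mathcal N'=\emptyset}} f(\mathcal N'\,\Delta\,\mathcal K),
\]
which is precisely the right-hand side. There is no real obstacle in the argument; the only point one has to be careful about is the bookkeeping for multigraphs, namely that subsets and the symmetric difference are to be understood at the level of the edge multi-set of $\mathcal M$ rather than at the level of currents on $G$. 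The hypothesis that $\mathcal K$ exists as a sub-multigraph of $\mathcal M$ with $\partial\mathcal K=A$ is exactly what guarantees both that the involution lands inside the set of sub-multigraphs of $\mathcal M$ and that the parities of sources are shifted in the desired way.
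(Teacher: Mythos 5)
Your proof is correct; the involution $\mathcal N \mapsto \mathcal N \,\Delta\, \mathcal K$ on labelled sub-multigraphs of $\mathcal M$, together with the parity identity $\partial(\mathcal N\,\Delta\,\mathcal K)=\partial\mathcal N\,\Delta\,\partial\mathcal K$, is exactly the standard argument. The paper itself does not reprove the lemma but cites \cite{ADTW} for it, and the proof there is the same change-of-variables you have written out; there is nothing to add.
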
 

This result will be used as follows. Consider a current $\m$ that contains a cluster separating two faces $u$ and $v$ in $\mathbb Z^2$. Then, if we consider the $\n$-flux between $u$ and $v$, i.e.~the $\n$-flux of $\n$ through a shortest dual path going from $u$ to $v$, we have that 
\[
\mathbf P_{\Omega,\Omega}^{\emptyset,\emptyset}[\n_1-\text{flux is odd}|\n_1+\n_2=\m]=\tfrac12.
\]
Indeed, when interpreting the currents in terms of multigraphs, we can rephrase the previous identity as follows: for every $\mathcal M$, if $f$ is the indicator function that $\mathcal N$ has an odd flux between the faces $u$ and $v$,
\begin{equation} \label{eq:switch2} 
  \sum_{\substack {\mathcal N \subset \mathcal M \\  \partial \mathcal N = \emptyset } }  f(\mathcal{N})  \ = \    \sum_{\substack {\mathcal N \subset \mathcal M \\  \partial \mathcal N = \emptyset } }  (1-f(\mathcal{N})).
\end{equation} 
Yet, if $\m$ disconnects $u$ from $v$, that means that there exists $\mathbf k\le \m$ such that the associated multigraph $\mathcal K$ is a simple loop going either around $u$ but not $v$, or the opposite. In particular, $\mathcal K$ has an odd flux between $u$ and $v$. We deduce that 
\[
1-f(\mathcal{N})=f(\mathcal N\Delta\mathcal K),
\]
and \eqref{eq:switch2} is a direct consequence of \eqref{eq:switch}.

\subsection{Definition and basic properties of the random cluster model}

We will use extensively the random cluster model and its basic properties that we now recall.
\paragraph{Definition} A percolation configuration $\omega$ on a graph $G=(V,E)$ is a function from $E$ into $\{0,1\}$. It is most of the time seen as a subgraph of ${G}$ with vertex-set $V$ and edge-set $\{e\in E:\omega_e=1\}$. The set of percolation configurations on $G$ is denoted by $\mathcal E({G})$. A boundary condition $\xi$ on ${G}$ is a partition of the vertices in ${G}$. Here and below, we do not require that $\xi$ is restricted to the actual boundary of the graph, as we will use these boundary conditions to merge vertices together later on in the paper. When the boundary condition is wiring vertices on $\partial{G}$ only, we speak of a {\em boundary condition on }$\partial{G}$.

We will also use the notation $A\stackrel{\omega}\longleftrightarrow B$ for the existence of a path $v_0,\dots,v_k$ with $v_0\in A$, $v_k\in B$ and $\{v_i,v_{i+1}\}\in E$ with $\omega_{\{v_i,v_{i+1}\}}=1$ for every $0\le i<k$. We call {\em cluster}  a connected component of the graph $\omega$.

The random cluster measure with edge-weight $p$, cluster-weight $q=2$, and boundary condition $\xi$ on ${G}$ will be denoted by 
\begin{align}\label{eq:rc}
\phi^\xi_{G,p}(\omega) := \frac 1{{Z}_{\rm RCM}^\xi(G)} 2^{k(\omega^\xi)}p^{\sum\omega_e}(1-p)^{|E|-\sum\omega_e} , \qquad \text{for all }\omega\in \mathcal E(G),
\end{align}
where $k(\omega^\xi)$ is the number of clusters in the configuration $\omega^\xi$ obtained by merging all the vertices that are wired together in $\xi$. When the boundary condition is made of singletons only, we refer to it as the {\em free boundary condition} and write $0$ instead of $\xi$. We will also consider {\rm wired} boundary conditions on part of $\partial G$, which corresponds to wiring all the vertices of this part into one element of the partition $\partial G$.

Below, we will always fix the parameter $p$ to be equal to the critical parameter $p_c:=\sqrt 2/(1+\sqrt 2)$ and we drop $p$ from the notation.

\paragraph{Spatial Markov property} Let us start by mentioning that the random cluster model satisfies the {\em spatial Markov property}: for every graph ${G}=(V,E)$ and $F\subset E$, let ${G}'$ be the graph induced by $F$ (i.e.~the graph with edge-set $F$ and vertices made of the endpoints of these edges). For every boundary condition $\xi$ on ${G}$ and every percolation configuration $\psi$  on $E\setminus F$, we have that 
\[
\phi_{G}^\xi[\cdot_{|F}|\omega_{|E\setminus F}=\psi_{|E\setminus F}]=\phi_{{G}'}^{\psi^\xi}[\cdot],
\]
where $\psi^\xi$ is the boundary condition for which $x$ is wired with $y$ if and only if 
the two vertices are connected in the graph $\psi^\xi$.

\paragraph{Positive association} We will be using a number of other properties of this model, among which are the consequences of its positive association. Below, a random variable $X:\mathcal E(G)\rightarrow\mathbb R$ is said to be increasing if $X(\omega)\le X(\omega')$ for every $\omega\le\omega'$, where $\le$ is the partial order on functions from $\mathcal E(G)$ to $\mathbb R$. Then, we have
\begin{itemize}
\item (FKG inequality) for every $X$ and $Y$ increasing, 
\begin{equation}\label{eq:FKG}
\phi^\xi_{G}[XY]\ge \phi_{G}^\xi[X]\phi_{G}^\xi[Y].
\end{equation}
\item (monotonicity in boundary conditions) for every $X$ increasing, every ${G}$, and every $\xi\le \xi'$, where $\xi\le \xi'$ means that every two vertices that are wired in $\xi$ are also wired in~$\xi'$,\begin{equation}
\phi_{G}^\xi[X]\le \phi_{{G}}^{\xi'}[X].
\end{equation}
\end{itemize}
\begin{remark}
This will often be  used in the following context: we will create a new graph by merging vertices of $\Omega$. This will be equivalent to wiring them in the sense of boundary conditions above and therefore will increase averages of increasing random variables.
\end{remark}

\paragraph{Crossing estimates}
 We will repeatedly use the following theorem, which was first proved in \cite{DumHonNol11}. \begin{theorem}[Crossing estimates for the random cluster model]\label{thm:RSW}
 For every $\kappa\in(0,\infty)$, there exists $c=c(\kappa)>0$ such that for every quad $(D,a,b,c,d)$ with $\ell_D[(ab),(cd)]\in(\kappa,1/\kappa)$ and every boundary condition $\xi$ on $\partial D$,
 \begin{equation}\label{eq:RSW}
 c\le \phi_D^\xi[(ab)\stackrel\omega\longleftrightarrow(cd)]\le 1-c.\tag{RSW}
 \end{equation}
 \end{theorem}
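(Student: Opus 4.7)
The plan is to follow the strategy of Duminil-Copin--Hongler--Nolin, which exploits that at $p_c = \sqrt 2/(1+\sqrt 2)$ the FK-Ising model is self-dual on $\mathbb Z^2$ (the dual of an FK-Ising configuration with free boundary conditions is an FK-Ising configuration with wired boundary conditions on the dual lattice, and vice-versa). The first step is to derive a uniform crossing lower bound for squares with \emph{mixed} boundary conditions (wired on two opposite sides, free on the other two): self-duality gives that the probability of a primal crossing in the wired direction equals the probability of a dual crossing in the free direction, and these two events are complementary, so each has probability exactly $1/2$. Combining this with the FKG inequality~\eqref{eq:FKG} applied to the four rotations of this mixed square then yields a uniform lower bound on the probability of crossing a square with \emph{any} boundary condition on $\partial D$.

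The second step is the classical RSW gluing: given square crossings with arbitrary boundary conditions, one combines two overlapping square crossings via the FKG inequality (with the help of a vertical crossing connecting them in the overlap region) to obtain a crossing lower bound in a rectangle of aspect ratio, say, $2$; iterating produces crossing lower bounds for any fixed aspect ratio, still uniformly in boundary conditions. Monotonicity in boundary conditions ensures that the wired case controls the most favourable scenario and the free case controls the least favourable, so the resulting estimates are uniform in $\xi$.

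The third step handles a general quad $(D,a,b,c,d)$ with $\ell_D[(ab),(cd)] \in (\kappa, 1/\kappa)$. The hypothesis on extremal length is used in the standard way: it implies that $D$ contains a rectangle $R$ of aspect ratio bounded in terms of $\kappa$, embedded in $D$ in such a way that a crossing of $R$ in the appropriate direction forces the crossing $(ab) \stackrel{\omega}\longleftrightarrow (cd)$ in $D$. Using the spatial Markov property and the monotonicity in boundary conditions, the crossing probability of $R$ under the measure $\phi_D^\xi$ is bounded below by its crossing probability under the free boundary condition on $\partial R$ (restricting to the worst case), which is bounded below by step two. The upper bound $1-c$ follows by duality from the lower bound on the dual crossing event.

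The main obstacle is the third step, namely converting extremal-length information into a concrete embedded rectangle along which a primal-crossing event implies the desired quad-crossing, while retaining uniformity in the (arbitrary) boundary condition $\xi$ on $\partial D$. This is the point where one must invoke either the discrete analogue of extremal length or an approximation of $D$ by unions of boxes, and where monotonicity in boundary conditions is essential in order to reduce the problem to the rectangle crossing estimate of step two.
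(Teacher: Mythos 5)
The paper does not prove Theorem~\ref{thm:RSW}: it simply cites \cite{DumHonNol11}, so your sketch is an independent argument, and it has a genuine gap in Step~1 -- which, for the random cluster model, is where essentially all of the content lies. The self-duality and rotation argument you invoke does give that a square with alternating mixed boundary conditions (wired on two opposite sides, free on the other two) is crossed in the wired direction with probability exactly $1/2$. But this mixed boundary condition stochastically \emph{dominates} the free one, so monotonicity in boundary conditions yields $\phi_D^0[\mathrm{cross}]\le 1/2$: the mixed computation gives an \emph{upper} bound under free boundary conditions, not the lower bound you need, and ``FKG applied to the four rotations'' cannot reverse the inequality since each rotated mixed measure still dominates the free one and FKG only combines lower bounds you do not yet possess. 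This is precisely the obstruction that makes the FK case harder than Bernoulli percolation, where the self-dual point alone gives probability $\tfrac12$ because boundary conditions play no role. Obtaining a square-crossing lower bound uniform in the boundary condition is the actual contribution of \cite{DumHonNol11}, where it rests on estimates for boundary connection probabilities derived from Smirnov's fermionic observable; later proofs also require substantial new ideas (for instance a renormalization of crossing probabilities) beyond FKG and self-duality.

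Without this input, Step~2 cannot run either: the ``classical RSW gluing'' explores the lowest crossing and restarts in a partly explored domain, and the boundary condition induced along the explored interface is exactly what remains uncontrolled. Step~3 -- converting the extremal-length hypothesis $\ell_D[(ab),(cd)]\in(\kappa,1/\kappa)$ into a controlled embedded rectangle using the spatial Markov property and monotonicity -- is comparatively routine; flagging it as ``the main obstacle'' misplaces the difficulty. The hard part of this theorem, and the reason it warranted its own paper, is the uniformity in $\xi$.
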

 
 We will use this result extensively in the next sections.  The recent literature contains numerous applications of such estimates. 
 We refer to these papers for details.
 
\paragraph{Couplings} The random cluster model is directly related to the random current model and the Ising model in the following ways. We do not mention all the properties of the corresponding coupling as we will only be using them sporadically.

 \begin{proposition}[Random current -- Random cluster coupling \cite{LupWer,ADTW}]\label{prop:coupling random cluster}
Consider a graph $G$ and a set $B\subset G$ of even cardinality. Let $\omega$ be the configuration constructed from $\n\sim\mathbf P^B_G$ as $\omega_e=1$ if either $\n_e>0$ or $\alpha_e=1$, where $\alpha$ is an independent family of Bernoulli random variables of parameter $1-\sqrt{1-p_c}$. Then,
 \[
 \omega\sim \phi_G^0[\cdot|\mathcal F_B],
 \]
 where $\mathcal F_B$ is the event that every cluster of $\omega$ contains an even number of vertices in $B$ (it may be none).
 \end{proposition}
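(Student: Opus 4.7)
The plan is to compute the joint law of $(\n,\omega)$ from the construction and then marginalize out $\n$. Writing $|\omega_0|$ for the number of open edges of $\omega_0$, the coupling gives, for any $\n_0$ with $\partial\n_0=B$ and any $\omega_0\supseteq\mathrm{supp}(\n_0)$,
\[
\Pb[\n=\n_0,\omega=\omega_0]\;=\;\mathbf P^B_G(\n_0)\,(1-\sqrt{1-p_c})^{|\omega_0|-|\mathrm{supp}(\n_0)|}(1-p_c)^{(|E|-|\omega_0|)/2}.
\]
Summing over $\n_0$ and expanding $w_G$,
\[
\Pb[\omega=\omega_0]\;=\;\frac{(1-p_c)^{(|E|-|\omega_0|)/2}}{Z^B(G)}\sum_{\substack{\n_0:\partial\n_0=B\\\mathrm{supp}(\n_0)\subseteq\omega_0}}(1-\sqrt{1-p_c})^{|\omega_0|-|\mathrm{supp}(\n_0)|}\prod_{e\in\omega_0}\frac{\beta_c^{\n_0(e)}}{\n_0(e)!}.
\]

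To evaluate the inner sum I would partition the currents according to their parity pattern $\eta:=\{e\in\omega_0:\n_0(e)\text{ odd}\}\subseteq\omega_0$. The source constraint $\partial\n_0=B$ then becomes the graph-theoretic condition $\partial\eta=B$ inside $(V,\omega_0)$, and the sum factorizes over edges of $\omega_0$. For each such edge the per-edge factor is $\sinh\beta_c=\sum_{k\ge 0}\beta_c^{2k+1}/(2k+1)!$ in the odd case, and $(1-\sqrt{1-p_c})+\sum_{k\ge 1}\beta_c^{2k}/(2k)!=\cosh\beta_c-\sqrt{1-p_c}$ in the even case. The criticality identity $\sqrt{1-p_c}=e^{-\beta_c}$ makes these two quantities coincide, both equal to $\sinh\beta_c$. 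This per-edge coincidence---which is precisely why the Bernoulli parameter is tuned to $1-\sqrt{1-p_c}$---is the one algebraically substantive step of the proof; everything else is bookkeeping.

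It follows that the inner sum equals $\sinh(\beta_c)^{|\omega_0|}$ times the number of subgraphs $\eta\subseteq\omega_0$ with $\partial\eta=B$. A standard cycle-space count (over $\mathbb F_2$) gives this number as $2^{|\omega_0|-|V|+k(\omega_0)}\mathbf 1_{\omega_0\in\mathcal F_B}$: it vanishes unless every cluster of $(V,\omega_0)$ carries an even number of vertices of $B$, and otherwise equals the cardinality of the cycle space of $(V,\omega_0)$. Plugging this in and invoking once more the identity $2\sinh(\beta_c)\sqrt{1-p_c}=1-e^{-2\beta_c}=p_c$, the expression collapses to
\[
\Pb[\omega=\omega_0]\;=\;C(G,B)\cdot 2^{k(\omega_0)}p_c^{|\omega_0|}(1-p_c)^{|E|-|\omega_0|}\mathbf 1_{\omega_0\in\mathcal F_B}
\]
for a constant $C(G,B)$ independent of $\omega_0$, which is exactly the density of $\phi_G^0[\,\cdot\mid\mathcal F_B]$.
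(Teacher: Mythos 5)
Your proof is correct, and the paper itself does not prove this statement—it is cited as a known result from \cite{LupWer,ADTW}. Your argument (conditioning on the parity pattern $\eta$, the per-edge coincidence $\sinh\beta_c = \cosh\beta_c - e^{-\beta_c}$ enabled by tuning the Bernoulli parameter so that $\sqrt{1-p_c}=e^{-\beta_c}$, the $\mathbb F_2$-cycle-space count yielding $2^{|\omega_0|-|V|+k(\omega_0)}\mathbf 1_{\mathcal F_B}$, and the final identity $e^{2\beta_c}-1=p_c/(1-p_c)$) is precisely the standard derivation one finds in those references, so the proposal matches the proof the paper is implicitly relying on.
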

 
  \begin{proposition}[Edwards--Sokal coupling \cite{EdwSok}]\label{prop:coupling ES} Consider a graph $G$. Let $\sigma$ be the configuration constructed from $\omega\sim \phi_G^0$ by assigning to each cluster $\mathcal C$ of $\omega$ a spin $\sigma_\mathcal C$ uniformly at random, and by writing $\sigma_x=\sigma_\mathcal C\in\{-1,+1\}$ for every $x\in \mathcal C$. Then, 
 \[
 \sigma\sim \langle\cdot\rangle_G.
 \]
 \end{proposition}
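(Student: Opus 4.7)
The plan is standard: introduce the Edwards--Sokal joint measure $\mu$ on $\{\pm 1\}^V\times\{0,1\}^E$ with weight
\[
\mu(\sigma,\omega) \;\propto\; \prod_{e=\{x,y\}\in E}\bigl[(1-p_c)\,\mathbf 1_{\omega_e=0} + p_c\,\mathbf 1_{\omega_e=1}\mathbf 1_{\sigma_x=\sigma_y}\bigr],
\]
and then read off both marginals together with the conditional law of $\sigma$ given $\omega$.

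First I would compute the $\sigma$-marginal by summing over $\omega_e\in\{0,1\}$ edge by edge: the bracket contributes $1$ when $\sigma_x=\sigma_y$ and $1-p_c$ otherwise. Using the identity $1-p_c=e^{-2\beta_c}$, which follows directly from $p_c=\sqrt 2/(1+\sqrt 2)$ and $\beta_c=\tfrac12\log(\sqrt 2+1)$, the $\sigma$-marginal is proportional to $\exp[-\beta_c H_G(\sigma)]$ and hence coincides with $\langle\cdot\rangle_G$.

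Next I would compute the $\omega$-marginal by summing over $\sigma\in\{\pm 1\}^V$ with $\omega$ fixed. The product of indicators forces $\sigma$ to be constant on each cluster of $\omega$, so the sum contributes a factor $2^{k(\omega)}$, while the remaining edge weights combine to $p_c^{\sum_e\omega_e}(1-p_c)^{|E|-\sum_e\omega_e}$. Up to normalization this is exactly $\phi_G^0(\omega)$ at $q=2$ and $p=p_c$. The same calculation also shows that, conditionally on $\omega$, the spin on each cluster is independent of the other clusters and uniform on $\{-1,+1\}$.

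Composing these two facts yields the stated two-step sampling: draw $\omega\sim\phi_G^0$, then paint each cluster with an independent uniform $\pm 1$ spin; by the $\sigma$-marginal computation, the resulting spin configuration is distributed as $\langle\cdot\rangle_G$. There is no real obstacle; the only point needing care is the parameter matching $1-p_c=e^{-2\beta_c}$, which is precisely the reason the random-cluster critical parameter $p_c=\sqrt 2/(1+\sqrt 2)$ used throughout the paper corresponds to the critical Ising inverse temperature $\beta_c$.
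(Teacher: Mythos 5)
Your argument is correct and is exactly the classical Edwards--Sokal proof via the joint measure on $\{\pm1\}^V\times\{0,1\}^E$; the paper itself states this proposition without proof, citing \cite{EdwSok}. The parameter-matching check $1-p_c=e^{-2\beta_c}$ is verified by $1-p_c = 1/(1+\sqrt2)=\sqrt2-1=e^{-\log(\sqrt2+1)}=e^{-2\beta_c}$, and the three computations (the $\sigma$-marginal, the $\omega$-marginal, and the conditional law of $\sigma$ given $\omega$) are all standard and carried out correctly.
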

 
 Let us remark that we immediately obtain from this construction that for every $B\subset G$,
 \begin{equation}
 \langle\sigma_B\rangle_G=\phi_G^0[\mathcal F_B].
 \end{equation}
 We now mention a coupling between the odd part of a random current and interfaces in a dual Ising model.

 \begin{proposition}[Kramers--Wannier duality \cite{KraWan41}]\label{prop:coupling Kramers Wannier}Consider a subgraph $G$ of $\mathbb Z^2$ where $\partial G$ is a self-avoiding polygon and $B$ a subset of $\partial G$. Let $\n\sim\mathbf P^B_G$ with $B\subset\partial G$ and $\eta=\eta(\n)$ be the set of edges for which $\n$ is odd. The configuration $\eta$ has the same law as the edges $e$ bordering faces of different sign for an Ising model at inverse-temperature $\beta_c^*:=\beta_c$ on the dual graph $G^*$, where the boundary conditions on $\partial G^*$ are such that the spin change along edges that are incident to the primal vertices in $B$. \end{proposition}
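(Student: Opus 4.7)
The plan is to execute the classical Kramers--Wannier computation: high-temperature expansion on the random-current side and low-temperature expansion on the dual Ising side. First, I fix a parity pattern $\eta\in\{0,1\}^E$ and sum the weights $w_G(\n)$ over all currents $\n$ with $\n_e\equiv\eta_e\pmod 2$ on every edge. Using $\sum_{k\text{ even}}\beta_c^k/k!=\cosh\beta_c$ and $\sum_{k\text{ odd}}\beta_c^k/k!=\sinh\beta_c$, together with the fact that the source constraint $\partial\n=B$ depends on $\n$ only modulo $2$, this yields
$$\mathbf P^B_G[\eta(\n)=\eta]\;\propto\;(\tanh\beta_c)^{|\eta|}\,\mathbf 1_{\partial\eta=B},$$
where $\partial\eta\subset V$ denotes the set of vertices at which $\eta$ has odd degree.

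Next, I would compare with the low-temperature contour expansion of the Ising model on $G^*$ at $\beta_c^*=\beta_c$. For a dual spin configuration $\sigma^*$, the set of disagreement edges $\gamma(\sigma^*)\subset E^*$ has Boltzmann weight proportional to $\exp(-2\beta_c^*|\gamma(\sigma^*)|)$. The self-duality identity $\tanh\beta_c=e^{-2\beta_c}$ at $\beta_c=\tfrac12\log(1+\sqrt 2)$ turns this into $(\tanh\beta_c)^{|\gamma|}$, which matches the random-current expression after identifying $E$ with $E^*$ via $e\leftrightarrow e^*$.

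It then remains to choose the boundary condition on $\partial G^*$ so that the admissible $\gamma$'s are precisely those $\eta\in\{0,1\}^E$ satisfying $\partial\eta=B$. Around each interior primal vertex $v$, the dual edges corresponding to primal edges incident to $v$ form a closed cycle in $G^*$, so the parity of disagreements along this cycle is automatically even — handling the constraint at interior vertices for free. Around a boundary primal vertex $v\in\partial G$, the corresponding dual edges form an \emph{arc}, and the parity of disagreements on it is forced by the two boundary dual spins at the endpoints of the arc. I would therefore fix the boundary dual spins to switch sign exactly across those boundary dual edges incident to primal vertices in $B$; this is possible, and unique up to a global flip, precisely because $|B|$ is even. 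For this choice, the admissible sets $\gamma$ are exactly $\{\eta:\partial\eta=B\}$, and normalization takes care of proportionality constants.

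The weight matching via self-duality is routine. The main point requiring care is the boundary book-keeping in the final step: one must verify that the interior dual cycles automatically enforce the right parity and that the boundary arcs can be coupled via a single planar spin pattern that produces exactly the source structure on $B$. Everything else is standard Kramers--Wannier duality adapted to sourced random currents.
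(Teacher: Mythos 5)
Your proof is correct; it is the standard Kramers--Wannier high-temperature/low-temperature expansion argument, with the self-duality identity $\tanh\beta_c=e^{-2\beta_c}$ matching the edge weights and planar parity book-keeping (interior dual $4$-cycles force even degree of $\eta$ at interior vertices automatically, boundary arcs tie the source constraint at $v\in\partial G$ to whether the two dual endpoints of the arc disagree, and the alternating sign pattern on $\partial G^*$ is consistent precisely because $|B|$ is even). The paper states Proposition~\ref{prop:coupling Kramers Wannier} as a classical fact with a citation to \cite{KraWan41} and provides no proof of its own, so there is no in-paper argument to compare against.
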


 \subsection{Harmonic estimates for the random cluster model}

We will need precise estimates for the random cluster model with so-called mixed boundary conditions. More precisely, consider a quad $(D,a,b,c,d)$  and let $\phi_D^{(ab),(cd)}$ be the measure with wired boundary condition on $(ab)$, wired on $(cd)$, and free elsewhere. We are seeking estimates on $\phi_D^{(ab),(cd)}[(ab)\longleftrightarrow(cd)]$ that are written in terms of discrete harmonic estimates. We refer to \cite{CheDumHon16} for details.

Fix a domain $D$ and attach to each edge  a conductance $w_e$ equal to 1 for edges between vertices of $D$, $2(\sqrt 2-1)$ for edges exiting $D$ along $(bc)$ and $(da)$, and 0 otherwise. We also write $m_x:=\sum_{y}w_{xy}$ for the sum of the conductances around a vertex. Below, let
\[
Z_{D}[x,y]:=\sum_{\gamma\subset D:x\mapsto y}m_y^{-1}\prod_{1\le i<k}\frac{w_{\gamma_i\gamma_{i+1}}}{m_{\gamma_i}},\]
where the sum runs over paths $\gamma=(\gamma_i)_{0\le i\le k}$ of vertices in $D$ going from $x$ to $y$. 
\begin{remark}
The quantity is related, up to an explicit constant, to the discrete Green function $G_D(x,y)$ associated to the conductances $w_e$, or equivalently to the expected number of visits of a random walk associated to the conductances above starting from $x$.
\end{remark}
We introduce, for $X,Y\subset D$,
\[
Z_{D}[X,Y]:=\sum_{x\in X}\sum_{y\in Y}Z_{D}[x,y].\]
We also define the same quantities on the dual graph $D^*$, and refer to them as $Z_{D^*}[u,v]$ and $Z_{D^*}[U,V]$. 

\begin{remark}
The following observation will be convenient: for every $\kappa\in(0,\infty)$, there exists $\kappa'=\kappa'(\kappa)\in(0,\infty)$ such that $\ell_D[(ab),(cd)]\in(\kappa,1/\kappa)
$ implies that $Z_D[(ab),(cd)]$ and $Z_{D^*}[(bc)^*,(da)^*]$ belong to $(\kappa',1/\kappa')$.\end{remark}

We will  use the following estimate, see Fig.~\ref{fig:harmonic}. 

\begin{corollary}\label{cor:bound harmonic measure}
There exist $c,C\in(0,\infty)$ such that for all $r,R$ and every  domain $\Omega$ that contains $\Lambda_{2R}$ but not $\Lambda_{12R}$, and $\Lambda_{2r}(x)$ but not $\Lambda_{12r}(x)$,
\begin{equation}\label{eq:bound harmonic measure}
cZ_{\Omega^*}[x^*,0^*]\le {\bf P}_{\Omega^{\bullet\bullet},\Omega^{\bullet\bullet}}^{\emptyset,\emptyset}[\Lambda_r(x)\stackrel{\n_1+\n_2}\longleftrightarrow\Lambda_R]
\le CZ_{\Omega}[x,0],
\end{equation}
where $\Omega^{\bullet\bullet}$ is the graph obtained by merging the vertices of $\Lambda_r(x)$ together, and those of $\Lambda_R$ together (we identify the obtained vertices with the sets $\Lambda_r(x)$ and $\Lambda_R$ themselves), and $0^*$ and $x^*$ are dual vertices adjacent to $0$ and $x$ respectively. 
\end{corollary}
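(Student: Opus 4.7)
The strategy is to translate the question about the double random current into one about the critical random-cluster model (RCM), and then invoke the harmonic-measure estimates of \cite{CheDumHon16}. The key observation is that merging the vertices of $\Lambda_r(x)$ (resp.\ $\Lambda_R$) into a single super-vertex in $\Omega^{\bullet\bullet}$ is precisely the effect of wiring those sets. Applying Lemma~\ref{lem:switching lemma} with $A=B=\{\Lambda_r(x),\Lambda_R\}$ on $\Omega^{\bullet\bullet}$, and noting that in this case $\mathcal F_A$ reduces to $\{\Lambda_r(x)\stackrel{\n_1+\n_2}{\longleftrightarrow}\Lambda_R\}$ (the super-vertex $\Lambda_r(x)$ is always counted once in its own cluster, forcing parity to pair it with the super-vertex $\Lambda_R$), together with Proposition~\ref{prop:coupling ES} to identify the Ising two-point function with an RCM connection probability, one obtains the clean identity
\begin{equation*}
\mathbf P^{\emptyset,\emptyset}_{\Omega^{\bullet\bullet},\Omega^{\bullet\bullet}}\!\bigl[\Lambda_r(x)\stackrel{\n_1+\n_2}{\longleftrightarrow}\Lambda_R\bigr]
\ =\ \langle\sigma_{\Lambda_r(x)}\sigma_{\Lambda_R}\rangle_{\Omega^{\bullet\bullet}}^{\,2}
\ =\ \phi^{\Lambda_r(x),\Lambda_R}_{\Omega}\!\bigl[\Lambda_r(x)\stackrel{\omega}{\longleftrightarrow}\Lambda_R\bigr]^{2},
\end{equation*}
where the right-hand side is the RCM on $\Omega$ with wired conditions on the two boxes and free conditions on $\partial\Omega$.

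It therefore suffices to establish
\begin{equation*}
\sqrt{c\,Z_{\Omega^*}[x^*,0^*]}\ \le\ \phi^{\Lambda_r(x),\Lambda_R}_{\Omega}\!\bigl[\Lambda_r(x)\longleftrightarrow\Lambda_R\bigr]\ \le\ \sqrt{C\,Z_\Omega[x,0]}.
\end{equation*}
Both inequalities are instances of the harmonic-measure estimates of \cite{CheDumHon16} for the critical RCM with mixed wired/free boundary conditions. The upper bound comes from a first-moment argument identifying a primal crossing with an excursion of the random walk carrying the conductances $w_e$ from $x$ to $\Lambda_R$, producing the quantity $Z_\Omega[x,0]$. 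The lower bound is obtained by a second-moment/RSW argument whose dual counterpart generates $Z_{\Omega^*}[x^*,0^*]$ via the self-duality of the critical RCM at $q=2$. The geometric hypotheses---that $\Omega$ is squeezed between $\Lambda_{2R}$ and $\Lambda_{12R}$, and between $\Lambda_{2r}(x)$ and $\Lambda_{12r}(x)$ around $x$---provide the uniform aspect ratios needed to feed the RSW input (Theorem~\ref{thm:RSW}) into those bounds.

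The main obstacle is to reconcile this "bulk" setup, in which the wired sets are two large boxes in the interior of $\Omega$, with the quad framework $(D,a,b,c,d)$ of \cite{CheDumHon16}, where the wired arcs live on $\partial D$. I would handle this by introducing collar annuli $\mathrm{Ann}(x,r,2r)$ and $\mathrm{Ann}(R,2R)$ inside which FKG and RSW allow one to replace the two boxes by boundary arcs of a genuine quad, and then verify via standard random-walk comparison that the Green's-function quantities $Z_\Omega[x,0]$ and $Z_{\Omega^*}[x^*,0^*]$ agree, up to multiplicative constants depending only on the fixed ratios $2$ and $12$, with the corresponding quantities in the modified quad. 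Once this translation is carried out carefully, both bounds of \eqref{eq:bound harmonic measure} follow directly by squaring the RCM estimates through the identity displayed above.
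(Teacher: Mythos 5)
Your opening reduction is correct and matches the paper's: applying the switching lemma with $A=B=\{\Lambda_r(x),\Lambda_R\}$ on $\Omega^{\bullet\bullet}$ gives $\mathbf P^{\emptyset,\emptyset}_{\Omega^{\bullet\bullet},\Omega^{\bullet\bullet}}[\Lambda_r(x)\leftrightarrow\Lambda_R]=\langle\sigma_{\Lambda_r(x)}\sigma_{\Lambda_R}\rangle_{\Omega^{\bullet\bullet}}^2$, and the Edwards--Sokal coupling turns this into the square of an RCM connection probability. You also correctly identify the remaining obstacle: $\Lambda_r(x)$ and $\Lambda_R$ are interior sets, whereas \cite[Prop.~4.1]{CheDumHon16} works for a quad $(D,a,b,c,d)$ whose wired arcs lie on $\partial D$.

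However, the resolution you sketch does not close this gap. Collar annuli $\mathrm{Ann}(x,r,2r)$, $\mathrm{Ann}(R,2R)$ together with FKG/RSW let you regularize the boundary data near the two boxes, but they do not convert two interior boxes into boundary arcs of a quad; after such a step, $\Lambda_r(x)$ and $\Lambda_R$ are still in the interior of $\Omega$, so the quad estimate still does not apply. The paper's actual construction is the crucial missing step, and it is of a different nature. For the upper bound one draws two \emph{dual} slits $\ell_x^*$, $\ell^*$ from $\Lambda_r(x)^*$ and $\Lambda_R^*$ out to $\partial\Omega^*$, deletes the primal edges they cross (and those inside the two boxes), and wires the endpoints of these deleted edges together with their respective boxes. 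Deleting a slit edge while wiring both its endpoints into the same super-vertex only increases connectivity, so by monotonicity in boundary conditions
\[
\phi_{\Omega^{\bullet\bullet}}^0[\Lambda_r(x)\stackrel\omega\longleftrightarrow\Lambda_R]\le \phi_{\Omega'}^{\Lambda_r(x)\cup\ell_x,\Lambda_R\cup\ell}[\Lambda_r(x)\cup\ell_x\stackrel\omega\longleftrightarrow\Lambda_R\cup\ell],
\]
and now $\Lambda_r(x)\cup\ell_x$ and $\Lambda_R\cup\ell$ are genuine boundary arcs of the cut domain $\Omega'$, so \cite{CheDumHon16} applies. The slits must in addition be chosen so that $Z_{\Omega'}[\Lambda_r(x)\cup\ell_x,\Lambda_R\cup\ell]\le C\,Z_\Omega[x,0]$; this is where the geometric assumptions ($\Omega$ sandwiched between $\Lambda_{2R}$ and $\Lambda_{12R}$, etc.) enter. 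For the lower bound one instead removes \emph{primal} slits $\ell,\ell_x$ together with the edges inside the boxes, producing a domain $\Omega''$ on which the duals of the two boxes are boundary arcs of $(\Omega'')^*$, and the same scheme applies on the dual side. Your plan omits this slit construction entirely, and without it neither inequality of \eqref{eq:bound harmonic measure} can be derived from the quad estimate.
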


\begin{figure} 
		\begin{center}
			\includegraphics[scale=0.9]{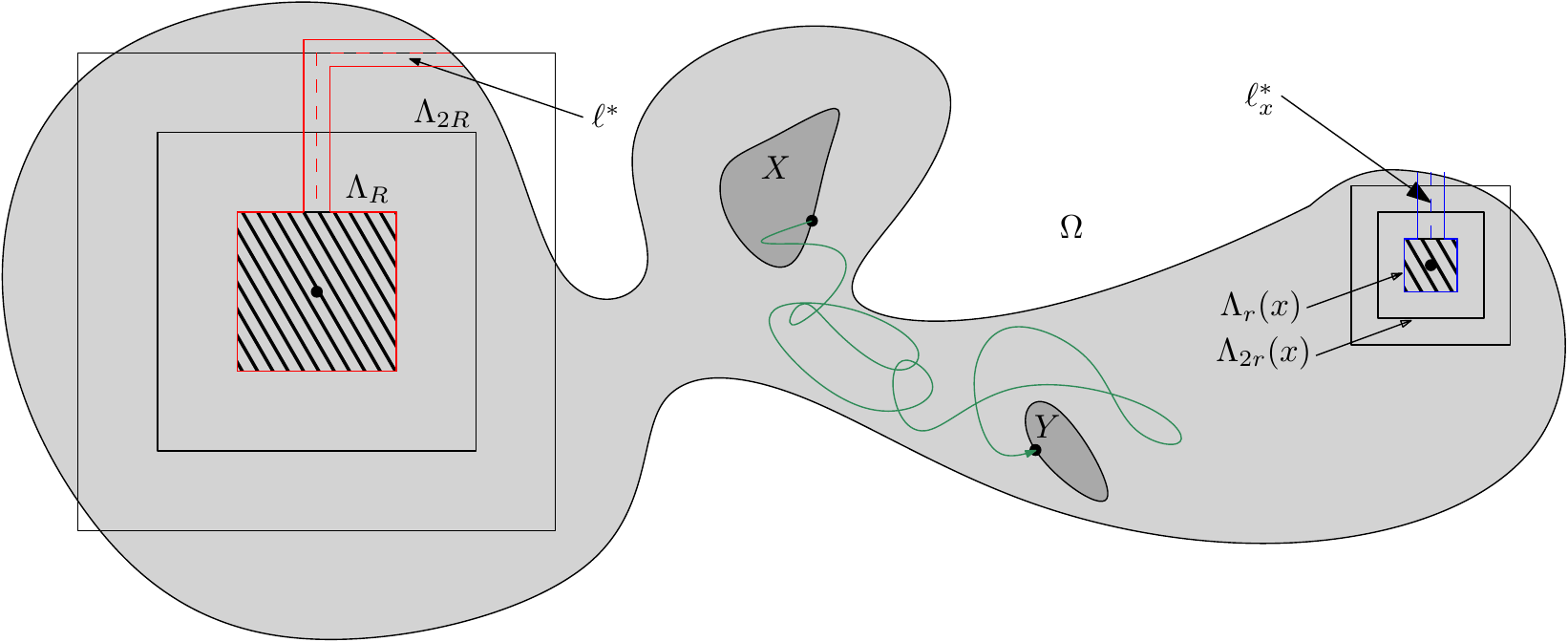}  
		\end{center}
		\caption{The graph $\Omega$ with the two boxes $\Lambda_R$ and $\Lambda_r(x)$. We also depicted an example of possible choice for $\ell_x^*$ and $\ell^*$. In red and blue the wired arcs of the domain $\Omega'$ obtained by removing all the edges enclosed by the red and blue parts. The quantity $Z_\Omega[X,Y]$ is obtained as the sum over every $x\in X$ and $y\in Y$ of the harmonic measure of $y$ seen from $x$ in $\Omega$. 
		}	\label{fig:harmonic}
\end{figure}

The proof will consist in using an estimate from \cite{CheDumHon16} that expresses random cluster crossing probabilities in quads with mixed boundary conditions in terms of the random-walk partition functions above. To use this result, we will create a quad by connecting $\Lambda_r(x)$ and $\Lambda_R$ to the boundary of $\Omega$ in a suitable way.

\begin{proof}It was proved in  \cite[Proposition~4.1]{CheDumHon16} that for every quad $(D,a,b,c,d)$,
\begin{equation}\label{eq:bound harmonic measure easy}
cZ_{D^*}[(ab)^*,(cd)^*]^{1/2}\le\phi_D^{(ab),(cd)}[(ab)\stackrel\omega\longleftrightarrow(cd)]\le CZ_{D}[(ab),(cd)]^{1/2},
\end{equation}
where $(ab)^*$ is the set of vertices on $\partial D^*$ neighboring vertices in $(ab)$, and similarly for $(cd)^*$. While only the upper bound was proved in \cite{CheDumHon16}, as mentioned in the paper, the lower bound can be obtained in a similar fashion.

Let us now explain how we use this estimate in our context. Note that, by the switching lemma~(Lemma~\ref{lem:switching lemma}) and the Edwards-Sokal coupling (Proposition~\ref{prop:coupling ES}), we have that 
\[
{\bf P}_{\Omega^{\bullet\bullet},\Omega^{\bullet\bullet}}^{\emptyset,\emptyset}[\Lambda_r(x)\stackrel{\n_1+\n_2}\longleftrightarrow\Lambda_R]=\langle\sigma_{\Lambda_r(x)}\sigma_{\Lambda_R}\rangle_{\Omega^{\bullet\bullet}}^2=\phi_{\Omega^{\bullet\bullet}}^0[\Lambda_r(x)\stackrel\omega\longleftrightarrow\Lambda_R]^2
\]
so that it suffices to estimate the latter.

We start with the upper bound of \eqref{eq:bound harmonic measure}. We wish to invoke \eqref{eq:bound harmonic measure easy}, but the problem is that $\Lambda_r(x)$ and $\Lambda_R(x)$ are not boundary arcs of $\Omega^{\bullet\bullet}$. Yet, one may create (we leave it as an exercise to the reader) two dual paths $\ell^*$ and $\ell_x^*$ from $\Lambda_R^*$ and $\Lambda_r(x)^*$ to $\partial\Omega^*$ such that
\[
Z_{\Omega'}[\Lambda_r(x)\cup\ell_x,\Lambda_R\cup\ell]\le C_2Z_{\Omega}[x,0],
\]
where 
\begin{itemize}[noitemsep]
\item $\Omega'$ is the domain obtained from $\Omega$ by removing all the edges with both endpoints in $\Lambda_r(x)\cup\Lambda_R$, and edges crossed by $\ell^*$ or $\ell^*_x$, 
\item  $\ell_x$ and $\ell$ are the sets of endpoints of edges crossed by $\ell^*$ and $\ell_x^*$ respectively.
\end{itemize}
If $\{\Lambda_r(x)\cup\ell_x,\Lambda_R\cup\ell\}$ denotes the wired boundary condition on these two sets, and free elsewhere, the comparison between boundary conditions for the random cluster model implies that 
\[
\phi_{\Omega^{\bullet\bullet}}^0[\Lambda_r(x)\stackrel\omega\longleftrightarrow\Lambda_R]\le \phi_{\Omega'}^{\Lambda_r(x)\cup\ell_x,\Lambda_R\cup\ell}[\Lambda_r(x)\cup\ell_x\stackrel\omega\longleftrightarrow\Lambda_R\cup\ell].\]
The upper bound then follows from \eqref{eq:bound harmonic measure easy} (now we are in the right context) and the bound on $Z_{\Omega}$.

For the lower bound, one may construct (again, we leave this as an exercise to the reader) two paths $\ell$ and $\ell_x$ from $\Lambda_R$ and $\Lambda_r(x)$ to $\partial\Omega$ such that
\[
Z_{\Omega^*}[x^*,0^*]\le C_2Z_{(\Omega'')^*}[\Lambda_r(x)^*,\Lambda_R^*],
\]
where $\Omega''$ is the graph obtained by removing the edges in $\ell\cup\ell_x$ and those strictly inside $\Lambda_R\cup\Lambda_r(x)$.
The proof then follows from the monotonicity properties of the random cluster model 
 and \eqref{eq:bound harmonic measure easy}, whose application is now justified since $\Lambda_R$ and $\Lambda_r(x)$ are intersecting the boundary of $\Omega''$.
\end{proof}

\section{Preliminaries for the random current model}\label{sec:preliminaries hugo}

In this section, we gather a number of new results of general interest. These results will be used extensively in the next sections.

\subsection{Mixing property of the random current model}

We start with a ratio weak mixing property that states that the probability of the intersection of events depending on sets of edges that are sufficiently well separated are comparable to the product of the probabilities of each event.  This will be a convenient substitute for the lack of independence in the model. With the same proof, we will also obtain a certain form of independence with respect to boundary conditions (under constraints that cannot be relaxed, such as the parity of the number of sources in a certain area of the quad), and with respect to the geometry of the graph far away. 

Consider a graph $\Omega$ partitioned into three subgraphs $G_0,G,G_1$ satisfying that 
either
\begin{itemize}[noitemsep]
\item[(i)] $G$ is a quad $(D,a,b,c,d)$ of $\mathbb Z^2$ of extremal distance in $(\kappa,\tfrac1\kappa)$ (with $\kappa>0$), and any path from $(bc)$ to $(da)$ disconnects $G_0$ from $G_1$ in $\Omega$;
\item[(ii)] $G$ is an annulus $\mathrm{Ann}(R,2R)$ and $\partial\Lambda_{3R/2}$ disconnects $G_0$ from $G_1$.
\end{itemize}
We insist on the fact that $G_0$ and $G_1$ can be any graph (not necessarily subsets of $\mathbb Z^2$), see Fig.~\ref{fig:mixing}.

\begin{figure} 
		\begin{center}
			\includegraphics[scale=0.9]{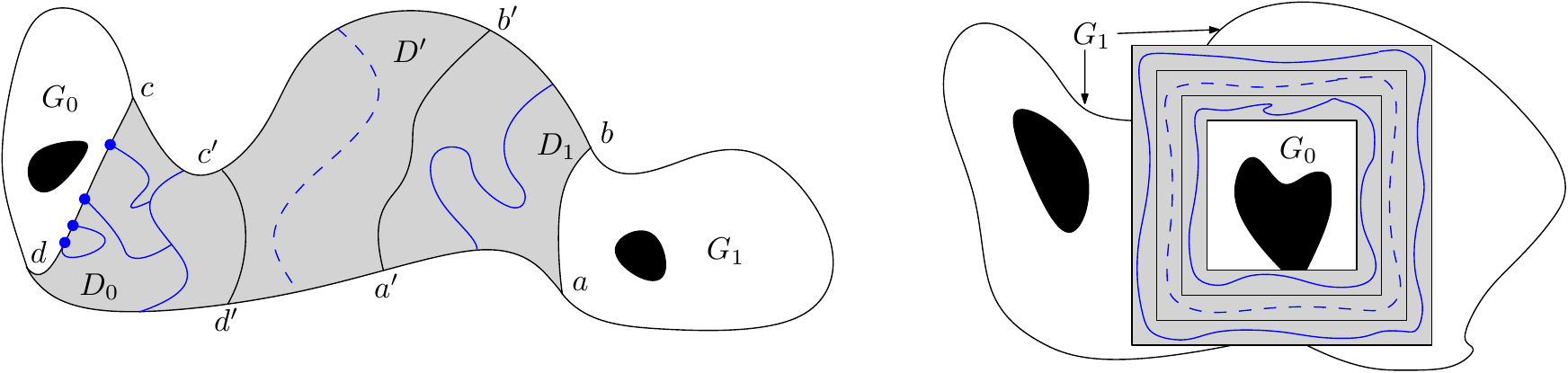}  
		\end{center}
		\caption{The settings (i) on the left and (ii) on the right. The black parts correspond to holes in $G_i$ (note that such holes are forbidden in the gray area). The graphs $G_i$ may not be subsets of the square lattice.  We will in particular use this in the case of graphs obtained from subgraphs of the square lattice by merging vertices together. We also depicted the splitting that is used in the proof of the mixing property, as well as the existence of paths in $\omega$ in blue lines. We depicted the absence of path in $\omega$ using dashed lines.
		}	\label{fig:mixing}
\end{figure}

\begin{proposition}[Mixing of the single random current]\label{lem:mixing}
For every $\kappa>0$, 
there exist $c_{\rm mix}>0$ and $C_{\rm mix}>0$ such that for every graph $\Omega$ satisfying either (i) or (ii), all events $E$ and $F$ depending on edges in $G_0$ and $G_1$ respectively, and any set of sources $A$ in $G_0\cup G_1$,
\begin{align}\label{eq:mixing}
c_{\rm mix}\,{\bf P}^A_\Omega[E]\,{\bf P}^A_\Omega[F]&\le{\bf P}^A_\Omega[E\cap F]\le C_{\rm mix}\,{\bf P}^A_\Omega[E]\,{\bf P}^A_\Omega[F].
\end{align}
Furthermore, if one considers a set $B\subset G_0\cup G_1$ such that $B\cap G_0=A\cap G_0$, then
\begin{align}\label{eq:independence in boundary conditions}
c_{\rm mix}\,{\bf P}^B_\Omega[E]&\le{\bf P}^A_\Omega[E]\le C_{\rm mix}\,{\bf P}^B_\Omega[E],
\end{align}
and if one considers another graph $\Omega'$ that differs from $\Omega$ only in $G_1$, 
\begin{align}\label{eq:independence in graph}
c_{\rm mix}\,{\bf P}^\emptyset_{\Omega'}[E]&\le{\bf P}^\emptyset_\Omega[E]\le C_{\rm mix}\,{\bf P}^\emptyset_{\Omega'}[E].
\end{align}
\end{proposition}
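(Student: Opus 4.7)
The starting point is an exact edge-decomposition of the current. Writing $\n=\n_0\oplus\n_G\oplus\n_1$ for the restrictions of $\n$ to the edges of $G_0,G,G_1$ and setting $B_i:=V(G)\cap V(G_i)$ for the two interfaces, the source constraint $\partial\n=A$ amounts to a local parity condition at every vertex. These conditions couple the three pieces only through the ``interface data'' $T_i:=\{v\in B_i:\partial\n_G(v)\text{ is odd}\}$, so that conditionally on $(T_0,T_1)$ the three restrictions become \emph{independent} random currents on $G_0,G,G_1$ with sources prescribed by $A$ and $(T_0,T_1)$. In particular the partition function factorizes as
\begin{equation*}
Z^A(\Omega)=\sum_{T_0,T_1}Z^{A_0\Delta T_0}(G_0)\cdot Z^{X\cup T_0\cup T_1}(G)\cdot Z^{A_1\Delta T_1}(G_1),
\end{equation*}
where $A_i=A\cap V(G_i)$ and $X=(A\cap V(G))\setminus(B_0\cup B_1)$. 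Letting $p(T_0,T_1)$ denote the induced law of the interface data under ${\bf P}^A_\Omega$, and $p_0,p_1$ its marginals, one obtains
\begin{equation*}
{\bf P}^A_\Omega[E\cap F]=\sum_{T_0,T_1}p(T_0,T_1)\,{\bf P}^{A_0\Delta T_0}_{G_0}[E]\,{\bf P}^{A_1\Delta T_1}_{G_1}[F],
\end{equation*}
so that \eqref{eq:mixing} reduces to a two-sided ratio bound $p(T_0,T_1)\asymp p_0(T_0)p_1(T_1)$ uniformly in $(T_0,T_1)$.

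Using the Ising identity $Z^Y(G)/Z^\emptyset(G)=\langle\sigma_Y\rangle_G$, this ratio bound on $p$ translates into an Ising mixing statement on $G$:
\begin{equation*}
\langle\sigma_X\rangle_G\cdot\langle\sigma_{X\cup T_0\cup T_1}\rangle_G\;\asymp\;\langle\sigma_{X\cup T_0}\rangle_G\cdot\langle\sigma_{X\cup T_1}\rangle_G,
\end{equation*}
uniformly in $T_0\subset B_0,\,T_1\subset B_1$. I would establish this by invoking the Edwards--Sokal coupling (Proposition~\ref{prop:coupling ES}) to rewrite each factor as $\langle\sigma_Y\rangle_G=\phi^0_G[\mathcal F_Y]$, together with RSW (Theorem~\ref{thm:RSW}) applied inside $G$: in case (i) the latter yields a dual crossing of the quad $G$ from $(bc)$ to $(da)$ with probability bounded below, and in case (ii) a dual circuit in $\mathrm{Ann}(5R/4,7R/4)\subset G$. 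On this separating event, no primal cluster of $\omega$ joins $B_0$ to $B_1$, so the event $\mathcal F_{X\cup T_0\cup T_1}$ decouples into the intersection of two events depending only on the respective sides. Conditioning on the outermost such dual separator and combining the spatial Markov property with monotonicity of the random cluster in its boundary conditions then yields both directions of the comparison.

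The independence statements \eqref{eq:independence in boundary conditions} and \eqref{eq:independence in graph} fit into the same framework: replacing the sources in $V(G_1)$ by another set $B\cap V(G_1)$, or replacing $G_1$ by another graph, only modifies the factor $Z^{A_1\Delta T_1}(G_1)$ in $p(T_0,T_1)$ and its analogue in the marginal $p_1(T_1)$, while the $G_0$-factor and the crossing factor $Z^{X\cup T_0\cup T_1}(G)$ are untouched; the ratio bound on the interface law therefore transfers verbatim, and division by the relevant partition functions gives the desired comparisons. The principal obstacle is the Ising mixing bound on $G$: since the events $\mathcal F_Y$ are \emph{not} monotonic in $\omega$, FKG is not directly available, and the argument must proceed entirely through the spatial Markov property at a separating dual path. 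One must verify in particular that the constants do not degrade with the sizes of $T_0$ and $T_1$, which may be comparable to $|B_0|+|B_1|$.
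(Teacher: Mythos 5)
Your route --- decompose the current over $G_0,G,G_1$, condition on the interface source data, and reduce the mixing bound to a factorization of Ising spin correlations on $G$, proved via Edwards--Sokal, RSW and the spatial Markov property --- is essentially the paper's proof (the paper's cut is at $\partial_i G$ and it writes the middle factor as $\langle\sigma_{A_0}\sigma_{A_1}\rangle_G$, but the algebra is equivalent). There is, however, a genuine gap in the reduction step. Your intermediate statement
\[
\langle\sigma_X\rangle_G\,\langle\sigma_{X\cup T_0\cup T_1}\rangle_G\;\asymp\;\langle\sigma_{X\cup T_0}\rangle_G\,\langle\sigma_{X\cup T_1}\rangle_G
\]
degenerates to $0\asymp 0$ whenever $|A\cap G_0|$ (equivalently $|A\cap G_1|$) is odd: then every admissible $T_0$ has odd cardinality, $|X\cup T_0|$ is odd, and $\langle\sigma_{X\cup T_0}\rangle_G=\phi^0_G[\mathcal F_{X\cup T_0}]=0$, so the display carries no information. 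What the ratio bound $p(T_0,T_1)\asymp p_0(T_0)p_1(T_1)$ actually requires is the four-term comparison
\[
\langle\sigma_{T_0\cup T_1}\rangle_G\,\langle\sigma_{T_0'\cup T_1'}\rangle_G\;\asymp\;\langle\sigma_{T_0\cup T_1'}\rangle_G\,\langle\sigma_{T_0'\cup T_1}\rangle_G,
\]
uniformly over $T_0,T_0'$ of one fixed parity and $T_1,T_1'$ of the other, which is precisely the paper's \eqref{eq:hug1}. In the even case your statement is equivalent to it and your sketch goes through; in the odd case no dual separator between $B_0$ and $B_1$ is compatible with $\mathcal F_{T_0\cup T_1}$ (odd parity on $T_0$ forces a primal $B_0$--$B_1$ crossing cluster), and the paper handles this by a separate argument that passes to modified events $\mathcal F'_{A_i}$ which exempt the cluster of the innermost/outermost separating circuit. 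You would need to supply this case, since the proposition allows arbitrary even $A$.

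A second, smaller inaccuracy: the events $\mathcal F_Y$ are in fact \emph{increasing} in $\omega$ (every cluster of $\omega'\ge\omega$ is a union of clusters of $\omega$, so even $Y$-counts are preserved under adding edges), hence FKG is available and is used in the paper for the lower bound of the factorization. The true obstructions are the parity degeneracy above and the upper bound, for which the RSW-plus-Markov decoupling you describe is indeed the right tool, and for which one must check, as you note, that constants are uniform in $T_0,T_1$.
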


To prove \eqref{eq:mixing} (the other inequalities are obtained in a similar fashion), we condition on the currents in $G_0$ and $G_1$ and express each term in the displayed equations above in terms of spin-spin correlations of the Ising model on $G$. Then, we interpret these spin-spin correlations in terms of the random cluster model using the Edwards-Sokal coupling, and use crossing estimates for the random cluster model to compare the different spin-spin correlations. 

\begin{proof}We prove the first inequality, the others two follow from the same observations. For $i=0,1$, let $\partial_i G$ be the set of vertices in $\partial G$ that are neighbors (in $\Omega$) of a vertex in $G_i$. Define the subgraph $\overline G_i$ of $\Omega$ obtained from $G_i$ by adding the vertices in $\partial_i G$ and edges with one endpoint in $G_i$ and one in $\partial_iG$. Below, the sums over $\n_i$ mean the sum over $\n_i$ on $\overline G_i$ such that $\partial\n_i\cap G_i=A\cap G_i$ and similarly for $\n'_i$. Finally, let $A_i:=\partial\n_i\cap \partial _i G$ and $A_i':=\partial\n_i'\cap \partial _i G$.

We split each current in three parts (one in $\overline G_0$, one in $\overline G_1$, and one on $G$, which is averaged upon to give spin-spin correlations). We obtain 
\begin{align*}
{\bf P}^A_\Omega[E\cap F]&=\frac{\displaystyle\sum_{\n_0,\n_1}w_{\rm RC}(\n_0)w_{\rm RC}(\n_1)\mathbf 1_{\n_0\in E}\mathbf1_{\n_1\in F}\langle\sigma_{A_0}\sigma_{A_1}\rangle_G}{\displaystyle\sum_{\n'_0,\n'_1}w_{\rm RC}(\n'_0)w_{\rm RC}(\n'_1)\langle\sigma_{A'_0}\sigma_{A'_1}\rangle_G},\\
{\bf P}^A_\Omega[E]&=\frac{\displaystyle\sum_{\n_0,\n_1'}w_{\rm RC}(\n_0)w_{\rm RC}(\n'_1)\mathbf 1_{\n_0\in E}\langle\sigma_{A_0}\sigma_{A'_1}\rangle_G}{\displaystyle\sum_{\n'_0,\n'_1}w_{\rm RC}(\n'_0)w_{\rm RC}(\n'_1)\langle\sigma_{A'_0}\sigma_{A'_1}\rangle_G},\\
{\bf P}^A_\Omega[F]&=\frac{\displaystyle\sum_{\n_0',\n_1}w_{\rm RC}(\n'_0)w_{\rm RC}(\n_1)\mathbf 1_{\n_1\in F}\langle\sigma_{A_0'}\sigma_{A_1}\rangle_G}{\displaystyle\sum_{\n'_0,\n'_1}w_{\rm RC}(\n'_0)w_{\rm RC}(\n'_1)\langle\sigma_{A'_0}\sigma_{A'_1}\rangle_G}.
\end{align*}
To conclude the proof, it suffices to show that as soon as $|A_0|$, $|A'_0|$, $|A_1|$, and $|A'_1|$  have the same parity, we get that
\begin{equation}\label{eq:hug1}
\langle\sigma_{A_0}\sigma_{A_1}\rangle_G\langle\sigma_{A_0'}\sigma_{A_1'}\rangle_G\ge c_{\rm mix}\langle\sigma_{A_0}\sigma_{A_1'}\rangle_G\langle\sigma_{A_0'}\sigma_{A_1}\rangle_G
\end{equation}
(the upper bound follows by symmetry). 

We treat the case (ii) first, i.e.~ the case of $G={\rm Ann}(R,2R)$ (see Fig.~\ref{fig:mixing}) and then explain how to solve the case of quads. Use the Edwards-Sokal coupling (Proposition~\ref{prop:coupling ES}) to rephrase these quantities in terms of the random cluster model. For instance, the left-most term becomes
\[
\langle\sigma_{A_0}\sigma_{A_1}\rangle_{\mathrm{Ann}(R,2R)}=\phi_{\mathrm{Ann}(R,2R)}^0[\mathcal F_{A_0\cup A_1}].
\]
We start with the case of $|A_0|$ and $|A_1|$ of even cardinality. Let $E_0$ be the event that there exists a circuit in $\omega_{|\Lambda_{4R/3}}$ surrounding $\Lambda_R$, and $E_1$ be the event that there exists a circuit in $\omega_{|\Lambda_{2R}}$ surrounding $\Lambda_{5R/3}$. Also, let $E^*$ be the event that there does not exist any path in $\omega$ between $\partial\Lambda_{4R/3}$ and $\partial\Lambda_{5R/3}$. On the one hand, the FKG inequality \eqref{eq:FKG} and the inclusion of events imply that 
\begin{equation}\label{eq:hug2}
\phi_{\mathrm{Ann}(R,2R)}^0[\mathcal F_{A_0\cup A_1}]\ge \phi_{\mathrm{Ann}(R,2R)}^0[\mathcal F_{A_0}]\phi_{\mathrm{Ann}(R,2R)}^0[\mathcal F_{A_1}].
\end{equation}
On the other hand, the comparison between boundary conditions and \eqref{eq:RSW} imply that
\begin{align}
\phi_{\mathrm{Ann}(R,2R)}^0[\mathcal F_{A_0\cup A_1}]&\le\phi_{\mathrm{Ann}(R,2R)}^0[\mathcal F_{A_0\cup A_1}| E_0\cap E^*\cap E_1]\nonumber\\
&\le C_0\phi_{\mathrm{Ann}(R,2R)}^0[\mathcal F_{A_0\cup A_1}\cap E_0\cap E^*\cap E_1]\nonumber\\
&=C_0\phi_{\mathrm{Ann}(R,2R)}^0[\mathcal F_{A_0}\cap E_0\cap E^*\cap\mathcal F_{A_1}\cap E_1]\nonumber\\
&\le C_0\phi_{\mathrm{Ann}(R,2R)}^0[\mathcal F_{A_0}\cap E_0|E^*\cap\mathcal F_{A_1}\cap E_1]\phi_{\mathrm{Ann}(R,2R)}^0[\mathcal F_{A_1}\cap E_1|E^*].\label{eq:on1}
\end{align}
Now, let $\pmb\Omega$ be the set of vertices in ${\rm Ann}(R,2R)$ that are not connected to the complement of $\Lambda_{5R/3}$. We deduce from the spatial Markov property, the comparison between boundary conditions, and the fact that $\mathcal F_{A_0}\cap E_0$ is increasing, that for every $\Omega$ for which $\{\pmb\Omega=\Omega\}\cap E^*\cap \mathcal F_{A_1}\cap E_1$ is non-empty,
\[
\phi^0_{\mathrm{Ann}(R,2R)}[\mathcal F_{A_0}\cap E_0|\{\pmb\Omega=\Omega\}\cap E^*\cap \mathcal F_{A_1}\cap E_1]=\phi^0_{\Omega}[\mathcal F_{A_0}\cap E_0]\le \phi^0_{{\rm Ann}(R,2R)}[\mathcal F_{A_0}\cap E_0].
\]
Summing over all those $\Omega$  and using the inclusion of events gives
\begin{equation}\label{eq:on2}
\phi^0_{\mathrm{Ann}(R,2R)}[\mathcal F_{A_0}\cap E_0| E^*\cap \mathcal F_{A_1}\cap E_1]\le \phi^0_{{\rm Ann}(R,2R)}[\mathcal F_{A_0}\cap E_0]\le \phi^0_{{\rm Ann}(R,2R)}[\mathcal F_{A_0}].
\end{equation}
Similarly, one gets
\begin{equation}\label{eq:on3}
\phi^0_{\mathrm{Ann}(R,2R)}[\mathcal F_{A_1}\cap E_1| E^*]\le \phi^0_{{\rm Ann}(R,2R)}[\mathcal F_{A_1}].
\end{equation}
 Together, \eqref{eq:hug2}--\eqref{eq:on3} imply  that 
 \begin{align*}
\phi_{\mathrm{Ann}(R,2R)}^0[\mathcal F_{A_0}]\phi_{\mathrm{Ann}(R,2R)}^0[\mathcal F_{A_1}]\le \phi_{\mathrm{Ann}(R,2R)}^0[\mathcal F_{A_0\cup A_1}]
 &\le C_0\phi^0_{\mathrm{Ann}(R,2R)}[\mathcal F_{A_0}]\phi^0_{\mathrm{Ann}(R,2R)}[\mathcal F_{A_1}].
\end{align*}
Plugging this factorization property (for $A_0,A_1,A'_0,A'_1$) concludes the proof of \eqref{eq:hug1} when $|A_0|$ and $|A_1|$ are even.

When $A_0$ and $A_1$ are odd, note that $\mathcal F_{A_0\cup A_1}\cap E_0\cap E_1$ is equal to the intersection of 
\begin{itemize}
\item the event $\mathcal F'_{A_0}$ that $E_0$ occurs and every cluster except the cluster of the inner-most circuit in $\Lambda_{4R/3}$ surrounding $\Lambda_R$ intersects $A_0$ an even number of times, 
\item the event $\mathcal F'_{A_1}$ that $E_1$ occurs and every cluster except the cluster of the outer-most circuit in $\Lambda_{2R}$ surrounding $\Lambda_{5R/3}$ intersects $A_1$ an even number of times,
\item the event $F$ that $\Lambda_R$ is connected to $\partial\Lambda_{2R}$.
\end{itemize}
Using this observation, a sequence of inequalities similar to the previous one concludes the proof.

Finally, when we are in the case $(i)$, one may split the quad $(D,a,b,c,d)$ into three quads $(D_0,c',c,d,d')$, $(D',a',b',c',d')$, and $(D_1,a',a,b',b)$ (see Fig.~\ref{fig:mixing}) in such a way that the quads have extremal length in $(\kappa',1/\kappa')$ with $\kappa'=\kappa'(\kappa)>0$. Then, one sets $E_0$ to be the event that $D_0$ is crossed from $(cc')$ to $(dd')$, $E_1$ to be the event that $D_1$ is crossed from $(aa')$ to $(bb')$, and $E^*$ the event that $(a'b')$ is not connected to $(c'd')$. The rest of the proof is the same.
\end{proof}

\subsection{Monotonicity properties of the double random current}

One of the most important properties of the Ising and random cluster models is that they are positively associated. This property is at the core of most arguments dealing with these models, as it conveniently leads to monotonicity of the averages of certain ``increasing'' observables, and to the classical FKG inequality. Unfortunately, the double random current does not satisfy the positive association. Nevertheless, for certain connection probabilities, it still enjoys some sort of monotonicity. Below, we collect some examples of these specific monotonicity properties.

We start with the monotonicity of connectivity properties with respect to coupling constants (see Fig.~\ref{fig:monotonicity}).
For the next lemma, we, for once, speak of the Ising model on $G$ with non-negative coupling constants $J:=(J_{x,y}:\{x,y\}\subset E)\in \mathbb R_+^{E}$ defined like the nearest neighbor Ising model, but with Hamiltonian 
\[
H_{G,J}(\sigma)=-\sum_{\{x,y\}\in E}J_{x,y}\sigma_x\sigma_y.
\]
We omit the dependency on $J$ in the notation below but one should remember that we consider this more general framework for the next lemma.
 \begin{lemma}[Monotonicity in coupling constants]\label{lem:monotonicity}
Consider two graphs $G$ and $G'$ and two sets $U$ and $V$. The quantity
\[
\mathbf P^{\emptyset,\emptyset}_{G,G'}[U\stackrel{\n_1+\n_2}\longleftrightarrow V]
\]
is increasing in the coupling constants of edges on $G$ (resp.~on $G'$) that are disconnected in $G$ (resp.~in $G'$) from $V$ by $U$ (by which we mean that any path from an endpoint of the edge to $V$ must intersect $U$).\end{lemma}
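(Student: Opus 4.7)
By the symmetric roles played by $G$ and $G'$, it suffices to treat an edge $e=\{a,b\}\in E(G)$ satisfying the separation hypothesis, namely that every path in $G$ from $\{a,b\}$ to $V$ meets $U$. Write $F$ for the event $\{U\stackrel{\n_1+\n_2}\longleftrightarrow V\}$. A direct differentiation yields
\[
J_e\,\tfrac{d}{dJ_e}\mathbf P^{\emptyset,\emptyset}_{G,G'}[F]=\mathbf E^{\emptyset,\emptyset}_{G,G'}\bigl[(\n_1)_e\,\mathbf 1_F\bigr]-\mathbf E^{\emptyset,\emptyset}_{G,G'}\bigl[(\n_1)_e\bigr]\,\mathbf P^{\emptyset,\emptyset}_{G,G'}[F],
\]
and the elementary substitution $\n_1\mapsto\n_1-\delta_e$ on $\{(\n_1)_e\geq 1\}$ gives the standard single-current identity
\[
\mathbf E^{\emptyset}_G\bigl[(\n_1)_e\,g(\n_1)\bigr]=J_e\,\langle\sigma_a\sigma_b\rangle_G\,\mathbf E^{\{a,b\}}_G\bigl[g(\n_1+\delta_e)\bigr].
\]
Applying this with $g(\n_1)=\mathbf E^{\emptyset}_{G'}[\mathbf 1_F(\n_1+\n_2)]$ and with $g\equiv 1$ reduces the derivative to
\[
\tfrac{d}{dJ_e}\mathbf P^{\emptyset,\emptyset}_{G,G'}[F]=\langle\sigma_a\sigma_b\rangle_G\Bigl(\mathbf P^{\{a,b\},\emptyset}_{G,G'}\bigl[F\text{ in }\n_1+\delta_e+\n_2\bigr]-\mathbf P^{\emptyset,\emptyset}_{G,G'}[F]\Bigr).
\]

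Since $F$ is monotone in the edge-support of the total current, adding the extra $\delta_e$ only helps, and it suffices to establish the source-change inequality
\begin{equation*}
\mathbf P^{\{a,b\},\emptyset}_{G,G'}[U\stackrel{\n_1+\n_2}\longleftrightarrow V]\;\geq\;\mathbf P^{\emptyset,\emptyset}_{G,G'}[U\stackrel{\n_1+\n_2}\longleftrightarrow V].\tag{$\ast$}
\end{equation*}
To prove $(\ast)$ I would use the coupling of Proposition~\ref{prop:coupling random cluster}: adjoining an independent Bernoulli$(1-\sqrt{1-p_c})$ field to $\mathrm{supp}(\n_1)$ produces a configuration $\omega_1$ distributed as $\phi_G^0[\,\cdot\,|\,\mathcal F_B]$, which coincides with $\phi_G^0$ when $B=\emptyset$ and with $\phi_G^0[\,\cdot\,|\,a\longleftrightarrow b]$ when $B=\{a,b\}$. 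Since $\{a\longleftrightarrow b\}$ is increasing, the FKG inequality~\eqref{eq:FKG} yields a monotone coupling $\omega_1^{(\{a,b\})}\supseteq\omega_1^{(\emptyset)}$. The separation hypothesis is then decisive: the additional edges in $\omega_1^{(\{a,b\})}\setminus\omega_1^{(\emptyset)}$ either lie inside the component $W$ of $\{a,b\}$ in $G\setminus U$, in which case they cannot affect the event $F$ (because $V$ is unreachable from $W$ without crossing $U$), or they traverse $U$, in which case they can only enlarge the cluster of $U$ and make $F$ more likely.

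The main obstacle is that the random current is not a deterministic function of its random cluster configuration, so the FKG coupling above must be upgraded to a coupling of the random currents themselves. A natural route is to sample $\n_1$ conditionally on $\omega_1$ via the explicit reconstruction of \cite{LupWer,ADTW}, whose conditional law depends only on $\omega_1$ together with (when $B=\{a,b\}$) a single path from $a$ to $b$ chosen inside $\omega_1$; the separation hypothesis then forces the extra edges of this path to sit on the $W$-side of $U$, with the combinatorial dichotomy above handling each edge. An alternative would be to attack $(\ast)$ combinatorially via the switching principle (Lemma~\ref{lem:switch}), expanding both sides over a common underlying multigraph and using the separation property to match terms in a weight-preserving way.
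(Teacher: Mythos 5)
Your strategy is genuinely different from the paper's: you differentiate in $J_e$, use the standard identity $\mathbf E^{\emptyset}_G[\n_e\,g(\n)]=J_e\langle\sigma_a\sigma_b\rangle_G\,\mathbf E^{\{a,b\}}_G[g(\n+\delta_e)]$, and reduce to the source-insertion inequality $(\ast)$. The paper instead conditions on the union $\mathsf C$ of $(\n_1+\n_2)$-clusters meeting $V$, factorizes $\mathbf P^{\emptyset,\emptyset}_{G,G'}[\mathsf C=C]$ into a piece independent of the relevant coupling constants times the ratios $Z^\emptyset_{G\setminus C}/Z^\emptyset_G$ and $Z^\emptyset_{G'\setminus C}/Z^\emptyset_{G'}$, and then shows these ratios are decreasing by rewriting $Z^\emptyset_G/Z^\emptyset_{G\setminus C}=\langle S_C\rangle_{G\setminus C}$ for a polynomial $S_C$ with nonnegative coefficients and invoking Griffiths' second inequality. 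Your differentiation step, the single-current identity, and the observation that dropping the extra $\delta_e$ only weakens the inequality are all correct.

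The gap is in $(\ast)$, which is \emph{not} an instance of Lemma~\ref{lem:monotonicity in sources}: that lemma requires $A\subset V$, whereas here the inserted sources $\{a,b\}$ are on the far side of $U$ from $V$. Your proposed route via Proposition~\ref{prop:coupling random cluster} and FKG produces a monotone coupling of the \emph{random-cluster} configurations $\omega_1^{(\{a,b\})}\supseteq\omega_1^{(\emptyset)}$, but $F$ is a function of $\mathrm{supp}(\n_1+\n_2)$, not of $\omega_1$, and the conditional law of $\n_1$ given $\omega_1$ is genuinely different for the two source sets (the parity constraints differ on every cluster). A monotone coupling of the supports $\mathrm{supp}(\n_1^{(\{a,b\})})\supseteq\mathrm{supp}(\n_1^{(\emptyset)})$ does not follow from FKG on $\omega_1$, is not known in general, and is in fact the crux of the matter: it would be essentially equivalent to what you are trying to prove. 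Your ``natural route'' (resampling $\n_1$ from $\omega_1$ plus a distinguished $a$--$b$ path) and the ``alternative'' via the switching principle are both left as sketches and do not close this hole. Relatedly, the dichotomy you draw --- extra edges lie in $W$ or cross $U$ --- is not exhaustive (extra $\omega_1$-edges can sit entirely on the $V$-side of $U$), and the ``separation is decisive'' phrasing suggests a localization of the extra edges near $\{a,b\}$ that stochastic domination does not provide. The separation hypothesis certainly enters somewhere (the lemma is false without it), but not in the way your coupling sketch uses it; the paper's argument makes the role of separation explicit by noting that the piece $Z^{\emptyset,\emptyset}_{\overline C,\overline C'}[\mathsf C=C]$ is independent of the coupling constants of edges cut off by $U$.
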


By symmetry it is also increasing in coupling constants that are disconnected in $G$ from $U$ by $V$.

The proof consists in conditioning on the union $C$ of all the clusters intersecting $V$ in $\n_1+\n_2$ and then splitting the sum using the fact that all the edges that have exactly one endpoint in $C$ have zero current in $\n_1$ and $\n_2$. Then, the monotonicity follows from the monotonicity of certain ratios of partition functions. 
\begin{figure} 
		\begin{center}
			\includegraphics[scale=1]{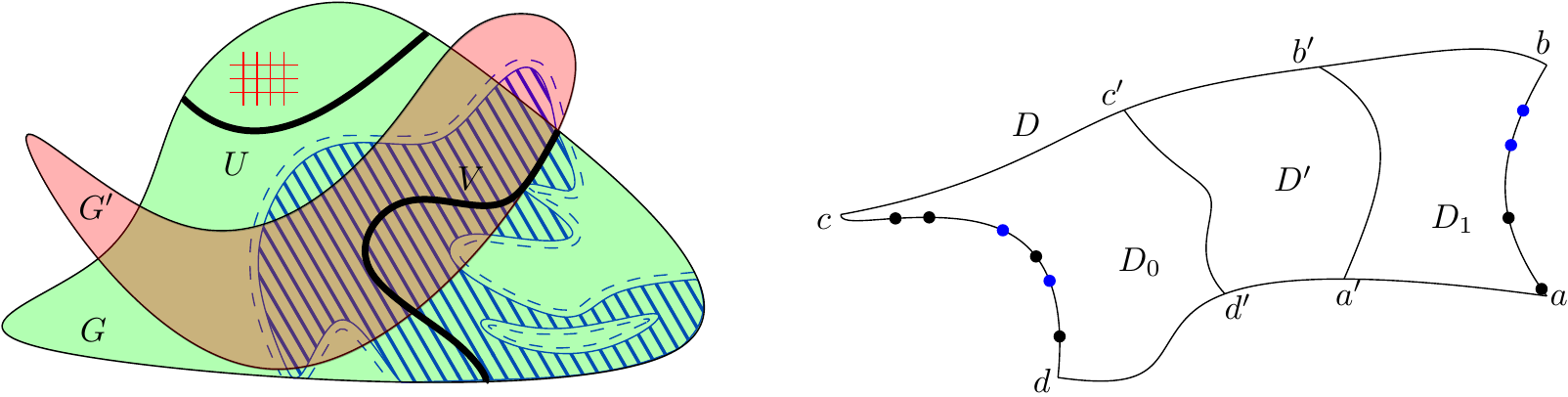}  
		\end{center}
		\caption{On the left, the two graphs $G$ and $G'$, as well as the sets $U$ and $V$. The red grid depicts the edges that are disconnected from $V$ by $U$. The blue part is the graph $C$ composed of the clusters in $\n_1+\n_2$ intersecting $V$. On the right, the quad $(D,a,b,c,d)$ with the quads $(D',a',b',c',d')$, $(D_0,c,d,d',c')$, and $(D_1,a,b,b',a')$. The two last quads are introduced to be able to use the mixing property and erase sources. The quad $(D',a',b',c',d')$ is used to bound from above the crossing probability. The black and blue dots denote the sources of the two currents. Note that there is an even number of such sources on both sides and for both currents (otherwise there must deterministically be a path from one side to the other).
		}	\label{fig:monotonicity}
\end{figure}
\begin{proof}
Let $\mathsf C$ be the clusters (in $\n_1+\n_2$) intersecting $V$. For a subset $C$ not intersecting $U$, let $\overline C$ and $\overline C'$ be the set of vertices of $G$ and $G'$ that are either in $C$ or neighboring a vertex in $C$. Since the two currents vanish on edges with one endpoint in $\mathsf C$ and one outside, we deduce the following factorization relation:
\begin{align*}
\mathbf P^{\emptyset,\emptyset}_{G,G'}[\mathsf C=C]=\frac{Z_{\overline C,\overline C'}^{\emptyset,\emptyset}[\mathsf C=C] Z_{G\setminus C}^\emptyset Z_{G'\setminus C}^\emptyset}{Z_G^\emptyset Z_{G'}^\emptyset},
\end{align*}
where $Z_{\overline C,\overline C'}^{\emptyset,\emptyset}[\mathsf C=C] $ denotes the sum of $w_{\rm RC}(\n_1)w_{\rm RC}(\n_2)\mathbb I[\mathsf C(\n_1+\n_2)=C]$ for $\n_1$ on $\overline C$ and $\n_2$ on $\overline C'$, and $G\setminus C$ denotes the subgraph of $G$ induced by edges with both endpoints outside of $C$.

Note that $Z_{\overline C,\overline C'}^{\emptyset,\emptyset}[\mathsf C=C] $ does not depend on the coupling constants for edges that are disconnected in $G$ from $V$ by $U$. As a consequence, the dependency in the coupling constants that we are interested in is encapsulated in the quantities $Z_{G\setminus C}^\emptyset/Z_G^\emptyset$ and $Z_{G\setminus C'
}^\emptyset/Z_G^\emptyset$, that we need to prove are decreasing in the corresponding coupling constants. 

We prove the result for the former quantity. First, $Z_{G\setminus C}^\emptyset$ can be interpreted as $2^{-|G\setminus C|}$ times the partition function of the Ising model on $G\setminus C$, or equivalently $2^{-|G|}$ times the partition function of the Ising model on $G$ with coupling constants equal to 0 on edges in $\overline C$. Yet, the latter is also the average, on the standard Ising model on $G\setminus C$, of the function defined for $\sigma\in\{\pm1\}^{G\setminus C}$ by
\[
S_C(\sigma):=\sum_{\tau\in\{\pm1\}^{\overline C}:\tau_{|\overline C\setminus C}=\sigma_{|\overline C\setminus C}}e^{-H_{\overline C}(\tau)}\qquad\text{where}\qquad 
H_{\overline C}(\tau):=-\sum_{x,y\in\overline C:\{x,y\}\in E}J_{xy}\tau_x\tau_y.
\]
 As a consequence, we end up with the equality
\[
\frac{Z_G^\emptyset}{Z_{G\setminus C}^\emptyset}=\langle S_C\rangle_{G\setminus C},
\]
where  $\langle\cdot\rangle_{G\setminus C}$ is a slight abuse of notation here and denotes the measure on $G$ with coupling constants equal to 0 on the edges of $\overline C$. We see that the average of $S_C$
 is increasing in coupling constants by the second Griffiths' inequality since 
 \[
S_C=\sum_{x_1,\dots,x_n\subset C}\lambda(x_1,\dots,x_n)\sigma_{x_1}\cdots\sigma_{x_n}
\] (with $\lambda(\cdot)\ge0$) is a positive combination of products of spins, thus concluding the proof. \end{proof}

The second monotonicity property we are interested in deals with the adjunction of sources.

\begin{lemma}[Monotonicity in sources]\label{lem:monotonicity in sources}
Consider two graphs $G$ and $G'$. For every $U,V\subset G$, every $A\subset V$, and every $B$ arbitrary,
\[
\mathbf P^{A,B}_{G,G'}[U\stackrel{\n_1+\n_2}\longleftrightarrow V]\ge\mathbf P^{\emptyset,B}_{G,G'}[U\stackrel{\n_1+\n_2}\longleftrightarrow V]. \]
\end{lemma}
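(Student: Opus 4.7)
The plan is to condition on $\n_2=m$, which is legitimate because $\mathbf P^{A,B}_{G,G'}$ and $\mathbf P^{\emptyset,B}_{G,G'}$ share the marginal $\mathbf P^B_{G'}$ on $\n_2$; the statement then reduces to the single-current inequality
\[
\mathbf E^A_G\bigl[\mathbf 1_{U\leftrightarrow V \text{ in }\n+m}\bigr]\ge\mathbf E^\emptyset_G\bigl[\mathbf 1_{U\leftrightarrow V \text{ in }\n+m}\bigr]
\]
for every deterministic current $m$ on $G'$. Clearing denominators by $Z^A(G)$ and $Z^\emptyset(G)$ and introducing an auxiliary independent current $\n'\in\sets^\emptyset(G)$ symmetrizes this into the equivalent positivity
\[
\sum_{\substack{\n\in\sets^A(G)\\\n'\in\sets^\emptyset(G)}}w_G(\n)\,w_G(\n')\bigl[\mathbf 1_{U\leftrightarrow V\text{ in }\n+m}-\mathbf 1_{U\leftrightarrow V\text{ in }\n'+m}\bigr]\ge 0.
\]

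I would then group this sum by the labeled multigraph $\mathcal M:=\n+\n'$, which automatically satisfies $\partial\mathcal M=A$, so each connected component of $\mathcal M$ contains an even number of vertices of $A$. Consequently one can pick a sub-multigraph $\mathcal K\subset\mathcal M$ consisting of edge-disjoint simple paths pairing the $A$-vertices component by component; since $A\subset V$ by hypothesis, every such path has both endpoints in $V$. Applying the switching principle (Lemma~\ref{lem:switch}), the sum over $\{\n\subset\mathcal M:\partial\n=A\}$ coincides with the sum over $\{\n\subset\mathcal M:\partial\n=\emptyset\}$ via the bijection $\n\mapsto\n\Delta\mathcal K$, reducing the problem to showing, for each fixed $\mathcal M$,
\[
\sum_{\substack{\n\subset\mathcal M\\\partial\n=\emptyset}}\bigl[\mathbf 1_{U\leftrightarrow V\text{ in }(\n\Delta\mathcal K)+m}-\mathbf 1_{U\leftrightarrow V\text{ in }\n+m}\bigr]\ge 0.
\]

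The heart of the argument is choosing $\mathcal K$ so that this inner sum is non-negative. The key input is $\partial(\n\Delta\mathcal K)=A\subset V$: the XORed current is forced to carry a backbone pairing the $A$-vertices, all of which lie in $V$. Hence any $u$-to-$v$ path present in $\n+m$ that is severed at an edge of $\n\cap\mathcal K\setminus m$ can be continued along $\mathcal K$ until it reaches an endpoint of a backbone path, and that endpoint already belongs to $A\subset V$. The main obstacle I foresee is verifying that a single deterministic choice of $\mathcal K$ (depending on $\mathcal M$ but not on $\n$) makes this rerouting work for every $\n$ with $\partial\n=\emptyset$ simultaneously; this seems to require a canonical (e.g.\ lexicographic) selection of the backbone paths in $\mathcal M$ and a careful case analysis of how the $u$-to-$v$ path in $\n+m$ intersects $\mathcal K$, using that edges of $\mathcal K\setminus\n$ are present in $(\n\Delta\mathcal K)+m$ and thus provide the necessary detours terminating in $V$.
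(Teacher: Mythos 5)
Your reduction is sound up to a point: conditioning on $\n_2=m$ is legitimate since the $\n_2$-marginal is the same under both measures, the symmetrisation with an auxiliary sourceless current $\n'$ is correct, and the passage to sums over sub-multigraphs $\mathcal N\subset\mathcal M$ together with the switching principle is a valid application of Lemma~\ref{lem:switch}. The gap is exactly where you flag it: you still need to exhibit, for each $\mathcal M$ with $\partial\mathcal M=A$, a single $\mathcal K\subset\mathcal M$ with $\partial\mathcal K=A$ for which the pointwise inequality $\mathbf 1_{U\leftrightarrow V\text{ in }\mathcal N+m}\le\mathbf 1_{U\leftrightarrow V\text{ in }(\mathcal N\Delta\mathcal K)+m}$ holds for \emph{every} sourceless $\mathcal N\subset\mathcal M$, and this is not established. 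Your rerouting heuristic breaks down precisely because the edges of $\mathcal K$ along which you propose to detour may themselves lie in $\mathcal N\cap\mathcal K$ and hence be deleted by the XOR; the backbone path from the severed edge to its $A$-endpoint is not available wholesale in $\mathcal N\Delta\mathcal K$, only its $\mathcal K\setminus\mathcal N$ portion is, and nothing prevents $\mathcal N$ from overlapping $\mathcal K$ on both sides of the severed edge. In small examples one can always pick a $\mathcal K$ that avoids the obstruction (e.g.\ in a theta graph between $a,b\in A$ with $U=\{c_2\}$, the path through $c_2$ is a \emph{bad} $\mathcal K$ while the path through $c_3$ is good), but you would still need a general existence argument for a good $\mathcal K$, and I don't see one. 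This is the genuine hole; the rest is scaffolding.

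The paper proves the lemma by a completely different mechanism that side-steps this combinatorial issue. It conditions on the union $\mathsf C$ of the $\n_1+\n_2$-clusters that is relevant to the non-connection event (so that, on $\{U\nxlra V\}$ and since $A\subset V$, the sources $A$ fall entirely on the opposite side of the decomposition from $\mathsf C$), factorises the probability $\mathbf P[\mathsf C=C]$ into a product of random-current partition functions on $\overline C$ and on $G\setminus C$, and then compares the resulting ratios $Z_G^A/Z_{G\setminus C}^A$ and $Z_G^\emptyset/Z_{G\setminus C}^\emptyset$ by writing them as $2^{|C|}\langle\sigma_A S_C\rangle_{G\setminus C}/\langle\sigma_A\rangle_{G\setminus C}$ and $2^{|C|}\langle S_C\rangle_{G\setminus C}$, where $S_C$ is a positive linear combination of spin products; the second Griffiths inequality then gives $\langle\sigma_A S_C\rangle\ge\langle\sigma_A\rangle\langle S_C\rangle$ and hence the claimed monotonicity. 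This uses the ferromagnetic structure (Griffiths) rather than a per-multigraph counting argument, which is why it does not run into the obstruction you encountered.
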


\begin{proof}
We proceed as in the previous proof (we reuse the notation $S_C$) and end up with
\[
\frac{Z_G^A}{Z_{G\setminus C}^A}=2^{|C|}\frac{\langle \sigma_AS_C\rangle_{G\setminus C}}{\langle \sigma_A\rangle_{G\setminus C}}\ge2^{|C|}\langle S_C\rangle_{G\setminus C}=\frac{Z_G^\emptyset}{Z_{G\setminus C}^\emptyset},
\]
where the inequality is due to the second Griffiths' inequality. Then one retransforms the quantity using the same transformation as in the previous proof.
\end{proof}

We deduce from these two monotonicity properties two useful corollaries.

\begin{corollary}\label{cor:upper bound crossing}
There exists $C>0$ such that for every quad $(D,a,b,c,d)$, 
\[
\mathbf P^{\emptyset,\emptyset}_{D,D}[(ab)\stackrel{\n_1+\n_2}\longleftrightarrow (cd)]\le CZ_{D}[(ab),(cd)].
\]
\end{corollary}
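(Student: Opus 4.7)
The strategy is to reduce the claim to the random cluster harmonic estimate \eqref{eq:bound harmonic measure easy} by merging the two arcs into single vertices, and then applying the switching lemma together with the Edwards--Sokal coupling on the resulting graph. Let $D^{\bullet\bullet}$ denote the graph obtained from $D$ by identifying all vertices of $(ab)$ into a single vertex $A$, and all vertices of $(cd)$ into a single vertex $C$.

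The first step is to establish the monotonicity bound
\[
\mathbf P^{\emptyset,\emptyset}_{D,D}\big[(ab)\stackrel{\n_1+\n_2}\longleftrightarrow(cd)\big] \;\le\; \mathbf P^{\emptyset,\emptyset}_{D^{\bullet\bullet},D^{\bullet\bullet}}\big[A\stackrel{\n_1+\n_2}\longleftrightarrow C\big].
\]
This follows from Lemma~\ref{lem:monotonicity} applied with $U=(ab)$ and $V=(cd)$: any edge $e$ with both endpoints in $(ab)$ is trivially disconnected from $(cd)$ by $(ab)$, so the left-hand side is nondecreasing in the coupling constant $J_e$, and a symmetric argument handles edges inside $(cd)$. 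Raising all these coupling constants to $+\infty$ forces every edge inside an arc to carry positive current with probability one, collapsing $(ab)$ and $(cd)$ each into a single cluster; in the limit the event $(ab)\leftrightarrow(cd)$ becomes $A\leftrightarrow C$ and the residual marginal is exactly the double current on $D^{\bullet\bullet}$.

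On the merged graph $D^{\bullet\bullet}$ I then apply the switching lemma (Lemma~\ref{lem:switching lemma}) with source sets $\{A,C\}$ for both currents. Since $\{A,C\}\,\Delta\,\{A,C\}=\emptyset$ and $\sigma_{\{A,C\}}^2=1$, the prefactor becomes $\langle\sigma_A\sigma_C\rangle^{-2}$ and the indicator $\mathbf 1_{\mathcal F_{\{A,C\}}}$ reduces to $\mathbf 1_{A\leftrightarrow C}$, which yields the classical identity
\[
\mathbf P^{\emptyset,\emptyset}_{D^{\bullet\bullet},D^{\bullet\bullet}}[A\longleftrightarrow C] = \langle\sigma_A\sigma_C\rangle_{D^{\bullet\bullet}}^{\,2}.
\]
Edwards--Sokal (Proposition~\ref{prop:coupling ES}) rewrites the right-hand side as $\phi^0_{D^{\bullet\bullet}}[A\leftrightarrow C]^{2}$, and since merging vertices is exactly wiring them in the random cluster representation, this equals $\phi_D^{(ab),(cd)}[(ab)\leftrightarrow(cd)]^{2}$. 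Combining with \eqref{eq:bound harmonic measure easy} then gives
\[
\phi_D^{(ab),(cd)}[(ab)\stackrel{\omega}\longleftrightarrow(cd)]^{2} \;\le\; C^2\, Z_D[(ab),(cd)],
\]
and chaining the displayed inequalities yields the conclusion.

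The main technical point is the rigorous justification of the $J\to\infty$ limit in the first step: the current weight $(\beta J_e)^{\n_e}/\n_e!$ itself diverges on each modified edge, so one must verify that these divergences cancel with the partition function and that the limiting measure on the remaining edges coincides with the double current on $D^{\bullet\bullet}$. The cleanest route is to carry out the bookkeeping at the level of the Edwards--Sokal and random-current/random-cluster couplings (Proposition~\ref{prop:coupling random cluster}), where $J_e\to\infty$ simply forces the corresponding random cluster edge open with probability one, and then to transfer the monotone limit back to the double current through the pointwise identity $\mathbf P^{\emptyset,\emptyset}[x\leftrightarrow y]=\langle\sigma_x\sigma_y\rangle^2$.
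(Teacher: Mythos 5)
Your proposal is correct and follows essentially the same route as the paper's proof: merge the two arcs via monotonicity in coupling constants (Lemma~\ref{lem:monotonicity}), apply the switching lemma on $D^{\bullet\bullet}$ to get the squared spin--spin correlation, convert to a random cluster crossing probability via Edwards--Sokal, and close with the harmonic estimate. If anything, your invocation of \eqref{eq:bound harmonic measure easy} directly is slightly cleaner than the paper's reference to Corollary~\ref{cor:bound harmonic measure} (whose statement is tailored to a specific box-in-domain geometry), and your remarks about justifying the $J\to\infty$ limit fill in a step the paper treats as implicit.
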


The idea behind the proof will be used repeatedly in the next sections so let us discuss it here in details. We consider a graph $D^{\bullet\bullet}$ that is obtained from $D$ by merging the vertices in $(ab)$ and $(cd)$ together into two ``master'' vertices, that we still call $(ab)$ and $(cd)$. Note that the Ising model on this graph can also be understood as an Ising model on $D$ with coupling constants equal to infinity for edges with both endpoints in $(ab)$, or both endpoints in $(cd)$. In particular, by the monotonicity in coupling constants the connectivity probability between $(ab)$ and $(cd)$ in the double random-current is larger on $D^{\bullet\bullet}$ than on $D$. Now, the advantage of $D^{\bullet\bullet}$ over $D$ is that $(ab)$ and $(cd)$ are single vertices, and that we can therefore use the switching lemma to rewrite the connectivity probabilities in terms of spin-spin correlations (for the Ising model on $D^{\bullet\bullet}$), which can then be estimated using  the Edwards-Sokal coupling and the random cluster model. This trick, that we will refer to as the {\em merging-vertices trick}, of passing to a graph with merged vertices will be used quite often when trying to estimate the probability that two sets are connected to each other. 

We now present the proof (which is shorter than the explanation).
\begin{proof}
By monotonicity in coupling constants (applied twice, once to $\n_1$ and once to $\n_2$), we have that  
\begin{align*}
\mathbf P^{\emptyset,\emptyset}_{D,D}[(ab)\stackrel{\n_1+\n_2}\longleftrightarrow (cd)]&\le \mathbf P^{\emptyset,\emptyset}_{D^{\bullet\bullet},D^{\bullet\bullet}}[(ab)\stackrel{\n_1+\n_2}\longleftrightarrow (cd)]=\langle \sigma_{(ab)}\sigma_{(cd)}\rangle_{D^{\bullet\bullet}}^2,
\end{align*}
where $D^{\bullet\bullet}$ is the graph obtained from $D$  by merging the vertices of $(ab)$ and $(cd)$ into two vertices that we keep denoting $(ab)$ and $(cd)$, and the equality is due to the switching lemma. Then, the Edwards-Sokal coupling and Corollary~\ref{cor:bound harmonic measure} imply the claim. 
\end{proof}

\begin{corollary}[crossing probabilities with arbitrary sources]\label{cor:upper bound crossing close 1}
For every $\kappa>0$, there exists $c=c(\kappa)>0$ such that for every quad $(D,a,b,c,d)$ with extremal distance bounded by $\kappa$ from above, and every set of sources $A$ and $B$ on $(ab)\cup(cd)$ that have an even intersection with $(ab)$ and $(cd)$,
\[
\mathbf P^{A,B}_{D,D}[(ab)\stackrel{\n_1+\n_2}\longleftrightarrow (cd)]\le 1-c.
\]
\end{corollary}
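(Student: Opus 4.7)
The plan is to adapt the \emph{merging-vertices trick} from the proof of Corollary~\ref{cor:upper bound crossing}, exploiting the parity hypothesis to absorb the sources at the merged boundary vertices and reduce to a sourceless crossing problem on a modified graph, then to close via the switching lemma, Edwards--Sokal, and RSW.

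Let $D^{\bullet\bullet}$ be the graph obtained from $D$ by identifying $(ab)$ into a single vertex (still written $(ab)$) and $(cd)$ into a single vertex $(cd)$. The even parities of $|A\cap(ab)|$, $|A\cap(cd)|$, $|B\cap(ab)|$, $|B\cap(cd)|$ are exactly the condition that the pushforward sources on $D^{\bullet\bullet}$ are empty: each merged vertex inherits parity $0$. I would first prove, by sending the coupling constants $J_e$ to $+\infty$ along every edge $e$ with both endpoints in $(ab)$ (and similarly in $(cd)$) and invoking a form of the monotonicity in coupling constants from Lemma~\ref{lem:monotonicity} extended to this source setting, that
\[
\mathbf P^{A,B}_{D,D}\bigl[(ab)\stackrel{\n_1+\n_2}{\longleftrightarrow}(cd)\bigr]\ \le\ \mathbf P^{\emptyset,\emptyset}_{D^{\bullet\bullet},D^{\bullet\bullet}}\bigl[(ab)\stackrel{\n_1+\n_2}{\longleftrightarrow}(cd)\bigr].
\]
By the computation at the end of the proof of Corollary~\ref{cor:upper bound crossing} (the switching lemma applied to sourceless currents, followed by the Edwards--Sokal coupling, Proposition~\ref{prop:coupling ES}), the right-hand side equals $\langle\sigma_{(ab)}\sigma_{(cd)}\rangle_{D^{\bullet\bullet}}^{\,2}=\phi_D^{(ab),(cd)}[(ab)\leftrightarrow(cd)]^{2}$, the last equality reflecting that merging vertices in the random cluster is the same as wiring them. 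The RSW estimate (Theorem~\ref{thm:RSW}) applied under the extremal-distance hypothesis on $\ell_D[(ab),(cd)]$ then gives $\phi_D^{(ab),(cd)}[(ab)\leftrightarrow(cd)]\le 1-c_1$ for some $c_1=c_1(\kappa)>0$, and squaring yields the claimed bound with $c=c(\kappa):=2c_1-c_1^2$.

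The main obstacle is the monotonicity step, since Lemma~\ref{lem:monotonicity} is stated only for $\emptyset$ sources. The key observation is that, because $|A\cap(ab)|$ is even, the spin product $\sigma_{A\cap(ab)}$ descends to the constant function $1$ on the merged graph (all $\sigma_v$ for $v\in(ab)$ become equal to the merged spin), so that $\langle\sigma_A\rangle\to 1$ as $J\to\infty$ on the edges inside $(ab)$, and the source contributions factor out cleanly in the limit. Combining this with the cluster-decomposition argument in the proof of Lemma~\ref{lem:monotonicity} --- where on the crossing event the sources $A\subset(ab)\cup(cd)$ all lie inside the thickened cluster $\overline{\mathsf C}$ of $(cd)$, leaving the outer partition-function factor in the ratio sourceless --- and the second Griffiths inequality (in the manner of Lemma~\ref{lem:monotonicity in sources}), one should be able to establish the desired monotonicity along the coupling-constant deformation.
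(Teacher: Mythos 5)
The plan has a genuine gap at exactly the step you flag as the ``main obstacle,'' and the sketch you offer for filling it is not correct.

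The paper's Lemma~\ref{lem:monotonicity} proves monotonicity of the non-crossing probability by conditioning on $\mathsf C$, the union of $(\n_1+\n_2)$-clusters meeting $V=(cd)$, and only summing over realizations $C$ that do \emph{not} intersect $U=(ab)$ (this is where the factorization $\mathbf P[\mathsf C=C]=Z_{\overline C,\overline C'}^{\cdot,\cdot}[\mathsf C=C]\,Z_{G\setminus C}^{\cdot}\,Z_{G'\setminus C}^{\cdot}/(Z_G^{\cdot}Z_{G'}^{\cdot})$ is applied). In that setting the sources $A\cap(ab)$ are therefore \emph{outside} $\overline{\mathsf C}$, not inside it as you claim; they land in the factor $Z^{A\setminus C}_{D\setminus C}$. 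When you then try to show that $Z^{A\setminus C}_{D\setminus C}/Z^A_D$ is decreasing in the coupling constants inside $(ab)$, you can split it as $\langle\sigma_{A\setminus C}\rangle_{D\setminus C}\cdot\bigl(Z^\emptyset_{D\setminus C}/Z^\emptyset_D\bigr)\cdot\bigl(1/\langle\sigma_A\rangle_D\bigr)$; by Griffiths the first factor is increasing while the other two are decreasing, so the monotonicity you want does not follow from any combination of Lemma~\ref{lem:monotonicity} and the second Griffiths inequality (and Lemma~\ref{lem:monotonicity in sources} is only proved for $A\subset V$, precisely to avoid this issue). In short, your key inequality $\mathbf P^{A,B}_{D,D}[(ab)\leftrightarrow(cd)]\le\mathbf P^{\emptyset,\emptyset}_{D^{\bullet\bullet},D^{\bullet\bullet}}[(ab)\leftrightarrow(cd)]$ is not established, and the justification you sketch is factually wrong about where the sources sit after the cluster decomposition.

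The paper sidesteps this entirely: it cuts $D$ into three sub-quads $D_0,D',D_1$, uses inclusion of events to pass to a crossing of the middle quad $D'$, and then invokes the mixing estimate~\eqref{eq:independence in boundary conditions} on the \emph{complementary} event to compare $\mathbf P^{A,B}$ to $\mathbf P^{\emptyset,\emptyset}$ at the cost of a $c_{\rm mix}^2$ factor. Only at that point, with no sources left, does it apply the merging-vertices trick (to $D'$), Edwards--Sokal, and RSW. This route never requires a monotonicity-in-couplings statement with boundary sources. If you want to keep your direct-merging idea, you would have to either prove the monotonicity with sources on both arcs (which, as the factor analysis above shows, is not a routine extension of the paper's lemmata) or, more in line with the paper, first use mixing to erase the sources before merging.
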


The proof of this corollary consists in splitting the quad $D$ into three quads and in combining the merging-vertices trick to estimate the crossing probability of the mid-section together with the mixing property.
\begin{proof}
We recommend to look at Fig.~\ref{fig:monotonicity}. Divide $D$ into three quads $D_0,D',D_1$ of extremal distance bounded by $\kappa'=\kappa'(\kappa)$ as in the proof of the mixing property. Then, use the inclusion of events in the first inequality, the mixing property \eqref{eq:independence in boundary conditions} (for the complementary event) in the second to get that 
\begin{align*}
\mathbf P^{A,B}_{D,D}[(ab)\stackrel{\n_1+\n_2}\longleftrightarrow (cd)]&\le \mathbf P^{A,B}_{D,D}[(a'b')\stackrel{\n_1+\n_2}\longleftrightarrow(c'd')]\le 1-c_{\rm mix}^2(1-\mathbf P^{\emptyset,\emptyset}_{D,D}[(a'b')\stackrel{\n_1+\n_2}\longleftrightarrow (c'd')]).
\end{align*}
Yet, the monotonicity in coupling constants and the Edwards-Sokal coupling imply that
\begin{align*}
\mathbf P^{\emptyset,\emptyset}_{D,D}[(a'b')\stackrel{\n_1+\n_2}\longleftrightarrow (c'd')]&\le \mathbf P^{\emptyset,\emptyset}_{D^{\bullet\bullet},D^{\bullet\bullet}}[(a'b')\stackrel{\n_1+\n_2}\longleftrightarrow (c'd')]=\phi_{D'}^{(a'b'),(c'd')}[(a'b')\stackrel{\omega}\longleftrightarrow (c'd')]^2,
\end{align*}
where $D^{\bullet\bullet}$ is the graph obtained from $D$ by merging all the vertices in $D_0$ into one vertex, and all those in $D_1$ into another one.

It remains to observe that since $D'$ has extremal distance smaller than $\kappa'$, the previous corollary implies that
\[
\phi_{D'}^{(a'b'),(c'd')}[(a'b')\stackrel{\omega}\longleftrightarrow (c'd')]\le 1-c(\kappa').
\]
The last three displayed equations imply the claim.\end{proof}

\section{Bounds on crossing probabilities for the double random-current: proof of Theorem~\ref{thm:crossing free}}\label{sec:crossing estimate}

In this section, we investigate crossing probabilities for the double random current in quads and prove Theorem~\ref{thm:crossing free}.  We split the section in two.  In the next subsection, we first study the expected number of boxes near the boundary that are connected to $\Lambda_R$. Then, we prove the theorem in the following subsection. Finally, the last subsection is a proof of a similar result which will be useful in next sections.
Below, for a domain  $\Omega$ containing $\Lambda_{R}$, let $\Omega^\bullet$ be the graph obtained by merging all the vertices of $\Lambda_R$ together. We identify the new vertex with $\Lambda_R$.

\subsection{On the expected number of boxes near the boundary that are connected to $\Lambda_R$}\label{sec:4.1}

For a box $B$, call $\overline B$ and $\underline B$ the twice bigger and twice smaller boxes centred on the same vertex. For a domain $\Omega$, let $\Omega^\square_r$ be the connected component containing $\Lambda_R$ in the union of all the boxes $B=\Lambda_r(x)$ with $x\in r\mathbb Z^2$ such that $\Lambda_{2r}(x)
\subset \Omega$. Also, let
\[
\partial_r^\square\Omega:=\{B=\Lambda_r(x)\text{ with $x\in r\mathbb Z^2$ such that $\Lambda_{r}(x)
\subset \Omega^\square_r$ and $\Lambda_{3r}(x)\not\subset \Omega$}\}.
\] 
The reader should be aware that $\Omega^\square_r$ is a subset of $\Omega$ while $\partial_r^\square\Omega$ is a set of boxes of size $r$.
At this stage, one may wonder why we consider only boxes with $\Lambda_{r}(x)\subset\Omega^\square_r$ and not simply $\Lambda_{r}(x)\subset\Omega$. The reason comes from Lemma~\ref{lem:1a}, see Remark~\ref{rmk:1a} below it.

For $\ep>0$, introduce the random variables (see Fig.~\ref{fig:definitionobservables}):
\begin{align*}
N&:=\sum_{B\in\partial_r^\square\Omega}\mathbb I[B\stackrel{\n_1+\n_2}\longleftrightarrow \Lambda_R],\\
\overline N&:=\sum_{B\in\partial_r^\square\Omega}\mathbb I[\overline B\stackrel{\n_1+\n_2}\longleftrightarrow \Lambda_R],\\
\underline N^\circ&:=\sum_{B\in\partial_r^\square\Omega}\mathbb I[\underline B\stackrel{\n_1+\n_2}\longleftrightarrow \Lambda_R,\mathcal A(B)],\\
\overline N_\ep&:=\sum_{B\in\partial_r^\square\Omega}\mathbb I[\mathcal E_\ep(\overline B)],\\
N_\ep^\circ&:=\sum_{B\in\partial_r^\square\Omega}\mathbb I[\mathcal E_\ep(B)\cap\mathcal A(B)],
\end{align*}
where
\begin{align}
\label{eq:def A}\mathcal A(B)&:=\{\text{there exists a circuit of $\n_1+\n_2>0$ in $\overline B$  surrounding $B$}\},\\
\label{eq:def E}\mathcal E_\ep(B)&:= 
\{{\bf P}^{\emptyset,\emptyset}_{\Omega^\bullet,\Omega^\bullet}[\underline B\stackrel{\n_1+\n_2}\longleftrightarrow\Lambda_R|(\n_1+\n_2)_{|B^c}]\ge \ep\}.
\end{align}
\begin{figure} 
		\begin{center}
			\includegraphics[scale=0.8]{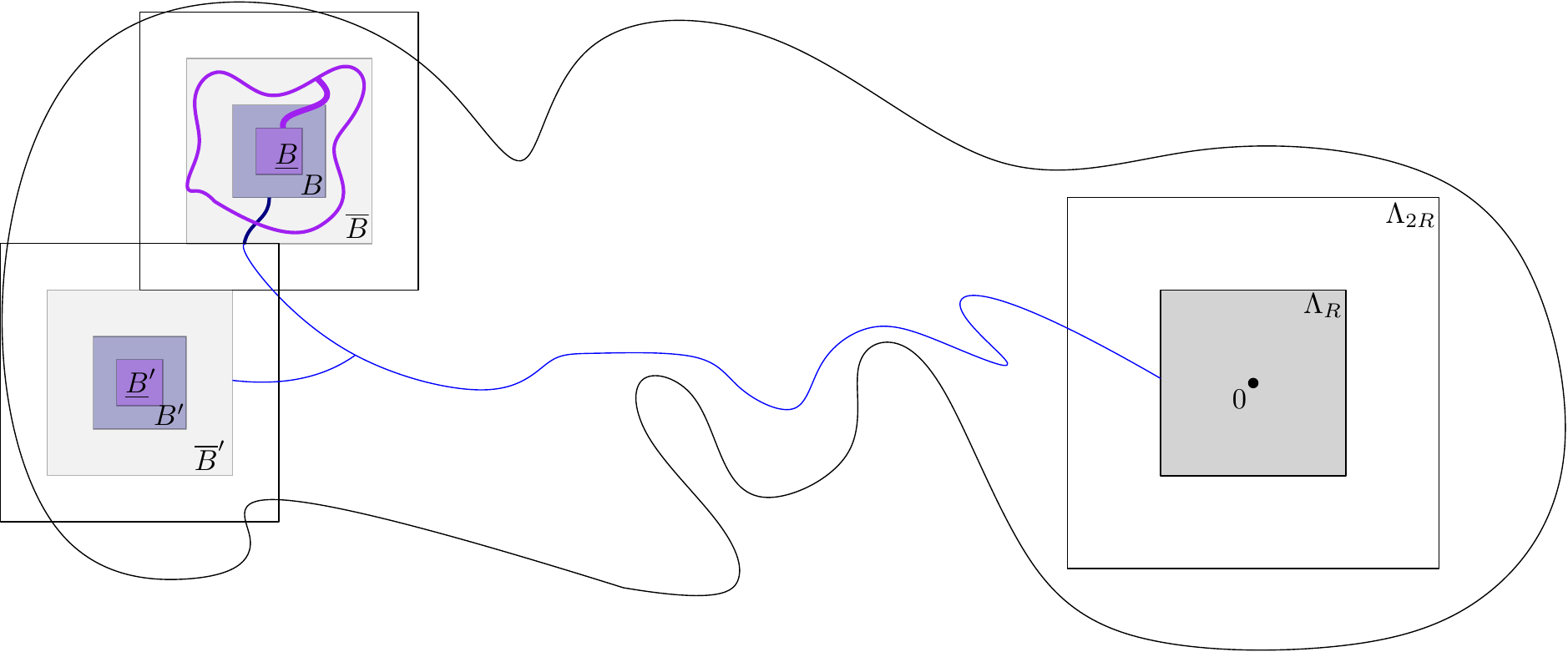}  
		\end{center}
		\caption{A picture of the domain $\Omega$ with two examples of boxes in $\partial_r^\square\Omega$. In blue, the path in $\n_1+\n_2$ needed to be counted in $\overline N$. In darker blue, the additional event needed to be counted in $N$, in violet the event needed to be counted in $\underline N^\circ$. Finally, a box is counted in $N_\ep^\circ$ if the blue and dark blue paths are occurring, and if conditioned on everything outside $B$, the bold violet occurs with probability larger than $\ep$. 
		}	\label{fig:definitionobservables}
\end{figure}

\begin{remark}\label{rmk:p}
The introduction of the previous random variables is technical but important. They satisfy the following features:
\begin{itemize}[noitemsep]
\item[(i)]
$N_\ep^\circ\le N\le \overline N$ and $\overline N_\ep\le \overline N$, 
\item[(ii)] $\mathcal E_\ep(B)$ does not depend on what happens inside $B$ and is included in $B\stackrel{\n_1+\n_2}\longleftrightarrow \Lambda_R$,
\item[(iii)] if $\mathcal A(B)$ occurs, then connections between vertices outside of $\overline B$ are not impacted by what happens in $B$,
\item[(iv)] conditioned on  $\mathcal E_\ep(B)$, the probability of $\underline B$ being connected to $\Lambda_R$ in $\n_1+\n_2$ is larger than $\ep$. Note for future reference that this is also true in the graph $\Omega^{\bullet\bullet}$ where vertices of $\underline B$ and $\Lambda_R$ are merged into two vertices\footnote{Indeed, fix the realizations of $\n_1$ and $\n_2$ outside of $B$ and consider the graphs $\underline\Omega^\bullet$ and $\underline\Omega^{\bullet\bullet}$ obtained from $\Omega^\bullet$ and $\Omega^{\bullet\bullet}$ by merging each cluster of $(\n_1+\n_2)_{|B^c}$ into a single vertex. The previous statement follows from the inequality 
\[
{\bf P}^{\emptyset,\emptyset}_{\underline\Omega^{\bullet\bullet},\underline\Omega^{\bullet\bullet}
}[\underline B\stackrel{\n_1+\n_2}\longleftrightarrow\Lambda_R]\ge {\bf P}^{\emptyset,\emptyset}_{\underline\Omega^\bullet,\underline\Omega^\bullet}[\underline B\stackrel{\n_1+\n_2}\longleftrightarrow\Lambda_R],
\]
which is a consequence of the monotonicity in coupling constants (Lemma~\ref{lem:monotonicity}).
}.
\end{itemize} 
\end{remark}

We start the proof of Theorem~\ref{thm:crossing free} with a lemma dealing with the expectations of the previously defined random variables. 

\begin{lemma}[Expectations of order 1]\label{lem:expected number}
There exist $\ep,C,c>0$ such that for all $r,R$ with $1\le r\le R$ and every domain $\Omega\supset\Lambda_{2R}$, 
\begin{align}\label{eq:expected number}
c\le \tfrac12\,{\bf E}^{\emptyset,\emptyset}_{\Omega^\bullet,\Omega^\bullet}[\underline N^\circ]\le {\bf E}^{\emptyset,\emptyset}_{\Omega^\bullet,\Omega^\bullet}[N_\ep^\circ]\le {\bf E}^{\emptyset,\emptyset}_{\Omega^\bullet,\Omega^\bullet}[N]\le {\bf E}^{\emptyset,\emptyset}_{\Omega^\bullet,\Omega^\bullet}[\overline N]&\le C.
\end{align}
\end{lemma}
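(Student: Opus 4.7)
The plan is to start with the three easy inequalities ${\bf E}[N_\ep^\circ] \le {\bf E}[N] \le {\bf E}[\overline N]$. They follow termwise from the inclusion $\mathcal E_\ep(B) \subset \{B \stackrel{\n_1+\n_2}{\longleftrightarrow} \Lambda_R\}$ (Remark~\ref{rmk:p}(ii)) together with the monotonicity of connectivity in the starting set, using $\underline B \subset B \subset \overline B$.

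For the upper bound ${\bf E}[\overline N] \le C$, the idea is the merging-vertices trick applied to each $B = \Lambda_r(x) \in \partial_r^\square\Omega$. By Lemma~\ref{lem:monotonicity} additionally merging all vertices of $\overline B$ together only increases ${\bf P}^{\emptyset,\emptyset}_{\Omega^\bullet,\Omega^\bullet}[\overline B \stackrel{\n_1+\n_2}{\longleftrightarrow} \Lambda_R]$; by the switching lemma and the Edwards--Sokal coupling the connection probability becomes $\phi^0_{\Omega^{\bullet\bullet}}[\overline B \stackrel{\omega}{\longleftrightarrow} \Lambda_R]^2$, which Corollary~\ref{cor:bound harmonic measure} bounds by $C\,Z_\Omega[x,0]$. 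Summing over $B$, the random-walk interpretation of $Z_\Omega[x,0]$ as (proportional to) the hitting distribution of $\partial\Omega$ from $0$ then gives $\sum_B Z_\Omega[x,0] = O(1)$.

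For the lower bound ${\bf E}[\underline N^\circ] \ge 2c$, the strategy is to exhibit one positive-probability event contributing at least $1$ to the sum. Using RSW (Theorem~\ref{thm:RSW}) for the random cluster model on $\Omega^\bullet$ with $\Lambda_R$ merged to a single vertex, the Edwards--Sokal coupling, and standard gluing to produce a circuit in a neighbouring annulus $\overline B \setminus B$ surrounding some $B \in \partial_r^\square\Omega$, one forces the cluster of $\Lambda_R$ in $\n_1+\n_2$ to reach $\underline B$ while simultaneously realising $\mathcal A(B)$.

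The delicate middle inequality $\tfrac12 {\bf E}[\underline N^\circ] \le {\bf E}[N_\ep^\circ]$ is the main obstacle. Setting $p_B := {\bf P}^{\emptyset,\emptyset}_{\Omega^\bullet,\Omega^\bullet}[\underline B \stackrel{\n_1+\n_2}{\longleftrightarrow} \Lambda_R \mid (\n_1+\n_2)_{|B^c}]$ and noting both $\mathcal A(B)$ and $\mathcal E_\ep(B)$ are measurable with respect to the conditioning, the natural splitting gives
\[
{\bf P}[\underline B \stackrel{\n_1+\n_2}{\longleftrightarrow} \Lambda_R, \mathcal A(B)] \le {\bf P}[\mathcal A(B) \cap \mathcal E_\ep(B)] + \ep\,{\bf P}[\mathcal A(B) \cap \mathcal E_\ep(B)^c].
\]
Controlling the second summand is where the real work lies. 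The key input is topological: on $\mathcal A(B)$ any $(\n_1+\n_2)$-cluster reaching $\underline B$ from outside $\overline B$ must share a vertex with the circuit $\Gamma \subset \overline B\setminus B$. Combined with the mixing property (Proposition~\ref{lem:mixing}) and an RSW lower bound on the inside crossing $\underline B \stackrel{\n_1+\n_2}{\longleftrightarrow} \Gamma$ within $\overline B$ (after a further merging-vertices reduction to the random cluster model), this will show $p_B \ge c_0$ whenever the outside configuration connects $\Gamma$ to $\Lambda_R$ in $B^c$. Choosing $\ep < c_0$, on $\mathcal E_\ep(B)^c$ the circuit cannot be outside-connected to $\Lambda_R$, and a mixing argument yields ${\bf P}[\mathcal A(B) \cap \mathcal E_\ep(B)^c] \le C'\,{\bf P}[\mathcal A(B) \cap \mathcal E_\ep(B)]$; summing over $B$ and taking $\ep$ small enough absorbs the second summand into the first and closes the bound.
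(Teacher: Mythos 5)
Your handling of the trivial inequalities ${\bf E}[N_\ep^\circ]\le{\bf E}[N]\le{\bf E}[\overline N]$ and of the upper bound ${\bf E}[\overline N]\le C$ matches the paper (merging-vertices trick, switching lemma, Edwards--Sokal, Corollary~\ref{cor:bound harmonic measure}, and summability of $Z_\Omega[x,0]$). However, there are two genuine gaps in the remaining steps.

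\emph{The lower bound.} You propose to ``exhibit one positive-probability event contributing at least $1$ to the sum'' by RSW and gluing: pick a single $B\in\partial_r^\square\Omega$ and force $\underline B\stackrel{\n_1+\n_2}\longleftrightarrow\Lambda_R$ together with $\mathcal A(B)$. This cannot work: by Corollary~\ref{cor:bound harmonic measure} the probability that a single boundary box is connected to $\Lambda_R$ in the double random current is bounded above by $C\,Z_\Omega[x,0]$, which tends to $0$ as $r/R\to0$ (indeed the entire point of Theorem~\ref{thm:crossing free} is that this connection probability vanishes). There is no uniform lower bound for a single box. The paper instead keeps the full sum over boxes: each term contributes $\gtrsim Z_{\Omega^*}[x^*,0^*]$ (proved via mixing, the switching lemma applied after merging $\underline{\underline B}$, Corollary~\ref{cor:bound harmonic measure}, and an RSW construction to realize $\mathcal A(B)$ with uniformly positive conditional probability), and then the random-walk estimate $\sum_{x}Z_{\Omega^*}[x^*,0^*]\gtrsim Z_{\Omega^*}[0^*,\partial\Omega^*]\ge c$ supplies the constant. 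The lower bound comes from summing small contributions over many boxes, not from one box of positive probability.

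\emph{The middle inequality.} Your splitting
\[
{\bf P}[\underline B\stackrel{\n_1+\n_2}\longleftrightarrow\Lambda_R,\mathcal A(B)]\le{\bf P}[\mathcal A(B)\cap\mathcal E_\ep(B)]+\ep\,{\bf P}[\mathcal A(B)\cap\mathcal E_\ep(B)^c]
\]
is correct, but the bound you then seek, ${\bf P}[\mathcal A(B)\cap\mathcal E_\ep(B)^c]\le C'\,{\bf P}[\mathcal A(B)\cap\mathcal E_\ep(B)]$, is \emph{false}: $\mathcal A(B)$ is a local circuit event of constant-order probability, whereas $\mathcal E_\ep(B)\subset\{B\leftrightarrow\Lambda_R\}$ has probability tending to $0$ as $r/R\to0$, so the proposed ratio blows up. The paper uses a much lighter mechanism. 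Since $\mathcal E_\ep(B)^c$ forces $p_B<\ep$ and $\{p_B>0\}\subset\{B\leftrightarrow\Lambda_R\}$ (Remark~\ref{rmk:p}(ii)), one has
\[
{\bf 1}_{\mathcal A(B)}{\bf 1}_{\mathcal E_\ep(B)^c}\,p_B\le \ep\,{\bf 1}_{B\leftrightarrow\Lambda_R},
\]
and summing over $B$ yields ${\bf E}[\underline N^\circ]\le{\bf E}[N_\ep^\circ]+\ep\,{\bf E}[N]$. Combined with the already-established bounds ${\bf E}[\underline N^\circ]\ge 2c$ and ${\bf E}[N]\le C$, choosing $\ep\le c/C$ gives ${\bf E}[N_\ep^\circ]\ge{\bf E}[\underline N^\circ]-\ep C\ge\tfrac12{\bf E}[\underline N^\circ]$. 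No topological/mixing argument on $\Gamma$ is needed here; the absorption is into $\ep{\bf E}[N]$, not into ${\bf P}[\mathcal A(B)\cap\mathcal E_\ep(B)]$.
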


For future reference, this lemma implies the same bounds for $B$ instead of $ \overline B$ in the definitions of the observables.
Note that the third and fourth inequalities are trivial by (i) of Remark~\ref{rmk:p}. The fifth one (i.e.~the right-most) is a direct consequence of the merging-vertices trick and bounds in terms of random-walk partition functions, see below. The first one (i.e.~the left-most) is also a consequence of the merging-vertices trick and bounds in terms of random-walk partition functions, but this time combined with the mixing property (in order to use this mixing property one merges vertices of the box $\underline{\underline B}$ which is twice smaller than $\underline B$). The second inequality is a combination of the previous ones, using the definition of $N_\ep^\circ$ in terms of the conditional probability of creating a connection.

\begin{proof}
As mentioned above, the third and fourth inequalities are trivial by (i) of Remark~\ref{rmk:p}. 
We now focus on the right-most inequality. 
If $\Omega^{\bullet\blacksquare}$ denotes the graph obtained from $\Omega^\bullet$ by merging the vertices of $\overline B$ together, the monotonicity in coupling constants (Lemma~\ref{lem:monotonicity}) and Corollary~\ref{cor:bound harmonic measure} give that 
\begin{align*}
{\bf E}^{\emptyset,\emptyset}_{\Omega^\bullet,\Omega^\bullet}[\overline N]&=\sum_{B\in \partial_r^\square\Omega}{\bf P}^{\emptyset,\emptyset}_{\Omega^\bullet,\Omega^\bullet}[\overline B\stackrel{\n_1+\n_2}\longleftrightarrow \Lambda_R]\\
&\le \sum_{B\in \partial_r^\square\Omega}{\bf P}^{\emptyset,\emptyset}_{\Omega^{\bullet\blacksquare},\Omega^{\bullet\blacksquare}}[\overline B\stackrel{\n_1+\n_2}\longleftrightarrow \Lambda_R]\\
&\le C_0\sum_{x:\Lambda_r(x)\in \partial_r^\square\Omega}Z_{\Omega}[0,x]\le C_1,
\end{align*}
where the last bound follows from standard random walk estimates.

\begin{figure} 
		\begin{center}
			\includegraphics[scale=0.93]{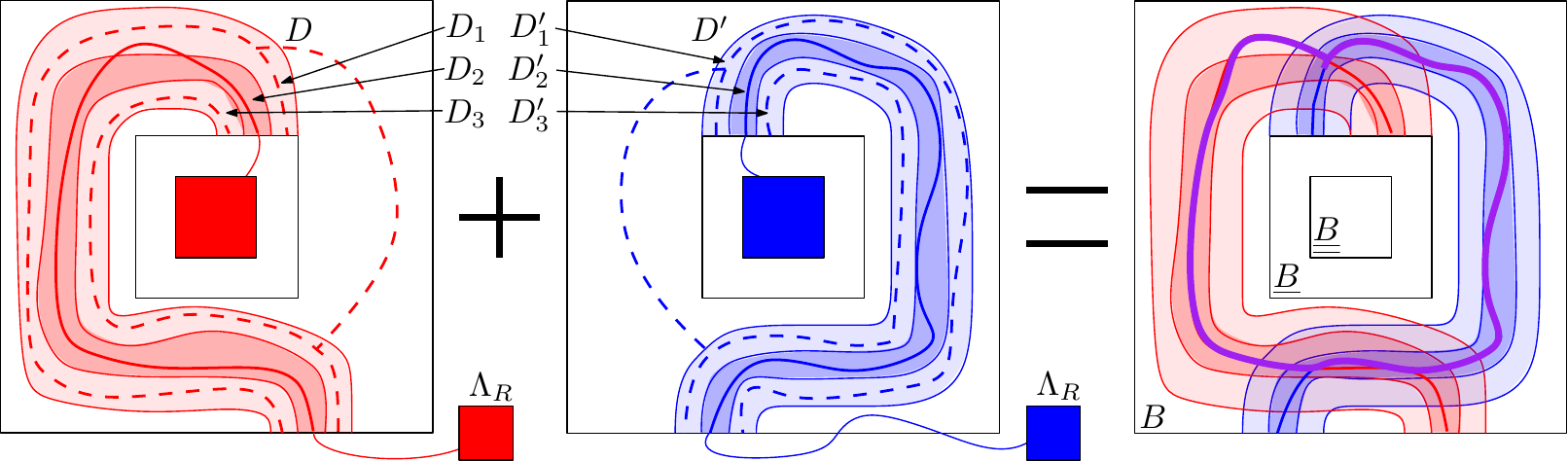}  
		\end{center}
		\caption{On the left, the domain $D=D_1\cup D_2\cup D_3$. We require that $D_1$ and $D_3$ are not crossed between their sides strictly inside $B\setminus \underline B$ by a path of $\n_1>0$ (which is depicted by a dashed path from inside to outside corresponding to a dual path crossing only edges with $\n_1$-current equal to zero). Then, we ask that all clusters of $\n_1>0$ going from inside to outside in $B\setminus \underline B$ must intersect $D_2$ (it is depicted in dashed again). Finally, the source constraint forces the existence of a primal path from $\underline{\underline B}$ to $\Lambda_R$ going through $D$ (note that it is not necessarily contained in $D_2$).	 In the middle, the corresponding picture for $\n_2$. Finally, the combination of the two currents necessarily includes a loop (in purple) in $B\setminus\underline B$ surrounding $\underline B$.	}	\label{fig:A(B)}
\end{figure}

We now turn to the left-most inequality. Set $\underline{\underline B}:=\Lambda_{r/4}(x)$ where $x$ is the center of $B$ (it is the box twice smaller than $\underline B$). Let $\Omega^{\bullet{\bigcdot}}$ denote the graph obtained from $\Omega^\bullet$ by merging the vertices  of $\underline{\underline B}$ together. Also, recall the definition of $\mathcal A(B)$. The mixing property \eqref{eq:independence in graph} gives 
\begin{align}\label{eq:kjkk}
{\bf P}^{\emptyset,\emptyset}_{\Omega^{\bullet},\Omega^{\bullet}}[\underline B\stackrel{\n_1+\n_2}\longleftrightarrow \Lambda_R,\mathcal A(B)]&\ge c_{\rm mix} {\bf P}^{\emptyset,\emptyset}_{\Omega^{\bullet{\bigcdot}},\Omega^{\bullet{\bigcdot}}}[\underline B\stackrel{\n_1+\n_2}\longleftrightarrow \Lambda_R,\mathcal A(B)]
\end{align}
(the fact that we need the place where we merged vertices to be well apart from the edges involved in the events under consideration is the main reason for introducing $\underline{\underline B}$). 

Now, the inclusion of events and the switching lemma lead to
\begin{align}
{\bf P}^{\emptyset,\emptyset}_{\Omega^{\bullet{\bigcdot}},\Omega^{\bullet{\bigcdot}}}[\underline B\stackrel{\n_1+\n_2}\longleftrightarrow \Lambda_R,\mathcal A(B)]&\ge {\bf P}^{\emptyset,\emptyset}_{\Omega^{\bullet{\bigcdot}},\Omega^{\bullet{\bigcdot}}}[\underline{\underline B}\stackrel{\n_1+\n_2}\longleftrightarrow \Lambda_R,\mathcal A(B)]\nonumber\\
&=\langle\sigma_{\underline{\underline B}}\sigma_{\Lambda_R}\rangle_{\Omega^{\bullet{\bigcdot}}}^2{\bf P}^{\{\underline{\underline B},\Lambda_R\},\{\underline{\underline B},\Lambda_R\}}_{\Omega^{\bullet{\bigcdot}},\Omega^{\bullet{\bigcdot}}}[\mathcal A(B)].\label{eq:ahj0}
\end{align}
Then, Corollary~\ref{cor:bound harmonic measure} implies that 
\begin{equation}\label{eq:ahj1}
 \langle\sigma_{\underline{\underline B}}\sigma_{\Lambda_R}\rangle_{\Omega^{\bullet{\bigcdot}}}^2\ge c_0 Z_{\Omega^*}[x^*,0^*].
\end{equation}
We also claim that
\begin{equation}\label{eq:ahj2}
{\bf P}^{\{\underline{\underline B},\Lambda_R\},\{\underline{\underline B},\Lambda_R\}}_{\Omega^{\bullet{\bigcdot}},\Omega^{\bullet{\bigcdot}}}[\mathcal A(B)]\ge c_1.
\end{equation}
Indeed (see Fig.~\ref{fig:A(B)} for an illustration), consider two quads $D$ and $D'$ from $\partial B$ to $\partial\overline B$ that have extremal length in $[\kappa,1/\kappa]$ and such that any crossing in $D$ combined with a crossing in $D'$ contains a circuit surrounding $B$ in $\overline B$. Now, let $D_1,D_2,D_3$ (resp.~$D'_1,D'_2,D'_3$) be a partition of $D$ (resp.~$D'$) into three quads connecting $\partial B$ to $\partial\overline B$ with extremal lengths in $[\kappa',1/\kappa']$. Then, for the first current, we can force
\begin{itemize}[noitemsep]
\item the absence of a path in $\n_1$ disconnecting $\partial B$ from $\partial\overline B$ in $D_1$, 
\item the absence of a path in $\n_1$ disconnecting $\partial B$ from $\partial\overline B$ in $D_3$,
\item the absence of a $(B\setminus \underline B)$-cluster in $\n_1$ from $\partial B$ to $\partial\overline B$ that is not intersecting $D_2$
\end{itemize}
 with positive probability (simply use Corollary~\ref{cor:upper bound crossing close 1} in $D_1$ and $D_3$, and then condition on the cluster of $D_2$ in $B\setminus(D_2\cup  \underline B)$ and apply an argument similar to the one leading to Corollary~\ref{cor:upper bound crossing close 1} in the complement  -- we leave this simple adaptation to the reader). Note that in particular there must be, because of source constraints prescribed by ${\bf P}^{\{\underline{\underline B},\Lambda_R\},\{\underline{\underline B},\Lambda_R\}}_{\Omega^{\bullet{\bigcdot}},\Omega^{\bullet{\bigcdot}}}$, a crossing of $D$ from $\partial B$ to $\partial\overline B$. One can do the same with $\n_2$ and prove that with positive probability there is a crossing of $D'$ from $\partial B$ to $\partial\overline B$. Since $\n_1$ and $\n_2$ are independent, we deduce that with positive probability $c_1$ the event $\mathcal A(B)$ occurs.

 Overall, plugging \eqref{eq:ahj1} and \eqref{eq:ahj2} into \eqref{eq:ahj0}, and summing over $B$ gives 
\begin{align}{\bf E}^{\emptyset,\emptyset}_{\Omega^\bullet,\Omega^\bullet}[\underline N^\circ]&
\ge c_{\rm mix}c_0c_1\sum_{x:\Lambda_r(x)\in\partial_r^\square\Omega}Z_{\Omega^*}[x^*,0^*]\ge c_2Z_{\Omega^*}[0^*,\partial\Omega^*]\ge c_3,
\end{align}
where again the last inequality follows from a random walk estimate.

Finally, the second inequality of \eqref{eq:expected number} follows easily from the other two bounds. Indeed, note that $\underline N^\circ$ is smaller than $N_\ep^\circ$ plus the sum over the boxes $B$ that are connected in $\n_1+\n_2$ to $\Lambda_R$ but such that $\mathcal E_\ep(B)$ does not occur. Yet, by definition the latter is smaller in expectation than $\ep \mathbf E_{\Omega^\bullet,\Omega^\bullet}^{\emptyset,\emptyset}[N]$ by using the spatial Markov property. We get that
\begin{equation}\label{eq:kj}
{\bf E}^{\emptyset,\emptyset}_{\Omega^\bullet,\Omega^\bullet}[\underline N^\circ]\le {\bf E}^{\emptyset,\emptyset}_{\Omega^\bullet,\Omega^\bullet}[N_\ep^\circ]+\ep{\bf E}^{\emptyset,\emptyset}_{\Omega^\bullet,\Omega^\bullet}[N].
\end{equation}
We deduce from the lower and upper bounds on the expectations of $\underline N^\circ$ and $ N$ that for $\ep$ small enough (but independent of everything else) we have that 
\begin{equation}\label{eq:kjk}
{\bf E}^{\emptyset,\emptyset}_{\Omega^\bullet,\Omega^\bullet}[N_\ep^\circ]\ge \tfrac12{\bf E}^{\emptyset,\emptyset}_{\Omega^\bullet,\Omega^\bullet}[\underline N^\circ].
\end{equation}
This concludes the proof.
\end{proof}

\subsection{Proof of Theorem~\ref{thm:crossing free}}\label{sec:theorem crossing free}

We divide the proof between  the lower and upper bounds. We start with the former, which is a second moment estimate on $N_\ep^\circ$. As usual, we use the merging-vertices trick together with random walk estimates. 

\begin{proof}[Proof of the lower bound in Theorem~\ref{thm:crossing free}]
Note that $N_\ep^\circ>0$ implies the existence of $B$ in $\partial_r^\square\Omega$ that is connected to $\Lambda_R$ in $\n_1+\n_2$, which gives a connection between $\Lambda_R$ and $\partial_{4r}\Omega$. It therefore suffices to prove that with probability $c/\log(R/r)$, $N_\ep^\circ>0$ (this proves the result for $4r$ instead of $r$ but this is irrelevant here). To get this, we use a second moment method on $N_\ep^\circ$. 
The previous lemma gives that
${\bf E}^{\emptyset,\emptyset}_{\Omega^\bullet,\Omega^\bullet}[N_\ep^\circ]\ge c_0$ and we therefore
 focus on the second moment. 
 
 Set $\mathcal E_\ep^\circ(B):=\mathcal A(B)\cap\mathcal E_\ep(B)$.
 For $B=\Lambda_r(x),B'=\Lambda_r(x')\in\partial_r^\square\Omega$, let $\Omega^{\bullet\bullet\bullet}$ be the graph obtained from $\Omega^\bullet$ by merging all the vertices of $\underline B$ together, and all those of $\underline B'$ together. As before, we start by using the mixing property \eqref{eq:independence in graph}  together with Remark~\ref{rmk:p}(ii)-(iii) to get
  \begin{align*}
{\bf P}^{\emptyset,\emptyset}_{\Omega^{\bullet},\Omega^{\bullet}}[\mathcal E^\circ_\ep(B)\cap \mathcal E^\circ_\ep(B')]&\le C_{\rm mix}^2{\bf P}^{\emptyset,\emptyset}_{\Omega^{\bullet\bullet\bullet},\Omega^{\bullet\bullet\bullet}}[\mathcal E^\circ_\ep(B)\cap \mathcal E^\circ_\ep(B')].
\end{align*}
 While we used that the event $\mathcal E^\circ_\ep(B)\cap \mathcal E^\circ_\ep(B')$ does not depend on edges in $B$ and $B'$ to invoke mixing, we now wish to use the switching lemma, and would like to have $\underline B$ and $\underline B'$ connected to $\Lambda_R$. This is where we use the parameter $\ep>0$ and the last part of the definition of $\mathcal E^\circ_\ep(B)$. More precisely, conditioning on everything outside $B$ and $B'$, we get from Remark~\ref{rmk:p}(iii) and (iv) that 
   \begin{align*}
{\bf P}^{\emptyset,\emptyset}_{\Omega^{\bullet},\Omega^{\bullet}}[\mathcal E^\circ_\ep(B)\cap \mathcal E^\circ_\ep(B')]&\le  \ep^{-2}{\bf P}^{\emptyset,\emptyset}_{\Omega^{\bullet\bullet\bullet},\Omega^{\bullet\bullet\bullet}}[\underline B,\underline B'\stackrel{\n_1+\n_2}\longleftrightarrow\Lambda_R]
\end{align*}
(notice the appearance of the parameter $\ep>0$).
Now, \cite[Proposition~A.3]{AizDum19} gives that for any graph $G$ and any four vertices $0,x,u,v$ in this graph, 
 \[
 \mathbf P_{G,G}^{\{0\}\Delta\{x\},\emptyset}[u,v\stackrel{\n_1+\n_2}\longleftrightarrow 0]\le \frac{\langle\sigma_0\sigma_u\rangle_G\langle\sigma_u\sigma_v\rangle_G\langle\sigma_v\sigma_x\rangle_G}{\langle\sigma_0\sigma_x\rangle_G}+\frac{\langle\sigma_0\sigma_v\rangle_G\langle\sigma_v\sigma_u\rangle_G\langle\sigma_u\sigma_x\rangle_G}{\langle\sigma_0\sigma_x\rangle_G}.
 \]
 Applied to $G=\Omega^{\bullet\bullet\bullet}$, $0=x=\Lambda_R$, $u=\underline B$, and $v=\underline B'$, we get that
   \begin{align*}
{\bf P}^{\emptyset,\emptyset}_{\Omega^{\bullet},\Omega^{\bullet}}[\mathcal E^\circ_\ep(B)\cap \mathcal E^\circ_\ep(B')]&\le  2\ep^{-2}\langle \sigma_{\Lambda_R}\sigma_{\underline B}\rangle_{\Omega
^{\bullet\bullet\bullet}}
\langle \sigma_{\underline B}\sigma_{\underline B'}\rangle_{\Omega
^{\bullet\bullet\bullet}}\langle \sigma_{\underline B'}\sigma_{\Lambda_R}\rangle_{\Omega
^{\bullet\bullet\bullet}}.
\end{align*}
An application of the Edwards-Sokal coupling with the random cluster model  (simply use \eqref{eq:RSW} combined with FKG to create an open circuit around the third vertex) implies that 
 \begin{align*}
{\bf P}^{\emptyset,\emptyset}_{\Omega^{\bullet},\Omega^{\bullet}}[\mathcal E^\circ_\ep(B)\cap \mathcal E^\circ_\ep(B')]&\le  C_0(\ep) \langle \sigma_{\Lambda_R}\sigma_{\underline B}\rangle_{\Omega_{\underline B'}
^{\bullet\bullet}}
\langle \sigma_{\underline B}\sigma_{\underline B'}\rangle_{\Omega
^{\bullet\bullet}_{\Lambda_R}}\langle \sigma_{\underline B'}\sigma_{\Lambda_R}\rangle_{\Omega_{\underline B}
^{\bullet\bullet}},
\end{align*}
where $\Omega_\#^{\bullet\bullet}$ is the graph with two of the three sets $\underline B$, $\underline B'$, and $\Lambda_R$ collapsed into single vertices, the set which is not collapsed to a single vertex being the one indicated in $\#$.

Finally, Corollary~\ref{cor:bound harmonic measure} gives that
 \begin{align*}
{\bf P}^{\emptyset,\emptyset}_{\Omega^{\bullet},\Omega^{\bullet}}[\mathcal E^\circ_\ep(B)\cap \mathcal E^\circ_\ep(B')]&\le C_1(\ep)[Z_\Omega(0,x)Z_\Omega(x,x')Z_\Omega(x',0)]^{1/2}.
\end{align*}
Now, a fairly simple random walk estimate using that $\Omega$ contains $\Lambda_{2R}$ but not $\Lambda_{3R}$ implies that the sum over $B$ and $B'$ satisfies
\[
{\bf E}^{\emptyset,\emptyset}_{\Omega^{\bullet},\Omega^{\bullet}}[(N_\ep^\circ)^2]=\sum_{B,B'\in \partial_r^\square\Omega}{\bf P}^{\emptyset,\emptyset}_{\Omega^{\bullet},\Omega^{\bullet}}[\mathcal E^\circ_\ep(B)\cap \mathcal E^\circ_\ep(B')]\le C_2(\ep)\log(R/r).
\]
Overall, we deduce from the Cauchy-Schwarz inequality that 
\[
{\bf P}^{\emptyset,\emptyset}_{\Omega^{\bullet},\Omega^{\bullet}}[N_\ep^\circ>0]\ge \frac{{\bf E}^{\emptyset,\emptyset}_{\Omega^{\bullet},\Omega^{\bullet}}[N_\ep^\circ]^2}{{\bf E}^{\emptyset,\emptyset}_{\Omega^{\bullet},\Omega^{\bullet}}[(N_\ep^\circ)^2]}\ge \frac{c_3(\ep)}{\log(R/r)}.
\]
This concludes the proof.\end{proof}

We now turn to the proof of the upper bound in Theorem~\ref{thm:crossing free}. The proof is based on the following idea: we already know that the expected number of boxes in $\partial_r^\square\Omega$ that are connected to $\Lambda_R$ in $\n_1+\n_2$ is uniformly bounded. Therefore, it suffices to show that the probability that a box in $\partial_r^\square\Omega$ is connected to $\Lambda_R$, but that there are only few other boxes in $\partial_r^\square\Omega$ that are connected to $\Lambda_R$, is much smaller than the probability that the box is connected to $\Lambda_R$ in $\n_1+\n_2$. In order to do that, we will use the switching lemma and a second-moment method to prove that conditioned on $B$ being connected to $\Lambda_R$ in $\n_1+\n_2$, in each well-defined annulus around $B$ (the aspect-ratio of the annulus will not be constant but on contrary will grow quickly with the distance to $B$), there is a positive probability of finding $B'\in\partial_r^\square\Omega$ connected to $\Lambda_R$ in $\n_1+\n_2$. Some technicalities will force us to juggle with the different random variables $N,\underline N^\circ,\overline N,\overline N_\ep$ defined above, and this is the reason for introducing so many objects in the first place.

\begin{proof}[Proof of the upper bound in Theorem~\ref{thm:crossing free}]
We focus on the case of $R/r$ large. 
Fix an integer $K>0$ and set $\ep:=1/K$. 

We have that 
\begin{align*}
{\bf P}^{\emptyset,\emptyset}_{\Omega^\bullet,\Omega^\bullet}[N>0]&\le \underbrace{{\bf P}^{\emptyset,\emptyset}_{\Omega^\bullet,\Omega^\bullet}[N>K]}_{(R)}+\underbrace{{\bf P}^{\emptyset,\emptyset}_{\Omega^\bullet,\Omega^\bullet}[N>0,\overline N_\ep=0]}_{(S)}+\underbrace{{\bf E}^{\emptyset,\emptyset}_{\Omega^\bullet,\Omega^\bullet}[\overline N_\ep\mathbb I( N\le K)]]}_{(T)}. 
\end{align*}
Bounding $(R)$ is straightforward using the Markov inequality
\begin{align*}
(R)&\le \frac1K\, {\bf E}^{\emptyset,\emptyset}_{\Omega^\bullet,\Omega^\bullet}[N]. 
\end{align*}
For $(S)$,
conditioning for each box $B$ on $\n_1+\n_2$ outside $\overline B$, and then applying the definition of $\mathcal E_\ep(\overline B)$ (recall that $\ep=1/K$), gives
\begin{align*}
(S)&\le {\bf E}^{\emptyset,\emptyset}_{\Omega^\bullet,\Omega^\bullet}[N\mathbb I(\overline N_\ep=0)]\le \frac1K {\bf E}^{\emptyset,\emptyset}_{\Omega^\bullet,\Omega^\bullet}[ N]. 
\end{align*}
We deduce from  Lemma~\ref{lem:expected number} that
\begin{align*}
(R)+(S)&\le  \frac{2C}K.
\end{align*}
We turn to the bound on $(T)$ which represents most of the work. Fix a ball $B=\Lambda_r(x)\in\partial_r^\square\Omega$ and let $\Omega^{\bullet\bullet}$ be the graph obtained from $\Omega^\bullet$ by merging all the vertices in $ B$. As mentioned above, the idea of the proof is to show that conditioned on $B$ being connected to $\Lambda_R$ in $\n_1+\n_2$, there are many other boxes in $\partial_r^\square\Omega$ that are connected to $\Lambda_R$ in $\n_1+\n_2$, so that the probability that $N\le K$ is small. 

Let $N_B$ be the number of boxes in $\partial_r^\square\Omega$ that are connected in $\n_1+\n_2$ to $\Lambda_R$ {\rm without using any edge of} $\overline B$. 
Using that $N_B$ does not depend on $\n_1$ and $\n_2$ inside $B$, we find that 
\begin{align*}
{\bf P}^{\emptyset,\emptyset}_{\Omega^\bullet,\Omega^\bullet}[\mathcal E_\ep(\overline B),N\le K]&\le {\bf P}^{\emptyset,\emptyset}_{\Omega^\bullet,\Omega^\bullet}[\mathcal E_\ep(\overline B),N_B\le K]\\
&\le \ep^{-1}{\bf P}^{\emptyset,\emptyset}_{\Omega^{\bullet\bullet},\Omega^{\bullet\bullet}}[B\stackrel{\n_1+\n_2}\longleftrightarrow \Lambda_R,N_B\le K]\\
&= \ep^{-1}{\bf P}^{\emptyset,\emptyset}_{\Omega^{\bullet\bullet},\Omega^{\bullet\bullet}}[B\stackrel{\n_1+\n_2}\longleftrightarrow \Lambda_R]{\bf P}^{\{B,\Lambda_R\},\{B,\Lambda_R\}}_{\Omega^{\bullet\bullet},\Omega^{\bullet\bullet}}[N_B\le K],
\end{align*}
where the inequality is obtained using Remark~\ref{rmk:p}(iii)--(iv) like in the previous proof, and the equality is a consequence of the switching lemma.

We would therefore deduce $(T)\le C/K$ from the definition of $\ep$ in terms of $K$ and Lemma~\ref{lem:expected number}, if we can show that for $R/r$ large enough,
\begin{equation}\label{eq:im} {\bf P}^{\{B,\Lambda_R\},\{B,\Lambda_R\}}_{\Omega^{\bullet\bullet},\Omega^{\bullet\bullet}}[N_B\le K]\le C/K^2.\end{equation}
In order to prove \eqref{eq:im}, consider the sequence of integers $(L_i)_i$ defined by 
\[
L_i:=r8^{8^i}
\]  
and introduce $D_i$ to be the part of $\Omega$ between $\ell_{\rm in}(L_i)$ and $\ell_{\rm out}(L_i)$, where $\ell_{\rm in}(L_i)$ and $\ell_{\rm out}(L_i)$ are the arcs of $\partial\Lambda_{L_i}(x)$ and $\partial\Lambda_{L_{i+1}}(x)$ separating (in $\Omega$) $B$ from $\Lambda_R$ which are the closest to $B$.

The idea of the proof of \eqref{eq:im} is to show that for some uniform constant $c_0>0$, in each $i$ with $L_{i+1}\le R$, 
\begin{equation}\label{eq:ahu1}
{\bf P}^{\{B,\Lambda_R\},\{B,\Lambda_R\}}_{\Omega^{\bullet\bullet},\Omega^{\bullet\bullet}}[E_i]\ge \tfrac12c_0.\end{equation}
where
\begin{equation}
E_i:=\Big\{\begin{array}{c}\exists\text{ a unique $D_i$-cluster in $\n_1+\n_2$ crossing $D_i$ from $\ell_{\rm in}(L_i)$ to $\ell_{\rm in}(L_{i+1})$}\\ \text{and this cluster intersects some $B\in \partial_r^\square\Omega$ that is included in $D_i$}\end{array}\Big\}.
\end{equation}

To prove \eqref{eq:ahu1}, define $D_i^{\rm in}$ to be the connected component of $D_i\cap \mathrm{Ann}(x,L_i,L_i^2)$ closest to $B$ (in the graph $\Omega$), and $D_i^{\rm out}$ the connected component of $D_i\cap\mathrm{Ann}(x,L_i^4,L_i^8)$ closest to $\Lambda_R$. Apply Lemma~\ref{lem:1b} below in $D_i^{\rm in}$ and $D_i^{\rm out}$ and Lemma~\ref{lem:1a} to $D_i\cap \mathrm{Ann}(x,L_i^2,L_i^4)$.

The event $E_i$ being defined in such a way that it depends on $\mathrm{Ann}(x,L_i,L_{i+1})$ only, the mixing property \eqref{eq:mixing} easily implies that the probability that fewer than $c_1\log\log(R/r)$ integers $i$ are such that $E_i$ occurs is smaller than $1/\log(R/r)^{c_2}$ once $R/r$ is large enough. 

Setting $K:=c_1\log\log(R/r)$, we deduce that 
\[
 {\bf P}^{\{B,\Lambda_R\},\{B,\Lambda_R\}}_{\Omega^{\bullet\bullet},\Omega^{\bullet\bullet}}[N_B\le K]\le1/\log(R/r)^{c_2}.
\]
 This implies \eqref{eq:im} for $R/r$ large enough. This concludes the proof of the theorem with $\epsilon(x)=3C/(c_1\log\log(1/x))$, subject to the two lemmata that we used to prove \eqref{eq:ahu1}.\end{proof}

The rest of this section is devoted to the proofs of Lemmata~\ref{lem:1a} and \ref{lem:1b}.

\begin{lemma}[Existence of intersections in each annulus] \label{lem:1a}
 There exist $c_0,C_0>0$  such that for all $R,r,L$ such that $C_0\le L\le \sqrt{R/r}$, every $R$-centred domain $\Omega$, every $B=\Lambda_r(x)\in \partial^\square_r\Omega$, and a connected component $D$ of $\Omega\cap  \mathrm{Ann}(x,rL,rL^2)$ disconnecting $B$ from $\Lambda_R$, \begin{equation}
{\bf P}^{\{B,\Lambda_R\},\{B,\Lambda_R\}}_{\Omega^{\bullet\bullet},\Omega^{\bullet\bullet}}[\Lambda_R\text{ is connected in $\n_1+\n_2$ to a box of $\partial_r^\square\Omega$ included in  $D$}]\ge c_0,\end{equation}
where $\Omega^{\bullet\bullet}$ is the graph obtained from $\Omega$ by merging the vertices in $\Lambda_R$ and $B$.
\end{lemma}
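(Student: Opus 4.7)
The plan is a second-moment argument on the counting variable $M := \sum_{B' \in \mathcal S} \mathbf 1\{B' \stackrel{\n_1+\n_2}\longleftrightarrow \Lambda_R\}$, where $\mathcal S := \{B' \in \partial_r^\square\Omega : B' \subset D\}$, finished by Paley--Zygmund. A preliminary geometric step establishes that $\mathcal S$ is nonempty and quantitatively substantial: since $B = \Lambda_r(x) \in \partial_r^\square\Omega$ forces $\Lambda_{3r}(x) \not\subset \Omega$, the centre $x$ lies within $O(r)$ of $\partial \Omega$; and since $D$ is a connected component of $\Omega \cap \mathrm{Ann}(x, rL, rL^2)$ separating $B$ from $\Lambda_R$ in the simply connected domain $\Omega$, the component $D$ must meet $\partial \Omega$ along two arcs on the ``$B$-side''. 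For $L \ge C_0$ large enough, these arcs carry a sizeable collection of boxes in $\partial_r^\square\Omega$.

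The switching lemma applied with $A_1 = A_2 = \{B, \Lambda_R\}$ (whose symmetric difference is empty) reduces the sourced measure $\widetilde{\mathbf P} := \mathbf P^{\{B, \Lambda_R\}, \{B, \Lambda_R\}}_{\Omega^{\bullet\bullet}, \Omega^{\bullet\bullet}}$ to a conditional version of the sourceless one:
\[
\widetilde{\mathbf P}[B' \stackrel{\n_1+\n_2}\longleftrightarrow \Lambda_R] = \frac{\mathbf P^{\emptyset, \emptyset}_{\Omega^{\bullet\bullet}, \Omega^{\bullet\bullet}}[B \stackrel{\n_1+\n_2}\longleftrightarrow B' \stackrel{\n_1+\n_2}\longleftrightarrow \Lambda_R]}{\mathbf P^{\emptyset, \emptyset}_{\Omega^{\bullet\bullet}, \Omega^{\bullet\bullet}}[B \stackrel{\n_1+\n_2}\longleftrightarrow \Lambda_R]}.
\]
The denominator is at most $C \, Z_{\Omega}[x, 0]$ by Corollary~\ref{cor:bound harmonic measure}. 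For the numerator with $B' = \Lambda_r(y) \in \mathcal S$, I will combine (i) the mixing property~\eqref{eq:independence in graph} to insert a merged super-vertex at $\underline{\underline{B'}}$ without affecting the event under consideration, (ii) the switching identity applied with sources $\{B, B'\}$ and $\{B', \Lambda_R\}$ (whose symmetric difference is exactly $\{B, \Lambda_R\}$), yielding the factorisation $\langle\sigma_B\sigma_{B'}\rangle\langle\sigma_{B'}\sigma_{\Lambda_R}\rangle = \langle\sigma_B\sigma_{\Lambda_R}\rangle\cdot \mathbf P^{\{B,\Lambda_R\},\emptyset}[B\leftrightarrow B']$, (iii) Corollary~\ref{cor:bound harmonic measure} applied to each two-point function to convert them into harmonic contributions, and (iv) an RSW-based circuit construction around $B'$ exactly as in the lower bound of Lemma~\ref{lem:expected number}. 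This produces $\widetilde{\mathbf P}[B' \stackrel{\n_1+\n_2}\longleftrightarrow \Lambda_R] \ge c\, Z_{\Omega^*}[x^*, y^*]\, Z_{\Omega^*}[y^*, 0^*] / Z_{\Omega^*}[x^*, 0^*]$. Summing over $\mathcal S$ and invoking a standard random-walk excursion estimate (a walk from $x^*$ to $0^*$ on $\Omega^*$ accumulates positive cumulative harmonic weight inside $\mathcal S$ provided $L \ge C_0$) yields $\widetilde{\mathbf E}[M] \ge c_1 > 0$.

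The second moment $\widetilde{\mathbf E}[M^2]$ is handled by a completely parallel argument using the four-point tree-diagram bound from \cite[Proposition~A.3]{AizDum19}, exactly as in the lower-bound proof of Theorem~\ref{thm:crossing free}. Summing the resulting triple products of harmonic terms over pairs $B', B'' \in \mathcal S$ produces $\widetilde{\mathbf E}[M^2] \le C_2$ via a second random-walk computation. The Paley--Zygmund inequality then gives $\widetilde{\mathbf P}[M \ge 1] \ge \widetilde{\mathbf E}[M]^2 / \widetilde{\mathbf E}[M^2] \ge c_0$, which is exactly the desired statement.

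The main obstacle will be the three-vertex adaptation of the lower bound of Lemma~\ref{lem:expected number}, where one must carry out the merging-vertices trick at three super-vertices ($B$, $B'$, and $\Lambda_R$) simultaneously and track the harmonic contributions from each side. The key technical observation making this tractable is that the switching identity with sources $\{B, B'\}$ and $\{B', \Lambda_R\}$ cleanly converts the three-point connection probability into a product of two two-point functions, each accessible via Corollary~\ref{cor:bound harmonic measure}.
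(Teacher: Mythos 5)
The overall strategy (switching lemma to remove sources, counting variable over $\partial_r^\square\Omega$, second-moment/Paley--Zygmund) is the same as the paper's, and the switching identity $\langle\sigma_B\sigma_{B'}\rangle\langle\sigma_{B'}\sigma_{\Lambda_R}\rangle=\langle\sigma_B\sigma_{\Lambda_R}\rangle\cdot\mathbf P^{\{B,\Lambda_R\},\emptyset}[B\leftrightarrow B']$ that you use is correct. However there is a genuine gap in your claimed moment bounds.

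Your first-moment bound $\widetilde{\mathbf E}[M]\ge c_1$ is weaker than what is actually true, and that weakening masks an inconsistency with your second-moment claim $\widetilde{\mathbf E}[M^2]\le C_2$. In fact $\widetilde{\mathbf E}[M]\asymp\log L$: by the Claim~2 argument of the paper (the ratio $\langle\sigma_B\sigma_{\underline{\underline B}'}\rangle\langle\sigma_{\underline{\underline B}'}\sigma_{\Lambda_R}\rangle/\langle\sigma_B\sigma_{\Lambda_R}\rangle$ localizes to a two-point function in a domain of scale $\rho_j\sim 8^jr$), each dyadic annulus $\mathrm{Ann}(x,r8^j,r8^{j+1})$ inside $D$ contributes an expected count $\asymp 1$ of boxes connected to $\Lambda_R$, and there are $\asymp\log L$ such annuli. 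Since then $\widetilde{\mathbf E}[M^2]\ge\widetilde{\mathbf E}[M]^2\gtrsim(\log L)^2$, your claim $\widetilde{\mathbf E}[M^2]\le C_2$ with $C_2$ independent of $L$ is false. The paper is careful precisely about this: it tracks the restricted counts $N_\ep^\circ(j)$ per annulus, obtains $\mathbf E[\sum_j N_\ep^\circ(j)]\gtrsim\log L$ and $\mathbf E[(\sum_j N_\ep^\circ(j))^2]\lesssim\sum_j\log(\rho_j/r)+(\log L)^2\lesssim(\log L)^2$, and only then applies Cauchy--Schwarz to get a uniform constant. You cannot get a bounded second moment for the full count $M$; what you need is that the second moment matches the square of the first.

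Two smaller remarks. First, your intermediate estimate $\widetilde{\mathbf P}[B'\leftrightarrow\Lambda_R]\ge c\,Z_{\Omega^*}[x^*,y^*]Z_{\Omega^*}[y^*,0^*]/Z_{\Omega^*}[x^*,0^*]$ puts the \emph{dual} Green-type quantity $Z_{\Omega^*}$ in the denominator, but Corollary~\ref{cor:bound harmonic measure} gives $Z_{\Omega^*}$ as a \emph{lower} bound on the squared correlation. Dividing by that lower bound produces a quantity \emph{larger} than the actual ratio, so the inequality goes the wrong way unless one shows the primal and dual quantities are comparable at the relevant location; the paper sidesteps this by reducing everything to a \emph{local} correlation $\langle\sigma_{\underline B_j}\sigma_{\underline{\underline B}'}\rangle_{\Omega_j^{\bullet\bullet}}^2$ where that comparability is automatic. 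Second, the paper applies Lemma~\ref{lem:monotonicity in sources} at the outset to replace $\mathbf P^{\{B,\Lambda_R\},\{B,\Lambda_R\}}$ by $\mathbf P^{\{B,\Lambda_R\},\emptyset}$; this is not merely cosmetic, since the tree-diagram inequality from \cite{AizDum19} that you invoke is stated for one sourced and one sourceless current, and keeping sources on both would require an extra switching step.
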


\begin{remark}\label{rmk:1a}
Notice that we are using here the fact that $\Lambda_{r}(x)\subset\Omega^\square_r$ and not only $\Lambda_{r}(x)\subset\Omega$. Indeed, this guarantees that there is a ``corridor'' of width $2r$ going from $x$ to $\Lambda_R$. In particular, there is at least one ball in every $ \mathrm{Ann}(x,8^jr,8^{j+1}r)$ with $L\le 8^j\le L^2/8$ that belongs to $\partial_r^\square\Omega$.
\end{remark}

The proof of the lemma is very similar to the proof of the lower bound of Theorem~\ref{thm:crossing free}. It is based on a second moment method for a slightly modified version of $N_\ep^\circ$ (here we sum over boxes of $\partial_r^\square\Omega$ that intersect a certain box). The twist is that the second moment will be of order $(\log (R/r))^2$ instead of $\log(R/r)$, and the first moment of order $\log(R/r)$ instead of 1, which will imply a uniform lower bound on the probability.

For the next proofs (and also later in the paper), we need the following definitions. For a $R$-centred domain $\Omega$, $x\in\Omega$ such that $\Lambda_r(x)\in\partial^\square_r\Omega$, and $j\ge0$ such that $r8^j\le \tfrac18R$,  introduce the vertex $y_j=y_j(\Omega,x,r,R)\in \Omega$ and the non-negative number $\rho_j=\rho_j(\Omega,x,r,R)$ defined as follows:
\begin{itemize}[noitemsep]
\item[(i)] if there exists $\rho<\tfrac1{10}r8^j$ and $y\in \mathrm{Ann}(x,2r 8^j+2\rho,4r 8^j-2\rho)$  such that $\Lambda_{2\rho}(y)\subset \Omega^\square_r$ and $\Lambda_{6\rho}(y)$ is disconnecting $\Lambda_r(x)$ from $\Lambda_R$ in $\Omega$ but $\Lambda_{5\rho}(y)$ is not,
then write $\rho_j$ for the smallest such radius, and $y_j$ for the associated vertex $y$ (if there is more than one, pick one according to an arbitrary rule).
\item[(ii)] otherwise, fix $\rho_j:=\tfrac1{10}r8^j$ and $y_j$ any vertex on the shortest portion of $\partial\Lambda_{3r8^j}(x)$ disconnecting $B$ from $\Lambda_R$ in $\Omega$ such that $\Lambda_{5\rho_j}(y_j)\subset\Omega^\square_r\not\supset\Lambda_{6\rho_j}(y_j)$. 
\end{itemize}

A key output of the previous definitions is that the minimality of $\rho_ j$ enables to find a quad $D_j\subset\Lambda_{6\rho_j}(y_j)$ (see also Fig.~\ref{fig:definitionyrho3}) satisfying that 
\begin{itemize}[noitemsep,nolistsep]
\item $D_j\setminus\Lambda_{\rho_j/2}(y_j)$ is disconnected in two;
\item $D_j$ is disconnecting $B$ from $\Lambda_R$;
\item the extremal distance of $D_j$ is between $c_1$ and $1/c_1$ for some universal constant $c_1>0$;
\end{itemize}
Now, divide $D_ j$ into three disjoint quads $D_j^{(-1)},D_ j^{(0)},D_j^{(1)}$ with extremal distance between $c_1/3$ and $3/c_1$ (see also Fig.~\ref{fig:definitionyrho3}), where the constant $c_1$ is small yet independent of everything.

\begin{figure} 
		\begin{center}
			\includegraphics[scale=0.6]{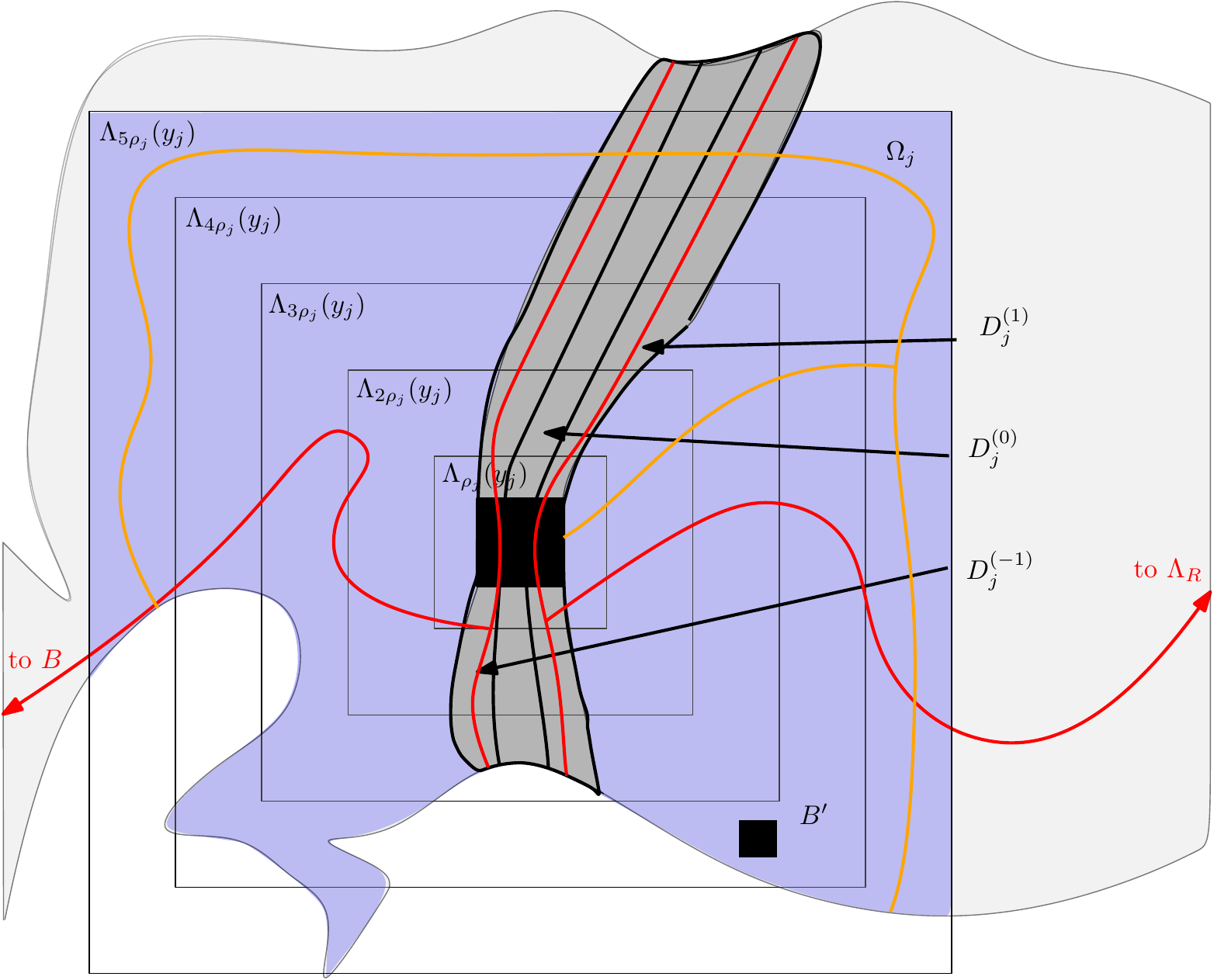}  
		\end{center}
		\caption{The domain $D_j$ in grey and black. Its existence is guaranteed by the fact that $\rho_j$ was taken to be minimal. In red, some primal and dual paths used in the proof of the claim. In blue, the domain $\Omega_j^{\bullet\bullet}$ (we depicted the merging of the vertices in black). In orange, the paths used to get \eqref{eq:hug3}.
		}	\label{fig:definitionyrho3}
\end{figure}

\begin{proof} Let $B:=\Lambda_r(x)\in \partial_r^\square\Omega$.  For $L\le \sqrt{R/r}$, let $J=J(L)$ be the set of integers $j$ such that $L\le r8^j\le \tfrac18L^2$. 
The monotonicity in sources from  Lemma~\ref{lem:monotonicity in sources} implies that 
\begin{equation}
{\bf P}^{\{B,\Lambda_R\},\{B,\Lambda_R\}}_{\Omega^{\bullet\bullet},\Omega^{\bullet\bullet}}[\Lambda_R\text{ conn.~to a box of $\partial_r^\square\Omega$ in  $D$}]\ge {\bf P}^{\{B,\Lambda_R\},\emptyset}_{\Omega^{\bullet\bullet},\Omega^{\bullet\bullet}}[\Lambda_R\text{ conn.~to a box of $\partial_r^\square\Omega$ in  $D$}].
\end{equation} 
We now focus on bounding the right-hand side.
For $j\in J$, let $N_\ep^\circ(j)$ be defined as $N_\ep^\circ$ but restricting the sum to boxes of $\partial_r^\square\Omega$ that are included in $\overline B_j:=\Lambda_{4\rho_j}(y_j)$ (note that there is at least one such box).
We have that  for every $j\ne j'$ in $J$,
 \begin{align}
{\bf E}^{\{B,\Lambda_R\},\emptyset}_{\Omega^{\bullet\bullet},\Omega^{\bullet\bullet}}[N_\ep^\circ(j)]&\ge  c_1,\label{eq:lower bound first moment}\\
 {\bf E}^{\{B,\Lambda_R\},\emptyset}_{\Omega^{\bullet\bullet},\Omega^{\bullet\bullet}}[N_\ep^\circ(j)^2] &\le C_1\log(\rho_j/r),\label{eq:upper bound second moment}\\
{ \bf E}^{\{B,\Lambda_R\},\emptyset}_{\Omega^{\bullet\bullet},\Omega^{\bullet\bullet}}[N_\ep^\circ(j)N_\ep^\circ(j')]&\le C_2\label{eq:upper bound second moment bis}.
 \end{align}
We explain how to prove these inequalities by taking the example of \eqref{eq:lower bound first moment} (the other ones can be obtained similarly). 
We start with two claims.
\paragraph{Claim 1} {\em There exists a constant $c_0>0$ independent of everything such that for every $B'\in \partial_r^\square\Omega$ contained in $\Lambda_{4\rho_j}(y_j)$, we have that 
\begin{equation}\label{eq:pos1}
{\bf P}^{\{B,\Lambda_R\},\emptyset}_{\Omega^{\bullet\bullet},\Omega^{\bullet\bullet}}[\mathcal A(B')\cap \mathcal E_\ep(B')]\ge c_0 \frac{\langle\sigma_{B}\sigma_{\underline{\underline B}'}\rangle_{\Omega^{\bullet{\bigcdot}\bullet}}\langle\sigma_{\underline{\underline B}'}\sigma_{\Lambda_R}\rangle_{\Omega^{\bullet{\bigcdot}\bullet}}}{\langle\sigma_{B}\sigma_{\Lambda_R}\rangle_{\Omega^{\bullet{\bigcdot}\bullet}}},
\end{equation}
where $\Omega^{\bullet{\bigcdot}\bullet}$ is the graph obtained from $\Omega^{\bullet\bullet}$ by merging all the vertices of the box $\underline{\underline B}'$ that is four times smaller than $B'$ together. }
\bigbreak
While we already used similar arguments in the proof of Lemma~\ref{lem:expected number}, let us provide additional details.
\begin{proof}
Following the reasoning leading to \eqref{eq:kj} and \eqref{eq:kjk} gives that for $\ep$ small enough,
\[
{\bf P}^{\{B,\Lambda_R\},\emptyset}_{\Omega^{\bullet\bullet},\Omega^{\bullet\bullet}}[\mathcal A(B')\cap \mathcal E_\ep(B')]\ge c_2 {\bf P}^{\{B,\Lambda_R\},\emptyset}_{\Omega^{\bullet\bullet},\Omega^{\bullet\bullet}}[\mathcal A(B')\cap \{\underline B'\stackrel{\n_1+\n_2}\longleftrightarrow \Lambda_R\}].
\]
Then, the mixing property, inclusion of events and the switching lemma give, like in \eqref{eq:kjkk} and \eqref{eq:ahj0}, that 
\begin{align*}
{\bf P}^{\{B,\Lambda_R\},\emptyset}_{\Omega^{\bullet\bullet},\Omega^{\bullet\bullet}}[\mathcal A(B')\cap \{\underline B'\stackrel{\n_1+\n_2}\longleftrightarrow \Lambda_R\}]&\ge c_{\rm mix} {\bf P}^{\{B,\Lambda_R\},\emptyset}_{\Omega^{\bullet{\bigcdot}\bullet},\Omega^{\bullet{\bigcdot}\bullet}}[\mathcal A(B')\cap \{{\underline B}'\stackrel{\n_1+\n_2}\longleftrightarrow \Lambda_R\}]\\
&\ge c_{\rm mix} {\bf P}^{\{B,\Lambda_R\},\emptyset}_{\Omega^{\bullet{\bigcdot}\bullet},\Omega^{\bullet{\bigcdot}\bullet}}[\mathcal A(B')\cap \{\underline{\underline B}'\stackrel{\n_1+\n_2}\longleftrightarrow \Lambda_R\}]\\
&=c_{\rm mix}\frac{\langle\sigma_{B}\sigma_{\underline{\underline B}'}\rangle_{\Omega^{\bullet{\bigcdot}\bullet}}\langle\sigma_{\underline{\underline B}'}\sigma_{\Lambda_R}\rangle_{\Omega^{\bullet{\bigcdot}\bullet}}}{\langle\sigma_{B}\sigma_{\Lambda_R}\rangle_{\Omega^{\bullet{\bigcdot}\bullet}}}{\bf P}^{\{B,\underline{\underline B}'\},\{\underline{\underline B}',\Lambda_R\}}_{\Omega^{\bullet\bullet},\Omega^{\bullet\bullet}}[\mathcal A(B')]\end{align*}
(the fact that we need the place where we merged vertices to be well apart from the edges involved in the events under consideration is the main reason for introducing $\underline{\underline B}'$). 

Then, like in \eqref{eq:ahj2} one may prove that 
\begin{equation*}
{\bf P}^{\{B,\underline{\underline B}'\},\{\underline{\underline B}',\Lambda_R\}}_{\Omega^{\bullet\bullet},\Omega^{\bullet\bullet}}[\mathcal A(B')]\ge c_3.
\end{equation*}
This concludes the proof of the claim.\end{proof}

We now use the definition of $(y_j,\rho_j)$ in a crucial fashion to get the next claim.

\paragraph{Claim 2} {\em If $\underline B_j:=\Lambda_{\rho_j}(y_j)$ and $B'\in\partial_r^\square\Omega$ included in $\Lambda_{4\rho_j}(y_j)$, we have that}
\begin{equation}\label{eq:bh0}
c_1\langle\sigma_{\underline B_j}\sigma_{\underline{\underline B}'}\rangle_{\Omega_j^{\bullet\bullet}}^2\le \frac{\langle\sigma_{B}\sigma_{\underline{\underline B}'}\rangle_{\Omega^{\bullet{\bigcdot}\bullet}}\langle\sigma_{\underline{\underline B}'}\sigma_{\Lambda_R}\rangle_{\Omega^{\bullet{\bigcdot}\bullet}}}{\langle\sigma_{B}\sigma_{\Lambda_R}\rangle_{\Omega^{\bullet{\bigcdot}\bullet}}}\le C_1\langle\sigma_{\underline B_j}\sigma_{\underline{\underline B}'}\rangle_{\Omega_j^{\bullet\bullet}}^2,
\end{equation}
{\em 
where $\Omega_j^{\bullet\bullet}$ is the graph obtained from $\Omega\cap \Lambda_{5\rho_j}(y_j)$ (note that this is included in the annulus $\mathrm{Ann}(x,r8^j,r8^{j+1})$) by identifying all the vertices that are in $\underline B_j$ together, and all those in $\underline B'$ together. }
\begin{proof} We first prove that
 \begin{equation}\label{eq:bh1}
c_1\langle\sigma_{B}\sigma_{\Lambda_R}\rangle_{\Omega^{\bullet{\bigcdot}\bullet}}\le \langle\sigma_{B}\sigma_{\underline B_j}\rangle_{\Omega^{\bullet{\bigcdot}\bullet}}\langle\sigma_{\underline B_j}\sigma_{\Lambda_R}\rangle_{\Omega^{\bullet{\bigcdot}\bullet}}\le \langle\sigma_{B}\sigma_{\Lambda_R}\rangle_{\Omega^{\bullet{\bigcdot}\bullet}}.
\end{equation}
The upper bound follows directly from the FKG inequality so we focus on the lower bound. For the lower bound, observe that for $B$ to be connected in the random cluster model to $\Lambda_R$ in $\Omega$, there must be a path from $B$ to $D_j^{(0)}$, and similarly a path from $D_j^{(0)}$ to $\Lambda_R$. As a consequence, the Edwards-Sokal coupling, the mixing property of the random cluster model and the fact that $D_j^{(0)}$ disconnects $B$ from $\Lambda_R$ give
\[
\langle\sigma_{B}\sigma_{\Lambda_R}\rangle_{\Omega^{\bullet{\bigcdot}\bullet}}=\phi^0_{\Omega^{\bullet{\bigcdot}\bullet}}[B\stackrel{\omega}\longleftrightarrow \Lambda_R]\le C_{\rm mix}\phi^0_{\Omega^{\bullet{\bigcdot}\bullet}}[B\stackrel{\omega}\longleftrightarrow D_j^{(0)}]\phi^0_{\Omega^{\bullet{\bigcdot}\bullet}}[D_j^{(0)}\stackrel{\omega}\longleftrightarrow \Lambda_R].
\] 
It only remains to observe that the RSW theorem (to create open crossings in $D_j^{(-1)}$ and $D_j^{(1)}$ disconnecting $B$ from $\Lambda_R$ in $\Omega$ as in Fig.~\ref{fig:definitionyrho3}) implies that
\[
\phi_{\Omega^{\bullet{\bigcdot}\bullet}}[B\stackrel{\omega}\longleftrightarrow D_j^{(0)}]\le C_1\phi^0_{\Omega^{\bullet{\bigcdot}\bullet}}[B\stackrel{\omega}\longleftrightarrow \underline B_j]\quad\text{and}\quad \phi^0_{\Omega^{\bullet{\bigcdot}\bullet}}[D_j^{(0)}\stackrel{\omega}\longleftrightarrow \Lambda_R]\le C_1\phi^0_{\Omega^{\bullet{\bigcdot}\bullet}}[\underline B_j\stackrel{\omega}\longleftrightarrow \Lambda_R].
\]
Together with the Edwards-Sokal coupling, this gives \eqref{eq:bh1}.

We can now prove the lower bound of \eqref{eq:bh0}. First, 
\begin{equation}\label{eq:hug4}
\langle\sigma_{\underline B_j}\sigma_{\underline{\underline B}'}\rangle_{\Omega^{\bullet{\bigcdot}\bullet}}\ge c_2\langle\sigma_{\underline B_j}\sigma_{\underline{\underline B}'}\rangle_{\Omega_j^{\bullet\bullet}}.
\end{equation}
Indeed, use the Edwards-Sokal coupling to rephrase the problem in terms of the random cluster model. Then, the FKG inequality and RSW show that conditionally on $\underline B_j$ being connected to $\underline{\underline B}'$, there is an open path in $\Lambda_{9\rho_j/2}(y_j)$ disconnecting $\Lambda_{4\rho_j}(y_j)$ from $\partial\Lambda_{9\rho_j/2}(y_j)$ with probability bounded by a uniform constant. Now, conditioned on everything inside $\Lambda_{9\rho_j/2}(y_j)$, we can use RSW to prove that there does not exist any path from $\Lambda_{9\rho_j/2}(y_j)$ to $\partial\Lambda_{5\rho_j}(y_j)$ with uniformly bounded probability. We deduce that the probability that $\underline B_j$ is connected to $\underline{\underline B}'$ but not to $\partial\Lambda_{5\rho_j}(y_j)$ is larger than constant times the probability that $\underline B_j$ and $\underline{\underline B}'$ are connected. Conditioning on the absence of connection, then using the spatial-Markov property and the comparison between boundary conditions concludes the proof of \eqref{eq:hug4}. 

Use the FKG inequality, then \eqref{eq:bh1} and \eqref{eq:hug4} to get that 
\begin{align*}
\langle\sigma_{B}\sigma_{\underline{\underline B}'}\rangle_{\Omega^{\bullet{\bigcdot}\bullet}}\langle\sigma_{\underline{\underline B}'}\sigma_{\Lambda_R}\rangle_{\Omega^{\bullet{\bigcdot}\bullet}}&\ge \langle\sigma_{B}\sigma_{\underline B_j}\rangle_{\Omega^{\bullet{\bigcdot}\bullet}}\langle\sigma_{\underline B_j}\sigma_{\underline{\underline B}'}\rangle_{\Omega^{\bullet{\bigcdot}\bullet}}^2\langle\sigma_{\Lambda_R}\sigma_{\underline B_j}\rangle_{\Omega^{\bullet{\bigcdot}\bullet}}\\
&\ge c_1c_2\langle\sigma_{B}\sigma_{\Lambda_R}\rangle_{\Omega^{\bullet{\bigcdot}\bullet}}\langle\sigma_{\underline B_j}\sigma_{\underline{\underline B}'}\rangle_{\Omega_j^{\bullet\bullet}}^2.
\end{align*}
We now turn to the upper bound. We prove that 
\begin{equation}\label{eq:bh2}
\langle\sigma_{B}\sigma_{\underline{\underline B}'}\rangle_{\Omega^{\bullet{\bigcdot}\bullet}}\le C\langle\sigma_{B}\sigma_{\underline B_j}\rangle_{\Omega^{\bullet{\bigcdot}\bullet}}\langle\sigma_{\underline B_j}\sigma_{\underline{\underline B}'}\rangle_{\Omega_j^{\bullet\bullet}}\quad\text{and}\quad\langle\sigma_{\Lambda_R}\sigma_{\underline{\underline B}'}\rangle_{\Omega^{\bullet{\bigcdot}\bullet}}\le C\langle\sigma_{\Lambda_R}\sigma_{\underline B_j}\rangle_{\Omega^{\bullet{\bigcdot}\bullet}}\langle\sigma_{\underline B_j}\sigma_{\underline{\underline B}'}\rangle_{\Omega_j^{\bullet\bullet}}.
\end{equation}
In order to see it, use the RSW estimates for the random cluster model to get that
\begin{align}
\langle\sigma_{B}\sigma_{\underline{\underline B}'}\rangle_{\Omega^{\bullet{\bigcdot}\bullet}}=\phi_{\Omega^{\bullet{\bigcdot}\bullet}}[B\longleftrightarrow \underline{\underline B}']
&\le C\phi_{\Omega^{\bullet{\bigcdot}\bullet}}[B\stackrel{\omega}\longleftrightarrow \underline B_j]\phi_{\Omega_j^{\bullet\bullet}}[\underline B_j\stackrel{\omega}\longleftrightarrow \underline{\underline B}'].\label{eq:hug3}
\end{align}
More precisely, we perform a construction which is related to the one leading to \eqref{eq:hug4}. Namely, we construct a crossing in $\Omega_j^{\bullet\bullet}$ disconnecting $\overline B_j$ from $B$, and then a path from $\underline B_j$ to $\partial\Omega_j^{\bullet\bullet}$ with positive probability, and use the FKG inequality to combine it with the path from $B$ to $\underline B'$ (see the orange paths in Fig.~\ref{fig:definitionyrho3}). 

Similarly, we find that 
\begin{align}
\langle\sigma_{\Lambda_R}\sigma_{\underline{\underline B}'}\rangle_{\Omega^{\bullet{\bigcdot}\bullet}}
&\le C\phi_{\Omega^{\bullet{\bigcdot}\bullet}}[\Lambda_R\stackrel{\omega}\longleftrightarrow \underline B_j]\phi_{\Omega_j^{\bullet\bullet}}[\underline B_j\stackrel{\omega}\longleftrightarrow \underline{\underline B}'].
\end{align}
Combining the two  inequalities in \eqref{eq:bh2} together with the right inequality of \eqref{eq:bh1} implies the upper bound.
\end{proof}
We are now in a position to prove \eqref{eq:lower bound first moment}. Indeed, one uses the previous claims and an estimate on spin-spin correlations that is similar to the lower bound in Lemma~\ref{lem:expected number} to directly get \eqref{eq:lower bound first moment}. 

The other inequalities are obtained in a similar fashion as in the lower bound of Theorem~\ref{thm:crossing free} using  \cite[Proposition~A.3]{AizDum19} to express the probability that two boxes are connected to $B$, this time using variations around the upper bound in the previous claim.

Overall, we deduce that
  \begin{align*}
 {\bf E}^{\{ B,\Lambda_R\},\emptyset}_{\Omega^{\bullet\bullet},\Omega^{\bullet\bullet}}\Big[\sum_{j\in J}N_\ep^\circ(j)\Big]&\ge 2c_1\log L,\\
  {\bf E}^{\{B,\Lambda_R\},\emptyset}_{\Omega^{\bullet\bullet},\Omega^{\bullet\bullet}}\Big[\Big(\sum_{j\in J}N_\ep^\circ(j)\Big)^2\Big]&\le C_1\sum_{j\in J}\log(\rho_j/r)+C_3(\log L)^2\le C_4(\log L)^2
 \end{align*}
 since $\rho_j\le r8^j$.
 
 The Cauchy-Schwarz inequality implies that 
 \[
 {\bf E}^{\{B,\Lambda_R\},\emptyset}_{\Omega^{\bullet\bullet},\Omega^{\bullet\bullet}}[\exists j\in J:N_\ep^\circ(j)>0]\ge c_3(\ep).
 \]
Since the event on the left implies the existence of a path from $\Lambda_R$ to a box of $\partial_r^\square\Omega$ in the annulus, we deduce the result. \end{proof}

We introduce a few more notation, see Fig.~\ref{fig:Omega0}. For $L$, let
\begin{align*}
\ell_{\rm in}=\ell_{\rm in}(L)&:=\text{the arc of $\partial\Lambda_{rL}(x)$ disconnecting $B$ from $\Lambda_R$ in $\Omega$ which is closest to $B$ in $\Omega$},\\
\ell'_{\rm in}=\ell_{\rm in}'(L)&:=\text{the arc of $\partial\Lambda_{rL^{4/3}}(x)$ disconnecting $B$ from $\Lambda_R$ in $\Omega$ which is closest to $B$ in $\Omega$,}\\
\ell'_{\rm out}=\ell'_{\rm out}(L)&:=\text{the arc of $\partial\Lambda_{rL^{5/3}}(x)$ disconnecting $B$ from $\Lambda_R$ in $\Omega$ which is closest to $B$ in $\Omega$,}\\
 \ell_{\rm out}=\ell_{\rm out}(L)&:=\text{the arc of $\partial\Lambda_{rL^2}(x)$ disconnecting $B$ from $\Lambda_R$ in $\Omega$ which is closest to $B$ in $\Omega$}.
\end{align*}
Let $D=D(L)$ be the part of $\Omega$ between $\ell_{\rm in}$ and $\ell_{\rm out}$.

\begin{lemma}[Uniqueness of the cluster crossing an annulus] \label{lem:1b} For every $\eta>0$, there exists $C=C(\eta)>0$ such that for all $R,r,L$ such that $C\le L\le \sqrt{R/r}$, every $R$-centred domain $\Omega$, every $B\in \partial^\square_r\Omega$,
\begin{align*}
{\bf P}^{\{B,\Lambda_R\},\{B,\Lambda_R\}}_{\Omega^{\bullet\bullet},\Omega^{\bullet\bullet}}[\exists \text{ 2 $D$-clusters in $\n_1+\n_2$ intersecting both }\ell_{\rm in}\text{ and }\ell_{\rm out}]&\le \eta,
\end{align*}
where $\Omega^{\bullet\bullet}$ is the graph obtained from $\Omega$ by merging the vertices in $\Lambda_R$, and those in $B$.
\end{lemma}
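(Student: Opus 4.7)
The plan is to first apply the switching lemma to turn the probability into a sourceless conditional probability, and then to combine mixing with RSW-type estimates to reduce the lemma to a one-arm versus two-arm comparison in a sub-annulus of aspect ratio growing with $L$.

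Applying the switching lemma (Lemma~\ref{lem:switching lemma}) with both source sets equal to $\{B,\Lambda_R\}$ (so that the symmetric difference of sources is empty) and using $\mathbf{P}^{\emptyset,\emptyset}_{\Omega^{\bullet\bullet},\Omega^{\bullet\bullet}}[B\stackrel{\n_1+\n_2}{\longleftrightarrow}\Lambda_R]=\langle\sigma_B\sigma_{\Lambda_R}\rangle^2_{\Omega^{\bullet\bullet}}$ to identify the normalization yields
\[
\mathbf{P}^{\{B,\Lambda_R\},\{B,\Lambda_R\}}_{\Omega^{\bullet\bullet},\Omega^{\bullet\bullet}}[E]\;=\;\mathbf{P}^{\emptyset,\emptyset}_{\Omega^{\bullet\bullet},\Omega^{\bullet\bullet}}\!\left[E\,\big|\,B\stackrel{\n_1+\n_2}{\longleftrightarrow}\Lambda_R\right],
\]
where $E$ denotes the 2-crossing event in the statement. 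It therefore suffices to bound this conditional probability by $\eta$.

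Let $\tilde D:=\Omega\cap{\rm Ann}(x,rL^{4/3},rL^{5/3})$, a thinner sub-annulus of aspect ratio $L^{1/3}$ strictly contained between $\ell_{\rm in}$ and $\ell_{\rm out}$, with inner and outer boundary arcs $\ell'_{\rm in},\ell'_{\rm out}$. The event $E$ implies the event $F$ that there exist two disjoint $\{\n_1+\n_2>0\}$-paths from $\ell'_{\rm in}$ to $\ell'_{\rm out}$ in $\tilde D$. Decomposing $\{B\leftrightarrow\Lambda_R\}$ as a connection from $B$ to $\ell'_{\rm in}$ inside $\tilde D$'s interior, a crossing of $\tilde D$, and a connection from $\ell'_{\rm out}$ to $\Lambda_R$ in the exterior, and applying the mixing property (Proposition~\ref{lem:mixing}) iteratively on the three regions separated by appropriate buffer sub-annuli, one would obtain a bound of the form
\[
\mathbf{P}^{\emptyset,\emptyset}\!\left[E\,\big|\,B\stackrel{\n_1+\n_2}{\longleftrightarrow}\Lambda_R\right]\;\le\; C\cdot\frac{\mathbf{P}^{\emptyset,\emptyset}[F]}{\mathbf{P}^{\emptyset,\emptyset}[\ell'_{\rm in}\stackrel{\n_1+\n_2}{\longleftrightarrow}\ell'_{\rm out}\text{ in }\tilde D]},
\]
reducing the lemma to showing that this ratio tends to $0$ as $L\to\infty$.

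For the ratio estimate, the plan is to use the random current-random cluster coupling (Proposition~\ref{prop:coupling random cluster}): since $\{\n_i>0\}\subset\omega_i$ for independent $\phi^0_{\Omega^{\bullet\bullet}}$-distributed FK configurations $\omega_1,\omega_2$, the event $F$ is contained in the event $\tilde F$ that $\omega_1\cup\omega_2$ admits two disjoint crossings of $\tilde D$. The main obstacle is to prove that $\mathbf{P}[\tilde F]$ is polynomially smaller in $L$ than the one-crossing probability: since $\omega_1\cup\omega_2$ is not itself an FK configuration, classical FK quasi-multiplicativity and 2-arm RSW estimates do not apply directly, so the plan is to decompose the two disjoint paths by color (attributing each portion to the copy $\omega_i$ that provides its edges) and to apply the FK 2-arm RSW estimate derived from Theorem~\ref{thm:RSW} separately in each copy, exploiting the independence of $\omega_1$ and $\omega_2$ to convert the $L^{1/3}$ aspect ratio into a polynomial gain.
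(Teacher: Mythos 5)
Your opening move -- applying the switching lemma to rewrite $\mathbf{P}^{\{B,\Lambda_R\},\{B,\Lambda_R\}}_{\Omega^{\bullet\bullet},\Omega^{\bullet\bullet}}[E]$ as $\mathbf{P}^{\emptyset,\emptyset}_{\Omega^{\bullet\bullet},\Omega^{\bullet\bullet}}[E\mid B\!\leftrightarrow\!\Lambda_R]$ -- is correct (with $A=B$ the symmetric difference is empty and $\mathcal F_{\{B,\Lambda_R\}}$ is exactly the connection event). But the two steps that follow both have genuine gaps, and the second one is fatal.

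The claimed reduction to the ratio $\mathbf{P}[F]/\mathbf{P}[\ell'_{\rm in}\!\leftrightarrow\!\ell'_{\rm out}]$ requires a lower bound of quasi-multiplicative type, namely that $\mathbf{P}^{\emptyset,\emptyset}[B\!\leftrightarrow\!\Lambda_R]\ge c\,\mathbf{P}[B\!\leftrightarrow\!\ell'_{\rm in}]\,\mathbf{P}[\ell'_{\rm in}\!\leftrightarrow\!\ell'_{\rm out}]\,\mathbf{P}[\ell'_{\rm out}\!\leftrightarrow\!\Lambda_R]$. This is precisely the kind of statement that FKG gives you for free in the random cluster model, and the double random current does not satisfy FKG (this is highlighted at the start of the paper). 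The mixing property of Proposition~\ref{lem:mixing} factorises probabilities of events living on disjoint regions $G_0$ and $G_1$; it does not let you glue three connection events across a quad into a connection from $B$ to $\Lambda_R$, and there is no general positive-association mechanism to substitute. So the intermediate bound is not justified, and in fact such quasi-multiplicativity is exactly the kind of thing this paper is working hard to circumvent.

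The final step is where the argument actually breaks. You bound $F$ by the event $\tilde F$ that $\omega_1\cup\omega_2$ has two disjoint crossings of $\tilde D$, with $\omega_1,\omega_2$ two independent samples of the critical FK measure obtained from Proposition~\ref{prop:coupling random cluster}. But $\omega_1\cup\omega_2$ is a \emph{strictly supercritical} percolation: each $\omega_i$ stochastically dominates i.i.d.~Bernoulli percolation of parameter $p_c/(p_c+2(1-p_c))=\sqrt 2-1$, so their independent union dominates Bernoulli percolation of parameter $1-(2-\sqrt 2)^2=4\sqrt 2-5>1/2$. The probability that a supercritical percolation has two disjoint crossings of an annulus of aspect ratio $L^{1/3}$ does \emph{not} tend to $0$ as $L\to\infty$; on the contrary it tends to $1$. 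So $\mathbf{P}[\tilde F]$ is bounded away from $0$ and the bound $F\subset\tilde F$ gives you nothing. Even leaving aside supercriticality, the ``colour decomposition'' you sketch is ill-defined: a single path in $\omega_1\cup\omega_2$ can alternate freely between $\omega_1$-edges and $\omega_2$-edges, so two disjoint crossings of the union do not decompose into two-arm events in the individual $\omega_i$. And even if they did, the critical FK two- and four-arm exponents are strictly less than $2$, whereas the quantitative input the paper ultimately wants here (Theorems~\ref{thm:absence closed pivotal expectation white}--\ref{thm:absence closed pivotal expectation black}) is $(r/R)^2$ decay, which is a genuinely new feature of the double random current and not a corollary of FK arm estimates. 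For reference, the paper's proof takes a quite different route that does not throw away the model-specific structure: it first uses Kramers--Wannier duality (Proposition~\ref{prop:coupling Kramers Wannier}) to show that, with good probability, each $\eta(\n_i)$ has a \emph{unique} odd-cluster crossing each of $D_{\rm in}$ and $D_{\rm out}$ (Claim~1), then shows with a partition-function-ratio argument (built on the geometric data $(y_j,\rho_j)$ of Section~\ref{sec:theorem crossing free}) that these two odd clusters typically meet (Claim~2), and finally rules out an additional $D$-cluster disjoint from both via the merging-vertices trick and Corollary~\ref{cor:upper bound crossing} (Claim~3).
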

Let $D_{\rm in}$ be the part of $D$ between $\ell_{\rm in}$ and $\ell_{\rm in}'$ (with these two arcs included) and  $D_{\rm out}$  be the part of $D$ between $\ell_{\rm out}'$ and $\ell_{\rm out}$ (with these two arcs included).

Below, for a current $\n$ we write $\eta(\n)$ for the set of edges with odd current in $\n$. For the proof, we proceed in three steps:
\begin{itemize}
\item In the first step, we show that with very good probability, only one $D_{\rm in}$-cluster of $\eta(\n_1)$ is crossing $D_{\rm in}$ and similarly for $D_{\rm out}$.  Due to the source constraint, this implies that on this event there exists exactly one $D$-cluster in $\eta(\n_1)$, denoted $\mathbf C(\n_1)$, containing a crossing of $D_{\rm in}$ and $D_{\rm out}$. The same is true for $\n_2$ (we introduce the corresponding random variable $\mathbf C(\n_2)$).
\item In the second step, we prove that with very good probability, on the previous event, the two clusters $\mathbf C(\n_1)$ and $\mathbf C(\n_2)$ intersect.
This is the most technical part of the proof. The idea is to first look at $\mathbf C(\n_1)$, and see that it must typically ``use a substantial amount of the room between $\ell_{\rm in}'$ and $\ell_{\rm out}'$'', and then to see that $\mathbf C(\n_2)$ has small probability to ``cross from $\ell_{\rm in}'$ and $\ell_{\rm out}'$ without intersecting $\mathbf C(\n_1)$''.

\item In the last step, we prove that there is no $D$-cluster in $\n_1+\n_2$ intersecting $\ell_{\rm in}$ and $\ell_{\rm out}$ but not $\mathbf C(\n_1)\cup\mathbf C(\n_2)$.
\end{itemize}

\begin{figure} 
		\begin{center}
			\includegraphics[scale=1.0]{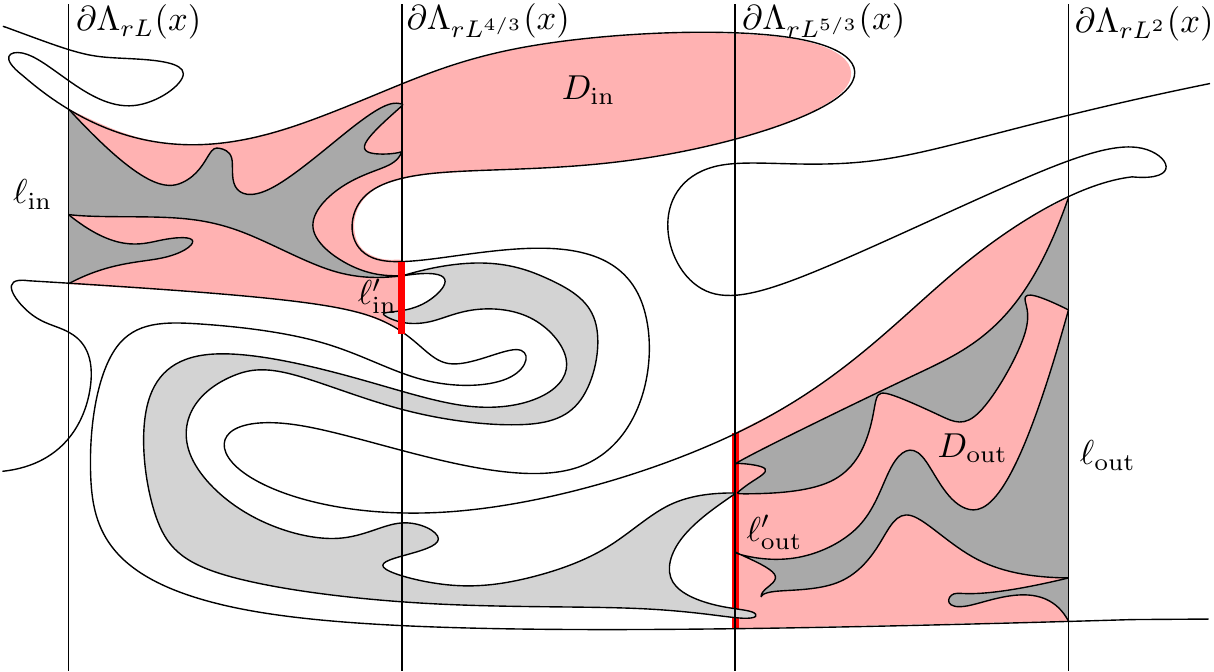}  
		\end{center}
		\caption{The arcs $\ell_{\rm in}$ and $\ell_{\rm out}$ as well as the domains $D_{\rm in}$ and $D_{\rm out}$ (in red). We also depicted in dark gray the $D_{\rm in}-$ and $D_{\rm out}$-clusters of $\eta(\n_1)$ that intersect $\ell'_{\rm in}$ and $\ell_{\rm in}$, and $\ell'_{\rm out}$ and $\ell_{\rm out}$ respectively. In light gray, the $D$-clusters of $\n_1$ intersecting $\ell'_{\rm in}\cup\ell'_{\rm out}$.
		}	\label{fig:Omega0}
\end{figure}

\begin{proof}
We refer to Fig.~\ref{fig:Omega0} for the following definitions. Introduce the events \begin{align*}
F_{\rm in}&:=\{\n: \text{$\exists$ two $D_{\rm in}$-clusters of $\eta(\n)$ intersecting $\ell_{\rm in}$ and $\ell_{\rm in}'$}\},\\
F_{\rm out}&:=\{\n: \text{$\exists$ two $D_{\rm out}$-clusters of $\eta(\n)$ intersecting $\ell_{\rm out}'$ and $\ell_{\rm out}$}\}\end{align*}
and 
\[
F:=\{(\n_1,\n_2):\n_1\text{ or }\n_2\text{ belongs to } F_{\rm in}\cup F_{\rm out}\}.
\]
On $F^c$, define $\mathbf C(\n_1)$ and $\mathbf C(\n_2)$ to be the {\em unique} $D$-clusters in $\eta(\n_1)$ and $\eta(\n_2)$ intersecting $\ell_{\rm in}$ and $\ell_{\rm out}$ and introduce
\begin{align*}
G&:=F^c\cap \{(\n_1,\n_2):\mathbf C(\n_1)\cap\mathbf C(\n_2)= \emptyset\},\\
H&:=F^c\cap \{(\n_1,\n_2):\exists\text{ a $D$-cluster in $\n_1+\n_2$ intersecting $\ell_{\rm in}$ and $\ell_{\rm out}$ but not $\mathbf C(\n_1)\cup\mathbf C(\n_2)$}\}.
\end{align*}
We start with the observation that 
\[
\{\exists \text{ 2 $D$-clusters in $\n_1+\n_2$ intersecting both }\ell_{\rm in}\text{ and }\ell_{\rm out}\}\subset F\cup G\cup H
\]
so that it suffices to bound the probability of the three events on the right separately.  We do it for each event separately in the following three claims. 

\paragraph{Claim 1.} {\em There exist $c,C\in(0,\infty)$ independent of everything such that}
\[
\mathbf P_{\Omega^{\bullet\bullet},\Omega^{\bullet\bullet}}^{\{B,\Lambda_R\},\{B,\Lambda_R\}}[F]\le CL^{-c}.
\]

\begin{proof}[Proof of Claim 1]
We bound the probability of $\n_1\in F_{\rm in}$. Call $(ab)$ and $(cd)$ the parts of $\partial\Omega$ corresponding to the boundaries of $D$ (or equivalently the two arcs obtained from $\partial D_{\rm in}$ by removing $\ell_{\rm in}$ and $\ell'_{\rm in}$). 
Let $\Gamma$ be the crossing in $\eta(\n_1)\cap D_{\rm in}$  between $\ell'_{\rm in}$ and $\ell_{\rm in}$ that is the closest to $(ab)$ -- note that $\Gamma$ must exist because of the source constraints. Set $\mathbf D^*$ to be the set of faces of $D_{\rm in}^*$ that are reachable from $(cd)$ in $D_{\rm in}^*$ without crossing $\Gamma$. 

In the low-temperature expansion interpretation of Proposition~\ref{prop:coupling Kramers Wannier}, the source constraints on $\eta(\n)$ implies that the faces bordering $(ab)$ and $(cd)$ receive spin say plus for those bordering $(ab)$, and minus for those bordering $(cd)$. Now, the definition of $\Gamma$ implies that the faces of $\mathbf D^*$ bordering $\Gamma$ on the side of $(cd)$ also receive the spin minus. 
Also, the existence of an additional crossing in $\eta(\n_1)$ crossing $\mathbf D^*$ from $\ell_{\rm in}$ to $\ell'_{\rm in}$ would imply the existence of a $*$-connected path of faces with spin plus going from $\ell_{\rm in}$ to $\ell'_{\rm in}$ in $\mathbf D^*$. 

Using the FKG inequality and the fact that conditioned on $\Gamma$, the Ising model on $\mathbf D^*$ has minus boundary conditions on the part of the boundary  strictly inside ${\rm Ann}(x,rL,rL^{4/3})$, this probability is smaller than the probability that for an Ising model on ${\rm Ann}(rL,rL^{4/3})$ with plus boundary conditions, there is no path of minuses surrounding the origin. We conclude\footnote{The probability for the critical Ising model on ${\rm Ann}(k,K)$ with plus boundary conditions of not finding a circuit of minus surrounding the origin is bounded by $C(k/K)^c$. Indeed, consider the Edwards-Sokal coupling from Proposition~\ref{prop:coupling ES} with the random cluster measure with wired boundary conditions. Using \eqref{eq:RSW}, we may show that with probability $1-C_0(\tfrac kK)^{c_0}$ for some $c_0$ small enough, there exist $c_0\log(K/k)$ distinct clusters in ${\rm Ann}(k/K)$ surrounding $0$, and not connected to $\partial\Lambda_k$ or $\partial\Lambda_K$. Since each of this cluster receives a spin minus with probability 1/2 thanks to the Edwards-Sokal coupling, we deduce that the probability of having a crossing of pluses from inside to outside in ${\rm Ann}(k,K)$ with plus boundary conditions is bounded by $C(\tfrac kK)^{3c}$ for some uniform constants $c,C>0$.
} that for some $c,C\in(0,\infty)$ independent of everything,
\begin{equation}\label{eq:uhug}
\mathbf P_{\Omega^{\bullet\bullet}}^{\{B,\Lambda_R\}}[F_{\rm in}]\le CL^{-c}.
\end{equation}
We may obtain the same bound for the event $\n_1\in F_{\rm out}$ and for $\n_2$. The result follows by applying the union bound and by changing the constants $c$ and $C$. 
\end{proof}
 
We turn to the bound on the probability of $G$.
 \paragraph{Claim 2.} {\em There exist $c,C\in(0,\infty)$ independent of everything such that }
\[
\mathbf P_{\Omega^{\bullet\bullet},\Omega^{\bullet\bullet}}^{\{B,\Lambda_R\},\{B,\Lambda_R\}}[G]\le CL^{-c}.
\]

\begin{proof}[Proof of Claim 2]
Introduce the two sets  
 \begin{align*}
\pmb\Omega_1&:={\bf A}_1\cup (D\setminus\{\text{union of $(D_{\rm in}\cup D_{\rm out})$-clusters of $\eta(\n_1)$ intersecting $\ell_{\rm in}\cup\ell_{\rm out}$}\}),\\
\pmb\Omega_2&:={\bf A}_2\cup (D\setminus\{\text{union of $(D_{\rm in}\cup D_{\rm out})$-clusters of $\eta(\n_2)$ intersecting $\ell_{\rm in}\cup\ell_{\rm out}$}\}),
\end{align*}
where, for $i=1,2$, ${\bf A}_i$ is the set of sources in $\ell_{\rm in}'\cup\ell_{\rm out}'$ of the restrictions of $\eta(\n_i)$ to the union of $(D_{\rm in}\cup D_{\rm out})$-clusters of $\eta(\n_i)$ intersecting $\ell_{\rm in}\cup\ell_{\rm out}$.

We reuse the definitions of $(y_j,\rho_j)$ from the proof of the previous lemma.
For every realization $(\Omega_1,\Omega_2,A_1,A_2)$ of $(\pmb\Omega_1,\pmb\Omega_2,{\bf A}_1,{\bf A}_2)$ compatible with the event $F^c$, the spatial Markov property implies that 
\begin{equation}
\mathbf P_{\Omega^{\bullet\bullet},\Omega^{\bullet\bullet}}^{\{B,\Lambda_R\},\{B,\Lambda_R\}}[G|(\pmb\Omega_1,\pmb\Omega_2,{\bf A}_1,{\bf A}_2)=(\Omega_1,\Omega_2,A_1,A_2)]= \mathbf P_{\Omega_1,\Omega_2}^{A_1,A_2}[\mathbf C'(\n_1)\cap \mathbf C'(\n_2)=\emptyset],
\end{equation}
where
 $\mathbf C'(\n_i)$ is the union of  the connected components of $\eta(\n_i)\cap \Omega_i$ intersecting $A_i$. 
 
 Consider a subset $J'$ of the set $J_0\subset J$ of indices $j$ with $L^{4/3}\le r8^j< \tfrac18 L^{5/3}$ and let 
\[
G'(J'):=\{\text{for every $j\in J'$, }\mathbf C'(\n_1)\text{ does  not intersect }B_j\},
\]
where $B_j:=\Lambda_{\rho_j}(y_j)$ (recall the definition of $\rho_j$ and $y_j$ above Lemma~\ref{lem:1b}). By conditioning on the union $\underline{\mathbf C}(\n_1)$ of the clusters in $\n_1$ intersecting $B_j$ for some $j\in J'$, we may write that for every possible realization $\underline C$ of $\underline{\mathbf C}(\n_1)$,
\begin{align}
{\bf P}^{A_1}_{\Omega_1}[\underline{\mathbf C}(\n_1)=\underline C]&=\sum_{\n\sim \underline C}w(\n)\frac{Z^{A_1}(\Omega_1\setminus\underline C)}{Z^{A_1}(\Omega_1)},
\end{align}
where for each $\underline C$, $\n\sim\underline C$ denotes a current on the set of edges with endpoints in $\underline C$ satisfying that every vertex in $\underline C$ is connected in $\n$ to $B_j$ for some $j\in J'$. To get the previous equality, we used that $\n_1$ is zero on edges with one endpoint in $\underline C$ and one outside of $\underline C$.
Now, one can rewrite the right-hand side in such a way that 
\begin{align}\label{eq:ahah1}
{\bf P}^{A_1}_{\Omega_1}[\underline{\mathbf C}(\n_1)=\underline C]&={\bf P}^{\emptyset}_{\Omega_1}[\underline{\mathbf C}(\n_1)=\underline C]\frac{\langle\sigma_{A_1}\rangle_{\Omega_1\setminus\underline C}}{\langle\sigma_{A_1}\rangle_{\Omega_1}}\le {\bf P}^{\emptyset}_{\Omega_1}[\underline{\mathbf C}(\n_1)=\underline C]\frac{\langle\sigma_{A_1}\rangle_{\Omega_1\setminus(\cup_{j\in J'} B_j)}}{\langle\sigma_{A_1}\rangle_{\Omega_1}},
\end{align}
where the second inequality is due to the monotonicity of spin-spin correlations in coupling constants. Summing over possible realizations of $\underline C$ compatible with the occurrence of $G'(J')$ gives  that 
\begin{align}
{\bf P}^{A_1}_{\Omega_1}[G'(J')]&\le \frac{\langle\sigma_{A_1}\rangle_{ \Omega_1\setminus(\cup_{j\in J'} B_j)}}{\langle\sigma_{A_1}\rangle_{\Omega_1}}.\label{eq:aho0}\end{align}
Now, we claim that there exists $c_0>0$ independent of everything such that  for every $\mathbf J\subset J_0$ and $\mathbf j\in J_0\setminus\mathbf J$, 
\begin{equation}\label{eq:aho}
\langle\sigma_{A_1}\rangle_{ \Omega_1\setminus(\cup_{j\in \mathbf J\cup\{\mathbf j\}} B_j)}\le (1-c_0)\langle\sigma_{A_1}\rangle_{ \Omega_1\setminus(\cup_{j\in \mathbf J}B_j)}.
\end{equation}
Indeed, let $\{\mathcal F_{A_1}\text{ outside }B_{\mathbf j}\}$ be the event that every cluster of $\omega_{|B_{\mathbf j}^c}$ contains an even number of vertices in $A_1$. The fact that we work on $\Omega_1\setminus(\cup_{j\in \mathbf J\cup\{\mathbf j\}} B_j)$ (for the equality) and the comparison between boundary conditions (for the inequality) give that 
\begin{align*}
\phi^0_{\Omega_1\setminus(\cup_{j\in \mathbf J\cup\{\mathbf j\}} B_j)}[\mathcal F_{A_1}]=\phi^0_{\Omega_1\setminus(\cup_{j\in \mathbf J\cup\{\mathbf j\}} B_j)}[\mathcal F_{A_1}\text{ outside }B_{\mathbf j}]&\le\phi^0_{\Omega_1\setminus(\cup_{j\in \mathbf J} B_j)}[\mathcal F_{A_1}\text{ outside }B_{\mathbf j}].
\end{align*}
Recall the definitions of $D_j^{(-1)},D_j^{(0)},D_j^{(1)}$ from above. Introduce the events (see Fig.~\ref{fig:definitionyrho2}):
\begin{itemize}[noitemsep,nolistsep]
\item[P1] There exist open paths in $D_\mathbf j^{(-1)}$ and $D_\mathbf j^{(1)}$ disconnecting $B$ from $\Lambda_R$ in $\Omega$, 
\item[P2] There exist two dual paths from $\partial\Omega$ to $B_\mathbf j$ disconnecting $D_\mathbf j^{(-1)}$ from $D_\mathbf j^{(1)}$ in $\Omega\setminus B_\mathbf j$, 
\item[P3] There exists an open path connecting the crossings in $D_\mathbf j^{(-1)}$ and $D_\mathbf j^{(1)}$.
\end{itemize} 
By \eqref{eq:RSW}, we get  that
\[
\phi^0_{\Omega_1\setminus(\cup_{j\in \mathbf J}B_j)}[\mathcal F_{A_1}\cap P_1\cap P_2\cap P_3]\ge c_0\phi^0_{\Omega_1\setminus(\cup_{j\in \mathbf J}B_j)}[\mathcal F_{A_1}].
\]
Since on the event on the left, $\{\mathcal F_{A_1}\text{ outside }B_{\mathbf j}\}$ does not occur (any path from $A_1\cap\ell_{\rm in}'$ to $A_1\cap\ell_{\rm out}'$ is forced to go through $B_\mathbf j$), we deduce that
\begin{align*}
\phi^0_{\Omega_1\setminus(\cup_{j\in \mathbf J}B_j)}[\mathcal F_{A_1}\text{ outside }B_{\mathbf j}]&\le(1-c_0)\phi^0_{\Omega_1\setminus(\cup_{j\in \mathbf J}B_j)}[\mathcal F_{A_1}].
\end{align*}
We deduce \eqref{eq:aho} from the two previous displayed equations using the Edwards-Sokal coupling (Proposition~\ref{prop:coupling ES}). 

Applying \eqref{eq:aho} repeatedly gives
\begin{equation}\label{eq:aho1}
\langle\sigma_{A_1}\rangle_{ \Omega_1\setminus(\cup_{j\in J'}  B_j)}\le (1-c_0)^{|J'|}\langle\sigma_{A_1}\rangle_{ \Omega_1},
\end{equation}
which implies, when plugged into \eqref{eq:aho0}, that \begin{align*}
{\bf P}^{A_1}_{\Omega_1}[G'(J')]&\le (1-c_0)^{|J'|}.\end{align*}

If $G'$ is the union of the $G'(J')$ for $|J_0\setminus J'|\le c_1\log L$, the union bound gives that provided that $c_1$ is small enough,
\begin{equation}\label{eq:ahoo}
{\bf P}^{A_1}_{\Omega_1}[G']\le e^{c_2 \log L} \cdot (1-c_0)^{|J'|}\le C_3L^{-c_3}.
\end{equation}
On the other hand, conditioning on $\mathbf C'(\n_1)$, then on the clusters of $\eta(\n_2)$ intersecting $\mathbf C'(\n_1)$, and then using the same proof as for \eqref{eq:ahah1}, we obtain that
\begin{align}\mathbf P_{\Omega_1,\Omega_2}[G\setminus G']
&\le \mathbf E_{\Omega_1,\Omega_2}^{A_1,\emptyset}\Big[\frac{\langle\sigma_{A_2}\rangle_{\Omega_2\setminus\mathbf C(\n_1)}}{\langle\sigma_{A_2}\rangle_{\Omega_2}}\mathbf 1_{\n_1\notin G'}\Big].\label{eq:hug6}\end{align}
Conditioning on $\n_1\notin G'$, one can follow an argument similar to the one leading to \eqref{eq:ahoo} to get that 
\begin{equation}\label{eq:hug7}
\frac{\langle\sigma_{A_2}\rangle_{\Omega_2\setminus\mathbf C(\n_1)}}{\langle\sigma_{A_2}\rangle_{\Omega_2}}\le CL^{-c}.
\end{equation}
Claim 2 follows from the combination of \eqref{eq:ahoo}--\eqref{eq:hug7} above.
\end{proof}
We conclude the proof with the bound on the probability of the event $H$. 

\paragraph{Claim 3.} {\em There exist $c,C\in(0,\infty)$ independent of everything such that }
\[
\mathbf P_{\Omega^{\bullet\bullet},\Omega^{\bullet\bullet}}^{\{B,\Lambda_R\},\{B,\Lambda_R\}}[H]\le CL^{-c}.
\]

\begin{proof}[Proof of Claim 3]
Recall the definition of $\pmb\Omega_i$ from the previous proof and introduce
 \begin{align*}
\pmb\Omega'_i:=\pmb\Omega_i\setminus\{\text{the $D$-cluster of $\n_1+\n_2$ intersecting $\mathbf C(\n_1)\cup\mathbf C(\n_2)$}\}.
\end{align*}
For every possible realization $(\Omega_1',\Omega'_2)$ of $(\pmb\Omega_1',\pmb\Omega'_2)$,  the spatial Markov property implies that 
\[
\mathbf P_{\Omega^{\bullet\bullet},\Omega^{\bullet\bullet}}^{\{B,\Lambda_R\},\{B,\Lambda_R\}}[H|(\pmb\Omega_1',\pmb\Omega_2')=(\Omega_1',\Omega_2')]\le \mathbf P_{\Omega_1',\Omega_2'}^{\emptyset,\emptyset}[\ell'_{\rm in}\stackrel{\n_1+\n_2}\longleftrightarrow \ell'_{\rm out}].
\]
Using that $\Omega'_1$ and $\Omega'_2$ coincide between $\ell_{\rm in}'$ and $\ell_{\rm out}'$, the  monotonicity in coupling constants (Lemma~\ref{lem:monotonicity}) gives that
\[
\mathbf P_{\Omega_1',\Omega_2'}^{\emptyset,\emptyset}[\ell_{\rm in}'\stackrel{\n_1+\n_2}\longleftrightarrow \ell_{\rm out}']\le \mathbf P_{\Omega_0^{\bullet\bullet},\Omega_0^{\bullet\bullet}}^{\emptyset,\emptyset}[\ell_{\rm in}'\stackrel{\n_1+\n_2}\longleftrightarrow \ell_{\rm out}'],
\]
where $\Omega_0^{\bullet\bullet}$ denotes the graph obtained from $\Omega'_1$ (or equivalently $\Omega'_2$) by merging the vertices enclosed by $\ell_{\rm in}'$ and exterior to $\ell_{\rm out}'$ into two vertices denoted $\ell_{\rm in}'$ and $\ell_{\rm out}'$. Now,
\[
\mathbf P_{\Omega_0^{\bullet\bullet},\Omega_0^{\bullet\bullet}}^{\emptyset,\emptyset}[\ell_{\rm in}'\stackrel{\n_1+\n_2}\longleftrightarrow \ell_{\rm out}']= \phi_{\Omega_0^{\bullet\bullet}}^0[\ell_{\rm in}'\stackrel\omega\longleftrightarrow \ell_{\rm out}']^2\le CL^{-c}.
\]
Summing over every possible $\Omega_0$ gives the result.
\end{proof}

The previous three claims together conclude the proof.
\end{proof}
\begin{figure} 
		\begin{center}
			\includegraphics[scale=0.6]{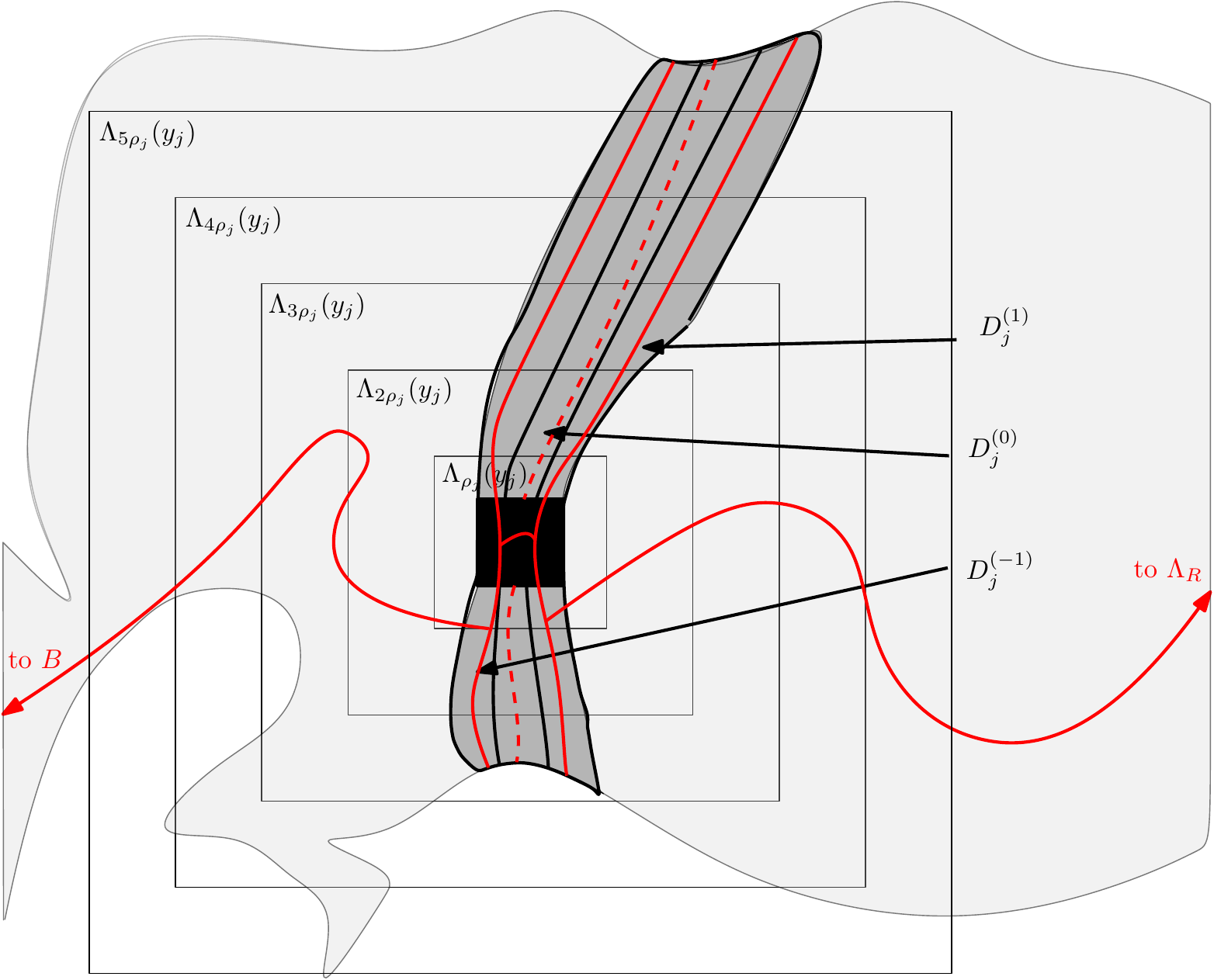}  
		\end{center}
		\caption{In red, a combination of primal and dual paths (in dashed) in the random cluster model guaranteeing that  conditioned on $\mathcal F_{A_1}$, with positive probability $\mathcal F_{A_1}$ does not occur using edges of $\Omega_1\setminus B_j$ only.
		}	\label{fig:definitionyrho2}
\end{figure}

\subsection{A related corollary}

We call a quad $(\Omega,a,b,c,d)$ $c_0$-{\em regular} at scale $R$ if $\Omega$ is $R$-centred and if the probability for a simple random-walk starting from $0$ to end on $(ab)$, $(bc)$, $(cd)$ and $(da)$ is larger than $c_0$. Let us mention that by construction  the distance between $(ab)$ and $(cd)$ is larger than or equal to $c_1R$ for some $c_1=c_1(c_0)>0$.
Let $\partial_r(ab)$ and $\partial_r(cd)$ be the set of vertices within a distance $r$ of $(ab)$ and $(cd)$ respectively.

\begin{corollary}[Boundary to boundary crossing probability in double random current]\label{cor:crossing double random current in quads}
For every $c_0>0$, there exists $c=c(c_0)>0$ such that for all $r,R$ with $r\le cR$, every $c_0$-regular quad $(\Omega,a,b,c,d)$ at scale $R$,
\begin{equation}
\mathbf P_{\Omega,\Omega}^{\emptyset,\emptyset}[\partial_r(ab)\stackrel{\n_1+\n_2}\longleftrightarrow\partial_r(cd)]\ge \frac{c}{\log(R/r)^2}.
\end{equation}
\end{corollary}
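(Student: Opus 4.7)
The strategy is to run the second-moment argument from the lower bound in the proof of Theorem~\ref{thm:crossing free}, but in parallel on both sides of the quad. Because the $c_0$-regularity forces $\dist((ab),(cd))\ge c_1R$ for some $c_1=c_1(c_0)>0$, we can split the boxes of $\partial_r^\square\Omega$ into two disjoint subsets $\partial_r^\square(ab)$ and $\partial_r^\square(cd)$, consisting of those within distance $c_1R/3$ of the arcs $(ab)$ and $(cd)$ respectively. Define
\[
N^{ab}_\ep:=\sum_{B\in\partial_r^\square(ab)}\mathbf 1[\mathcal E_\ep(B)\cap\mathcal A(B)],\qquad N^{cd}_\ep:=\sum_{B\in\partial_r^\square(cd)}\mathbf 1[\mathcal E_\ep(B)\cap\mathcal A(B)].
\]
On the event $\{N^{ab}_\ep>0\}\cap\{N^{cd}_\ep>0\}$, two boxes from opposite sides are each connected to $\Lambda_R$ in $\n_1+\n_2$, which yields a crossing $\partial_r(ab)\leftrightarrow\partial_r(cd)$ through $\Lambda_R$. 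By Cauchy--Schwarz applied on the graph $\Omega^\bullet$,
\[
\mathbf P^{\emptyset,\emptyset}_{\Omega^\bullet,\Omega^\bullet}[N^{ab}_\ep>0,\,N^{cd}_\ep>0]\ge \frac{\mathbf E^{\emptyset,\emptyset}_{\Omega^\bullet,\Omega^\bullet}[N^{ab}_\ep\cdot N^{cd}_\ep]^2}{\mathbf E^{\emptyset,\emptyset}_{\Omega^\bullet,\Omega^\bullet}[(N^{ab}_\ep\cdot N^{cd}_\ep)^2]},
\]
so it will suffice to bound the numerator from below by a constant and the denominator from above by a multiple of $\log(R/r)^2$; the mixing property \eqref{eq:independence in graph} will transfer the conclusion back from $\mathbf P^{\Omega^\bullet,\Omega^\bullet}$ to $\mathbf P^{\Omega,\Omega}$.

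\textbf{First moments.} The $c_0$-regularity, together with the random-walk interpretation of $Z_{\Omega^*}[\cdot,\cdot]$ (see the remark before Corollary~\ref{cor:bound harmonic measure}), yields $Z_{\Omega^*}[0^*,(ab)^*]\ge c(c_0)$ and similarly for $(cd)$. Restricting the lower-bound computation of Lemma~\ref{lem:expected number} to the boxes of a single side then gives $\mathbf E^{\emptyset,\emptyset}_{\Omega^\bullet,\Omega^\bullet}[N^{ab}_\ep],\,\mathbf E^{\emptyset,\emptyset}_{\Omega^\bullet,\Omega^\bullet}[N^{cd}_\ep]\ge c(c_0)$.

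\textbf{Key step: joint first moment.} The main work lies in showing $\mathbf E^{\emptyset,\emptyset}_{\Omega^\bullet,\Omega^\bullet}[N^{ab}_\ep\cdot N^{cd}_\ep]\ge c(c_0)$. For fixed $B=\Lambda_r(x)\in\partial_r^\square(ab)$ and $B'=\Lambda_r(x')\in\partial_r^\square(cd)$, I mirror the argument around \eqref{eq:kjkk}--\eqref{eq:ahj2}: the mixing property \eqref{eq:independence in graph} and the switching lemma reduce the joint event $\mathcal E_\ep^\circ(B)\cap \mathcal E_\ep^\circ(B')$ to a three-point connection event on the graph obtained from $\Omega^\bullet$ by further merging $\underline{\underline B}$ and $\underline{\underline B}'$. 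Via the Edwards--Sokal coupling and the FKG inequality for the random cluster model, this three-point connection event is bounded below by $c\cdot \phi^0[\underline{\underline B}\leftrightarrow\Lambda_R]\cdot \phi^0[\underline{\underline B}'\leftrightarrow\Lambda_R]$; by Corollary~\ref{cor:bound harmonic measure} each factor is comparable to $Z_{\Omega^*}[x^*,0^*]$ and $Z_{\Omega^*}[(x')^*,0^*]$ respectively, while RSW combined with the construction used for \eqref{eq:ahj2} realizes the annular events $\mathcal A(B),\mathcal A(B')$ with positive probability. Summing over $B,B'$ and using the harmonic-measure lower bound on each side yields the claimed constant.

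\textbf{Second moments and conclusion.} The estimate $\mathbf E[(N^{ab}_\ep)^2]\le C\log(R/r)$ is inherited verbatim from the proof of Theorem~\ref{thm:crossing free}, and likewise for $N^{cd}_\ep$. For the mixed second moment $\mathbf E[(N^{ab}_\ep N^{cd}_\ep)^2]$, expanding the sum over four boxes (two on each side) and invoking the tree-like four-point upper bound of \cite[Proposition~A.3]{AizDum19}, then summing the resulting products of two-point functions against the random-walk harmonic measures, yields $\mathbf E[(N^{ab}_\ep N^{cd}_\ep)^2]\le C\log(R/r)^2$. The Cauchy--Schwarz estimate above then provides the bound $c(c_0)/\log(R/r)^2$. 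The main obstacle is precisely the lower bound on the joint first moment: because the DRC does not satisfy FKG, one cannot directly combine the two connection events, and the key workaround is to use the switching lemma together with the Edwards--Sokal coupling to convert the three-point DRC cluster event into a pair of two-point RCM connection events, where FKG is available.
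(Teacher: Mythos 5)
The approach has a structural gap: on the unmerged graph $\Omega$, the event $\{N^{ab}_\ep>0\}\cap\{N^{cd}_\ep>0\}$ does \emph{not} imply a crossing $\partial_r(ab)\leftrightarrow\partial_r(cd)$, because a box near $(ab)$ connected to $\Lambda_R$ and a box near $(cd)$ connected to $\Lambda_R$ need not lie in the same cluster of $\n_1+\n_2$ — they may touch $\Lambda_R$ in disjoint clusters. You do get the implication on the graph $\Omega^\bullet$ where $\Lambda_R$ is collapsed to a single vertex, since there the two connections meet at the merged hub; but then the Cauchy--Schwarz bound only yields a lower bound on $\mathbf P^{\emptyset,\emptyset}_{\Omega^\bullet,\Omega^\bullet}[\partial_r(ab)\leftrightarrow\partial_r(cd)]$, a quantity that is \emph{larger} than the one you want by the monotonicity in coupling constants (Lemma~\ref{lem:monotonicity}). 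You assert the conclusion transfers back from $\Omega^\bullet$ to $\Omega$ ``by the mixing property \eqref{eq:independence in graph}'', but mixing does not apply: that estimate compares two graphs differing only in a region $G_1$ that is disconnected from the support of the event by an annulus of fixed modulus, whereas the crossing event $\partial_r(ab)\leftrightarrow\partial_r(cd)$ depends on the edges inside $\Lambda_R$ — exactly the region where $\Omega$ and $\Omega^\bullet$ differ — and the spurious connectivity created by the merging is concentrated precisely there. Since the DRC has no FKG inequality, there is also no way to condition on $\{N^{ab}_\ep>0, N^{cd}_\ep>0\}$ in $\Omega$ and glue the two clusters with positive probability, which you yourself identify as the core difficulty.

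The paper's proof avoids this by redefining the counting variable: it sums over \emph{pairs} $(B,B')$ with $B\in\partial_r^\square(ab)$, $B'\in\partial_r^\square(cd)$ of the indicator that $B$ and $B'$ are connected to each other (the variables $M$, $\overline M$, $\underline M^\circ$, $M_\ep^\circ$, with $\mathcal E^\circ_\ep(B,B')$ built from the conditional probability of $\underline B\leftrightarrow\underline B'$ rather than of $\underline B\leftrightarrow\Lambda_R$). Then $M_\ep^\circ>0$ directly implies the desired crossing in $\Omega$ without any merging of $\Lambda_R$ and without needing to glue two separate connection events. The first-moment lower bound uses the $c_0$-regularity as in your first-moment step, the second-moment upper bound $\mathbf E[(M_\ep^\circ)^2]\le C\log(R/r)^2$ follows from the upper bound in Theorem~\ref{thm:crossing free} together with the fact that $\dist((ab),(cd))\ge c_1R$, and Cauchy--Schwarz closes the proof. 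Your second-moment and joint-first-moment mechanics are plausible, but they are estimates about the wrong event; you would need to recast everything in terms of direct $B\leftrightarrow B'$ connections as the paper does.
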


With the help of the FKG inequality, this result would be an easy application of the lower bound in Theorem~\ref{thm:crossing free} together with the RSW theorem. In the context of the double random current, one is forced to redo the whole proof as FKG is not available.

\begin{proof}Let $\partial_r^\square(ab)$ and $\partial_r^\square(cd)$ be the sets of boxes $B=\Lambda_r(x)$ with $x\in r\mathbb Z^2$ such that $\Lambda_{r}(x)\subset \Omega^\square_r$ and $\Lambda_{3r}(x)$ intersects $(ab)$ and $(cd)$ respectively. The idea is to replace the variables $N$, $\overline N$, $\underline N^\circ$, and $N_\ep^\circ$ by random variables $M$, $\overline M$, $\underline M^\circ$, and $M_\ep^\circ$ defined as 
\begin{align}
M&:=\sum_{B\in \partial_r^\square(ab)}\sum_{B'\in\partial_r^\square(cd)}\mathbb I [B\stackrel{\n_1+\n_2}\longleftrightarrow  B'],\\
\overline M&:=\sum_{B\in \partial_r^\square(ab)}\sum_{B'\in\partial_r^\square(cd)}\mathbb I [\overline B\stackrel{\n_1+\n_2}\longleftrightarrow  \overline B'],\\
\underline M^\circ&:=\sum_{B\in \partial_r^\square(ab)}\sum_{B'\in\partial_r^\square(cd)}\mathbb I[\underline B\stackrel{\n_1+\n_2}\longleftrightarrow  \underline B',\mathcal A(B),\mathcal A(B')],\\
M_\ep^\circ&:=\sum_{B\in \partial_r^\square(ab)}\sum_{B'\in\partial_r^\square(cd)}\mathbb I[\mathcal E_\ep(B,B')\cap\mathcal A(B)\cap\mathcal A(B')],
\end{align}
where $\mathcal A(B)$ is defined in \eqref{eq:def A} and 
\begin{equation}
\mathcal E_\ep^\circ(B,B'):=\{B\stackrel{\n_1+\n_2}\longleftrightarrow B'\}\cap \{\mathbf P^{\emptyset,\emptyset}_{\Omega,\Omega}[\underline B\stackrel{\n_1+\n_2}\longleftrightarrow \underline B'|(\n_1+\n_2)_{|(B\cup B')^c}]\ge \ep\}.
\end{equation}
Then, one may follow the lines of the proofs of Lemma~\ref{lem:expected number} and use the $c_0$-regularity assumption to show that 
\[
\mathbf E_{\Omega,\Omega}^{\emptyset,\emptyset}[M_\ep^\circ]\ge c
\]
 and use the upper bound in Theorem~\ref{thm:crossing free} to show that 
 \[
 \mathbf E_{\Omega,\Omega}^{\emptyset,\emptyset}[(M_\ep^\circ)^2]\le C\log(R/r)^2
 \] 
 (we use that the distance between $(ab)$ and $(cd)$ is larger than or equal to $c_1R$). The result follows from the Cauchy-Schwarz inequality.
\end{proof}

\section{Absence of thick pivotal points: Proofs of Theorems~\ref{thm:absence closed pivotal expectation white} and \ref{thm:absence closed pivotal expectation black}}\label{sec:pivotal points}

In this section, we focus on the proofs of Theorems~\ref{thm:absence closed pivotal expectation white} and \ref{thm:absence closed pivotal expectation black}.
We prove the first theorem in Section~\ref{sec:pivotal points expectation}, and the second in Section~\ref{sec:pivotal points theorem}.

\subsection{Proof of  Theorem~\ref{thm:absence closed pivotal expectation white}}\label{sec:pivotal points expectation}

We split the proof of the theorem into two lemmata. 

\begin{lemma}\label{prop:closed pivotal}
There exists $C>0$ such that for all $r,R$ with $R\ge r\ge1$,
 \begin{align}
{\bf P}_{\mathbb Z^2,\mathbb Z^2}^{\emptyset,\emptyset}[A_4^\square(r,R)]&\le C(\tfrac rR)^2.\label{eq:4-arm closed}
\end{align}
\end{lemma}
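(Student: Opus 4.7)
The strategy is to reduce the event $A_4^\square(r,R)$ to a computation about Ising correlation functions via the switching lemma, and then bound those correlations using the harmonic-measure estimates of Corollary~\ref{cor:bound harmonic measure}.

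First, by the mixing property~\eqref{eq:independence in graph}, I would replace the infinite-volume measure $\mathbf{P}^{\emptyset,\emptyset}_{\mathbb Z^2,\mathbb Z^2}$ by the measure on a finite domain $\Omega\supset\Lambda_{2R}$ (say $\Omega=\Lambda_{100R}$) at the cost of a multiplicative constant. Since the event $A_4^\square(r,R)$ depends only on edges inside $\Lambda_R$, this reduction is essentially free. If $A_4^\square(r,R)$ occurs, then for each of the two crossing $\Lambda_R$-clusters we may select a canonical entry vertex in $\partial\Lambda_r$ (e.g.\ the lexicographically smallest), giving a union bound
\[
\mathbf{P}^{\emptyset,\emptyset}_{\Omega,\Omega}[A_4^\square(r,R)] \;\le\; \sum_{a_1,a_2\in\partial\Lambda_r} \mathbf{P}^{\emptyset,\emptyset}_{\Omega,\Omega}[F(a_1,a_2)],
\]
where $F(a_1,a_2)$ is the event that $a_1$ and $a_2$ lie in distinct $\Lambda_R$-clusters of $\n_1+\n_2$, each of which reaches $\partial\Lambda_R$.

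The heart of the argument is to further sum over exit vertices $b_1,b_2\in\partial\Lambda_R$ and apply the switching lemma (Lemma~\ref{lem:switching lemma}) with sources $A=\{a_1,b_1\}$ on $\n_1$ and $B=\{a_2,b_2\}$ on $\n_2$. Since $A\Delta B=\{a_1,b_1,a_2,b_2\}$, this translates the disjoint-connection event $F(a_1,a_2)$ into a weighted sum of Ising 4-point functions $\langle \sigma_{a_1}\sigma_{b_1}\sigma_{a_2}\sigma_{b_2}\rangle_{\Omega}$ divided by two-point normalizations. I would then apply the Lebowitz (GHS) inequality
\[
\langle \sigma_{a_1}\sigma_{a_2}\sigma_{b_1}\sigma_{b_2}\rangle \le \langle \sigma_{a_1}\sigma_{b_1}\rangle\langle\sigma_{a_2}\sigma_{b_2}\rangle + \langle \sigma_{a_1}\sigma_{b_2}\rangle\langle\sigma_{a_2}\sigma_{b_1}\rangle + \langle\sigma_{a_1}\sigma_{a_2}\rangle\langle\sigma_{b_1}\sigma_{b_2}\rangle
\]
to factorize each 4-point function into sums of products of two-point functions. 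Each resulting two-point function is bounded via the Edwards-Sokal coupling (Proposition~\ref{prop:coupling ES}) and the merging-vertices trick together with Corollary~\ref{cor:bound harmonic measure}, which controls spin correlations by a planar random-walk partition function $Z_\Omega[\cdot,\cdot]$.

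The main obstacle will be carrying out the final random-walk sum over the four endpoints $(a_1,a_2,b_1,b_2)$ and verifying that it yields exactly the exponent $2$, i.e.\ a bound of order $(r/R)^2$, rather than a weaker power. Heuristically this scaling reflects two independent ``arms'' each contributing a factor $r/R$ from the harmonic measure of $\Lambda_r$ as seen from $\partial\Lambda_R$; it is the disjointness constraint between the two clusters, enforced by the specific source pairing in the switching lemma, that boosts the logarithmic one-arm estimate of Theorem~\ref{thm:crossing free} into the polynomial bound claimed here. Controlling the cross terms in the GHS expansion, where the pairings mix the two arms, will require using that $a_1,a_2$ are close (on $\partial\Lambda_r$) and $b_1,b_2$ are far (on $\partial\Lambda_R$), so that $\langle \sigma_{a_1}\sigma_{a_2}\rangle\langle\sigma_{b_1}\sigma_{b_2}\rangle$ is the dominant ``short-range'' term and does not spoil the product structure needed for the $(r/R)^2$ bound.
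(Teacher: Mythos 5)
Your proposal has a genuine gap: the disjointness constraint between the two crossing clusters is not captured by the switching lemma or by the Lebowitz inequality. The switching lemma converts connection events into Ising correlations but has no mechanism to express that $a_1$ and $a_2$ lie in \emph{distinct} $\Lambda_R$-clusters, and the Lebowitz bound $\langle\sigma_{a_1}\sigma_{a_2}\sigma_{b_1}\sigma_{b_2}\rangle\le\sum(\text{pairings})$ is an upper bound on a connection correlation that carries no trace of the non-connection requirement. Dropping disjointness can only recover the one-arm estimate $\mathbf P[\Lambda_r\stackrel{\n_1+\n_2}\longleftrightarrow\partial\Lambda_R]$, which by Theorem~\ref{thm:crossing free} is of order $1/\log(R/r)$, not $(r/R)^2$. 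In addition, the heuristic at the end of your argument is incorrect: in two dimensions the harmonic measure of $\Lambda_r$ seen from distance $R$ scales like $1/\log(R/r)$, not like $r/R$, so a sum of two-point functions over boundary endpoints cannot produce the polynomial exponent $2$.

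The paper's proof is structurally different and does not decompose over boundary endpoints at all. Writing $\mathbf N$ for the number of $x\in r\mathbb Z^2\cap\Lambda_R$ such that $A_4^\square(x,r,3R)$ occurs, translation invariance gives $\mathbf P^{\emptyset,\emptyset}_{\mathbb Z^2,\mathbb Z^2}[A_4^\square(r,3R)]\asymp (r/R)^2\,\mathbf E^{\emptyset,\emptyset}_{\mathbb Z^2,\mathbb Z^2}[\mathbf N]$, so the exponent $2$ is nothing but the density of translates; the real content is the bound $\mathbf E[\mathbf N]=O(1)$. To obtain it one explores the $\Lambda_{2R}$-clusters crossing $\mathrm{Ann}(R,2R)$ one at a time; given the first $k$ explored clusters $\mathbf C_{\le n_k}$, any box witnessing $A_4^\square$ must be touched both by a discovered cluster and by a fresh connection to $\partial\Lambda_{2R}$ living in a complementary component $\mathbf D_\ell^{(k)}$. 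The probability of that fresh connection is bounded by a random-walk partition function $Z_{\mathbf D_\ell^{(k)}}$ via Corollary~\ref{cor:bound harmonic measure}, and summing over boxes $\Lambda_r(x)$ yields an expected occupation-time estimate which is $O(1)$, combined with the fact that the number of crossing clusters has bounded expectation. Conditioning on the already-explored clusters is precisely what enforces disjointness, and summing over box locations rather than over boundary endpoints is what produces the correct exponent --- both ingredients are absent from your proposal.
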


\begin{figure}
\begin{center}
			\includegraphics[scale=0.8]{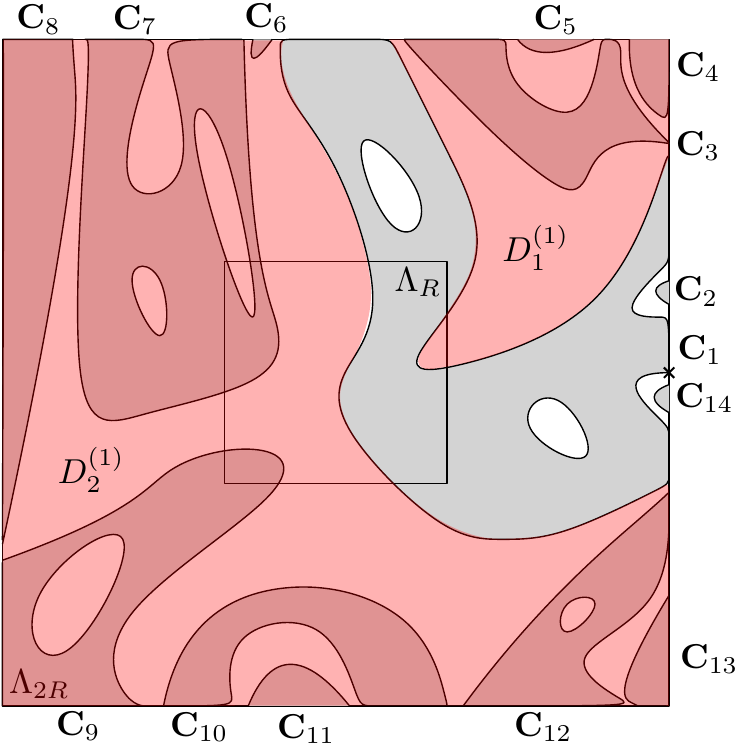}
		\end{center}
\caption{A picture of the clusters $\mathbf C_1,\dots,\mathbf C_{14}$ for which $n_1=1$. There are two connected components $D_2^{(1)}$ and $D_1^{(1)}$.
}\label{fig:A4}
\end{figure}

\begin{proof}
To lighten the notation, we prove the result for $3R$ instead of $R$. Below, denote by $\mathbf C_1,\dots,\mathbf C_n,\dots$ the $\Lambda_{2R}$-clusters in $\n_1+\n_2$ that intersect $\partial\Lambda_{2R}$ according to the smallest vertex in $\partial\Lambda_{2R}$ it contains, where the vertices on the boundary are indexed counterclockwise starting from $(2R,0)$. Also, let $\mathbf C_{\le n}$ be the union of the $\mathbf C_m$ for $m\le n$. We introduce $n_k$ to be the index of the $k$-th $\Lambda_{2R}$-cluster that contains a crossing of $\mathrm{Ann}(R,2R)$.

For a fixed $k$, let $\mathbf D_1^{(k)},\dots,\mathbf D^{(k)}_L$ be the connected components of $\Lambda_{2R}\setminus \mathbf C_{\le n_k}$ crossing $\mathrm{Ann}(R,2R)$ (see Fig.~\ref{fig:A4}). For each $\mathbf D^{(k)}_\ell$ with $\ell\le L$, let 
\[
\mathbf M^{(k)}_{\ell}:=|\{x\in r\mathbb Z^2\cap\Lambda_R:\Lambda_r(x)\cap \mathbf C_{\le n_k}\ne \emptyset\text{ and }\Lambda_r(x)\stackrel{\n_1+\n_2}\longleftrightarrow\partial\Lambda_{2R}\text{ in }\mathbf D^{(k)}_\ell\}|.
\] Note that $\sum_\ell\mathbf M^{(k)}_\ell$ is larger than or equal to the number of $x\in r\mathbb Z^2\cap\Lambda_R$ such that $\Lambda_r(x)$ intersects both some $\mathbf C_i$ with $i\le n_k$ and some $\mathbf C_j$ with $j>n_k$.

Now, Corollary~\ref{cor:upper bound crossing} implies easily that a.s.
\begin{equation}\label{eq:uhu0}
{\bf P}_{\mathbb Z^2,\mathbb Z^2}^{\emptyset,\emptyset}[\Lambda_r(x)\stackrel{\n_1+\n_2}\longleftrightarrow \partial\Lambda_{2R}\text{ in }\mathbf D_\ell^{(k)}|\mathbf C_{\le n_k}]\le C_0Z_{\mathbf D^{(k)}_\ell\setminus\Lambda_r(x)}[\Lambda_{r}(x),\mathbb Z^2\setminus\Lambda_{2R}].
\end{equation}
Summing over $x\in r\mathbb Z^2\cap \Lambda_R$ and interpreting the result in terms of the expected number of boxes $\Lambda_r(x)$ that are visited by a random walk starting from $\partial\Lambda_{2R}$ before exiting $\mathbf D_\ell^{(k)}$, we find that (we leave the details to the reader) a.s.
\begin{equation}\label{eq:uhu1}
{\bf E}_{\mathbb Z^2,\mathbb Z^2}^{\emptyset,\emptyset}[\mathbf M^{(k)}_{\ell}|\mathbf C_{\le n_k}]\le C_1Z_{\mathbf D^{(k)}_\ell}[\partial\mathbf D^{(k)}_\ell\cap\Lambda_R,\mathbb Z^2\setminus\Lambda_{2R}].
\end{equation}
Summing over every $\ell$ and averaging over the possible realizations of $\mathbf C_{\le n_k}$ gives that 
\[
{\bf E}_{\mathbb Z^2,\mathbb Z^2}^{\emptyset,\emptyset}\big[\sum_\ell\mathbf M^{(k)}_\ell\mathbf 1_{\{\mathbf C_{n_k}\text{ exists}\}}\big]\le C_2.
\] Summing over every $k$, we get that 
\[
{\bf E}_{\mathbb Z^2,\mathbb Z^2}^{\emptyset,\emptyset}[\mathbf N]\le C_2{\bf E}_{\mathbb Z^2,\mathbb Z^2}^{\emptyset,\emptyset}[\mathbf X],
\]
where $\mathbf N$ is the number of $x\in r\mathbb Z^2\cap\Lambda_R$ such that $A_4^\square(x,r,3R)$ occurs,  and $\mathbf X$ is the number of $\Lambda_{2R}$-clusters that contain a crossing of ${\rm Ann}(R,2R)$. We conclude the proof by showing that for $R/r\ge C$ with $C$ large enough,
\[
{\bf E}_{\mathbb Z^2,\mathbb Z^2}^{\emptyset,\emptyset}[\mathbf X]\le C_3,
\] 
which directly follows from the inequalities, for every $k\ge0$,
\begin{equation}\label{eq:uhu3}
{\bf P}_{\mathbb Z^2,\mathbb Z^2}^{\emptyset,\emptyset}[\mathbf X\ge k+1|\mathbf X\ge k]\le \tfrac12.
\end{equation}
To see the latter, observe that when $R/r$ is large enough, replacing $\Lambda_r(x)$ by $\Lambda_R$ in \eqref{eq:uhu1} and summing implies that a.s.
\begin{equation}\label{eq:uhu2}
{\bf P}_{\mathbb Z^2,\mathbb Z^2}^{\emptyset,\emptyset}[\Lambda_R\stackrel{\n_1+\n_2}\longleftrightarrow \partial\Lambda_{2R}|\mathbf C_{\le n_k}]\le\tfrac12.
\end{equation}
Averaging over the $\mathbf C_{\le n_k}$ concludes the proof of \eqref{eq:uhu3}.

Overall, we deduce that 
\[
(R/r)^2{\bf P}_{\mathbb Z^2,\mathbb Z^2}^{\emptyset,\emptyset}[A_4^\square(r,3R)]\le {\bf E}_{\mathbb Z^2,\mathbb Z^2}^{\emptyset,\emptyset}[\mathbf N]\le C_2C_3,
\]
which concludes the proof.\end{proof}

We now state an important corollary. For $\delta>0$, let ${\rm Sep}_\delta(r)$ be the event that there does not exist any $x\in \partial\Lambda_r$ such that $A_4^\square(x,\delta r,r/4)$ occurs. Note that on this event, clusters in $\n_1+\n_2$ of radius $r/4$ intersecting $\partial\Lambda_r$ are necessarily ``separated'' by a distance at least $\delta r$ on $\partial\Lambda_r$. 
\begin{corollary}[Separability of arms]\label{cor:sep}
For every $\ep>0$, there exists $\delta_0=\delta_0(\ep)>0$ such that for every $0<\delta\le \delta_0$ and $r\ge 1/\delta$,
\begin{align}\label{eq:bound G_j}
{\bf P}_{\mathbb Z^2,\mathbb Z^2}^{\emptyset,\emptyset}[{\rm Sep}_\delta(r)]\ge 1-\ep.
\end{align}
\end{corollary}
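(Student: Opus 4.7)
The plan is a direct covering argument reducing everything to Lemma~\ref{prop:closed pivotal}. A naive union bound over all $\Theta(r)$ vertices of $\partial\Lambda_r$ yields only $O(r\delta^2)$, which blows up when $r\gg 1/\delta$, so one must exploit the fact that the events $A_4^\square(x,\delta r,r/4)$ for nearby $x$ are essentially equivalent, and union bound over only $O(1/\delta)$ well-chosen points.

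Concretely, I would first cover $\partial\Lambda_r$ by $N\le 8/\delta$ regularly spaced vertices $y_1,\ldots,y_N\in\partial\Lambda_r$ such that every $x\in\partial\Lambda_r$ lies within $\ell^\infty$-distance $\delta r$ of some $y_i$. The hypothesis $r\ge 1/\delta$ ensures $\delta r\ge 1$, so this is a valid lattice cover, and one may harmlessly restrict to $\delta\le \tfrac{1}{32}$.

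The key geometric step is the event inclusion
\[
A_4^\square(x,\delta r,r/4)\ \subset\ A_4^\square(y_i,3\delta r,r/16)\qquad\text{whenever }\|x-y_i\|_\infty\le\delta r.
\]
For the stated range of parameters, $\Lambda_{r/16}(y_i)\subset\Lambda_{r/4}(x)$, so every crossing of $\mathrm{Ann}(x,\delta r,r/4)$ in $\n_1+\n_2$ contains a sub-path that crosses $\mathrm{Ann}(y_i,3\delta r,r/16)$ while staying entirely inside $\Lambda_{r/16}(y_i)$. Two such sub-paths arising from distinct $\Lambda_{r/4}(x)$-clusters cannot belong to a common $\Lambda_{r/16}(y_i)$-cluster, since any connection inside $\Lambda_{r/16}(y_i)$ would automatically be a connection inside $\Lambda_{r/4}(x)$, contradicting distinctness. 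Hence the two distinct crossing $\Lambda_{r/4}(x)$-clusters produce two distinct crossing $\Lambda_{r/16}(y_i)$-clusters.

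Finally, combining this inclusion with translation invariance of $\mathbf P^{\emptyset,\emptyset}_{\mathbb Z^2,\mathbb Z^2}$, the union bound, and Lemma~\ref{prop:closed pivotal} applied with $(r',R')=(3\delta r,r/16)$ (so that $r'/R'=48\delta$), I expect to get
\[
\mathbf P^{\emptyset,\emptyset}_{\mathbb Z^2,\mathbb Z^2}[{\rm Sep}_\delta(r)^c]\ \le\ \sum_{i=1}^N\mathbf P^{\emptyset,\emptyset}_{\mathbb Z^2,\mathbb Z^2}[A_4^\square(y_i,3\delta r,r/16)]\ \le\ \tfrac{8}{\delta}\cdot C\cdot(48\delta)^2\ \le\ C'\delta,
\]
and the choice $\delta_0:=\ep/C'$ then concludes. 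No substantial obstacle is expected here: the packing refinement replaces a wasteful $O(r)$-point union bound by an economical $O(1/\delta)$-point one, and everything else is routine once the preceding lemma is in hand.
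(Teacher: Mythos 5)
Your proposal is correct and follows the same route as the paper, which disposes of this corollary in one line: ``The proof is obvious using the union bound and \eqref{eq:4-arm closed}.'' You have usefully made explicit the step the paper leaves implicit: the union bound must be taken over a $\delta r$-spaced cover $\{y_i\}$ of $\partial\Lambda_r$ (via the inclusion $A_4^\square(x,\delta r,r/4)\subset A_4^\square(y_i,3\delta r,r/16)$ for $\|x-y_i\|_\infty\le\delta r$, with $\Lambda_{r/16}(y_i)\subset\Lambda_{r/4}(x)$ guaranteeing that the two restricted crossing clusters stay distinct) rather than over all $\Theta(r)$ boundary vertices, since the naive count gives only $O(r\delta^2)$, which is not uniformly small in $r\ge1/\delta$; your geometry and the resulting bound $(8/\delta)\cdot C(48\delta)^2=O(\delta)$ are right.
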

In words, this implies that typically, given a distance $r$, long clusters remain at a reasonable macroscopic distance from each other near $\partial\Lambda_r$. This will be a very convenient tool in the next proofs.

\begin{proof}
The proof is obvious using the union bound and \eqref{eq:4-arm closed}. 
\end{proof}

\begin{lemma}
For every $\ep>0$, there exists $\eta=\eta(\ep)>0$ such that for all $r,R$ such that $1\le r\le \eta R$ and every domain $\Omega\supset\Lambda_{2R}$,
\begin{align}
\label{eq:closed pivotal}{\bf P}_{\Omega,\Omega}^{\emptyset,\emptyset}[\exists x\in \Lambda_R:A_4^\square(x,r,R)]&\le \ep.
\end{align}
\end{lemma}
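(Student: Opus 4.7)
The strategy is to combine the expectation estimate just proved (Lemma~\ref{prop:closed pivotal}) with the mixing property of Proposition~\ref{lem:mixing} and the separability of arms (Corollary~\ref{cor:sep}) through a multi-scale analysis. The starting observation is a monotonicity-across-scales property: if some $x \in \Lambda_R$ satisfies $A_4^\square(x, r, R)$, then for every intermediate radius $\rho \in [r, R]$, the two witnessing $\Lambda_R(x)$-clusters restrict (edge-wise) to two disjoint subsets of $\Lambda_\rho(x)$-edges, each of which contains a $\Lambda_\rho(x)$-cluster crossing $\mathrm{Ann}(x, r, \rho)$. Thus $A_4^\square(x, r, \rho)$ holds at every intermediate scale; writing $\rho_k = 2^k r$ for $k = 0, \ldots, K$ with $\rho_K \le R/4$, the event $A_4^\square(x, \rho_k, \rho_{k+1})$ holds for each $k$, giving a nested 4-arm structure across all dyadic scales between $r$ and $R$.

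First I would use this nested structure together with the mixing property applied to the disjoint annuli $\mathrm{Ann}(x, \rho_k, \rho_{k+1})$ to quasi-decouple the per-scale 4-arm events. By Lemma~\ref{prop:closed pivotal} each single-scale 4-arm probability is bounded by a universal constant $C/4$. However, the naive per-scale constant is not sufficient once it is multiplied by the mixing loss $C_{\rm mix}$ and the entropy factor $4$ per doubling coming from summing over possible box-centers. To close this gap, I would invoke the separability of arms (Corollary~\ref{cor:sep}) at each dyadic scale: on the high-probability event that arms are $\delta \rho_k$-separated on $\partial \Lambda_{\rho_k}(x)$, the 4-arm configuration admits a strict RSW-type improvement using the random-cluster crossing estimates, analogously to the merging-vertices arguments leading to Corollary~\ref{cor:upper bound crossing close 1}. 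This yields a per-scale factor strictly smaller than $1/(4 C_{\rm mix})$, up to bad events whose probability sum to at most $\epsilon/2$ by Corollary~\ref{cor:sep}.

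Iterating the per-scale improved factor across $K = \log_2(R/r)$ scales, then summing over possible box centers in $\Lambda_R$ via a union bound, yields a total probability bound of the form $C(r/R)^{\delta}$ for some $\delta > 0$. Choosing $\eta = \eta(\epsilon) > 0$ small enough makes this smaller than $\epsilon$. The assumption $\Omega \supset \Lambda_{2R}$ ensures that all the relevant annuli $\mathrm{Ann}(x, \rho_k, \rho_{k+1})$ for $x \in \Lambda_R$ lie inside $\Omega$, so the mixing property and the upper-bound half of Lemma~\ref{prop:closed pivotal} (which extends from $\mathbb Z^2$ to $\Omega$ by the same proof, combined with monotonicity in coupling constants as in Lemma~\ref{lem:monotonicity}) apply uniformly in $\Omega$.

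The main obstacle will be the careful bookkeeping of the per-scale improvement factor: one must ensure that the product of the mixing constant $C_{\rm mix}$, the entropy factor per doubling, and the improved 4-arm probability combine into a quantity strictly less than $1$. This is exactly where Corollary~\ref{cor:sep} is indispensable, as it reduces the 4-arm event to its ``well-separated'' version and thereby allows the effective per-scale probability to be brought strictly below the value $1/(C_{\rm mix}\cdot 4)$ needed to beat the entropy. Apart from this key point, all ingredients are already in place from Sections~\ref{sec:preliminaries hugo} and~\ref{sec:crossing estimate}.
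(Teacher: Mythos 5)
Your proposal takes a genuinely different route from the paper, but it has a gap that cannot be repaired: the multi-scale, union-bound scheme you describe is structurally incapable of producing the required bound. The heart of the issue is that you want to show
\[
{\bf P}_{\Omega,\Omega}^{\emptyset,\emptyset}[\exists x\in \Lambda_R:A_4^\square(x,r,R)]\le C(r/R)^{\delta}
\]
for some $\delta>0$ by improving the per-dyadic-scale $4$-arm probability to something strictly below $1/(4\,C_{\rm mix})$. But nothing in the paper (or in the present state of knowledge about this model) establishes a $4$-arm probability decaying faster than $(r/R)^2$: Lemma~\ref{prop:closed pivotal} gives precisely exponent $2$, which is also the exponent predicted by the CLE(4) scaling limit, so the per-scale factor is asymptotically $1/4$ on the nose. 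Since $C_{\rm mix}>1$, the product ``(entropy $4$) $\times$ ($C_{\rm mix}$) $\times$ (per-scale probability $\approx 1/4$)'' is $>1$, and iterating across scales produces a divergent, not a decaying, bound. The supposed ``RSW-type improvement on the well-separated event'' is exactly the ingredient that is missing, and it is worth noting that Corollary~\ref{cor:sep} is itself derived by a union bound from Lemma~\ref{prop:closed pivotal}; using it to strictly improve the per-scale constant threatens to be circular. More bluntly, the paper explicitly remarks after the theorem that the expected number of boxes $x\in r\mathbb Z^2\cap\Lambda_R$ with $A_4^\square(x,r,R)$ is $O(1)$ (not $o(1)$), so any argument that begins by summing a per-box probability is stuck at $O(1)$ unless it injects a new, non-first-moment mechanism.

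The paper's proof supplies precisely that mechanism, and it is of a different nature from anything in your plan: it invokes Theorem~\ref{thm:crossing free}, the boundary-connection estimate for the double random current in a domain, which says that the probability of connecting a central box to the $r$-neighborhood of the boundary tends to $0$ as $r/R\to 0$. Concretely, one introduces events $E(y,z)$ at a mesoscopic scale $\rho=\lfloor\kappa R\rfloor$, shows that $\{\exists x: A_4^\square(x,r,R)\}$ forces either many pivotal boxes at scale $\rho$ (controlled by Markov's inequality and Lemma~\ref{prop:closed pivotal}, cost $C_0\kappa$) or some $E(y,z)$; and on $E(y,z)$ one conditions on the union of $\Lambda_{3R}$-clusters intersecting $\Lambda_\rho(z)$, uses the mixing property to strip away the induced sources, and then applies Theorem~\ref{thm:crossing free} to the remaining domain to bound the probability that $\Lambda_\rho(y)$ connects to the $2r$-neighborhood of the explored cluster, giving $\epsilon(8\eta/\kappa)$. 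Choosing $\kappa$ small, then $\eta$ small, closes the argument. Note that the rate so obtained is governed by the $\epsilon(\cdot)$ of Theorem~\ref{thm:crossing free}, which is only a $\log\log$ rate; this is consistent with the fact that a polynomial bound $(r/R)^{\delta}$ --- the target of your plan --- is almost certainly false in this model. You should abandon the multi-scale/entropy route and instead make Theorem~\ref{thm:crossing free} the engine of the proof, as in the paper.
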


 Our goal here is to apply Theorem~\ref{thm:crossing free}. Roughly speaking, the idea is that if $A^\square(x,r,R)$ occurs, then conditioned on the first cluster, the second cluster should be connecting the $\eta R$-neighborhood of the first cluster to a box $\Lambda_{\kappa R}(y)$ with $1\gg\kappa\gg\eta$ which is far from the first cluster. This has small probability by Theorem~\ref{thm:crossing free}. Implementing this idea is not especially long, but slightly cumbersome, due to two small technicalities: first, one needs to be able to ``explore the first cluster'', in the sense that one should condition on it leaving sufficiently vast uncharted territories outside of it to apply Theorem~\ref{thm:crossing free}; and second, once this is done, one should be able to find $y$ such that the translate of the domain by $y$ satisfies the assumptions of Theorem~\ref{thm:crossing free} for $\kappa R$. To guarantee all these conditions, we introduce two families of events $E(y,z)$ and $F(y,z)$ and go through a few trivial manipulations to place ourselves in the right framework.

\begin{proof} We prove the result for $4R$ instead of $R$.
Consider $\kappa$ and $\eta$ to be fixed later (think of $1\gg\kappa\gg\eta>0$). Let $\rho:=\lfloor \kappa R\rfloor$ and recall that $r\le \eta R$. Below, we assume that $\eta\ll1$ so that in particular $r\ll R$. 
 Let 
 \[
 \mathbf N:=|\{u\in \rho\mathbb Z^2\cap\Lambda_{2R}:A^\square_4(u,2\rho,R/4)\}|,
 \] and introduce (see Fig.~\ref{fig:Annulus} on the left), for $y,z\in \rho\mathbb Z^2\cap\Lambda_{2R}$,
\begin{align*}
F(y,z)&:=\{\Lambda_\rho(y)\stackrel{\n_1+\n_2}\longleftrightarrow \text{$2r$-neighborhood of $\mathcal C(z,\rho)$}\text{ in $\Lambda_{2R}$}\},\\
E(y,z)&:=\{\Lambda_\rho(z)\stackrel{\n_1+\n_2}\longleftrightarrow\partial\Lambda_{3R}\}\cap\{\Lambda_{2\rho}(y)\stackrel{\n_1+\n_2}\nxlra\Lambda_\rho(z)\}\cap F(y,z),
\end{align*}
where $\mathcal C(z,\rho)$ is the union of the $\Lambda_{3R}$-clusters intersecting $\Lambda_\rho(z)$. 

At this stage, the introduction of the event $E(y,z)$ may seem like an unnecessary complication. The advantage of this event is that when conditioning on it, we will be able to first condition on the $\Lambda_{3R}$-clusters intersecting $\Lambda_\rho(z)$, then use the mixing property to remove all the potential sources on $\partial\Lambda_{3R}$ induced by this conditioning, and finally use crossing estimates in domains without sources to bound the probability that $\Lambda_\rho(y)$ is connected to the $2r$-neighborhood of the clusters intersecting $\Lambda_\rho(z)$.

First, we claim that 
\begin{equation}\label{eq:yg}
\{\exists x\in\Lambda_R:A_4^\square(x,r,4R)\}\subset\{\mathbf N>1/(8\kappa)\}\cup\Big(\bigcup_{y,z\in \rho\mathbb Z^2\cap\Lambda_{2R}}E(y,z)\Big).
\end{equation}
Indeed, assume that $A_4^\square(x,r,4R)$ occurs for some $x\in \Lambda_R$ and  $\mathbf N\le 1/(8\kappa)$. Consider two distinct $\Lambda_{3R}$-clusters ${\mathcal C}$ and ${\mathcal C}'$ that intersect both $\partial\Lambda_{3R}$ and $\Lambda_r(x)$:
\begin{itemize}
\item
First, there must exist $z\in \rho\mathbb Z^2\cap\Lambda_{2R}$ with $\Lambda_\rho(z)\cap{\mathcal C}\ne \emptyset$ and $\Lambda_\rho(z)\cap{\mathcal C}'=\emptyset$ since otherwise all the boxes $\Lambda_\rho(z)$ with $z\in \rho\mathbb Z^2$ that are intersected by $\mathcal C$ in ${\rm Ann}(R,2R)$ must also be intersected by $\mathcal C'$, but there are at least $R/\rho\ge 1/(8\kappa)$ such boxes since $\mathcal C$ contains a crossing from $\Lambda_R$ to $\partial\Lambda_{2R}$, which is not compatible with the bound on $\mathbf N$. 

\item Second, let $\underline{\mathcal C}'$ be {\em any} $\Lambda_{2R}$-cluster contained in ${\mathcal C}'$ and intersecting $\Lambda_r(x)$ and $\partial\Lambda_{2R}$. Since $\underline{\mathcal C}'\subset{\mathcal C}'$, there must exist $y\in  \rho\mathbb Z^2\cap\Lambda_{2R}$ with $\Lambda_\rho(y)\cap\underline{\mathcal C}'\ne \emptyset$ and $\Lambda_{2\rho}(y)\cap\mathcal C(z,\rho)=\emptyset$ otherwise there would be too many boxes for which $A_4^\square(y,2\rho,R/4)$ occurs. Indeed, note that the vertex $z$ is either in ${\rm Ann}(R,3R/2)$ or in ${\rm Ann}(3R/2,2R)$. Assume it is in the first case (the second one can be treated similarly), then any box $\Lambda_{2\rho}(y)$ with $y\in \rho\mathbb Z^2\cap {\rm Ann}(7R/4,2R)$ that is intersected by $\underline{\mathcal C}'$ and $\mathcal C(z,\rho)$ is such that $A_4^\square(y,2\rho,R/4)$ occurs. Yet, there are at least $\tfrac14 R/\rho\ge 1/(8\kappa)$ boxes such that $\Lambda_\rho(y)$ intersects $\underline{\mathcal C'}$. This would again be contradictory with the bound on $\mathbf N$.
\end{itemize}
The two paragraphs together imply that the event $E(y,z)$ occurs. 
 
We deduce from \eqref{eq:yg} that 
\begin{equation}\label{eq:ahy1}
{\bf P}_{\Omega,\Omega}^{\emptyset,\emptyset}[\exists x\in\Lambda_R:A_4^\square(x,r,3R)]\le {\bf P}_{\Omega,\Omega}^{\emptyset,\emptyset}[\mathbf N>1/(8\kappa)]+\sum_{y,z\in\rho\mathbb Z^2\cap\Lambda_{2R}}{\bf P}_{\Omega,\Omega}^{\emptyset,\emptyset}[E(y,z)].
\end{equation}
The first term on the right-hand side is bounded, using the mixing property \eqref{eq:independence in graph}, Markov's inequality and Lemma~\ref{prop:closed pivotal}, by
\[
{\bf P}_{\Omega,\Omega}^{\emptyset,\emptyset}[\mathbf N>1/(8\kappa)]\le8\kappa\, {\bf E}_{\Omega,\Omega}^{\emptyset,\emptyset}[\mathbf N]\le C_{\rm mix} 8\kappa\, {\bf E}_{\mathbb Z^2,\mathbb Z^2}^{\emptyset,\emptyset}[\mathbf N]\le C_0\kappa.
\]
For the second term, we bound ${\bf P}_{\Omega,\Omega}^{\emptyset,\emptyset}[E(y,z)]$ by first conditioning on  $\mathcal C(z,\rho)$. Then, for $E(y,z)$ to occur it must be that $F(y,z)$ does. Using the mixing property \eqref{eq:independence in graph} again to relate the probability in $\Omega\setminus\mathcal C(z,\rho)$ with the sources induced by the currents $\n_1$ and $\n_2$ on $\mathcal C(z,\rho)$ with the probability in $\Lambda_{3R}\setminus \mathcal C(z,\rho)$ without any source (we work in $\Lambda_{3R}\setminus\mathcal C(z,\rho)$ since the event that $\Lambda_\rho(y)$ is connected to the $2r$-neighborhood of $\mathcal C(z,\rho)$ in $\Lambda_{2R}$ depends on what happens inside $\Lambda_{2R}$ only).
In particular the connected component $\Omega'$ of $\Lambda_{3R}\setminus \mathcal C(z,\rho)$ containing $y$ has a connected boundary (remember that $\mathcal C(z,\rho)$ contains $\Lambda_\rho(z)$ and intersects $\partial\Lambda_{3R}$), and we are in a position to use Theorem~\ref{thm:crossing free} to bound the probability that $\Lambda_\rho(y)$ is connected in $\n_1+\n_2$ to $\partial_{2r}\Omega'\cap\Lambda_{2R}$. We deduce that 
\[
{\bf P}_{\Omega,\Omega}^{\emptyset,\emptyset}[E(y,z)]\le \epsilon(8\eta/\kappa),
\]
where $\epsilon(\cdot)$ is given by Theorem~\ref{thm:crossing free}.

Taking the union bound on $y,z$ and plugging the two previous displayed inequalities into \eqref{eq:ahy1}, we get that 
\[
{\bf P}_{\Omega,\Omega}^{\emptyset,\emptyset}[\exists x\in\Lambda_R:A_4^\square(x,r,R)]\le C_0\kappa+\frac{C_1}{\kappa^{4}}\epsilon(8\eta/\kappa)\]
and the claim follows by first setting $\kappa=\kappa(\ep)$ small enough and then setting $\eta=\eta(\kappa,\ep)$ small enough.
\end{proof}

\begin{figure}
\begin{center}
			\includegraphics[scale=0.95]{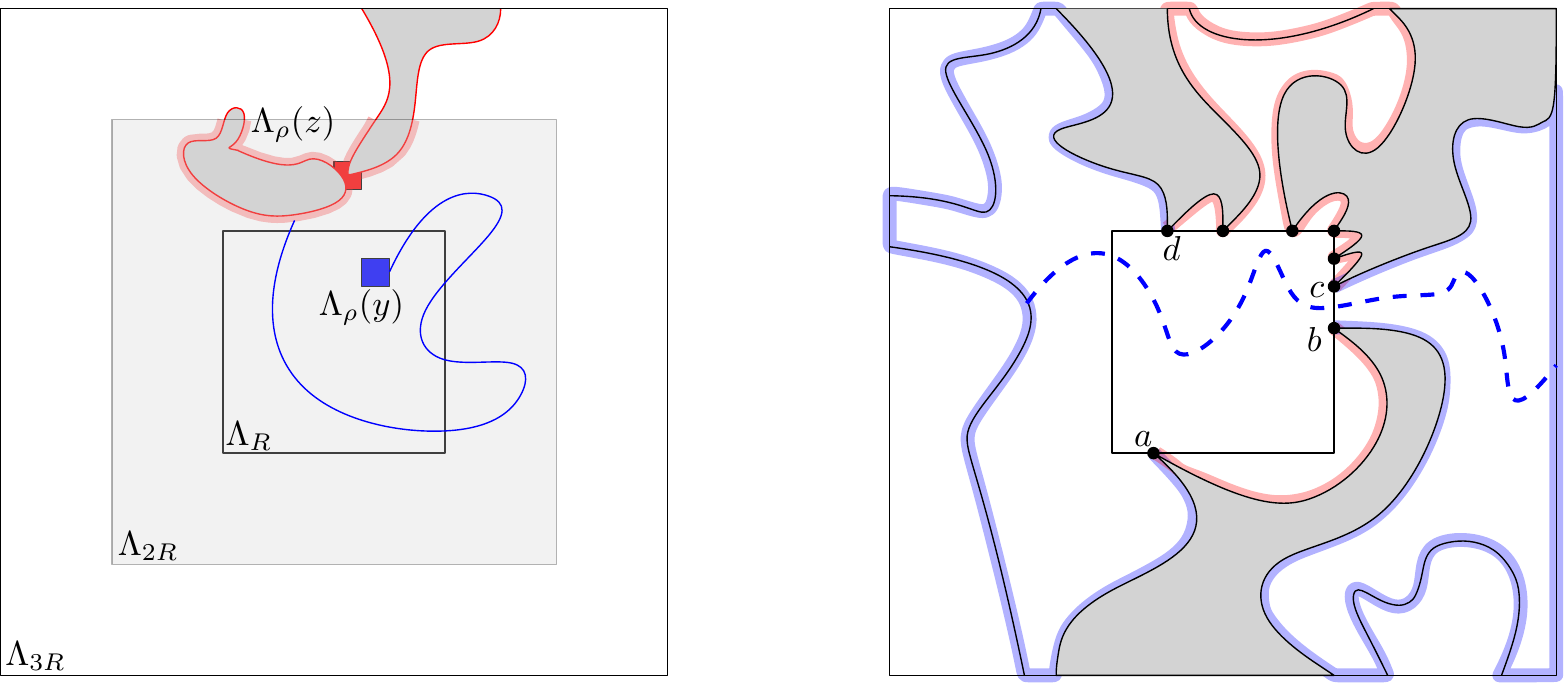}
		\end{center}
\caption{On the left, a depiction of the event $E(y,z)$. In grey, the union $\mathsf C$ of clusters in $\Lambda_{3R}$ intersecting $\Lambda_{\rho}(z)$ with its $r$-neighborhood in red, and the blue part denotes the path from $\Lambda_\rho(y)$ to the $r$-neighborhood of $\mathsf C$. On the right, the quad $D:=\Lambda_{R}\setminus \mathsf C$. In dashed we depicted a dual path crossing edges of zero $\n_1+\n_2$ current that prevents the existence of a path in $\n_1+\n_2$ between the two red arcs. On the event $H_j$, the extremal distance $\ell_D[(ab),(cd)]$ is bounded from above by $1/\kappa(\delta)$. }\label{fig:Annulus}
\end{figure}

\subsection{Proof of  Theorem~\ref{thm:absence closed pivotal expectation black}}\label{sec:pivotal points theorem}

In this section, define $r_j:=2^jr$ and $J:=\lfloor \log_2(R/r)\rfloor-1$. We will use the following sub-events of $A_r^\blacksquare(r,R)$ for $1\le r\le R$:
\begin{align*}
A_4^{\blacksquare\, \mathrm{odd}}(r,R)&:=\Big\{\begin{array}{c}\text{there exist two ${\rm Ann}(r,R)$-holes crossing ${\rm Ann}(r,R)$ and the shortest}\\
\text{dual path between them has even $(\n_1+\n_2)$-flux and odd $\n_1$-flux}\end{array}\Big\},\\
A_4^{\blacksquare\, \mathrm{even}}(r,R)&:=\Big\{\begin{array}{c}\text{there exist two ${\rm Ann}(r,R)$-holes crossing ${\rm Ann}(r,R)$ and the shortest}\\
\text{dual path between them has even $(\n_1+\n_2)$-flux and even $\n_1$-flux}\end{array}\Big\}.
\end{align*}
We have that 
\begin{equation}
A_4^\blacksquare(r,R)=A_4^\square(r,R)\cup A_4^{\blacksquare\, \mathrm{odd}}(r,R)\cup A_4^{\blacksquare\, \mathrm{even}}(r,R).\label{eq:decomposition A_4}
\end{equation}
Again, we split the proof into three lemmata and the proof of the theorem. We start by a lemma estimating the probability of $A_4^\blacksquare(r,R)$.
\begin{lemma}\label{prop:open pivotal}
There exists $C>0$ such that for all $r,R$ with $R\ge r\ge1$,
 \begin{align}
{\bf P}_{\mathbb Z^2,\mathbb Z^2}^{\emptyset,\emptyset}[A_4^\blacksquare(r,R)]&\le C(\tfrac rR)^2.\label{eq:4-arm open}
\end{align}
\end{lemma}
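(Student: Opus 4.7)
My plan is to use the decomposition \eqref{eq:decomposition A_4} and bound each of the three pieces by $C(r/R)^2$. The piece $\mathbf{P}^{\emptyset,\emptyset}_{\mathbb Z^2,\mathbb Z^2}[A_4^\square(r,R)]$ is already controlled by Lemma~\ref{prop:closed pivotal}, so it remains to bound $A_4^{\blacksquare,\mathrm{odd}}(r,R)$ and $A_4^{\blacksquare,\mathrm{even}}(r,R)$.

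For $A_4^{\blacksquare,\mathrm{odd}}(r,R)$ I would invoke the switching principle (Lemma~\ref{lem:switch}) in the form derived right after its statement. On the event $A_4^\blacksquare$, the two ${\rm Ann}(r,R)$-holes crossing the annulus topologically partition it into two sectors, and by planar duality each sector must contain a primal $(\n_1+\n_2)$-crossing from $\partial\Lambda_r$ to $\partial\Lambda_R$; any such crossing is a cluster of $\m:=\n_1+\n_2$ separating any face $u$ in the first hole from any face $v$ in the second. Conditioning on $\m$ and applying the switching principle to $\n_1$ shows that the $\n_1$-flux parity between $u$ and $v$ is uniform on $\{0,1\}$, so
$$
\mathbf{P}^{\emptyset,\emptyset}_{\mathbb Z^2,\mathbb Z^2}[A_4^{\blacksquare,\mathrm{odd}}(r,R)] \;=\; \mathbf{P}^{\emptyset,\emptyset}_{\mathbb Z^2,\mathbb Z^2}[A_4^{\blacksquare,\mathrm{even}}(r,R)] \;=\; \tfrac12\,\mathbf{P}^{\emptyset,\emptyset}_{\mathbb Z^2,\mathbb Z^2}[A_4^\blacksquare(r,R)].
$$
Some care is needed to choose $u,v$ as measurable functions of the configuration and to run the principle only on the conditional law of $\n_1$ given $\m$, but these are standard applications of the framework of Lemma~\ref{lem:switch}.

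For $A_4^{\blacksquare,\mathrm{even}}(r,R)$, both the $\n_1$- and the $\n_2$-fluxes between $u,v$ are separately even. Since $\eta(\n_i)$ is a disjoint union of dual loops (because $\partial\n_i=\emptyset$), even $\n_i$-flux between $u$ and $v$ is equivalent to $u$ and $v$ lying in the same component of $(\mathbb Z^2)^*\setminus\eta(\n_i)$; via Kramers--Wannier duality (Proposition~\ref{prop:coupling Kramers Wannier}), this is a dual-Ising cluster constraint holding simultaneously for the two independent dual Ising configurations associated with $\n_1$ and $\n_2$. I would exploit this pair of constraints to show that the two primal ${\rm Ann}(r,R)$-crossings lying between the holes cannot be merged through $\Lambda_r$ into a single $\Lambda_R$-cluster without violating the even-parity condition on at least one of the currents. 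Hence two distinct $\Lambda_R$-clusters cross the annulus, i.e., $A_4^\square(r,R)$ occurs, and Lemma~\ref{prop:closed pivotal} delivers the $C(r/R)^2$ bound.

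The main obstacle I foresee is making this last reduction $A_4^{\blacksquare,\mathrm{even}}\Rightarrow A_4^\square$ (perhaps at a slightly adjusted scale) fully rigorous: the interplay between the planar topology of the two holes, the primal $(\n_1+\n_2)$-crossings separating them, and the even-flux constraints for two \emph{independent} currents is rather delicate and will likely require a careful case analysis, together with RSW-type inputs from \eqref{eq:RSW} to rule out pathological configurations inside $\Lambda_r$.
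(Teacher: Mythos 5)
Your opening moves --- the decomposition \eqref{eq:decomposition A_4}, invoking Lemma~\ref{prop:closed pivotal} for $A_4^\square$, and using the switching principle to equate ${\bf P}[A_4^{\blacksquare\,\rm odd}]$ with ${\bf P}[A_4^{\blacksquare\,\rm even}]$ --- agree with the paper. The gap is the last step: the deterministic reduction $A_4^{\blacksquare\,\rm even}(r,R)\Rightarrow A_4^\square(r,R)$ that you try to extract from the two even-parity constraints is simply false, even up to a change of scale. Take $\n_2\equiv 0$ and $\n_1$ equal to $2$ on every edge of a simple path $\gamma\subset\Lambda_R$ whose two endpoints lie on $\partial\Lambda_R$ and which dips into $\Lambda_{r-1}$ and back out. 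Then $\n_1$ is sourceless and $\n_1+\n_2=\n_1$ has a single $\Lambda_R$-cluster, namely $\gamma$, so $A_4^\square$ fails. Yet the two legs of $\gamma$ in ${\rm Ann}(r,R)$ separate the annulus into two ${\rm Ann}(r,R)$-holes that each cross, and any dual path between them crosses edges of $\gamma$ only, so its $\n_1$-flux is a multiple of $2$ and its $\n_2$-flux is $0$: the event $A_4^{\blacksquare\,\rm even}(r,R)$ \emph{does} occur. The constraint that the $\n_i$-flux is even says only that $\eta(\n_i)$ does not separate the two holes; it is perfectly compatible with the two crossings being joined inside $\Lambda_r$ by doubled-up current. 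No planar case analysis on $\eta(\n_1),\eta(\n_2)$ can rule this out, so the step is not merely ``delicate,'' it is wrong.

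The implication must be made probabilistic, and this is the heart of the paper's argument. One first conditions on the smallest scale $r_j$ at which ${\rm Sep}_\delta(r_j)$ holds (Corollary~\ref{cor:sep}), then on the union $\mathsf C$ of the ${\rm Ann}(r_j,R)$-clusters meeting $\partial\Lambda_R$. The even-parity constraints guarantee that the sources induced on the two well-separated arcs $(ab),(cd)\subset\partial\Lambda_{r_j}$ are of even cardinality for both $\n_1$ and $\n_2$, so Corollary~\ref{cor:upper bound crossing close 1} applies in the unexplored quad $D=\Lambda_R\setminus\mathsf C$ and gives a uniformly positive conditional probability that $(ab)\stackrel{\n_1+\n_2}\nxlra(cd)$, which does force $A_4^\square(r_j,R)$. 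This gives ${\bf P}[H_j]\le C(\delta){\bf P}[A_4^\square(r_j,R)]\le C'(\delta)(r_j/R)^2$, and one sums over $j$, paying a geometric factor at each scale where separation fails (via the mixing property). So your instinct to route the bound through $A_4^\square$ is right, but only as a conditional crossing estimate, not as a deterministic inclusion.
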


The proof is reminiscent of ``separation of arms'' arguments in percolation theory (see e.g.~\cite{CheDumHon16} for an example with the random cluster model): we consider the smallest scale $\rho$ at which large clusters are separated by a reasonable distance, and then we use our crossing estimates to express the probability of the event in terms of the one of $A_4^\square(\rho,R)$.

\begin{proof} 
 The switching principle (Lemma~\ref{lem:switch}) and the paragraph following it give that 
 \[
 {\bf P}_{\mathbb Z^2,\mathbb Z^2}^{\emptyset,\emptyset}[A_4^{\blacksquare\,  \rm even}(r,R)]={\bf P}_{\mathbb Z^2,\mathbb Z^2}^{\emptyset,\emptyset}[A_4^{\blacksquare\,  \rm odd}(r,R)].
 \]
By \eqref{eq:decomposition A_4}, we get that
\begin{align}
{\bf P}_{\mathbb Z^2,\mathbb Z^2}^{\emptyset,\emptyset}[A_4^\blacksquare\, (r,R)]&\le {\bf P}_{\mathbb Z^2,\mathbb Z^2}^{\emptyset,\emptyset}[A_4^\square(r,R)]+2{\bf P}_{\mathbb Z^2,\mathbb Z^2}^{\emptyset,\emptyset}[A_4^{\blacksquare\,  \rm even}(r,R)].\label{eq:h1}
\end{align}
By \eqref{eq:4-arm closed}, it suffices to focus on the bound of ${\bf P}_{\mathbb Z^2,\mathbb Z^2}^{\emptyset,\emptyset}[A_4^{\blacksquare\,  \rm even}(r,R)]$.
Consider $\ep>0$ such that $4C_{\rm mix}\,\ep<1$ ($C_{\rm mix}$ is given by \eqref{eq:mixing}) and let $\delta\in(0,\delta_0(\ep))$ with $\delta_0(\ep)$ given by Corollary~\ref{cor:sep}. 
By decomposing on the smallest $j$ at which ${\rm Sep}_\delta(r_j)$ occurs (there is a possibility that no such $j$ exists but this is taken into account by the second term in the next formula), we find that 
\begin{align}
{\bf P}_{\mathbb Z^2,\mathbb Z^2}^{\emptyset,\emptyset}[A_4^{\blacksquare\,  \rm even}(r,R)]&\le\sum_{j=1}^{J} {\bf P}_{\mathbb Z^2,\mathbb Z^2}^{\emptyset,\emptyset}\Big[{\rm Sep}_\delta(r_j),\bigcap_{\ell<j}{\rm Sep}_\delta(r_\ell)^c,A_4^{\blacksquare\,  \rm even}(r,R)\Big]\nonumber\\
&\qquad\qquad\qquad\qquad\qquad\qquad+{\bf P}_{\mathbb Z^2,\mathbb Z^2}^{\emptyset,\emptyset}\Big[\bigcap_{\ell\le J}{\rm Sep}_\delta(r_\ell)^c,A_4^{\blacksquare\,  \rm even}(r,R)\Big].\label{eq:h11}
\end{align}
First of all, since the events ${\rm Sep}_\delta(r_\ell)$ depend on well-separated regions, the mixing property implies that the second term on the right-hand side is bounded by
\begin{align}\label{eq:sep1}
{\bf P}_{\mathbb Z^2,\mathbb Z^2}^{\emptyset,\emptyset}\Big[\bigcap_{\ell\le J}{\rm Sep}_\delta(r_\ell)^c\Big]\le \Big(\prod_{\ell< J}{\bf P}_{\mathbb Z^2,\mathbb Z^2}^{\emptyset,\emptyset}[{\rm Sep}_\delta(r_\ell)^c]\Big)\Big(\prod_{\ell<J}C_{\rm mix}\Big)\le C_{\rm mix}^{J-1}\ep^J.
\end{align}
We now refer to Fig.~\ref{fig:Annulus} on the right. On $A_4^{\blacksquare\,  \rm even}(r,R)\cap {\rm Sep}_\delta(r_j)$, let $\mathsf C$ be the union of the $\mathrm{Ann}(r_j,R)$-clusters intersecting $\partial\Lambda_R$, and note that there must exist two arcs $(ab)$ and $(cd)$ of $\partial\Lambda_{r_j}$ separated by a distance at least $\delta r_j$ with the property that the $\mathrm{Ann}(r_j,R)$-clusters crossing $\mathrm{Ann}(r_j,R)$ end entirely either on $(ab)$ or $(cd)$, and that the $\n_1$- and $\n_2$-fluxes of the {\em union}\footnote{We do not claim this for individual $\mathrm{Ann}(r_j,R)$-clusters.} of the $\mathrm{Ann}(r_j,R)$-clusters ending in $(ab)$ is even (and therefore also for $(cd)$).
Denote the event that this happens by $H_j$ (note that it is not equal to $A_4^{\blacksquare\,  \rm even}(r,R)\cap {\rm Sep}_\delta(r_j)$ as some configurations could satisfy $H_j$ but not $A_4^{\blacksquare\,  \rm even}(r,R)\cap {\rm Sep}_\delta(r_j)$). 

 Since $H_j$ depends on the outside of $\Lambda_{r_j}$ only, and $\bigcap_{\ell<j}{\rm Sep}_\delta(r_\ell)^c$ on the inside of $\Lambda_{3r_j/4}$, we deduce from the mixing property \eqref{eq:mixing} and a bound similar to \eqref{eq:sep1} that 
\begin{align*}
{\bf P}_{\mathbb Z^2,\mathbb Z^2}^{\emptyset,\emptyset}\Big[{\rm Sep}_\delta(r_j),\bigcap_{\ell<j}{\rm Sep}_\delta(r_\ell)^c,A_4^{\blacksquare\,  \rm even}(r,R)\Big]&\le {\bf P}_{\mathbb Z^2,\mathbb Z^2}^{\emptyset,\emptyset}\Big[\bigcap_{\ell<j}{\rm Sep}_\delta(r_\ell)^c,H_j\Big]\\
&\le C_{\rm mix}^j\ep^{j-1}{\bf P}_{\mathbb Z^2,\mathbb Z^2}^{\emptyset,\emptyset}[H_j].
\end{align*}
On $H_j$, one may condition on $\mathsf C$ and everything outside of $\Lambda_R$ (recall that $\mathsf C$ is the union of the $\mathrm{Ann}(r_j,R)$-clusters intersecting $\partial\Lambda_R$). Then, in the complement $D:=\Lambda_{R}\setminus\mathsf C$ of the explored edges, we can find four points $a,b,c,d\in\partial\Lambda_{r_j}$ such that the two currents have an even set of sources on $(ab)$ and $(cd)$, and no source elsewhere. Since these two sets are at a distance at least $\delta r_j$ of each other, Corollary~\ref{cor:upper bound crossing close 1} applied to $(D,a,b,c,d)$ shows that there exists $c_1(\delta)>0$ independent of everything except $\delta$ such that a.s.
\[
{\bf P}_{\mathbb Z^2,\mathbb Z^2}^{\emptyset,\emptyset}[(ab)\stackrel{\n_1+\n_2}\nxlra(cd)\text{ in }D|H_j,\mathsf C]\ge c_1(\delta).
\]
In particular, on this event $A_4^\square(r_j,R)$ occurs, see Fig.~\ref{fig:Annulus} on the right again. We deduce from \eqref{eq:4-arm closed} that 
$${\bf P}_{\mathbb Z^2,\mathbb Z^2}^{\emptyset,\emptyset}[H_j]\le C_1(\delta){\bf P}_{\mathbb Z^2,\mathbb Z^2}^{\emptyset,\emptyset}[A_4^\square(r_j,R)]\le C_2(\delta)(\tfrac{r_j}R)^2.$$
Overall, we find that 
$${\bf P}_{\mathbb Z^2,\mathbb Z^2}^{\emptyset,\emptyset}\Big[{\rm Sep}_\delta(r_j),\bigcap_{\ell<j}{\rm Sep}_\delta(r_\ell)^c,A_4^{\blacksquare\,  \rm even}(r,R)\Big]\le C_2(\delta)C_{\rm mix}^j\ep^{j-1} (\tfrac {r_j}R)^2.$$
Plugging this bound into \eqref{eq:h11}, summing the estimate and then plugging it into \eqref{eq:h1} implies the result (we use that $r_j=2^jr$ and our assumption that $4C_{\rm mix}\,\ep<1$).
\end{proof}

 We now turn to the second lemma.
For $\delta>0$ and $\rho\ge r$, define
\[
F_\delta(\rho):=\{ \exists x\in  \Lambda_{r_j}:A_4^{\blacksquare\, \rm odd } (x,r,3\rho)\}\cap {\rm Sep}_\delta(\rho).
\]

\begin{lemma}\label{lem:claim 2} For every $\ep>0$, there exist $\delta_1=\delta_1(\ep)>0$ and $c_{\rm exists}(\ep)>0$ such that for all $r,\rho,R$ with $r\le \rho\le R/2$ and every $0<\delta\le\delta_1$,
\begin{equation}\label{eq:h6} {\bf P}_{\mathbb Z^2,\mathbb Z^2}^{\emptyset,\emptyset}[\exists x\in \Lambda_{\rho}:A_4^{\blacksquare\, \rm odd}(x,r,R)]\ge \ep(\rho/R)^2\qquad\Longrightarrow\qquad {\bf P}_{\mathbb Z^2,\mathbb Z^2}^{\emptyset,\emptyset}[F_\delta(\rho)]\ge c_{\rm exists}(\ep).\end{equation}
\end{lemma}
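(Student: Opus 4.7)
The plan combines a cross-scale factorization via the mixing property (Proposition~\ref{lem:mixing}) with a second-moment estimate, then incorporates the separation event via Corollary~\ref{cor:sep}. In outline, I will first use mixing to show that the hypothesis forces the expected number of pivotals at the smaller scale $(r,3\rho)$ inside $\Lambda_\rho$ to be bounded below by a constant $c_0(\ep)$, then use a Paley--Zygmund bound to turn this into a positive lower bound on the existence probability, and finally take $\delta_1(\ep)$ small enough via Corollary~\ref{cor:sep} so that intersecting with ${\rm Sep}_\delta(\rho)$ costs at most half the probability.

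For the first-moment bound, I will observe that $A_4^{\blacksquare\rm odd}(x,r,R)$ implies both $A_4^{\blacksquare\rm odd}(x,r,3\rho)$ (an event depending on $\Lambda_{3\rho}(x)$) and $A_4^\square(x,12\rho,R)$ (an event depending on the complement of $\Lambda_{12\rho}(x)$). Indeed, two distinct crossing holes in $\mathrm{Ann}(x,r,R)$ restrict to distinct sub-components crossing any sub-annulus by connectedness: two components that are disjoint in a larger annulus cannot merge in a smaller one. Applying Proposition~\ref{lem:mixing} to the buffer $\mathrm{Ann}(x,4\rho,8\rho)$ and then Proposition~\ref{prop:closed pivotal} should yield
\[
\mathbf P[A_4^{\blacksquare\rm odd}(x,r,R)]\;\le\;C_{\rm mix}\,\mathbf P[A_4^{\blacksquare\rm odd}(x,r,3\rho)]\cdot\mathbf P[A_4^\square(x,12\rho,R)]\;\le\; C(\rho/R)^2\,\mathbf P[A_4^{\blacksquare\rm odd}(x,r,3\rho)].
\]
Summing over $x\in\Lambda_\rho$ and using the hypothesis $\mathbf P[\exists x\in\Lambda_\rho:A_4^{\blacksquare\rm odd}(x,r,R)]\ge \ep(\rho/R)^2$ will then give $\mathbf E[\#\{x\in\Lambda_\rho:A_4^{\blacksquare\rm odd}(x,r,3\rho)\}]\ge c_0(\ep)$.

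For the second moment, I will control $\mathbf P[A_4^{\blacksquare\rm odd}(x,r,3\rho)\cap A_4^{\blacksquare\rm odd}(x',r,3\rho)]$ by decomposing pairs $(x,x')$ according to the dyadic scale $d\simeq|x-x'|$: another application of mixing on a buffer annulus around the midpoint decouples the two individual 4-arm events below scale $d$ (each of probability $\lesssim (r/d)^2$) from the combined multi-arm event above scale $d$ at the midpoint (of probability $\lesssim (d/\rho)^2$ by Proposition~\ref{prop:closed pivotal}). This bound is summable in both variables and yields $\mathbf E[\#^2]\le C(\ep)$, so Paley--Zygmund gives $\mathbf P[\exists x\in\Lambda_\rho:A_4^{\blacksquare\rm odd}(x,r,3\rho)]\ge c_0(\ep)^2/C(\ep)=:c_1(\ep)$. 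A union bound with Corollary~\ref{cor:sep} applied at $\delta_1(\ep)$ chosen so that $\mathbf P[{\rm Sep}_\delta(\rho)^c]\le c_1(\ep)/2$ then completes the proof with $c_{\rm exists}(\ep):=c_1(\ep)/2$. The main obstacle lies in the factorization step: mixing cleanly decouples events supported in well-separated regions, but the ``odd $\n_1$-flux'' condition is a global constraint on the shortest dual path between two holes, spanning the whole annulus. I will need to verify carefully that the odd-flux witness at scale $(r,R)$ descends to a valid odd-flux witness at scale $(r,3\rho)$, which hinges on the connectedness of the two original holes across the buffer annulus together with a careful choice of a witnessing dual path that stays inside the inner sub-annulus and whose flux is therefore controlled only by the inner sub-configuration.
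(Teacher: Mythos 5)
The approach has a genuine gap and is considerably more roundabout than what the lemma requires.

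On the smaller point first: $A_4^{\blacksquare\,\rm odd}(x,r,R)$ does not imply $A_4^\square(x,12\rho,R)$; these events are of different nature. The event $A_4^\square$ asks for two \emph{primal} $\Lambda_R$-clusters crossing the annulus, whereas $A_4^\blacksquare$ asks for two \emph{dual} holes. Two distinct holes crossing a large annulus do not force two distinct primal clusters to cross from the inner to the outer boundary. The correct outer event to restrict to is $A_4^{\blacksquare\,\rm odd}(x,5\rho,R)$, and the $(\rho/R)^2$ upper bound you then need is Lemma~\ref{prop:open pivotal}, not Lemma~\ref{prop:closed pivotal}.

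The more serious problem is the second-moment step. Take $x,x'\in r\mathbb{Z}^2\cap\Lambda_\rho$ at distance of order $d$. Your decoupling gives $\mathbf{P}[A_4^{\blacksquare\,\rm odd}(x,r,3\rho)\cap A_4^{\blacksquare\,\rm odd}(x',r,3\rho)]\le C(r/d)^4(d/\rho)^2$, and the number of such pairs is of order $(\rho/r)^2(d/r)^2$. The product is of order one \emph{per dyadic scale} $d$, and there are of order $\log(\rho/r)$ dyadic scales between $r$ and $\rho$, so $\mathbf{E}[\#^2]$ is of order $\log(\rho/r)$, not $C(\ep)$. Paley--Zygmund then returns only $\mathbf{P}[\exists x\in\Lambda_\rho:A_4^{\blacksquare\,\rm odd}(x,r,3\rho)]\ge c_0(\ep)^2/\log(\rho/r)$, which vanishes as $\rho/r\to\infty$ and cannot produce a uniform $c_{\rm exists}(\ep)$. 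This is not slack that can be removed by sharper arm estimates: the logarithmic divergence is the same one underlying Theorem~\ref{thm:crossing free}.

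The paper avoids second moments entirely. The observation you missed is that mixing factorizes the \emph{existence event} itself, not merely the per-site probabilities. Since $A_4^{\blacksquare\,\rm odd}(x,r,R)\subset A_4^{\blacksquare\,\rm odd}(x,r,3\rho)\cap A_4^{\blacksquare\,\rm odd}(x,5\rho,R)$, with the two sub-events supported in disjoint annuli, one has
\[
\{\exists x\in\Lambda_\rho:A_4^{\blacksquare\,\rm odd}(x,r,R)\}\subset\{\exists x\in\Lambda_\rho:A_4^{\blacksquare\,\rm odd}(x,r,3\rho)\}\cap\{\exists x\in\Lambda_\rho:A_4^{\blacksquare\,\rm odd}(x,5\rho,R)\},
\]
with the first existence event measurable inside a box of size $O(\rho)$ and the second measurable outside it. Writing $\{\exists x:A_4^{\blacksquare\,\rm odd}(x,r,3\rho)\}\subset F_\delta(\rho)\cup{\rm Sep}_\delta(\rho)^c$, applying Proposition~\ref{lem:mixing} to the intersection, bounding the outer factor by $C(5\rho/R)^2$ via Lemma~\ref{prop:open pivotal}, and choosing $\delta_1(\ep)$ by Corollary~\ref{cor:sep} so that $\mathbf{P}[{\rm Sep}_\delta(\rho)^c]$ is a small fraction of $\ep/(C_{\rm mix}C)$, the hypothesis $\ep(\rho/R)^2\le\mathbf{P}[\exists x:A_4^{\blacksquare\,\rm odd}(x,r,R)]$ is solved directly for $\mathbf{P}[F_\delta(\rho)]$. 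No Paley--Zygmund step is needed, and no second moment appears.
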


\begin{proof}
Fix $\ep>0$ and let 
\[
\delta_1(\ep):=\delta_0\big(\tfrac{\ep}{50C_{\rm mix}C}\big)\qquad\text{and}\qquad c_{\rm exists}(\ep):=\tfrac{\ep}{50C_{\rm mix}},
\]
where $C$ is given by Lemma~\ref{prop:open pivotal}, $C_{\rm mix}$ by \eqref{eq:mixing} and $\delta_0(\cdot)$ is given by Corollary~\ref{cor:sep}. 
For $\delta\in(0,\delta_1)$, note that 
\[
A_4^{\blacksquare\, \rm odd}(x,r,R)\subset A_4^{\blacksquare\, \rm odd}(x,r,3\rho)\cap A_4^{\blacksquare\, \rm odd}(x,5\rho,R)\] 
and that the events $A_4^{\blacksquare\, \rm odd}(x,r,3\rho)$ and $A_4^{\blacksquare\, \rm odd}(x,5\rho,R)$ depend respectively on edges in ${\rm Ann}(x,r,3\rho)$ and ${\rm Ann}(x,5\rho,R)$ only (the existence of the two holes and the fluxes can be determined looking only at the annuli). 

This implies that
\begin{align*}\ep (\tfrac {\rho}{R})^2&\le {\bf P}_{\mathbb Z^2,\mathbb Z^2}^{\emptyset,\emptyset}[F_\delta(\rho)\cap A_4^{\blacksquare\,  \rm odd}(5\rho,R)]+{\bf P}_{\mathbb Z^2,\mathbb Z^2}^{\emptyset,\emptyset}[{\rm Sep}_\delta(r_j)^c\cap A_4^{\blacksquare\, \rm odd}(5\rho,R)]\\
&\le C_{\rm mix}\big({\bf P}_{\mathbb Z^2,\mathbb Z^2}^{\emptyset,\emptyset}[F_\delta(\rho)]+ {\bf P}_{\mathbb Z^2,\mathbb Z^2}^{\emptyset,\emptyset}[{\rm Sep}_\delta(\rho)^c]\big) \times{\bf P}_{\mathbb Z^2,\mathbb Z^2}^{\emptyset,\emptyset}[A_4^{\blacksquare\, \rm odd}(5\rho,R)]\\
&\le C_{\rm mix}\Big({\bf P}_{\mathbb Z^2,\mathbb Z^2}^{\emptyset,\emptyset}[F_\delta(\rho)]+\frac{\ep}{50C_{\rm mix}C}\Big) \times C(\tfrac{5\rho}R)^2,
\end{align*}
where we used the union bound in the first, the mixing property \eqref{eq:mixing} in the second, and Corollary~\ref{cor:sep} and Lemma~\ref{prop:open pivotal} in the third.
 This implies \eqref{eq:h6} readily.
\end{proof}

The third lemma complements the previous one.

\begin{lemma}\label{lem:claim 3} For every $\ep>0$ and all $r,\rho,R$ with $r\le \rho\le R/2$,
\[
{\bf P}_{\mathbb Z^2,\mathbb Z^2}^{\emptyset,\emptyset}[\exists x\in \Lambda_{\rho}:A_4^{\blacksquare\, \rm odd}(x,r,R)]\le \ep(\tfrac\rho R)^2\quad\Longrightarrow\quad{\bf P}_{\mathbb Z^2,\mathbb Z^2}^{\emptyset,\emptyset}[\exists x\in \Lambda_R:A_4^{\blacksquare\, \rm odd}(x,r,R)]\le \ep.
\]
\end{lemma}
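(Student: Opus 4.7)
The plan is to deduce the conclusion from the hypothesis by combining translation invariance of the infinite volume measure with a covering argument. The event $\{\exists x\in \Lambda_R : A_4^{\blacksquare\,\rm odd}(x,r,R)\}$ can be viewed as a disjunction over localized events, each of which, after translation, looks exactly like the event controlled by the hypothesis.

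More precisely, first I would use that ${\bf P}_{\mathbb Z^2,\mathbb Z^2}^{\emptyset,\emptyset}$ is translation invariant, so that for every $z\in\mathbb Z^2$,
\[
{\bf P}_{\mathbb Z^2,\mathbb Z^2}^{\emptyset,\emptyset}[\exists x\in \Lambda_{\rho}(z):A_4^{\blacksquare\, \rm odd}(x,r,R)]={\bf P}_{\mathbb Z^2,\mathbb Z^2}^{\emptyset,\emptyset}[\exists x\in \Lambda_{\rho}:A_4^{\blacksquare\, \rm odd}(x,r,R)]\le \ep(\rho/R)^2,
\]
where the final inequality is precisely the hypothesis. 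Second, I would fix a set $\mathcal Z\subset \rho\mathbb Z^2$ of translates such that $\Lambda_R\subset\bigcup_{z\in \mathcal Z}\Lambda_\rho(z)$, chosen so that $|\mathcal Z|\le C(R/\rho)^2$ for a universal constant $C$ (one can for instance take $\mathcal Z:=\rho\mathbb Z^2\cap\Lambda_{R+\rho}$, which has at most $(2R/\rho+3)^2\le C(R/\rho)^2$ elements, using $\rho\le R/2$).

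Finally, a union bound gives
\[
{\bf P}_{\mathbb Z^2,\mathbb Z^2}^{\emptyset,\emptyset}[\exists x\in \Lambda_R:A_4^{\blacksquare\, \rm odd}(x,r,R)]\le \sum_{z\in\mathcal Z}{\bf P}_{\mathbb Z^2,\mathbb Z^2}^{\emptyset,\emptyset}[\exists x\in \Lambda_{\rho}(z):A_4^{\blacksquare\, \rm odd}(x,r,R)]\le C\ep,
\]
which is the desired statement up to the universal multiplicative constant $C$ (absorbed into $\ep$ by applying the implication with $\ep/C$ in place of $\ep$ in the hypothesis). Since the argument is essentially a one-line translation-plus-union-bound, there is no serious obstacle; the only point to check is that the covering $\mathcal Z$ has size $O((R/\rho)^2)$, which is immediate from $\rho\le R/2$.
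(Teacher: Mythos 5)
Your argument—translation invariance, covering $\Lambda_R$ by $O((R/\rho)^2)$ translates of $\Lambda_\rho$, and a union bound—is exactly the paper's argument, so the approach is the same. One small caveat: your final remedy for the extra constant $C$ (``apply the implication with $\ep/C$ in place of $\ep$ in the hypothesis'') does not actually recover the lemma as stated, since it proves the implication $\text{hyp}(\ep/C)\Rightarrow\text{concl}(\ep)$ rather than $\text{hyp}(\ep)\Rightarrow\text{concl}(\ep)$; but the paper's own proof (covering with $T=(2\rho+1)\mathbb Z^2\cap\Lambda_R$, which need not even cover $\Lambda_R$ when $\rho$ is close to $R/2$) is no more careful about this constant, and the slack is harmless for the downstream application in the proof of Theorem~\ref{thm:absence closed pivotal expectation black}.
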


\begin{proof}
Cover $\Lambda_R$ by boxes $\Lambda_{\rho}(y)$ with $y\in T:=(2\rho+1)\mathbb Z^2\cap \Lambda_R$, the union bound gives
\begin{align*}
{\bf P}_{\mathbb Z^2,\mathbb Z^2}^{\emptyset,\emptyset}[\exists x\in \Lambda_R:A_4^{\blacksquare\, \rm odd}(x,r,R)]&\le\sum_{y\in T}{\bf P}_{\mathbb Z^2,\mathbb Z^2}^{\emptyset,\emptyset}[\exists x\in \Lambda_{\rho}(y):A_4^{\blacksquare\, \rm odd}(x,r,R)].\end{align*}
The invariance under translation implies the result.
\end{proof}
We are now in a position to prove Theorem~\ref{thm:absence closed pivotal expectation black}. 
\begin{proof}[Proof of Theorem~\ref{thm:absence closed pivotal expectation black}] 
Lemma~\ref{prop:open pivotal} gives \eqref{eq:open pivotal} so we focus on \eqref{eq:open pivotal existence}.  Using the mixing property \eqref{eq:independence in graph}, we replace $\Omega$ by $\mathbb Z^2$. Also, the switching principle (Lemma~\ref{lem:switch}) and the paragraph following it imply that
\begin{align*}{\bf P}_{\mathbb Z^2,\mathbb Z^2}^{\emptyset,\emptyset}[\exists x\in \Lambda_R:A_4^{\blacksquare\, \rm even}(x,r,R)]=  {\bf P}_{\mathbb Z^2,\mathbb Z^2}^{\emptyset,\emptyset}[\exists x\in \Lambda_R:A_4^{\blacksquare\, \rm odd} (x,r,R)].\end{align*}
By \eqref{eq:decomposition A_4} and \eqref{eq:closed pivotal}, 
\begin{align*}
{\bf P}_{\mathbb Z^2,\mathbb Z^2}^{\emptyset,\emptyset}[\exists x\in \Lambda_R:A_4^{\blacksquare}(x,r,R)]\le {\bf P}_{\mathbb Z^2,\mathbb Z^2}^{\emptyset,\emptyset}&[\exists x\in \Lambda_R:A_4^\square (x,r,R)]\\
&+2 {\bf P}_{\mathbb Z^2,\mathbb Z^2}^{\emptyset,\emptyset}[\exists x\in \Lambda_R:A_4^{\blacksquare\, \rm odd} (x,r,R)]
\end{align*}
and it suffices to bound the probability on the last line. Fix $\ep>0$. Either the quantity is bounded by $\ep$ and we are done, or by 
a combination of Lemmata~\ref{lem:claim 2} and \ref{lem:claim 3} we may assume that 
\begin{equation} \label{eq:h2}
(\mathcal H_\ep)\qquad\qquad\forall j\le J,\qquad {\bf P}_{\mathbb Z^2,\mathbb Z^2}^{\emptyset,\emptyset}[F_\delta(r_j)]\ge c_{\rm exists}(\ep),
\end{equation}
where $c_{\rm exists}(\ep)$ is given by Lemma~\ref{lem:claim 2}.
We now work under the assumption that $(\mathcal H_\ep)$ holds true. (Let us remark that a posteriori it is not true, and that we are therefore assuming something wrong.) 

Under $(\mathcal H_\ep)$, we will show that conditioned on having at least one point for which $A^\blacksquare_4(x,r,R)$ occurs, there are in fact many other places where the (translate of the) event does as well. 
 The underlying idea is reminiscent of the upper bound of Theorem~\ref{thm:crossing free}: we already know that the expected number of $x\in \Lambda_R\cap r\mathbb Z^2$ for which $A_4^{\blacksquare}(x,2r,R/2)$ occurs\footnote{The fact that we consider $R/2$ instead of $R$ is due to the following observation: the existence of $x_0\in \Lambda_R$ such that $A_4^\square(x_0,r,R)$ occurs implies the existence of $x\in \Lambda_R\cap r\mathbb Z^2$ such that $A_4^\square(x,2r,R/2)$ does.} is of order 1, so we only need to prove that the probability that there is such an $x$, but not too many other $y$ satisfying $A_4^{\blacksquare}(y,2r,R/4)$ \footnote{The fact that we consider $R/4$ instead of $R/2$ is due to the fact that in the construction of Claim 1 below, it will be useful to have $R/4$ instead of $R/2$.}, is quite small. More precisely, we will show that conditioned on $A_4^\blacksquare(x,2r,R/2)$, the number of diadic scales around $x$ that contain some $y\in r\mathbb Z^2$ with $A_4^\blacksquare(y,2r,R/4)$ occurring is typically large.

Introduce the event 
\[
H_j:=\{\exists x\in r\mathbb Z^2\cap \mathrm{Ann}(r_{j},2r_{j})\text{ such that }A_4^{\blacksquare\, \rm odd} (x, 2r ,R/4)\text{ occurs}\}
\] and the random variables
\begin{align*}
\mathbf N&:=|\{ x\in r\mathbb Z^2\cap \Lambda_R:A_4^{\blacksquare\, \rm odd}(x,2r,R/4)\text{ occurs}\}|,\\
\mathbf M&:=|\{j\le J:H_j \text{ occurs}\}|.\end{align*} 
The random variable $\mathbf N$ ``counts'' the number of places where $A_4^{\blacksquare\, \rm odd}(x,2r,R/4)$ occurs, while $\mathbf M$ is centred on the vertex 0 and counts the number of scales in which there is a vertex $y$ such that $A_4^{\blacksquare\, \rm odd}(y,2r,R/4)$ occurs. We also introduce $\mathbf M_x$ to be the translate of $\mathbf M$ by $x$.

Markov's inequality (like in the proof of Theorem~\ref{thm:crossing free}) implies that
\begin{align}
{\bf P}_{\mathbb Z^2,\mathbb Z^2}^{\emptyset,\emptyset}[\exists x\in \Lambda_R:A_4^{\blacksquare\, \rm odd}(x,r,R)]
&\le \ep{\bf E}_{\mathbb Z^2,\mathbb Z^2}^{\emptyset,\emptyset}[\mathbf N]+\sum_{x\in r\mathbb Z^2\cap \Lambda_R}{\bf P}_{\mathbb Z^2,\mathbb Z^2}^{\emptyset,\emptyset}[A_4^{\blacksquare\, \rm odd}(x,2r,R/2),\mathbf N\le \tfrac1\ep]\nonumber\\
&\le C\ep+\sum_{x\in r\mathbb Z^2\cap \Lambda_R}{\bf P}_{\mathbb Z^2,\mathbb Z^2}^{\emptyset,\emptyset}[A_4^{\blacksquare\, \rm odd}(x,2r,R/2),\mathbf M_x\le \tfrac1\ep]\nonumber\\
&\le C\ep+C(\tfrac R r)^2{\bf P}_{\mathbb Z^2,\mathbb Z^2}^{\emptyset,\emptyset}[A_4^{\blacksquare\, \rm odd}(2r,R/2),\mathbf M\le \tfrac1\ep],\label{eq:fund1}\end{align}
where in the second inequality we used Lemma~\ref{prop:open pivotal} for $1\le r\le R$ to bound the expectation of $\mathbf N$ and we used that $\mathbf M_x\le \mathbf N$, and in the last one we invoked the invariance under translation. 

It only remains to prove that for $R/r$ large enough (how large it must be depends on $\ep$),
\[
{\bf P}_{\mathbb Z^2,\mathbb Z^2}^{\emptyset,\emptyset}[A_4^{\blacksquare\, \rm odd}(2r,R/2),\mathbf M\le \tfrac1\ep]\le \ep(r/R)^2.
\]
In order to do that, we implement the following reasoning, sometimes referred to as a ``multimap principle'', or ``energy-entropy comparison'', that we first present in a generic context. Assume that one wishes to bound the probability of the event $\mathbf E$. Then, one may try to find two constants $\mathbf C,\mathbf K$, a set $\mathbf I$ and a family of events $(\mathbf E_\mathbf i)_{\mathbf i\in \mathbf I}$ included in an event $\mathbf F$ such that
\begin{itemize}[noitemsep,nolistsep]
\item[(i)] for every $\mathbf i\in \mathbf I$, $\mathbb P[\mathbf E]\le \mathbf C\mathbb P[\mathbf E_\mathbf i
]$;
\item[(ii)] the maximal number of $\mathbf i\in \mathbf I$ to which a given element of $\mathbf F$ can belong to is bounded by $\mathbf K$.
\end{itemize}
Then, we get the bound
\[
\mathbb P [\mathbf E]\le \frac{\mathbf C\mathbf K}{|\mathbf I|}\mathbb P [\mathbf F]
\]
from the chain of straightforward inequalities
\[
|\mathbf I|\mathbb P [\mathbf E]\le \sum_{\mathbf i\in \mathbf I}\mathbb P [\mathbf E]\le C\sum_{\mathbf i\in \mathbf I}\mathbb P[\mathbf E_{\mathbf i}]\le C\mathbb E[\#\{\mathbf i\in \mathbf I:\omega\in \mathbf E_{\mathbf i}\}\mathbb I_{\omega\in \mathbf F}]\le CK\mathbb P[\mathbf F].
\]

In our context, we will take 
\begin{align*}
\mathbf E&:=\{A_4^{\blacksquare\, \rm odd}(2r,R/2),\mathbf M\le \tfrac1\ep\},\\
\mathbf F&:=\{A_4^{\blacksquare\, \rm odd}(2r,R/2),\mathbf M\le \tfrac4\ep\},\\
\mathbf I&:=\{(j_1,\dots,j_k)\in\mathbb N^k:1<j_1<\dots<j_k<J\}\qquad\text{ with }k:=\lfloor1/\ep\rfloor,\\
\mathbf E_{\mathbf i}&:=\{A_4^{\blacksquare\, \rm odd}( 2r ,R/2),\mathbf M\le \tfrac4\ep,H_{j_1},\dots,H_{j_k}\}.
\end{align*}
By construction, one sees that (ii) occurs with
\[
\mathbf K:=\binom{\lfloor 4/\ep\rfloor}{k}
\] 
since for any $\omega$ with $\mathbf M\le 4/\ep$, there are at most $4/\ep$ indexes $j_i$ for which $H_{j_i}$ occurs.
Then, the following claim will be the equivalent of Property (i) (we state the estimate in a slightly more general context).

\paragraph{Claim 1} {\em Fix $\ep>0$ small enough. There exists $C_0=C_0(\ep)>0$ such that if we assume $(\mathcal H_\ep)$, then for every $\ell>0$ and every collection of integers $1<j_1<\dots<j_k<J$,}
\begin{equation}\label{eq:h5}
 {\bf P}_{\mathbb Z^2,\mathbb Z^2}^{\emptyset,\emptyset}[A_4^{\blacksquare\, \rm odd}( 2r ,R/2),\mathbf M\le \ell]\le C_0^k\,{\bf P}_{\mathbb Z^2,\mathbb Z^2}^{\emptyset,\emptyset}[A_4^{\blacksquare\, \rm odd}( 2r ,R/2),\mathbf M\le 3k+\ell,H_{j_1},\dots,H_{j_k}].
\end{equation}

By choosing $k=\ell$, the claim enables us to pick 
\[
\mathbf C:=C_0(\ep)^k.
\]
Overall, we deduce that for $r/R\le \eta$ small enough
\begin{align*}
 {\bf P}_{\mathbb Z^2,\mathbb Z^2}^{\emptyset,\emptyset}[A_4^{\blacksquare\, \rm odd}(2r,R/2),\mathbf M\le \tfrac1\ep]&\le \frac{C_0(\ep)^k\binom{\lfloor 4/\ep\rfloor}{k}}{\binom{J}{k}} {\bf P}_{\mathbb Z^2,\mathbb Z^2}^{\emptyset,\emptyset}[A_4^{\blacksquare\, \rm odd}(2r,R/2),\mathbf M\le \tfrac4\ep]\\
 &\le \frac{\ep}{16C} {\bf P}_{\mathbb Z^2,\mathbb Z^2}^{\emptyset,\emptyset}[A_4^{\blacksquare\, \rm odd}(2r,R/2)]\le \ep(r/R)^2,
\end{align*}
where the constant $C$ is defined in Lemma~\ref{prop:open pivotal}, the second inequality is due to the fact that $J\ge \lfloor \log_2(1/\eta)\rfloor-1$ and the last one to Lemma~\ref{prop:open pivotal}. This concludes the proof.
It only remains to prove Claim 1. The proof proceeds in a slightly similar way to proofs of arm-separation. We identify good scales at which clusters are well-separated (i.e.~at which ${\rm Sep}_\delta(r_j)$ and ${\rm Sep}_\delta(2r_j)$ occur). Then, we forget what happens outside of these scales to reconstruct four arms with the further requirement that $H_j$ occurs at the scales $j$ that we want.

\begin{proof}[Proof of Claim 1]
Fix $\ep>0$ small enough. We choose
\[
\delta:=\min\{\delta_1(\ep),\delta_0(\ep)\},
\] where $\delta_1(\ep)$ is given by Lemma~\ref{lem:claim 2} and $\delta_0(\ep)$ by Corollary~\ref{cor:sep}.
In the whole proof, fix  a collection of integers $1<j_1<\dots<j_k<J$ (recall that $J:=\lfloor \log_2(R/r)\rfloor-1$). 

Call an integer $j\in(0,J)$ {\em good} if ${\rm Sep}_\delta(r_j)$ and ${\rm Sep}_\delta(2r_j)$ occur. By definition, we decide that $0$ and $J$ are automatically good. We say that $j$ is {\em bad} when it is not good. Note that the choice of $\delta$ implies that the probability of being bad is smaller than $2\ep$ by Corollary~\ref{cor:sep}.

Decomposing on the first good integers strictly above and below each $j_i$ (they may be the same for different $j_i$), we get that
\begin{align}{\bf P}_{\mathbb Z^2,\mathbb Z^2}^{\emptyset,\emptyset}&[A_4^{\blacksquare\, \rm odd}( 2r ,R/2),\mathbf M\le\ell]\label{eq:hi0}\le \sum_{\vec j^\pm}\underbrace{{\bf P}_{\mathbb Z^2,\mathbb Z^2}^{\emptyset,\emptyset}[A_4^{\blacksquare\,  \rm odd}( 2r ,R/2),\mathbf M\le \ell,E_1(\vec j^\pm)]}_{(U_1)},\nonumber\end{align}
where the sum runs over the set of $\vec j^\pm:=(j_v^\pm:1\le v\le u)$ such that
\begin{itemize}[noitemsep]
\item
$j_1^-< j_1^+\le j_2^-< j_2^+\le \dots \le j_u^-< j_u^+$, 
\item for every $1\le v\le u$, there is no $i\in[1,k]$ such that $j_v^+<j_i<j_{v+1}^-$,
\item there is no $i\in[1,k]$ with $j_i<j_1^-$ or $j_i>j_u^+$,
\end{itemize}
and 
\[
E_1(\vec j^\pm):=\bigcap_{1\le v\le u} \{j_v^{\pm}\text{ is good},j\text{ is bad for every }j_v^-<j<j_v^+\}.
\]
More generally, for $1\le u'\le u$, set
\[
E_{u'}(\vec j^\pm):=\bigcap_{u'\le v\le u} \{j_v^{\pm}\text{ is good},j\text{ is bad for every }j_v^-<j<j_v^+\}.
\]
We will now bound $(U_1)$. Set $n:=r_{j_1^-}$ and $N:=r_{j_1^+}$. Also, let $s_1$ be the number of $i\in[1,k]$ such that $j_1^-\le j_i\le j_1^+$. 

Consider the event $F$ (see Fig.~\ref{fig:reconnect} for an illustration) that 
\begin{itemize}
\item $A_4^{\blacksquare\,  \rm odd}(2r,n)$ and $A_4^{\blacksquare\,  \rm odd}(2N ,R/2)$ occur,
\item the number of $j\ge j_1^+$ or $j\le j_1^-$ such that $H_j$ occurs is smaller than $\ell$, 
\item there exist vertices $x_1,y_1,x_2,y_2,x_3,y_3,x_4,y_4$  found in a counterclockwise order around $\partial\Lambda_n$ at a distance at least $\delta n$ of each other, such that each $\mathrm{Ann}(2r,n)$-cluster  crossing $\mathrm{Ann}(2r,n)$ intersects exactly one of the $(x_iy_i)$, and if $T_i^0$ denotes the union of these clusters intersecting $(x_iy_i)$, then $T_i^0$ has odd (resp.~even) $\n_1-$ and $\n_2-$fluxes for $i=1,3$ (resp.~$i=2,4$).
\item there exist vertices $x_1',y_1',x_2',y_2',x_3',y_3',x_4',y_4'$  found in a counterclockwise order around  $\partial\Lambda_{2N}$ at a distance at least $\delta N$ of each other, such that each cluster in $\mathrm{Ann}(2N,R/2)$ crossing $\mathrm{Ann}(2N,R/2)$ intersects exactly one of the $(x_i'y_i')$, and if $T_i^{s_1+1}$ denotes the union of these clusters intersecting $(x_iy_i)$, then $T_i^{s_1+1}$ has odd (resp.~even) $\n_1-$ and $\n_2-$fluxes for $i=1,3$ (resp.~$i=2,4$).
\item $E_2(\vec j^\pm)$ occurs.
\end{itemize}
Note that the event $F$ depends only on the state of edges inside $\Lambda_n$ and outside of $\Lambda_{2N}$. Using the definition of good integers, the mixing property \eqref{eq:mixing} and Corollary~\ref{cor:sep}, we get that\begin{align}\label{eq:hi1}
(U_1)\le C_{\rm mix}^{\lfloor (j_1^+-j_1^--1)/3\rfloor+1}\,(2\ep)
^{\lfloor (j_1^+-j_1^--1)/3\rfloor}\, {\bf P}_{\mathbb Z^2,\mathbb Z^2}^{\emptyset,\emptyset}[F].\end{align}
Now, define $F'_j$ to be the translate by the vector $z_j:=(\tfrac32r_j,0)$ of the event $F_\delta(r_{j-3})$ (see the definition just above Lemma~\ref{lem:claim 2}). Notice that $F_j'$ depends on the edges in the box $\Lambda_{r_j/4}(z_j)$ only. One may therefore use the mixing property \eqref{eq:mixing} ($s_1$ times) and $(\mathcal H_\ep)$  to get 
\[
[c_{\rm mix}c_{\rm exists}(\ep)]^{s_1}{\bf P}_{\mathbb Z^2,\mathbb Z^2}^{\emptyset,\emptyset}[F]
\le {\bf P}_{\mathbb Z^2,\mathbb Z^2}^{\emptyset,\emptyset}[F_{j_1}',\dots,F_{j_{s_1}}',F].
\]
Now, condition on the set of edges
\begin{itemize}[noitemsep]
\item in $\Lambda_n$ that are connected to $\Lambda_{n/2}$ in $\n_1+\n_2$;
\item outside $\Lambda_{2N}$ that are connected to $\mathbb Z^2\setminus\Lambda_{4N}$ in $\n_1+\n_2$;
\item in $\Lambda_{r_j/4}(z_j)$ that are connected to $\Lambda_{r_j/8}(z_j)$ for every $1\le j\le s_1$.
\end{itemize}
The definition of the events $F$ and $F'_j$ guarantees the existence, on $F\cap F'_{j_1}\cap\dots\cap F_{j_{s_1}}'$, of 
\begin{itemize}[noitemsep]
\item $(T_1^0,T_2^0,T_3^0,T_4^0)$ as above;
\item $(T_1^{s_1+1},T_2^{s_1+1},T_3^{s_1+1},T_4^{s_1+1})$ as above;
\item for every $1\le j\le s_1$, vertices $x_1^j,y_1^j,x_2^j,y_2^j,x_3^j,y_3^j,x_4^j,y_4^j$ found in a counterclockwise order around $\partial\Lambda_{r_j/4}(z_j)$ at a distance at least $\delta r_j/4$ of each other, such that each $\mathrm{Ann}(z_j,r_j/8,r_j/4)$-cluster  crossing $\mathrm{Ann}(z_j,r_j/8,r_j/4)$ intersects exactly one of the $(x_i^jy_i^j)$, and if $T_i^j$ denotes the union of these clusters intersecting $(x_i^jy_i^j)$, then $T_i^j$ has odd (resp.~even) $\n_1-$ and $\n_2-$fluxes for $i=1,3$ (resp.~$i=2,4$).
\end{itemize}
We may now use successive applications of Corollary~\ref{cor:upper bound crossing close 1} (this construction is tedious but fairly straightforward, see the caption of Fig.~\ref{fig:reconnect} for some details) to guarantee that with probability bounded from below by $C(\delta)^{-(s_1+2)} C_0^{-(j_1^+-j_1^-)}$, where $C_0>0$ is independent of everything else,
\begin{itemize}[noitemsep]
\item the only connections between some $T_i^j$ are from $T_1^j$ to $T_1^{j+1}$ and from $T_3^j$ to $T_3^{j+1}$ for some $0\le j\le s_1$,
\item the only possible places where $H_j$ occurs are $j\in\{j_1,\dots,j_{s_1}\}\cup\{j_1^-,j_1^+\}$.
\end{itemize}
We now use the crucial fact that we are working with $A_4^{\blacksquare \,\rm odd}$: the source parity on each $T_1^j$ and $T_3^j$ guarantees that in our case $T_1^j$ is connected to $T_1^{j+1}$ and $T_3^j$ to $T_3^{j+1}$ for every $0\le j\le s_1$. 
 We deduce  that the events $H_{j_i}$ occur for $1\le i\le s_1$  (we used that the constants for $A_4^\square$ around $x$ and $y$ are respectively taken to be $R/2$ and $R/4$). Overall, we get from the whole construction that
\begin{align*}
&{\bf P}_{\mathbb Z^2,\mathbb Z^2}^{\emptyset,\emptyset}[F_{j_1}',\dots,F_{j_{s_1}}',F]
\\
&\le C(\delta)^{s_1+2} C_0^{j_1^+-j_1^-}\underbrace{{\bf P}_{\mathbb Z^2,\mathbb Z^2}^{\emptyset,\emptyset}[A_4^{\blacksquare\,\rm odd}(2r,R/2),E_2(\vec j^\pm),\mathbf M\le s_1+2+\ell,H_{j_1},\dots,H_{j_{s_1}}]}_{(U_2)}.
\end{align*}
The crucial observation here is that $C_0$ is independent of everything, including $\delta$. 

We now set
\[
c(\ep):=C_0^3C_{\rm mix}2\ep\qquad\text{and}\qquad C(\ep):=\frac{C(\delta)^3}{c_{\rm mix}c_{\rm exists}(\ep)}.
\]
The three previous displayed equations lead to
\[
(U_1)
\le c(\ep)^{\lfloor (j_1^+-j_1^--1)/3\rfloor} C(\ep)^{s_1} (U_2).
\]
One may proceed similarly for $(U_2)$, $(U_3)$, etc and by induction obtain that 
\[
(U_1)
\le c(\ep)^{\sum \lfloor (j_i^+-j_i^--1)/3\rfloor} C(\ep)^k{\bf P}_{\mathbb Z^2,\mathbb Z^2}^{\emptyset,\emptyset}[A_4^{\blacksquare\, \rm odd}( 2r ,R/2),\mathbf M\le 3k+\ell,H_{j_1},\dots,H_{j_k}].
\]
It only remains to consider $\ep$ sufficiently small that $c(\ep)\le1/2$ and to sum over all possible values for $\vec j^\pm$ to get the result.
\end{proof}
All of this concludes the proof of our theorem.\end{proof}

\begin{figure}
		\begin{center}
			\includegraphics[scale=0.6]{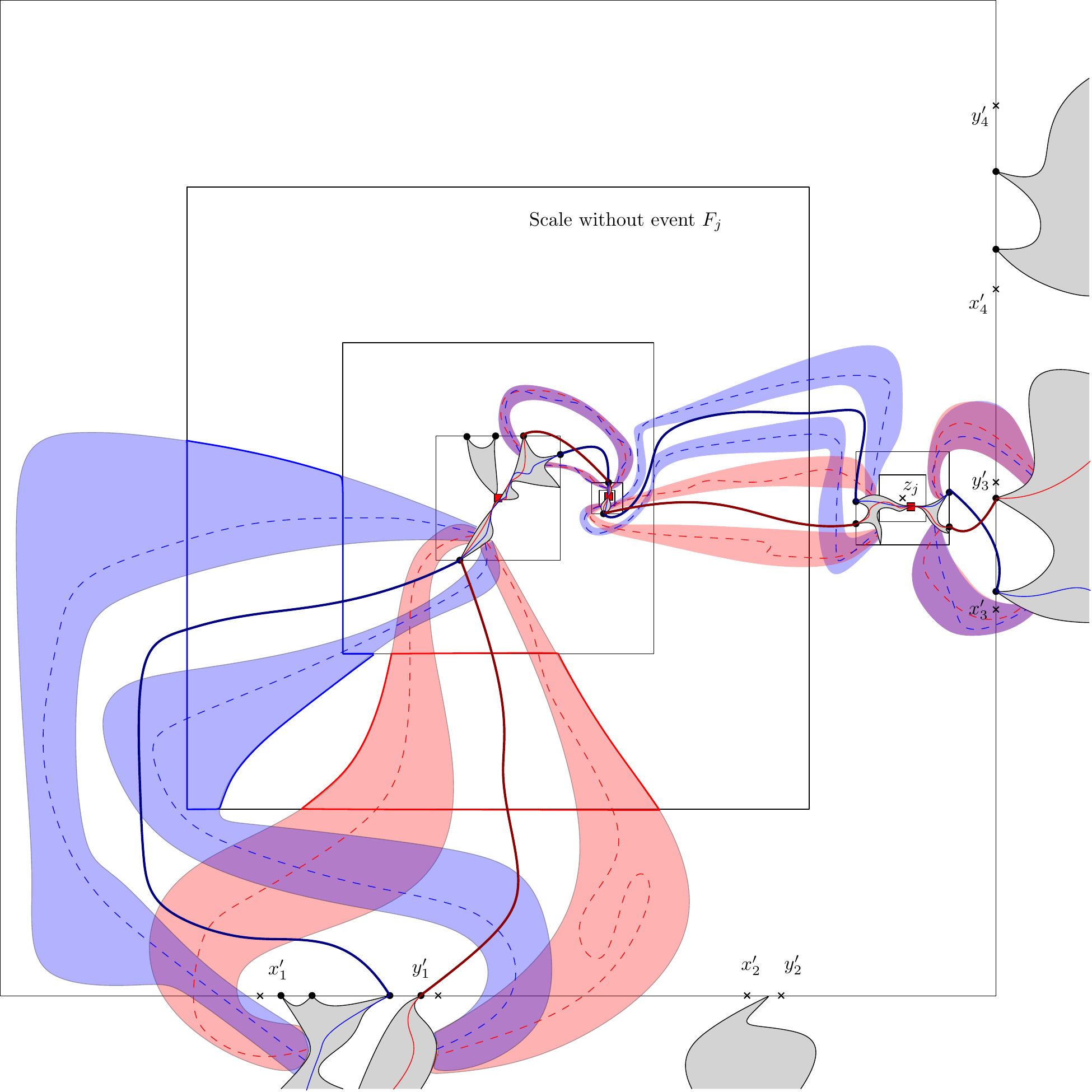}  
		\end{center}
		\caption{In blue, odd paths of $\n_1$, and in red, odd paths of $\n_2$. The sources of $\n_1$ and $\n_2$ are depicted with bullets. One enforces that none of the blue quads is crossed transversally by a path of $\n_1>0$, and similarly none of the red ones by a path of $\n_2>0$ (we depicted the absence of paths by dashed paths of the corresponding color). Note that some quads appear purple as they are both red and blue. The pictures for $\n_1$ and $\n_2$ are completely independent. Since one may find quads that are well-separated, one can prove that the cost of enforcing no crossing is of order constant to the number of scales. The non-existence of crossings forces the sources to be connected (for instance in the scale without event $F_j$ in the picture, there is a blue path going from the inner side of the blue quad to the outer side of it, and similarly for the red one); we depicted these paths in bold. To ensure that there is no other scale where there could be a four-arm event, we choose the quads in such a way that they force the existence of paths between sources in $\n_1$ and $\n_2$ that are macroscopically far from each other, like in the scale without event $F_j$. As a consequence of this choice, there cannot be any box in these scales that is intersecting both the path forced by the sources of the first current, and the one forced by the sources of the second current. Note that the scale without event $F_j$ could encompass a number of diadic scales.
		}	\label{fig:reconnect}
\end{figure}

\section{Proof of Theorem~\ref{prop:tight number crossings}}\label{sec:tightness}

In this section, we prove an Aizenman-Burchard \cite{AizBur99} criterion for the double random current model.

Roughly speaking, the proof of the theorem will consist in conditioning on ${\rm Ann}(r,R)$-clusters crossing ${\rm Ann}(r,R)$ and to show that conditioned on having $k$ such clusters, having an additional one  crossing ${\rm Ann}(r,R)$  costs at least $(r/R)^c$ using crossing estimates for the double random current. The problem with this strategy is that ${\rm Ann}(r,R)$-clusters  crossing ${\rm Ann}(r,R)$  may create sources on $\partial\Lambda_r$ and $\partial\Lambda_R$ that could force the existence of additional ${\rm Ann}(r,R)$-clusters (imagine for instance that after conditioning on the first ${\rm Ann}(r,R)$-cluster  crossing ${\rm Ann}(r,R)$, $\n_1$ or $\n_2$ has an odd number of sources on $\partial\Lambda_r$ and therefore also on $\partial\Lambda_R$). To go around this difficulty, we will first bound the probability of having many clusters of odd current in each $\n_i$ crossing ${\rm Ann}(r,s)$ and ${\rm Ann}(S,R)$ with $s$ and $S$ two intermediate integers that are chosen in such a way that the ratio $R/S$, $S/s$ and $s/r$ are roughly the same. The bound on the probability of having a certain number of such crossing clusters will be based on the interpretation of the odd part of a current as the low-temperature of a critical Ising model on the dual graph, and will not rely on crossing estimates for the double random current. Once it is proved that there are not too many crossing clusters of odd current with good probability, one may explore all the connected components of odd currents intersecting $\partial\Lambda_r$ and $\partial\Lambda_R$. Then, ${\rm Ann}(r,R)$-clusters crossing ${\rm Ann}(r,R)$ are of two types: either they intersect one of the clusters of odd current crossing ${\rm Ann}(r,s)$ or ${\rm Ann}(S,R)$, or if they do not they must contain a crossing of ${\rm Ann}(s,S)$ in the complement of what was explored, which in this case will necessarily be exempt of sources.  

We start with a lemma. Consider the set $\eta=\eta(\n)$ of odd edges of $\n$ and the event $B_{2k}(r,R)$ that there exist $k$ disjoint $\mathrm{Ann}(r,R)$-clusters in $\eta$ crossing ${\rm Ann}(r,R)$. 
\begin{lemma}[Aizenman-Burchard criterion for the odd part of one current]\label{lem:AB odd current}
There exist $C_0,\lambda_0>0$ such that for every $k\ge0$, every $\Omega$, and every $r,R$ with $1\le r\le R$,
\begin{align}\label{eq:ex}
{\bf P}_{\Omega}^{\emptyset}[B_{2k}(r,R)]\le (C_0\tfrac rR)^{k\lambda_0}.
\end{align}
\end{lemma}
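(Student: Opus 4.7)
The plan is to use Kramers--Wannier duality (Proposition~\ref{prop:coupling Kramers Wannier}) to reinterpret $\eta(\n)$ as the set of interface edges of a critical Ising model on the dual graph, and then bound the desired $k$-arm event by a corresponding multi-arm event in the associated random cluster model that can be controlled via iterated RSW. Precisely, since $\partial\n = \emptyset$, Proposition~\ref{prop:coupling Kramers Wannier} with $B = \emptyset$ yields that $\eta(\n)$ has the law of the disagreement set $\{e : \sigma_{u(e)}\neq \sigma_{v(e)}\}$ of a critical Ising spin configuration $\sigma$ on $\Omega^*$ with pure (say $+$) boundary conditions, since with $B = \emptyset$ no forced sign changes are prescribed on $\partial\Omega^*$.

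I then claim that the event $B_{2k}(r,R)$ implies the existence of at least $\lceil k/2\rceil$ disjoint monochromatic crossings of the dual annulus $\mathrm{Ann}(r,R)^*$ by $\sigma$, all of the same sign. Indeed, each $\mathrm{Ann}(r,R)$-cluster of $\eta$ crossing the primal annulus forms a wall of disagreement edges in the dual separating a $+$ region from a $-$ region and joining the two sides of $\mathrm{Ann}(r,R)^*$. With $k$ such disjoint walls, a planar topology argument shows that $\mathrm{Ann}(r,R)^*$ is partitioned into $k$ angular sectors whose spins alternate between $+$ and $-$, so by pigeonhole at least $\lceil k/2\rceil$ of these sectors carry the same sign, providing the asserted disjoint monochromatic crossings of the dual annulus.

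Next, by the Edwards--Sokal coupling (Proposition~\ref{prop:coupling ES}) applied on the dual graph, the configuration $\sigma$ is obtained from a random cluster configuration $\omega^*$ on $\Omega^*$ (at $q=2$, with boundary conditions encoding the $+$ BC) by assigning to each FK cluster an independent uniform $\pm$ label. Since two disjoint monochromatic regions of the same sign must belong to distinct FK clusters, having $\lceil k/2\rceil$ disjoint same-sign crossings of $\mathrm{Ann}(r,R)^*$ in $\sigma$ forces $\omega^*$ to contain at least $\lceil k/2\rceil$ disjoint FK crossings of $\mathrm{Ann}(r,R)^*$. Finally, by iterating Theorem~\ref{thm:RSW} together with FKG and monotonicity in boundary conditions, the probability that an FK configuration contains $m$ disjoint crossings of an annulus of aspect ratio $R/r$ decays at least like $(C\, r/R)^{\lambda m}$ for universal $C,\lambda>0$: for instance, inserting $m-1$ disjoint dual crossings between the $m$ disjoint primal ones yields $2m-1$ alternating arms, and iterated RSW over nested sub-annuli of aspect ratio $2$ gives the polynomial decay with an exponent linear in $m$. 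Setting $\lambda_0 := \lambda/2$ and absorbing the pigeonhole factor into $C_0$ produces the desired inequality.

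The main technical obstacle lies in the topological step of the second paragraph: primal-vertex connectivity of $\eta$-edges is not the same as dual-vertex connectivity of the disagreement edges $\eta^*$, so the informal identification of ``$\eta$-clusters'' with ``dual walls'' has to be carried out with care — for instance by working on the medial lattice, where Ising interfaces are honest non-crossing loops and each $\eta$-cluster crossing $\mathrm{Ann}(r,R)$ determines an unambiguous interface arc separating two dual spin domains of $\mathrm{Ann}(r,R)^*$.
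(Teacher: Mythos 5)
Your proposal takes a genuinely different route from the paper's, but it has a real gap at the step from spins to the FK model. In the Edwards--Sokal coupling, a monochromatic region of $\sigma$ is a union of FK clusters that all happened to receive the same sign; such a region can consist of many tiny FK clusters without containing any single FK cluster that crosses the annulus. So ``$\lceil k/2\rceil$ disjoint same-sign crossings of $\mathrm{Ann}(r,R)^*$ in $\sigma$'' does \emph{not} force $\lceil k/2\rceil$ disjoint FK crossings of $\mathrm{Ann}(r,R)^*$. The auxiliary claim that ``two disjoint monochromatic regions of the same sign must belong to distinct FK clusters'' is also false (two disjoint $+$ sectors of the annulus may be joined by an FK path running outside the annulus), and in any case being in distinct FK clusters does not produce FK \emph{crossings}. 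At criticality spin-cluster crossings are much more probable than FK-cluster crossings, so the implication you need at this step goes in the wrong direction, and the medial-lattice care you flag in your last paragraph cannot rescue it.

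For comparison, the paper avoids multi-arm estimates entirely: it orders and explores the crossing $\eta$-clusters one at a time along $\partial\Lambda_R$, and after conditioning on the first $k$ of them, notes via Kramers--Wannier that the unexplored dual domain has monochromatic boundary conditions on its two remaining arcs; the conditional probability of a further $\eta$-crossing is then bounded by the probability of the absence of a monochromatic circuit in a dual Ising annulus with plus boundary conditions, a single-scale estimate obtained from RSW and Edwards--Sokal. Iterating this one step produces the $k$-th power. If you prefer a direct multi-arm route, the useful observation is the deterministic inclusion $\eta\subset(\omega^*)^*$, i.e.\ disagreement edges are dual-FK-open: this converts $k$ disjoint $\eta$-crossings of the primal annulus into $k$ disjoint open crossings of a critical FK percolation and makes your iterated-RSW argument applicable, while bypassing the pigeonhole/spin-cluster detour altogether.
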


\begin{proof}
For the purpose of the proof, let $B_0(r,R)$ be the full even
. The claim follows from the bound, for every $k\ge0$,
\begin{align}\label{eq:ex}
{\bf P}_{\Omega}^{\emptyset}[B_{2k+4}(r,R)|B_{2k}(r,R)]\le C_0(\tfrac rR)^{\lambda_0}.
\end{align}

In order to prove \eqref{eq:ex}, we work with the Ising model on $\Omega^*$.
Recall that, by Kramers-Wannier's duality (Proposition~\ref{prop:coupling Kramers Wannier}), $\eta$ can be seen as the low-temperature expansion of this Ising model. Below, spins refer to Ising spins of the Ising model on the dual graph. 

Order the vertices on $\partial\Lambda_R$ in counterclockwise order starting from $(R,0)$ and index the clusters in $\eta$ intersecting $\partial\Lambda_R$ according to the smallest vertex of $\partial\Lambda_R$ it contains. Condition on the first $k$ ${\rm Ann}(r,R)$-clusters crossing ${\rm Ann}(r,R)$ and let $\pmb\Omega$ be the set of unexplored vertices. In the dual Ising model, the boundary conditions on $\pmb\Omega^*$ are monochromatic on the two arcs of $\partial\pmb\Omega^*$ strictly inside ${\rm Ann}(r,R)$. We are now facing two possibilities:

\begin{itemize} 
\item If the arcs receive the same spin, say minus, then by comparison between boundary conditions for the Ising model, the probability that there is a path of minuses between these two arcs, and therefore no additional cluster of $\eta$ crossing ${\rm Ann}(r,R)$ from inside to outside, is bounded from below by the probability that there exists a circuit of minuses surrounding the origin in the Ising model on ${\rm Ann}(r,R)^*$ with plus boundary conditions. We saw in the footnote preceding \eqref{eq:uhug} that this probability is bounded by $1-C_0(\tfrac rR)^{\lambda_0}$.

\item If the two arcs receive different spins, say minus and plus, then explore the cluster of minuses connected to the minus arc. The boundary of this cluster is an additional path in $\eta$ crossing ${\rm Ann}(r,R)$. Yet, the exterior boundary of this cluster is a star-connected path of pluses in the Ising model and one can apply the same argument as in the previous item to prove the existence of a path of pluses connecting this arc to the plus arc, thus proving that there is no additional crossing of ${\rm Ann}(r,R)$ in $\eta$. Overall, we deduce that the probability that there are two disjoint clusters in $\eta$ crossing ${\rm Ann}(r,R)$ is bounded by $C_0(\tfrac rR)^{\lambda_0}$ again.
\end{itemize}
Overall, this gives \eqref{eq:ex} for $k\ge2$. For $k=1$ one can use the estimate in the footnote preceding \eqref{eq:uhug} directly.
\end{proof}

\begin{proof}[Proof of Theorem~\ref{prop:tight number crossings}]
Fix two intermediary integers $s,S$ satisfying that $R/S$, $S/s$, and $s/r$ are all larger than $\lfloor (R/r)^{1/3}\rfloor $. We recommend to take a look at Fig.~\ref{fig:karmevent}.
Assume that both $\n_1$ and $\n_2$ do not belong to $B_{2k}(r,s)\cup B_{2k}(S,R)$. 
Let $\mathsf C_i=\mathsf C_i(\eta_i)$ be the union of the ${\rm Ann}(r,R)$-clusters in $\eta_i$ crossing $\mathrm{Ann}(r,s)$ or $\mathrm{Ann}(S,R)$. Also, let $\mathsf C=\mathsf C(\n_1+\n_2)$ be the union of all the $\mathrm{Ann}(r,R)$-clusters in $\n_1+\n_2$ intersecting $\mathsf C_1\cup\mathsf C_2$. By definition, the sets $\mathsf C_1$ and $\mathsf C_2$ each contain at most $k$ clusters crossing $\mathrm{Ann}(r,s)$ and $k$ clusters crossing $\mathrm{Ann}(S,R)$ and therefore $\mathsf C$ contains at most $4k$ disjoint clusters in $\n_1+\n_2$ crossing $\mathrm{Ann}(r,R)$. 

Condition on all the connected components in $\eta_1$ and $\eta_2$ intersecting $\partial\mathrm{Ann}(r,R)$, and then on the $\mathrm{Ann}(r,R)$-clusters of $\n_1+\n_2$ intersecting $\mathsf C_1\cup\mathsf C_2$. Let $E_i$ be the set of vertices of $\Omega$ that are not connected to $\partial\mathrm{Ann}(r,R)$ in $\eta_i$ and do not belong to $\mathsf C(\n_1+\n_2)$ (note that $E_1$ and $E_2$ are not necessarily coinciding, but that their intersections with ${\rm Ann}(s,S)$ are). The currents $\n'_i$ on $E_i$ are sourceless currents (as the conditioning on the edges incident to a vertex in $E_i$ and one outside is imposing that the current $\n_i$ is either 0 or even -- when it is incident to $\partial\mathrm{Ann}(r,R)$ for instance, see Fig.~\ref{fig:karmevent}).
For the conditioned measure, the probability that there exist $k$ crossing  $\mathrm{Ann}(r,R)$-clusters in $\n_1+\n_2$ which are not in $\mathsf C$ is bounded by the $\mathbf P^{\emptyset,\emptyset}_{E_1,E_2}$-probability that there are $k$ clusters in $\n_1'+\n_2'$ that are crossing $\mathrm{Ann}(s,S)$. 

Consider the event $F_\ell$ that there are $\ell$ clusters of $\n_1'+\n'_2$ crossing $\mathrm{Ann}(s,S)$. Conditioning on $F_\ell$ and exploring the clusters of $\n_1'+\n_2'$ intersecting $\partial\Lambda_S$ one-by-one by going counterclockwise around $\partial \Lambda_S$, we deduce that
\[
\mathbf P^{\emptyset,\emptyset}_{E_1,E_2}[F_{\ell+1}|F_\ell]\le \mathbf E^{\emptyset,\emptyset}_{E_1,E_2}\big[\mathbf P^{\emptyset,\emptyset}_{\mathbf E_1(\ell),\mathbf E_2(\ell)}[\partial\Lambda_s\stackrel{\n_1+\n_2}\longleftrightarrow\partial\Lambda_S]~\big|~F_\ell\big],
\]
where $\mathbf E_i(\ell)$ is obtained from $E_i$  by removing the edges with at least one endpoint in the first $\ell\ge1$ clusters. Since $\mathbf E_1(\ell)$ and $\mathbf E_2(\ell)$ coincide on ${\rm Ann}(s,S)$, Lemma~\ref{lem:monotonicity} and Corollary~\ref{cor:upper bound crossing}  gives that a.s.
\[
\mathbf P^{\emptyset,\emptyset}_{\mathbf E_1(\ell),\mathbf E_2(\ell)}[\partial\Lambda_s\stackrel{\n_1+\n_2}\longleftrightarrow\partial\Lambda_S]\le (C_1\tfrac rR)^{\lambda_1}
\] which when averaged over $F_\ell$ gives that for every $\ell\ge1$,
\[
\mathbf P^{\emptyset,\emptyset}_{E_1,E_2}[F_{\ell+1}|F_\ell]\le(C_1\tfrac rR)^{\lambda_1}.
\]
We conclude that
\[
\mathbf P^{\emptyset,\emptyset}_{E_1,E_2}[F_{k}]\le (C_1\tfrac rR)^{k\lambda_1}.
\]

In conclusion, since any crossing is either part of the $\mathrm{Ann}(r,R)$-clusters of $\n_1+\n_2$ intersecting $\mathsf C_1\cup\mathsf C_2$, or contains a crossing of $\n'_1+\n'_2$ (since they need to connect the green parts in Fig.~\ref{fig:karmevent}), we get that
\begin{align}
{\bf P}_{\Omega,\Omega
}^{\emptyset,\emptyset}[A_{5k}(r,R)]\le (C_0\tfrac rR)^{k\lambda_0}+(C_1\tfrac rR)^{k\lambda_1}.\end{align}
The result follows readily for $5k$ instead of $2k$. 

If one wants the result for $k\le 2$, simply use that there must be a path in $\n_1+\n_2$ from $\partial\Lambda_r$ to $\partial\Lambda_R$. Lemma~\ref{lem:monotonicity} and Corollary~\ref{cor:upper bound crossing} directly imply that the probability is bounded by $C_0(\tfrac rR)^{\lambda_0}$ in this case.
\end{proof}

	\begin{figure}[t] 
		\begin{center}
			\includegraphics[scale=1]{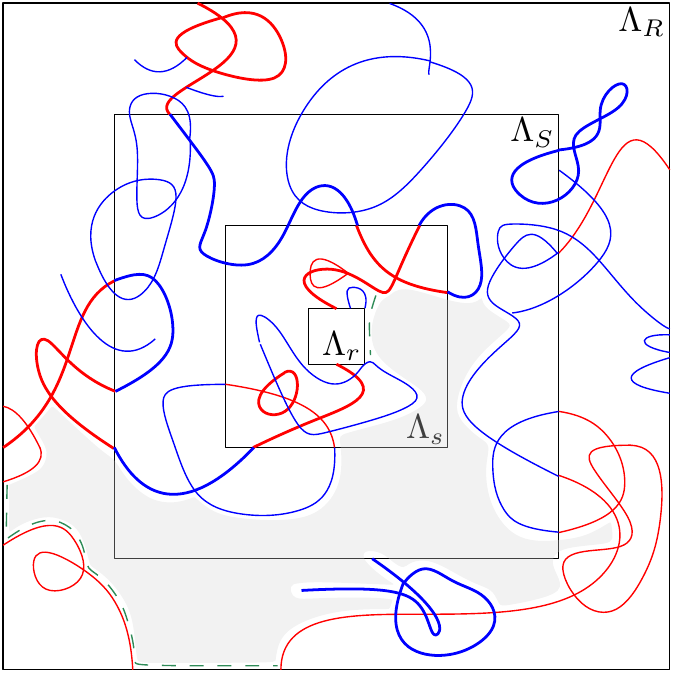}  
		\end{center}
		\caption{The depiction of the clusters of $\eta_1$ and $\eta_2$ intersecting $\partial\mathrm{Ann}(r,R)$, as well as the cluster in $\n_1+\n_2$ of the clusters of $\eta_1$ and $\eta_2$ that cross $\mathrm{Ann}(r,s)$ or $\mathrm{Ann}(S,R)$. To distinguish between $\n_1$ and $\n_2$, we depicted the former in bold. In light grey,  one connected component of the set $E_1$. The red corresponds to the events $B_{2k}(r,s)\cup B_{2k}(S,R)$ and the blue to the rest of the currents. Note that the edges in dashed green are not necessarily equal to 0 in $\n_1$, but that necessarily $\n_1$ is even. In particular, they do not impact the current $\n_1'$  in the grey area, and the current can therefore be considered as a sourceless current. The same is true of $E_2$ and $\n'_2$. Then, $\n'_1+\n_2'$ should still contain crossings of $\mathrm{Ann}(s,S)$ to add new potential $\mathrm{Ann}(r,R)$-clusters crossing $\mathrm{Ann}(r,R)$, as the part of the boundary of $E_1\cap \mathrm{Ann}(r,R)=E_2\cap\mathrm{Ann}(r,R)$ that is strictly inside $\mathrm{Ann}(s,S)$ is made of zero currents.}	\label{fig:karmevent}
\end{figure}

We conclude this paper by listing a straightforward yet important consequence of Theorem~\ref{prop:tight number crossings}.

\begin{corollary}[Tightness of the number of clusters crossing a rectangle]\label{tight rectangles}
There exist $c,C>0$ such that for every $k\ge C$, every $2R\times R$ rectangle $D$, and every domain $\Omega$ (not necessarily containing $D$),
\[
{\bf P}_{\Omega,\Omega}^{\emptyset,\emptyset}[\exists k\text{ $D$-clusters crossing $D$}]\le (1-c)^{k},
\]
\end{corollary}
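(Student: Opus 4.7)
The strategy is to convert the event $\{X\ge k\}$ of having at least $k$ disjoint crossing $D$-clusters into a many-arm event in a single annulus, and then to invoke Theorem~\ref{prop:tight number crossings}. The refinement I would rely on is that the proof of that theorem actually establishes, for absolute constants $C,c>0$,
\[
\mathbf P^{\emptyset,\emptyset}_{\Omega,\Omega}[A_{2m}(r,R)]\le (C\,r/R)^{cm}
\qquad\text{for every }m\ge 1,\ 1\le r\le R;
\]
this linear-in-$m$ rate is visible from the bound $\mathbf P[A_{5k}(r,R)]\le (C_0 r/R)^{k\lambda_0}+(C_1 r/R)^{k\lambda_1}$ that appears in that proof, and monotonicity $A_{2m}\subset A_{2m'}$ for $m\ge m'$ lets one extend it to all integers $m$.

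By translation I may assume $D=[-R,R]\times[-R/2,R/2]$; if $R$ is bounded above by an absolute constant then at most $O(R)$ disjoint crossings fit, so enlarging the constant $C$ in the statement makes the inequality vacuous, and I may assume $R$ large. Every $D$-crossing cluster in $\n_1+\n_2$ visits the midline $\{0\}\times[-R/2,R/2]$; picking one touch $(0,y_i)$ per cluster yields $k$ distinct points. Fix an absolute integer $N$ and partition $[-R/2,R/2]$ into $N$ equal sub-intervals; pigeonhole produces an interval $I$ of length $R/N$ carrying at least $\lceil k/N\rceil$ of these touches. Set $x^*=(0,y^*)$ for $y^*$ the midpoint of $I$, and $r:=\lceil R/(2N)\rceil$, $R':=\lfloor R/4\rfloor$. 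Each of the $\lceil k/N\rceil$ corresponding clusters $C_i$ has its touch in $\Lambda_r(x^*)$, and because it reaches both short sides of $D$, it contains a vertex at distance at least $R>R'$ from $x^*$, hence exits $\Lambda_{R'}(x^*)$. A routine lattice-connectivity argument extracts from each $C_i$ a connected sub-path inside ${\rm Ann}(x^*,r,R')$ joining $\partial\Lambda_r(x^*)$ to $\partial\Lambda_{R'}(x^*)$; since the clusters $C_i$ are pairwise vertex-disjoint in $D$, these sub-paths belong to pairwise distinct crossing ${\rm Ann}(x^*,r,R')$-clusters. Consequently $\{X\ge k\}\subset \bigcup_{y^*}A_{2\lceil k/N\rceil}(x^*,r,R')$, where $y^*$ ranges over the $N$ candidate midpoints.

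Applying the sharper estimate with $m=\lceil k/N\rceil$ and $r/R'\le 2/N$ gives $\mathbf P[A_{2m}(x^*,r,R')]\le (2C/N)^{c k/N}$. Choosing $N$ absolute with $2C/N\le 1/2$, a union bound over the $N$ possible locations yields
\[
\mathbf P^{\emptyset,\emptyset}_{\Omega,\Omega}[X\ge k]\le N\cdot 2^{-ck/N},
\]
which is bounded by $(1-c_0)^k$ for an absolute $c_0>0$ once $k$ exceeds an absolute constant. The main delicate point of the argument is extracting the linear-in-$m$ decay rate from the proof of Theorem~\ref{prop:tight number crossings}; the pigeonhole step and the topological argument producing distinct annulus-crossings from the clusters $C_i$ are both routine.
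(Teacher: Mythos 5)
Your approach matches the paper's: pigeonhole the $k$ crossings into a bounded family of concentric annuli (the paper covers $D$ with boxes, you use midline intervals) and then invoke exponential-in-$k$ decay of the many-arm event. You are correct that the published statement of Theorem~\ref{prop:tight number crossings} (which only asserts $\lambda_k\to\infty$) does not suffice and that the linear-in-$m$ exponent must be read off the bound $(C_0\,r/R)^{m\lambda_0}+(C_1\,r/R)^{m\lambda_1}$ inside its proof; the paper's own one-line wrap-up (``choosing $k$ large enough that $\lambda k>2C$'') relies on exactly this. Your outer radius $R'=\lfloor R/4\rfloor$ is also cleaner than the paper's $\mathrm{Ann}(x_i,r,R)$, for which $\Lambda_R(x_i)$ may contain all of $D$ when $x_i$ is central, so a $D$-crossing need not cross that annulus at all.

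The one step that needs more than you give it is ``since the clusters $C_i$ are pairwise vertex-disjoint in $D$, these sub-paths belong to pairwise distinct crossing $\mathrm{Ann}(x^*,r,R')$-clusters.'' This does not follow from disjointness in $D$ alone: $\Lambda_{R'}(x^*)$ sticks out above and/or below the rectangle, so $\mathrm{Ann}(x^*,r,R')\setminus D$ is nonempty, and two sub-paths $P_i,P_j$ lying in distinct $D$-clusters could a priori be joined by an open path in $\mathrm{Ann}(x^*,r,R')$ that leaves $D$. The conclusion is in fact true, but for a genuinely topological reason: $\Lambda_{R'}(x^*)\setminus D$ has exactly two connected components, one above and one below $D$; disjoint long-way crossings of $D$ are totally ordered by height, so among $C_1,\dots,C_m$ only the topmost can reach the upper component and only the bottommost can reach the lower one; and any would-be connecting path in $\mathrm{Ann}(x^*,r,R')$ between two distinct $P_i$'s would have to traverse $D$ from its top side to its bottom side, inside the annulus, which forces a $D$-cluster that crosses some $\gamma_i\subset C_i$ and contradicts disjointness. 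The paper's one-paragraph proof skips the same point, so this is a shared imprecision rather than a flaw particular to your route, but the clause ``since\dots disjoint'' is not a justification and should be replaced by the short topological argument above.
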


\begin{proof}Fix $r=R/(2C)$ where $C$ is the constant from the previous proposition and consider $x_0,\dots, x_s$ (with $s=O(C^2)$) such that the boxes $\Lambda_r(x_i)$ cover $D$ for $0\le i\le s$. For $k$ crossings to exist, there must be one of the annuli $\mathrm{Ann}(x_i,r,R)$ that contains $k/(2C)$ clusters crossing from outside to inside. As a consequence, we deduce the result immediately from Theorem~\ref{prop:tight number crossings} by choosing $k$ large enough that $\lambda k>2C$.
\end{proof}

\begin{remark}[double random-current with wired boundary conditions]
Let us mention a result for the double random current with wired boundary conditions, meaning the double random current on the graph  $\Omega^{\mathfrak g}$ obtained from $\Omega$ by adding a {\em ghost} vertex $\mathfrak g\notin \Omega$ connected to all the vertices on $\partial\Omega$ by an edge. We also get the Aizenman-Burchard criterion for this model, with the small point that the ${\rm Ann}(r,R)$-clusters of $\mathfrak g$ are counted as a single cluster. Indeed, if $\Lambda_R\subset \Omega$ then the mixing property enables to deduce the result from the result for free boundary conditions. When $\Lambda_R$ is not contained in $\Omega$, the result still holds as one can first explore the ${\rm Ann}(r,R)$-clusters of $\mathfrak g$ in $\Omega$, and then use the same argument as for free boundary conditions in the remaining domain. \end{remark}

\bibliographystyle{amsplain}
\begin{bibdiv}
\begin{biblist}

\bib{Aiz82}{article}{
      author={Aizenman, M.},
       title={Geometric analysis of {$\varphi ^{4}$} fields and {I}sing models.
  {I}, {II}},
        date={1982},
        ISSN={0010-3616},
     journal={Comm. Math. Phys.},
      volume={86},
      number={1},
       pages={1\ndash 48},
         url={http://projecteuclid.org/getRecord?id=euclid.cmp/1103921614},
}

\bib{AizBarFer87}{article}{
      author={Aizenman, M.},
      author={Barsky, D.~J.},
      author={Fern{{\'a}}ndez, R.},
       title={The phase \mbox{transition} in a general class of {I}sing-type
  models is sharp},
        date={1987},
        ISSN={0022-4715},
     journal={J. Statist. Phys.},
      volume={47},
      number={3-4},
       pages={343\ndash 374},
         url={http://dx.doi.org/10.1007/BF01007515},
}

\bib{AizBur99}{article}{
  title={H{\"o}lder regularity and dimension bounds for random curves},
  author={Aizenman, M.},
  author={Burchard, A.},
  journal={Duke mathematical journal},
  volume={99},
  number={3},
  pages={419--453},
  year={1999},
}

\bib{AizDum19}{article}{
  title={Marginal triviality of the scaling limits of critical 4D Ising and $\phi_4^4$ models},
  author={Aizenman, M.},
  author ={ Duminil-Copin, H.},
  journal={arXiv:1912.07973},
  year={2019}
}

\bib{AizDumSid15}{article}{
      author={Aizenman, M.},
      author={{Duminil-Copin}, H.},
      author={Sidoravicius, V.},
       title={Random {C}urrents and {C}ontinuity of {I}sing {M}odel's
  {S}pontaneous {M}agnetization},
        date={2015},
     journal={Communications in Mathematical Physics},
      volume={334},
       pages={719\ndash 742},
}

\bib{ADTW}{article}{
      author={Aizenman, Michael},
      author={Duminil-Copin, Hugo},
      author={Tassion, Vincent},
      author={Warzel, Simone},
       title={Emergent planarity in two-dimensional Ising models with
  finite-range interactions},
        date={2019},
     journal={Inventiones mathematicae},
      volume={216},
      number={3},
       pages={661\ndash 743},
         url={https://doi.org/10.1007/s00222-018-00851-4},
}

\bib{CheDumHon16}{article}{
  title={Crossing probabilities in topological rectangles for the critical planar FK-Ising model},
  author={Chelkak, D.},
  author={ Duminil-Copin, H.},
  author={Hongler, C.},
  journal={Electronic Journal of Probability},
  volume={21},
  year={2016},

}

\bib{Dum17a}{unpublished}{
      author={Duminil-Copin, H.},
       title={Lectures on the {I}sing and {P}otts models on the hypercubic
  lattice},
        note={arXiv:1707.00520},
}

\bib{Dum16}{unpublished}{
      author={{Duminil-Copin}, H.},
       title={Random currents expansion of the {I}sing model},
        date={2016},
        note={arXiv:1607:06933},
}

\bib{DumGosRao18}{article}{
author={Duminil-Copin, H.},
author={Goswami, S.},
author={Raoufi, A.},
title={Exponential Decay of Truncated Correlations for the Ising Model in any Dimension for all but the Critical Temperature},
date={2020},
number={2},
volume={374},
journal={Communications in Mathematical Physics},
pages={891-921},
}

\bib{DumLis}{article}{
      author={Duminil-Copin, H.},
      author={Lis, M.},
       title={On the double random current nesting field},
        date={2019},
     journal={Probability Theory and Related Fields},
      volume={175},
      number={3-4},
       pages={937\ndash 955},
}

\bib{DumHonNol11}{article}{
  title={Connection probabilities and RSW-type bounds for the two-dimensional FK Ising model},
  author={Duminil-Copin, H.},
  author ={Hongler, C.},
  author ={Nolin, P.},
  journal={Communications on pure and applied mathematics},
  volume={64},
  number={9},
  pages={1165--1198},
  year={2011},
}

\bib{DumLisQia21}{article}{
     author={Duminil-Copin, H.},
      author={Lis, M.},
author={Qian, W.},
title={Conformal invariance of double random currents and the XOR-Ising model I: identification of the limit},
date={2021},
        note={preprint},
}

\bib{DumManTas20}{article}{
  title={Planar random cluster model: fractal properties of the critical phase},
  author={Duminil-Copin, H.},
  author ={Manolescu, I.},
  author ={Tassion, V.},
  journal={arXiv:2007.14707},
  year={2020}
}

\bib{DumSidTas17}{article}{
  title={Continuity of the Phase Transition for Planar Random cluster and Potts Models with $1\le q\le4$},
  author={Duminil-Copin, H.},
  author={Sidoravicius, V.},
  author={Tassion, V.},
  journal={Communications in Mathematical Physics},
  volume={349},
  number={1},
  pages={47--107},
  year={2017}
}

\bib{DumTas16}{article}{
  title={RSW and box-crossing property for planar percolation},
  author={Duminil-Copin, H.},
  author={Tassion, V.},
  booktitle={Proceedings of the international congress of Mathematical Physics},
  year={2016}
}

\bib{DumTas15}{article}{
      author={Duminil-Copin,~H.},
      author={Tassion, V.},
       title={A new proof of the sharpness of the phase transition for
  {B}ernoulli percolation and the {I}sing model},
        date={2016},
     journal={Communications in {M}athematical {P}hysics},
      volume={343},
      number={2},
       pages={725\ndash 745},
}

\bib{EdwSok}{article}{
  title = {Generalization of the Fortuin-Kasteleyn-Swendsen-Wang representation and Monte Carlo algorithm},
  author = {Edwards, Robert G.}
  author=  {Sokal, Alan D.},
  journal = {Phys. Rev. D},
  volume = {38},
  issue = {6},
  pages = {2009--2012},
  numpages = {0},
  year = {1988},
  month = {Sep},
  publisher = {American Physical Society},
  doi = {10.1103/PhysRevD.38.2009},
  url = {https://link.aps.org/doi/10.1103/PhysRevD.38.2009}
}

\bib{GHS}{article}{
      author={Griffiths, R.~B.},
      author={Hurst, C.~A.},
      author={Sherman, S.},
       title={{Concavity of Magnetization of an Ising Ferromagnet in a Positive
  External Field}},
        date={1970},
     journal={Journal of Mathematical Physics},
      volume={11},
      number={3},
       pages={790\ndash 795},
  url={http://scitation.aip.org/content/aip/journal/jmp/11/3/10.1063/1.1665211},
}

\bib{KemSmi17}{article}{
  title={Random curves, scaling limits and Loewner evolutions},
  author={Kemppainen, A.},
  author={Smirnov, S.},
  journal={The Annals of Probability},
  volume={45},
  number={2},
  pages={698--779},
  year={2017},
}

\bib{KraWan41}{article}{
  title={Statistics of the two-dimensional ferromagnet. Part I},
  author={Kramers, H.},
  author={Wannier, G.},
  journal={Physical Review},
  volume={60},
  number={3},
  pages={252},
  year={1941},
  publisher={APS}
}

\bib{LisT}{article}{
      author={Lis, Marcin},
       title={The planar {I}sing model and total positivity},
        date={2017},
        ISSN={0022-4715},
     journal={J. Stat. Phys.},
      volume={166},
      number={1},
       pages={72\ndash 89},
         url={https://doi.org/10.1007/s10955-016-1690-x},
}

\bib{LupWer}{article}{
      author={Lupu, T.},
      author={Werner, W.},
       title={{A note on Ising random currents, Ising-FK, loop-soups and the
  Gaussian free field}},
        date={2016},
     journal={Electron. Commun. Probab.},
      volume={21},
       pages={7 pp.},
       }

\bib{Pei36}{article}{
      author={Peierls, R.},
       title={On {I}sing's model of ferromagnetism.},
        date={1936},
     journal={Math. Proc. Camb. Phil. Soc.},
      volume={32},
       pages={477\ndash 481},
}

\bib{Rao17}{article}{
author = {Raoufi, Aran},
title = {{Translation-invariant Gibbs states of the Ising model: General setting}},
volume = {48},
journal = {The Annals of Probability},
number = {2},
publisher = {Institute of Mathematical Statistics},
pages = {760 -- 777},
keywords = {amenable graphs, Gibss states, Ising model, percolation},
year = {2020},
doi = {10.1214/19-AOP1374},
URL = {https://doi.org/10.1214/19-AOP1374}
}

\bib{Rus78}{article}{
  title = {A note on percolation},
  author = {Russo, Lucio},
  journal = {Zeitschrift f\"ur Wahrscheinlickkeitstheorie},
  volume = {43},
  number = {1},
  pages = {39 à 48},
  year = {1978},
  editor = {springer},
}

\bib{SeyWel78}{incollection}{
  title={Percolation probabilities on the square lattice},
  author={Seymour, Paul D},
  author={Welsh, Dominic JA},
  booktitle={Annals of Discrete Mathematics},
  volume={3},
  pages={227--245},
  year={1978},
  publisher={Elsevier},
}

\bib{Wilson}{article}{
AUTHOR={Wilson, D.B.},
TITLE={{XOR}-{I}sing loops and the {G}aussian free field},
JOURNAL={arXiv:1102.3782},
}

\end{biblist}
\end{bibdiv}

\end{document}